\newcommand{\q}[1]{( {#1} )_q}
\newcommand{\Q}[1]{ [ {#1} ]_q }
\newcommand{\id}[1]{\mathrm{id}\vert_{ {#1} } }
\newcommand{\binomq}[2]{ \binom{ {#1}}{{#2}}_q}
\newcommand{\binomqq}[2]{ \binom{ {#1}}{{#2}}_{q^{2}}}
\newcommand{\binomQ}[2]{ \left[ \begin{array}{c} {#1} \\ {#2} \end{array}\right]_q}
\newcommand{\Gr}{\mathrm{Gr}}
\newcommand{\Ga}{\mathrm{Ga}}
\newcommand{\ot}{\otimes}
\newcommand{\DOI}[1]{\href{http://dx.doi.org/#1}{DOI:#1}}
\newcommand{\arXiv}[1]{\href{http://arxiv.org/abs/#1}{arXiv:#1}}
\newtheorem{theorem}{Theorem}[section]
\newtheorem{proposition}[theorem]{Proposition}
\newtheorem{lemma}[theorem]{Lemma}
\newtheorem{corollary}[theorem]{Corollary}
\theoremstyle{definition}
\newtheorem{definition}[theorem]{Definition}
\newtheorem{remark}[theorem]{Remark}
\begin{document}

\title{An algebraic construction of duality functions for the stochastic $\mathcal{U}_q( A_n^{(1)})$ vertex model and its degenerations} \author{Jeffrey Kuan}

\date{}

\maketitle

\abstract{A recent paper \cite{KMMO} introduced the stochastic $\mathcal{U}_q(A_n^{(1)})$ vertex model. The stochastic $S$--matrix is related to the $R$--matrix of the quantum group $\mathcal{U}_q(A_n^{(1)})$ by a gauge transformation. We will show that a certain function $D^+_{\mu}$ intertwines with the transfer matrix and its space reversal. When interpreting the transfer matrix as the transition matrix of a discrete--time totally asymmetric particle system on the one--dimensional lattice $\mathbb{Z}$, the function $D^+_{\mu}$ becomes a Markov duality function $D_{\mu}$ which only depends on $q$ and the vertical spin parameters $\{\mu_x\}$. By considering degenerations in the spectral parameter, the duality results also hold on a finite lattice with closed boundary conditions, and for a continuous--time degeneration. This duality function had previously appeared in a multi--species $\textrm{ASEP}(q,j)$ process \cite{K}.  The proof here uses that the $R$--matrix intertwines with the co--product, but does not explicitly use the Yang--Baxter equation. 

It will also be shown that the stochastic $\mathcal{U}_q(A_n^{(1)})$ is a multi--species version of a stochastic vertex model studied in \cite{BP,CP}. This will be done by generalizing the fusion process of \cite{CP} and showing that it matches the fusion of \cite{KRL} up to the gauge transformation. 

We also show, by direct computation, that the multi--species $q$--Hahn Boson process (which arises at a special value of the spectral parameter) also satisfies duality with respect to $D_0$, generalizing the single--species result of \cite{C}.

}

\tableofcontents

\section{Introduction}
Over the last 25 years, much work has been done investigating interacting particle systems with a property called \textit{stochastic duality} (e.g. \cite{BC,BS,BS2,BS3,BorCor13,BCS,CGRS,CGRS2,C,GKRV,GRV,K2,K,O,S,S2,SS}). Duality has been shown to be useful for asymptotics \cite{BCS}, weak convergence \cite{CST}, and shock dynamics \cite{BS3}. The first use of duality in interacting particle systems actually goes back even farther, to 1970 \cite{Sp}. See also Chapter III of \cite{Ligg} for an exposition and references. 

A more recent direction of research has been to find dualities in multi--species versions of some of these systems. This was done in \cite{BS,BS2,BS3,K2,K}. In these cases, the interacting particle system satisfied a Lie algebra symmetry, and the rank of the Lie algebra corresponded to the number of species of particles. 

Duality has also been discovered in stochastic vertex models (see e.g. \cite{BCG,BP,CP}). These vertex models enjoy the property of degenerating to a large class of other probabilistic models, including some of the ones above. However, the proofs of these dualities seem to be ad--hoc, in the sense that they required knowing the duality function beforehand, and did not involve constructing the duality function using the algebraic symmetry of the model. Furthermore, there were no known examples of multi--species vertex models satisfying duality.

Thus, it is natural to look for a stochastic vertex model such that

 (1) self--duality holds with respect to a duality function which can be defined with the algebraic symmetry of the vertex model, and

 (2) in certain degenerations of the vertex model, previous duality results can be recovered, and
 
 (3) is a multi--species generalization of an existing single--species model.

The purpose of this paper is to prove that the stochastic $\mathcal{U}_q( A_n^{(1)})$ vertex model of \cite{KMMO} satisfies all three of these properties. 

The stochastic $\mathcal{U}_q( A_n^{(1)})$ vertex model is defined from a stochastic matrix $S(z)$ depending on a spectral parameter $z$; see section \ref{KMMOBACK} for definitions. The matrix $S(z)$ is obtained from the (non--stochastic) $R$--matrix of $\mathcal{U}_q( A_n^{(1)})$ by a gauge transform. The action of $S(z)$ on a certain representation $V_l^{z_1} \otimes V_m^{z_2}$ then defines a local Markov operator. Here, $l$ is the horizontal spin parameter, $m$ is the vertical spin parameter, and the spectral parameters $z_1$ and $z_2$ satisfy $z=z_1/z_2$. The corresponding transfer matrix then defines a Markov operator for an interacting particle system on a one--dimensional lattice. 

The original paper \cite{KMMO} specifically considers $z=q^{l-m}$ and finds a $n$--species version of the $q$--Hahn Boson process introduced in \cite{Po}. The $q$--Hahn Boson process has both a discrete--time and a continuous--time definition, with the continuous--time process satisfying a Hecke algebra symmetry, as shown in \cite{T0}. When the vertical spin parameter $\mu=q^{-m}$ converges to $0$, this further degenerates to a single species $q$--Boson process introduced by \cite{SW}. The paper here will consider general values of $z$. 

The main results will be summarized as follows.

(1) We will show that the stochastic $\mathcal{U}_q( A_n^{(1)})$ vertex model (for generic values of $z$) satisfies self--duality with respect to a certain explicit duality function $D_{\mu}$. The function $D_{\mu}$ had previously appeared as the duality function of a multi--species $\textrm{ASEP}(q,j)$, and is defined from the action of a certain element $u_0\in \mathcal{U}_q\left(A_n\right).$ It only depends on the vertical spin parameters $\mu_x$ and not on the horizontal spin parameter. The proof uses that the $R$--matrix intertwines with the action of $\mathcal{U}_q( A_n^{(1)})$, but does not explicitly use the Yang--Baxter equation. The proof also involves showing that the gauge is the same as the ``ground state transformation'' of \cite{K} up to a diagonal change of basis. 

(2) For a range of values of the spectral parameter $z$, the matrix $S(z)$ is stochastic (see Proposition \ref{Stoch}). In section \ref{BBB}, some degenerations of $S(z)$ will be considered. In particular, for degenerations of $z$, the matrix $S(z)$ becomes trivial (see Theorem \ref{Stuff}). This actually allows for construction of the particle system in continuous time as well as on a finite lattice with closed boundary (see section \ref{Iaap}). A noteworthy result is that the $n$--species $q$--Boson process introduced by \cite{T} can be shown to satisfy self--duality, which had previously been shown with different methods in \cite{K}. Another interesting case is when $z=q^{l-m}$, which shows that the $n$--species discrete--time $q$--Hahn Boson process is self--dual with respect to $D_{\mu}$. See Figures \ref{Deg} and \ref{Deg2} for the degenerations discussed here.

It also turns out that the $n$--species discrete--time $q$--Hahn Boson process is also self--dual with respect to $D_0$, which was shown for $n=1$ in \cite{C}. However, it is not clear how to prove this algebraically. A direct proof will be given.

(3) It will be shown that the stochastic $\mathcal{U}_q(A_n^{(1)})$ vertex model is a $n$--species generalization of the models of \cite{CP,BP}. This will be done by showing that the ``stochastic fusion'' procedure of \cite{CP} can be generalized for multiple species (see Theorem \ref{StochasticFusion}).  Additionally, the Markov projection to the first $k$ species is a $\mathcal{U}_q(A_k^{(1)})$ vertex model (see Proposition \ref{JJJ}). The latter property is typical for multi--species models (see e.g. \cite{K}).

It is also worth explicitly mentioning the role of the boundary conditions in the duality results. The transfer matrix of the stochastic $\mathcal{U}_q(A_n^{(1)})$ vertex model intertwines with its space reversal under a certain function $D^+_{\mu}$ which also acts on the auxiliary space. In order to reduce $D^+_{\mu}$ to a duality functional $D_{\mu}$ which does not act on the auxiliary space, a certain cancellation is needed (Lemma \ref{Reduction}), but this reduction does not seem to hold for open or periodic boundary conditions.

The remainder of the paper is outlined as such: Section \ref{PR} states the necessary definitions, notations and results from previous papers. {In \color{black} section \ref{Further}, further properties of $S(z)$ will be proved, including ranges for which the matrix is stochastic (Proposition \ref{Stochastic}), degenerations (section \ref{BBB}), Markov projections (Theorem \ref{JJJ}), and stochastic fusion (Theorem \ref{StochasticFusion}).} 

Section \ref{ALGDUA} defines the transfer matrix and proves duality results for the resulting particle systems. The main theorem is stated in Theorem \ref{Major}, which shows that $D_{\mu}^+$ intertwines between the transfer matrix and its space reversal. This results in duality results for a discrete--time particle system on the infinite line (Theorem \ref{DualThm}), on a finite lattice with closed boundary conditions (Proposition \ref{412}), and for a continuous--time degeneration (Proposition \ref{AAA2}). Section \ref{Dop} describes the processes that can be obtained from the various degenerations: subsection \ref{ZZZ} considers the multi--species $q$--Hahn Boson process, and section \ref{PPP} considers the case when $l=1$.  Section \ref{DIRECTDUA} shows, using direct computation,  the self--duality of multi--species $q$--Hahn Boson with respect to $D_0$, as well as the Markov projection property. 

\textbf{Acknowledgments}. The author would like to thank Alexei Borodin and Ivan Corwin for helpful conversations. Financial support was provided by the Minerva Foundation and NSF grant DMS--1502665.

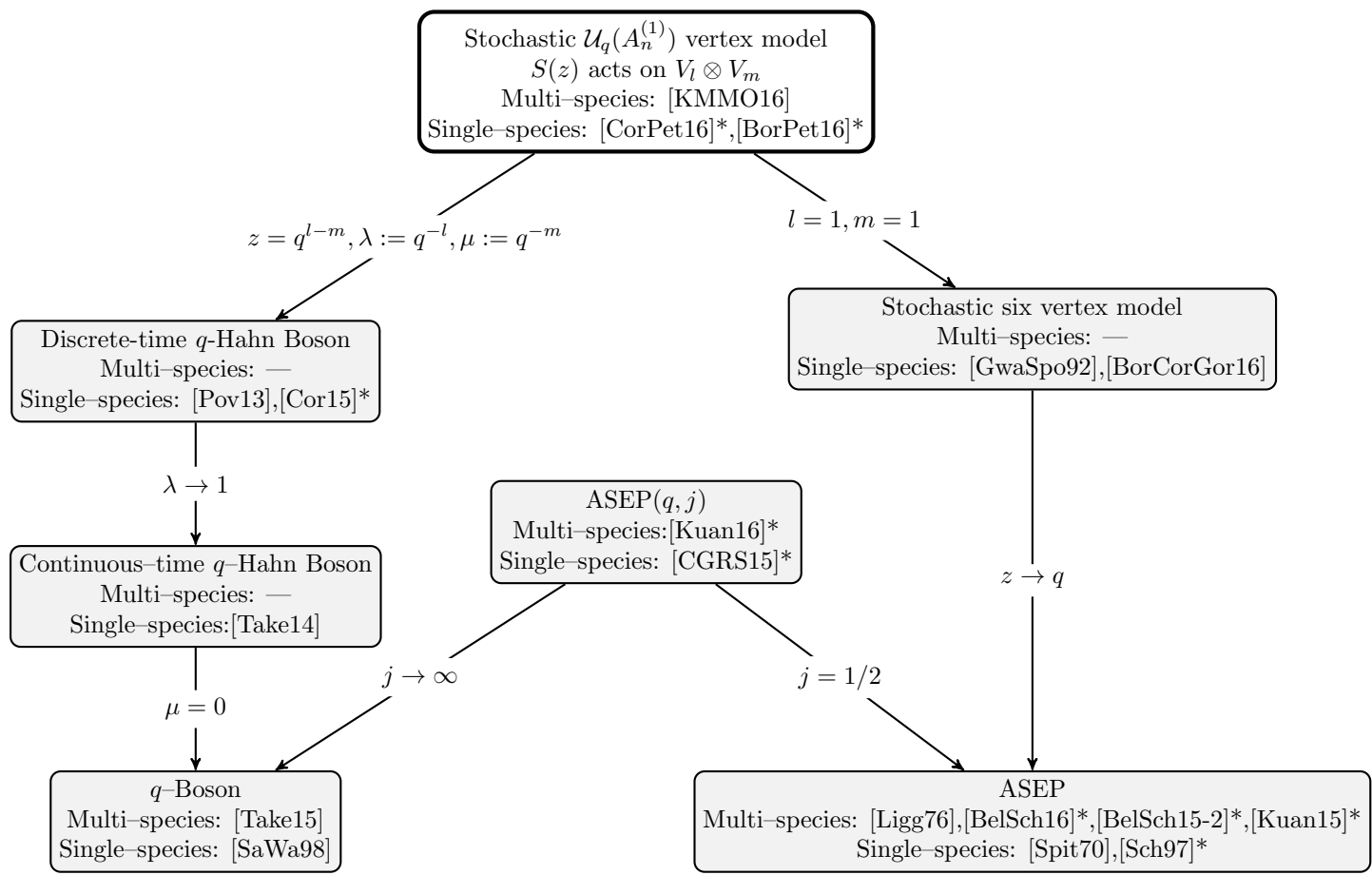
\begin{figure}

\begin{tikzpicture}[scale=0.9, every text node part/.style={align=center}]
\usetikzlibrary{arrows}
\usetikzlibrary{shapes}
\usetikzlibrary{shapes.multipart}
\tikzstyle{process}=[rectangle ,draw,thick ,rounded corners=4pt,fill=black!5 ]
\tikzstyle{AEP}=[rectangle,draw,ultra thick, rounded corners=6pt ]
\tikzstyle{qHahn}=[rectangle,draw, ultra thick, rounded corners=4pt, gray, text=black]
\tikzstyle{limit}=[->,>=stealth',thick,rounded corners=4pt]
\tikzstyle{relation}=[ ultra thick, dashed, rounded corners=4pt, gray]

\node[process]  (cqHahn) at (-4,-3.5) {Discrete-time $q$-Hahn Boson \\ Multi--species: --- \\ Single--species: \cite{Po},\cite{C}*};
\node[process]  (qTASEP) at (-4,-7) {Continuous--time $q$--Hahn Boson \\ Multi--species: ---  \\ Single--species:\cite{T0}};
\node[process]  (TASEP) at (-4,-10.5) {$q$--Boson \\ Multi--species: \cite{T} \\ Single--species: \cite{SW}};
\node[process]  (MADM) at (3,-6) {ASEP$(q,j)$ \\ Multi--species:\cite{K}* \\Single--species: \cite{CGRS}*};
\node[AEP]  (qHahn) at (3,1) {Stochastic $\mathcal{U}_q(A_n^{(1)})$ vertex model \\ $S(z)$ acts on $V_l \otimes V_m$ \\ Multi--species: \cite{KMMO} \\ Single--species: \cite{CP}*,\cite{BP}*};
\node[process] (SixVer) at (9,-3) {Stochastic six vertex model \\ Multi--species: --- \\ Single--species: \cite{GS92},\cite{BCG} };
\node[process]  (ASEP) at (9,-10.5) {ASEP \\ Multi--species: \cite{L},\cite{BS3}*,\cite{BS2}*,\cite{K2}* \\ Single--species: \cite{Sp},\cite{S}*} ;

\draw[limit] (qHahn) -- (cqHahn) node[midway,fill=white]{$z=q^{l-m}, \lambda := q^{-l}, \mu:=q^{-m}$};
\draw[limit] (cqHahn) -- (qTASEP) node[midway,fill=white]{$\lambda\rightarrow 1$};
\draw[limit] (qTASEP) -- (TASEP) node[midway,fill=white]{$\mu=0$};
\draw[limit] (MADM) -- (ASEP) node[midway,fill=white]{$j=1/2$};
\draw[limit] (SixVer) -- (ASEP) node[midway,fill=white]{$z \rightarrow q$};
\draw[limit] (qHahn) -- (SixVer) node[midway,fill=white]{$l=1,m=1$};
\draw[limit] (MADM) -- (TASEP) node[midway,fill=white]{$j\rightarrow \infty$};
\end{tikzpicture}
\caption{The various degenerations and limits explicitly mentioned in this paper. A * indicates a paper with a duality result. See Figure 1 of \cite{CP} for a more complete diagram. Additional degenerations will be shown in Figure \ref{Deg2}.}
\label{Deg}
\end{figure}

{\color{black}
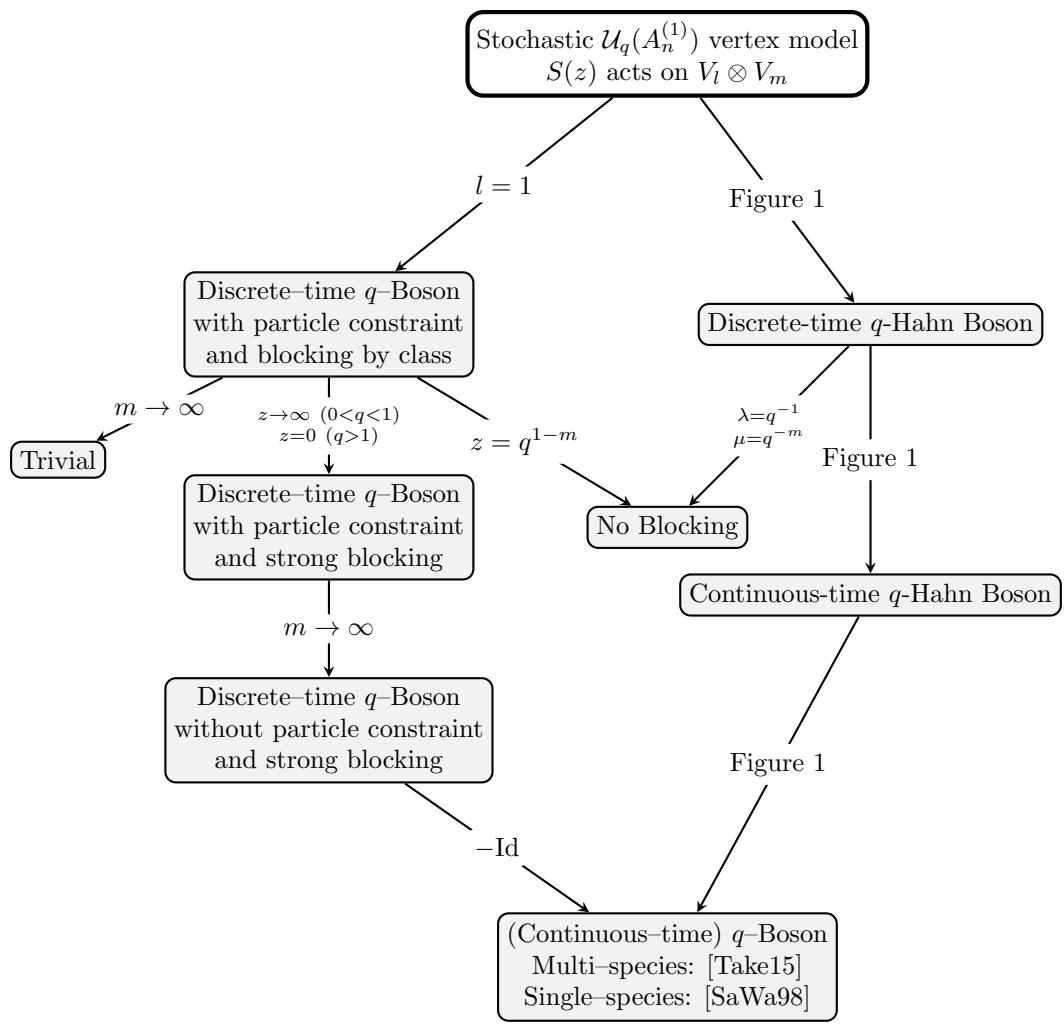
\begin{figure}

\begin{tikzpicture}[scale=0.9, every text node part/.style={align=center}]
\usetikzlibrary{arrows}
\usetikzlibrary{shapes}
\usetikzlibrary{shapes.multipart}
\tikzstyle{process}=[rectangle ,draw,thick ,rounded corners=4pt,fill=black!5 ]
\tikzstyle{AEP}=[rectangle,draw,ultra thick, rounded corners=6pt ]
\tikzstyle{qHahn}=[rectangle,draw, ultra thick, rounded corners=4pt, gray, text=black]
\tikzstyle{limit}=[->,>=stealth,thick,rounded corners=4pt]
\tikzstyle{relation}=[ ultra thick, dashed, rounded corners=4pt, gray]

\node[AEP] (KMMO) at (12,3) {Stochastic $\mathcal{U}_q(A_n^{(1)})$ vertex model \\ $S(z)$ acts on $V_l \otimes V_m$};
\node[process]  (l=1) at (7,-1) {Discrete--time $q$--Boson \\ with particle constraint \\ and blocking by class};
\node[process] (wit) at (7,-4) { Discrete--time $q$--Boson \\ with particle constraint \\ and strong blocking };
\node[process] (trivial) at (3,-3) {Trivial};
\node[process]  (discreteqBoson) at (7,-7) {Discrete--time $q$--Boson \\ without particle constraint \\ and strong blocking};
\node[process]  (qBoson) at (12,-10.5) {(Continuous--time) $q$--Boson \\ Multi--species: \cite{T} \\ Single--species: \cite{SW}};
\node[process]  (cqHahn) at (15,-1) {Discrete-time $q$-Hahn Boson};
\node[process]  (contHahn) at (15,-5) {Continuous-time $q$-Hahn Boson};
\node[process]  (nob) at (12,-4) {No Blocking};

\draw[limit] (KMMO) -- (l=1) node[midway,fill=white]{$l=1$};
\draw[limit] (l=1) -- (wit) node[midway,fill=white]{$\substack{z \rightarrow \infty \ (0<q<1)  \\ z=0 \ (q>1)}$};
\draw[limit] (l=1) -- (trivial) node[midway,fill=white]{$m\rightarrow\infty$};
\draw[limit] (wit) -- (discreteqBoson) node[midway,fill=white]{$m \rightarrow \infty$};
\draw[limit] (discreteqBoson) -- (qBoson) node[midway,fill=white]{$ - \mathrm{Id}$};
\draw[limit] (KMMO) -- (cqHahn) node[midway,fill=white]{Figure \ref{Deg}};
\draw[limit] (l=1) -- (nob) node[midway,fill=white]{$z=q^{1-m}$};
\draw[limit] (cqHahn) -- (nob) node[midway,fill=white]{$\substack{\lambda=q^{-1} \\ \mu=q^{-m}}$};
\draw[limit] (cqHahn) -- (contHahn) node[midway,fill=white]{Figure \ref{Deg}};
\draw[limit] (contHahn) -- (qBoson) node[midway,fill=white]{Figure \ref{Deg}};
\end{tikzpicture}
\caption{Some new degenerations developed in this paper. The limit $z\rightarrow\infty$ is expressed as $w_x\rightarrow 0$, but the set of $x$ such that $w_x\neq 0$ must be infinite. The degeneration labeled $-\mathrm{Id}$ is described in section \ref{CTP}.}
\label{Deg2}
\end{figure}
}

\section{Preliminaries}\label{PR}

This section will review results from previous papers and state necessary definitions.

\subsection{Definitions}
\subsubsection{$q$--notation}

For $z\in \mathbb{C}, q\in (0,1)$ and $n\in \mathbb{N} \cup \{\infty\}$, let the $q$--Pochhammer symbol be defined by
$$
(z;q)_n = (1-z)(1-zq) \cdots (1-zq^{n-1}).
$$
Furthermore define
$$
(q)_n = (q;q)_n, \quad  \binomq{n}{k} = \frac{ (q)_n  }{ (q)_k(q)_{n-k} }.
$$
Notice that
$$
\lim_{q \rightarrow 1} \frac{(q)_n}{(1-q)^n} = n!,
$$
so these can be viewed as $q$--deformations of the usual integers, factorials, and binomials. Another useful $q$--deformation is 
$$
\Q{n} = \frac{q^n - q^{-n}}{q-q^{-1}}, \quad \Q{n}^! = \Q{1} \cdots \Q{n}, \quad \binomQ{n}{k} = \frac{ \Q{n}^!}{ \Q{k}^! \Q{n-k}^!}.
$$
Note that $[n]_q=[n]_{q^{-1}}$, and similarly for the $q$--factorials and binomials.

There is a $q$--analog of the Binomial theorem, which states that if $A$ and $B$ are variables such that $BA=qAB$, then 
$$
(A+B)^l = \sum_{d=0}^l \binomq{l}{d} A^d B^{l-d}.
$$
This can be stated equivalently as a sum over subsets. Any subset $L \subseteq \{1,\ldots,l\}$ can be identified with a monomial in $A$ and $B$ by setting $L$ index the locations of the variable $A$: for example, if $L = \{2,3,5\} \subseteq \{1,\ldots,6\}$, then the corresponding monomial is $BAABAB.$ For any subset  $L  = \{r_1,\ldots,r_d\} \subseteq \{1,\ldots,l\}$ with $d$ elements, let 
$$
c_d(L) = | \{(r,s): 1 \leq r < s \leq l \text{ and } r \in L, s\notin L\} |.
$$ 
Then 
\begin{equation}\label{qBin}
\sum_{\substack{L \subseteq \{1,\ldots,l\} \\ |L| =d}} q^{c_d(L)} = \binomq{l}{d}.
\end{equation}
For example, for $l=4$ and $d=2$, then 
$$
c_d(\{1,2\}) = 4, \quad c_d(\{1,3\}) = 3, \quad c_d(\{1,4\}) = 2, \quad c_d(\{2,4\}) = 1, \quad c_d(\{3,4\}) = 0, \quad c_d(\{2,3\}) = 2 
$$
and
$$
1 + q + 2q^2 + q^3 + q^4 = \frac{(1-q^3)(1-q^4)}{(1-q)(1-q^2)} = \binomq{4}{2}.
$$

Note that by the identity
$$
(q)_{n} = \frac{(q;q)_{\infty}}{(q^{n+1};q)_{\infty}},
$$
it is possible to extend these $q$--deformations to complex numbers. Define the $q$--Gamma function $\Gamma_q(z)$ as 
$$
\Gamma_q(z) = (1-q)^{1-z} \frac{ (q;q)_{\infty} }{ (q^z;q)_{\infty} }
$$
for $\vert q \vert<1$. When $q\rightarrow 1$, $\Gamma_q(z)$ converges to the usual Gamma function $\Gamma(z)$. This definition is related to $\Q{n}^!$ via
$$
\Q{n}^! = q^{n(n-1)/2} \Gamma_{q^{-2}}(n+1).
$$
The right--hand--side is well--defined even if $n \notin \mathbb{N}$, so the $q$--factorials and binomials are still well--defined.

\subsubsection{Representation Theory}\label{RT}
The Drinfeld--Jimbo quantum affine algebra (without derivation) $\mathcal{U}_q(A_n^{(1)}) = \mathcal{U}_q(\widehat{sl}_{n+1})$ is generated by $e_i,f_i,k_i, 0 \leq i \leq n$ with the Weyl relations
\begin{align*}
k_i^{-1} k_i = k_i k_i^{-1} = 1, & \quad [k_i,k_j]=0, \\
k_i e_j = q^{2\delta_{ij} - \delta_{i,j-1} - \delta_{i,j+1}} e_j k_i, & \quad k_i f_j = q^{-2\delta_{ij} + \delta_{i,j-1} + \delta_{i,j+1}} f_j k_i, \\
[e_i, f_j] &= \delta_{ij} \frac{k_i  - k_i^{-1}}{q-q^{-1}} ,
\end{align*}
and the Serre relations (for $n \geq 2$)
\begin{align*}
e_i^2 e_{i+1} - (q+q^{-1}) e_{i+1} e_i e_{i+1} + e_{i+1} e_i^2 &= 0 \\
f_i^2 f_{i+1} - (q+q^{-1}) f_{i+1} f_i f_{i+1} + f_{i+1} f_i^2 &= 0 \\
[e_i,e_j] = [f_i,f_j]&= 0, \quad { j \neq i\pm 1}
\end{align*}
where the indices are taken cyclically (i.e. as elements of $\mathbb{Z} / (n+1)\mathbb{Z})$. The co--product is an algebra homomorphism $\Delta: \mathcal{U}_q(A_n^{(1)})  \rightarrow \mathcal{U}_q(A_n^{(1)}) ^{\otimes 2}$ defined by
$$
\Delta k_i^{\pm 1} = k_i^{\pm 1} \otimes k_i^{\pm 1}, \quad \Delta e_i = 1 \otimes e_i + e_i \otimes k_i, \quad \Delta f_i = f_i \otimes 1 + k_i^{-1} \otimes f_i.
$$
The formula for the co--product is actually not canonical. Another choice of co--product is
$$
\Delta_0 k_i^{\pm 1} = k_i^{\pm 1} \otimes k_i^{\pm 1}, \quad \Delta_0 e_i =  e_i \otimes 1 + k_i^{-1} \otimes e_i, \quad \Delta_0 f_i =  1 \otimes f_i +  f_i \otimes k_i,
$$
which leads to essentially the same algebraic structure. The co--product $\Delta$ satisfies the co--associativity property, which says that as maps from $\mathcal{U}_q(A_n^{(1)}) $ to $\mathcal{U}_q(A_n^{(1)}) ^{\otimes 3}$,
$$
\left( \mathrm{id} \otimes \Delta\right) \circ \Delta = \left( \Delta \otimes \mathrm{id} \right) \circ \Delta.
$$
Because of co-associativity, higher powers of $\Delta$ can be defined inductively and unambiguously as algebrahomomorphisms $\Delta^{(L)}: \mathcal{U}_q(A_n^{(1)})  \rightarrow \mathcal{U}_q(A_n^{(1)})^{\otimes L}$ by
$$
\Delta^{(L)} = \left( \mathrm{id} \otimes \Delta^{(L-1)}\right) \circ \Delta
$$
We will also use Sweedler's notation:
$$
\Delta^{(L)}(u) = \sum_{(u)} u_{(1)} \otimes \cdots \otimes u_{(L)},
$$
where each $u_{(x)}$ is some element of $ \mathcal{U}_q(A_n^{(1)})$.

There is an involution $\omega$ of $\mathcal{U}_q(A_n^{(1)})$ defined on generators by
$$
\omega(k_i) = k_{n+1-i}^{-1}, \quad \omega(e_i) = f_{n+1-i}, \quad \omega(f_i) = e_{n+1-i}.
$$
It is straightforward to check from the Weyl and Serre relations that $\omega$ is indeed an automorphism, and it is immediate from the definition that $\omega^2=\mathrm{id}$.

For $l \in \mathbb{Z}_{\geq 0}$, let $V_l$ be the vector space with basis indexed by the set
$$
{\color{black} \mathcal{B}_l^{(n+1)}:=}\{ \alpha = (\alpha_1,\ldots,\alpha_{n+1}) \in \mathbb{Z}_{\geq 0 }^{n+1} : \alpha_1 + \ldots + \alpha_{n+1} = l\}.
$$
The superscript $(n+1)$ will be dropped if it is clear from the context. For $i \in \{1,2,\ldots,n+1\}=\mathbb{Z}/(n+1)\mathbb{Z}$, define $\hat{i} = (0,\ldots,0,1,-1,\ldots,0) \in \mathbb{Z}^{n+1}$ with the $1$ at the $i$th position and the $-1$ at the $(i+1)$th position. For $z\in \mathbb{C}$, define the representation $\pi_l^z$ of $ \mathcal{U}_q(A_n^{(1)})$ on $V_l$ by
\begin{equation}\label{ACT}
\pi_l^z(e_i) \vert \alpha \rangle = z^{\delta_{i,0}} [\alpha_i]_q \vert \alpha - \hat{i} \rangle, \quad \pi_l^z(f_i) \vert \alpha \rangle = z^{-\delta_{i,0}} [\alpha_{i+1}]_q \vert \alpha + \hat{i} \rangle, \quad 
\pi_l^z(k_i) = q^{\alpha_{i+1}-\alpha_i} \vert \alpha \rangle.
\end{equation}
The parameter $z$ is called the spectral parameter of the representation. Let $V_l^z$ denote the vector space of the representation $\pi_l^z$. The subalgebra generated by $e_i,f_i,k_i, 1\leq i \leq n$ is denoted $\mathcal{U}_q(A_n)$, and for this subalgebra the spectral parameter does not play a role. {\color{black} The vector space $V_l$ is the $l$--th symmetric tensor representation, which will be used in section \ref{FusionSection} below.}

For any $l\geq 0$, let $\vert \Omega \rangle \in V_l$ be the vacuum vector. In other words, $\Omega$ is the basis vector indexed by $(0,\ldots,0,l)$. Analogously, let capital alpha $\vert \mathrm{A} \rangle \in V_l$ denote the basis vector indexed by $(l,0,\ldots,0)$. In general, $\vert \alpha \rangle$ will be interpreted as a particle configuration with an $\alpha_i$ number of $i$th species particles for $1\leq i \leq n$. The $(n+1)$th species of particles will be considered holes. {\color{black} From the viewpoint of probability theory, it is somewhat unnatural to consider holes as being present in the state space. Because of this, it will also be useful to define $\bar{\alpha}=(\alpha_1,\ldots,\alpha_n)$ and $\vert \alpha \vert = \alpha_1 + \ldots + \alpha_n$. Note that if $\alpha \in \mathcal{B}^{(n+1)}_m$, then $\alpha_{n+1}$ equals $m-\vert \alpha \vert$. Thus, any expression $E(\alpha)$ depending on $\alpha \in \mathcal{B}_m$ can be written as an expression $E(\bar{\alpha},m)$ depending on $\bar{\alpha}$ and $m$. In particular, define the limit
$$
\lim_{m\rightarrow \infty} E(\alpha) := \lim_{m\rightarrow\infty} E(\bar{\alpha},m),
$$
where $\bar{\alpha}$ does not depend on $m$.

This definition can be extended to vector spaces and operators. For any $m \in \{0,1,\ldots, \}\cup \{\infty\}$, define $\bar{V}_m$ to be the vector space indexed by the set
$$
\bar{\mathcal{B}}_m = \{ (\alpha_1,\ldots,\alpha_n) \in \mathbb{Z}_{\geq 0}^{n}:  \alpha_1 + \ldots + \alpha_n \leq m\}.
$$
If $\alpha \in \mathcal{B}_m$, then $\bar{\alpha} \in \bar{\mathcal{B}}_m$, and thus any map $M$ on $V_m$ can also be defined as a map on $\bar{V}_m$. Extend the map $M$ on $\bar{V}_m$ to a map on $\bar{V}_{\infty}$ by defining $M$ to be zero outside of $\bar{V}_m$. With these definitions, given any sequence of maps $M_m$ on $V_m$, define the limit $\lim_{m\rightarrow \infty} M_m$ to be a map on $\bar{V}_{\infty}$, if the limit exists.
} 

Given $1 \leq i \leq j\leq n+1$, also let $\alpha_{[i,j]} = \alpha_i + \ldots + \alpha_j$. If $i>j$, then set $\alpha_{[i,j]}=0$.

\subsubsection{Duality}
Recall the definition of stochastic duality:
\begin{definition} Two Markov processes (either discrete or continuous time) $X(t)$ and $Y(t)$ on state spaces $\mathfrak{X}$ and $\mathfrak{Y}$ are dual with respect to a function $D$ on $ \mathfrak{X}\times \mathfrak{Y}$ if
$$
\mathbb{E}_x\left[  D(X(t),y) \right] = \mathbb{E}_y \left[ D(x,Y(t))\right] \text{ for all } (x,y) \in \mathfrak{X}\times \mathfrak{Y} \text{ and all } t \geq 0.
$$
On the left--hand--side, the process $X(t)$ starts at $X(0)=x$, and on the right--hand--side the process $Y(t)$ starts at $Y(0)=y$.

An equivalent definition (for continuous--time processes and discrete state spaces) is that if the generator\footnote{Note that in probabilistic literature, a stochastic matrix has rows which sum to $1$, whereas in mathematical physics literature, the columns sum to 1. This paper uses the latter definition. If the former definition were used, then the definition of duality would be $\mathcal{L}_XD = D\mathcal{L}_Y^*$.} $\mathcal{L}_X$ of $X(t)$ is viewed as a $\mathfrak{X} \times \mathfrak{X}$ matrix, the generator $\mathcal{L}_Y$ of $Y(t)$ is viewed as a $\mathfrak{Y}\times \mathfrak{Y}$ matrix, and $D$ is viewed as a $\mathfrak{X} \times \mathfrak{Y}$ matrix, then $\mathcal{L}_X^*D = D\mathcal{L}_Y$. For discrete--time chains with transition matrices $P_X$ and $P_Y$ also viewed as $\mathfrak{X}\times \mathfrak{X}$ and $\mathfrak{Y}\times \mathfrak{Y}$ matrices, an equivalent definition is
$$
P_X^* D = D P_Y.
$$
If $X(t)$ and $Y(t)$ are the same process, in the sense that $\mathfrak{X}=\mathfrak{Y}$ and $\mathcal{L}_{{X}} = \mathcal{L}_{{Y}}$ (for continuous time) or $P_X=P_Y$ (for discrete--time), then we say that $X(t)$ is self--dual with respect to the function $D$. 

Suppose that $\mathfrak{X} = \mathfrak{Y} = S^I$, where $I \subseteq \mathbb{Z}$ is an interval and $S$ is a countable set. If $\sigma$ is an involution of $I$ such that $\sigma(x+1)=\sigma(x)-1$ for all $x$, then $\sigma$ induces an involution $\sigma^*$ of $S^I$ by
$$
(\sigma^*\eta)(x) = \eta(\sigma(x)) \text{ for } \eta: I\rightarrow S.
$$
 If $\mathcal{L}_X = \sigma^* \circ \mathcal{L}_Y \circ \sigma^*$ and $\mathcal{L}_X^*D = D\mathcal{L}_Y$, then we say that $X(t)$ satisfies {\color{black} space--reversed self--duality} with respect to $D$. 
\end{definition}

\begin{remark}
In the literature, some authors do not draw a distinction between self--duality and space--reversed self--duality. However, for the duality functions of interest here, a \textit{totally} asymmetric process cannot satisfy self--duality, but it does satisfy space--reversed self--duality (see the remarks before Proposition 2.6 of \cite{K}). The terminology here is chosen to emphasize this distinction.
\end{remark}

\begin{remark}\label{Constant}
Note that if $c(x,y)$ is a function on $\mathfrak{X} \times \mathfrak{Y}$ which is constant under the dynamics of $X(t)$ and $Y(t)$, then $c(x,y)D(x,y)$ is also a duality function. This will be used to simplify the expression for $D(x,y)$. For this paper, $c(x,y)$ will be a function which only depends on the number of particles of each species, which is a constant assuming particle number conservation. See \cite{O} for an example of duality functions on a lattice with open boundary conditions, in which this type of simplification is not applicable.
\end{remark}

\subsubsection{Lumpability}\label{LumpSection}
Let $T_0$ be a $\mathfrak{X}_0 \times \mathfrak{X}_1$ matrix and let $\mathcal{P}_0 = \{ p_0^{(i)}\}_i$ be a partition of $\mathfrak{X}_0$, and $\mathcal{P}_1 = \{p_1^{(i)} \}_i$ a partition of $\mathfrak{X}_1$.  Recall the convention that a matrix is stochastic if the columns (rather than the rows) sum to $1$. Say that $T_0$ is \textit{lumpable} (with respect to $\mathcal{P}_1$ and $\mathcal{P}_2$) if for all $p_1^{(i)}\in \mathcal{P}_1$ 
$$
\sum_{x_1 \in p_1^{(i)}} T_0(x_1,x_0) = \sum_{x_1 \in p_1^{(i)}} T_0(x_1,x'_0) 
$$
 whenever $x_0,x_0'$ are in the same block $p_0^{(j)} \in \mathcal{P}_0$. Define the $\mathcal{P}_0 \times \mathcal{P}_1$ matrix $\bar{T_0}$ by setting $\bar{T_0}(p_0^{(j)}, p_1^{(i)})$ to be the quantity above. 
 
The composition of lumpable matrices is again lumpable. To see this, If $T_1$ is a $\mathfrak{X}_1 \times \mathfrak{X}_2$ matrix which is lumpable with respect to $\mathcal{P}_1$ and $\mathcal{P}_2$, then for $x_0 \in p_0^{(k)}$,
\begin{align*}
\sum_{x_2 \in p_2^{(i)}} (T_0T_1)(x_2,x_0) &= \sum_{x_2 \in p_2^{(i)}} \sum_{x_1 \in \mathfrak{X}_1} T_0(x_2,x_1)T_1(x_1,x_0) \\
&= \sum_{p_1^{(j)} \in \mathcal{P}_1} \sum_{x_1 \in p_1^{(j)}} \sum_{x_2 \in p_2^{(i)}} T_0(x_2,x_1)T_1(x_1,x_0)\\
&=\sum_{p_1^{(j)} \in \mathcal{P}_1} \sum_{x_1 \in p_1^{(j)}} \bar{T}_0(p_2^{(i)},p_1^{(j)}) {T}_1(x_1,x_0) \\
&=\sum_{p_1^{(j)} \in \mathcal{P}_1} \bar{T}_0(p_2^{(i)},p_1^{(j)})  \bar{T}_1(p_1^{(j)},p_0^{(k)})\\
&=\bar{T}_0\bar{T}_1(p_2^{(i)},p_0^{(k)}).
\end{align*}
This does not depend on the choice of $x_0$ in $p_0^{(k)}$, so $T_0T_1$ is lumpable with $\overline{T_0T_1} = \bar{T_0}\bar{T_1}$.

This is a generalization of a \textit{lumped Markov process} introduced in \cite{KS}. The condition that a Markov process is lumpable is simply the condition that a projection of a Markov process is still Markov. There are more general conditions of interest: for example, \cite{PR} gives an intertwining condition in which the projection is random. In particular, if $T_0$ is a stochastic $\mathfrak{X}_0 \times \mathfrak{X}_0$ matrix, and $\Xi$ is a stochastic $\mathfrak{X}_0 \times \mathfrak{X}_1$ matrix, $\Lambda$ is a stochastic $\mathfrak{X}_1 \times \mathfrak{X}_0$ matrix, then define the $\mathfrak{X}_1 \times \mathfrak{X}_1$ matrix
$$
T_1 = \Lambda T_0 \Xi.
$$
If $\Lambda\Xi$ is the identity matrix on $\mathfrak{X}_1$, and $T_0,T_1$ satisfy the intertwining relation
$$
\Lambda T_0 = T_1 \Lambda,
$$
then $\Lambda$ maps the Markov chain defined by $T_0$ to a well--defined Markov chain defined by $T_1$. 

It is not hard to see that the Pitman--Rogers relation is a generalization of lumpability. if $\mathfrak{X}_1$ is a partition $\mathcal{P}_0$ of $\mathfrak{X}_0$, pick an arbitrary element $x(p)\in p$ for each $p\in \mathcal{P}_0$. Then define 
$\Xi$ and $\Lambda$ by 
$$
\Xi(x,p) = 1_{x = x(p)}, \quad \Lambda(p,x) = 1_{x\in p}.
$$
It is immediate that $\Lambda\Xi$ is the identity matrix on $\mathcal{P}_0$. If $T_0$ is lumpable with respect to $\mathcal{P}_0$, then
$$
T_1(p_1,p_2)=(\Lambda T_0 \Xi ) (p_1,p_2) = \sum_{x \in p_1} T_0(x,x(p_2))
$$
does not depend on the choice of $x(p)$, and is the transition matrix of the lumped Markov chain. Furthermore, the lumpability implies that for $x \in q$,  
$$
\Lambda T_0(p,x) = \sum_{y \in p} T_0(y,x) = \sum_{y\in p} T_0(y,x(q))= \Lambda T_0 \Xi\Lambda(p,x)=T_1\Lambda(p,x).
$$

\begin{remark}
One example of a lumpable Markov process is $n$--species ASEP. In this process, there are $n$ species of particles, and at most one particle may occupy a lattice site. One can think of each species as having a different mass. If a particle attempts to jump to a site occupied with a heavier particle, then the jump is blocked. If a particle attempts to jump to a site occupied with a lighter particle, then the two particles switch places. All left jumps occur with the same rate (independent of the species), and likewise all the right jumps occur with the same rate (independent of the species). It is not hard to see that the projection onto the first $k$ species results in $k$--species ASEP, since each particle treats lighter particles the same as holes. This model was first introduced in \cite{L}.

A more general model would allow the jump rates to depend on the species of the particles. In this case, the projection onto the first $k$ species is no longer a $k$--species ASEP. See \cite{Ka,K2} for examples of multi--species ASEP which have jump rates depending on the species. With open boundary conditions, several models (see. e.g. \cite{C2,CFRV,CGGW,CMW,M,MV,Uch}) have jump rates at the boundaries which depend on the species, with jump rates in the bulk that are independent of the species. See also \cite{CGW,PEM} for multi--species ASEP on a ring, with jump rates independent of the species.

\end{remark}

\subsubsection{Operator Notation}
We introduce some notation for operators. Given two linear spaces $V$ and $W$, a symbol of the form $M_{VW}$ will denote an linear map with domain $V\otimes W$. In particular, let $P_{VW}$ be the permutation operator $P_{VW}: V \otimes W \rightarrow W \otimes V$ defined by $P_{VW}(v\otimes w) = w\otimes v$. Given an operator $M$ from $V\otimes W$ to itself, let $\widetilde{M}$ denote the reversed operator on $W\otimes V$:
$$
\widetilde{M} = P_{VW} \circ M \circ P_{WV}.
$$
Given $M$ on $V\otimes W$, let $\check{M}: V\otimes W \rightarrow W \otimes V$ be the map $P_{VW} \circ M$. 

Suppose $\{V_m: m \geq 0\}$ is a family of vector spaces and for each $m\geq 0$, $M_m$ is an operator on $V_m$. By abuse of notation, the subscript $m$ in $M_m$ will often be dropped. Given $m_1,\ldots,m_L$, the tensor power $V_{m_1} \otimes \cdots \otimes V_{m_L}$ will be denoted $V^{(L)}$. For $1 \leq a \leq b \leq L$, let $V^{[a,b]}$ denote $V_{m_a} \otimes \cdots \otimes V_{m_b}$. Let $M^{\otimes L}$ denote the operator $M_{m_1} \otimes \cdots \otimes M_{m_L}$ on $V^{ (L)}$. Given $\sigma \in S_L$, let $P^{\sigma}$ be the operator from $V_{m_1} \otimes \cdots \otimes V_{m_L}$ to $V_{m_{\sigma(1)}} \otimes \cdots \otimes V_{m_{\sigma(L)}}$ defined by
$$
P^{\sigma}(v_1 \otimes \cdots \otimes v_L) =  v_{\sigma(1)} \otimes \cdots \otimes v_{\sigma(L)}
$$
and note that
$$
P^{\sigma} M^{\otimes L} = M^{\otimes L} P^{\sigma}.
$$

If $G$ is an operator on $V^{( L)}$ and $\sigma \in S_L$ is the reversal permutation $\sigma(j) = L+1-j$, let 
$$
\widetilde{G} = P^{\sigma} \circ G \circ P^{\sigma}.
$$
If $R$ acts on $V_l\otimes V_m$ for $l,m\geq 0$, then $R_{ij}$ is the action on the $i,j$ component of the tensor product of $V^{\otimes L}$ for $1\leq i,j\leq L$.

The --ket $\vert \alpha,\beta \rangle$ means $\vert \alpha \rangle \otimes \vert \beta \rangle$, and similarly for the bra-- $\langle \alpha,\beta \vert$. The Greek letters $\eta$ and $\xi$ will denote multiple tensor products, e.g. $\vert \eta \rangle = \vert \eta^1,\ldots,\eta^L \rangle = \vert \eta^1 \rangle \otimes \cdots \otimes \vert \eta^L \rangle$.

As usual, $M^*$ denotes the transpose of $M$. 

\subsection{Results from \cite{KMMO}}\label{KMMOBACK}
For this section, $\alpha,\gamma\in V_l$ and $\beta,\delta \in V_m$, where $ 0 \leq l \leq m$.

For every $z \in \mathbb{C}$ and $l,m \geq 0$, there is an $R$--matrix 
$$
R(z): V_l^{z_1} \otimes V_m^{z_2} \rightarrow V_l^{z_1} \otimes V_m^{z_2},
$$
where $z=z_1/z_2$ depends on the spectral parameters $z_1,z_2$ of $V_l^{z_1},V_m^{z_2}$. This $R$--matrix is characterized, up to a constant, by the \textit{intertwining property} (see (4) of \cite{KMMO})
\begin{equation}\label{Inter}
\check{R}(z) \Delta(u) = {\Delta}(u) \check{R}(z) ,
\end{equation}
viewed as maps $V_l^{z_1} \otimes V_m^{z_2} \rightarrow V_m^{z_2} \otimes V_l^{z_1}$.  
The constant is normalized by
\begin{equation}\label{Unit}
R(z) \vert \Omega \rangle = \vert \Omega \rangle.
\end{equation}
The $R$--matrix also satisfies the Yang--Baxter equation.

It also satisfies {(\color{black} see (12) of \cite{KMMO}, which is a corollary of (2.4) and (2.24) of  \cite{KOS}})
$$
\langle \gamma,\delta \vert R(z) \vert \alpha,\beta \rangle  =  \langle {\alpha',\beta'} \vert R(z) \vert {\gamma',\delta'} \rangle\prod_{i=1}^{n+1} \frac{(q^2)_{\alpha_i} (q^2)_{\beta_i} }{ (q^2)_{\gamma_i} (q^2)_{\delta_i}},
$$
where $\alpha'$ denotes the charge--reversed array of $\alpha$, i.e. $\alpha'=(\alpha_{n+1},\ldots,\alpha_1)$. We can write this as
\begin{equation}\label{Trans}
\Pi^{\otimes 2} B^{\otimes 2} R(z) =  R^*(z) \Pi^{\otimes 2} B^{\otimes 2},
\end{equation}
where $B$ is the diagonal change of basis matrix on $V_l, l \geq 0$ 
$$
\langle \gamma \vert B \vert {\alpha} \rangle = 1_{\{\alpha=\gamma\}} \prod_{i=1}^{n+1} (q^2)_{\gamma_i} 
$$
and $\Pi$ is the charge--reversal matrix on $V_l, l \geq 0$,
$$
\langle  {\gamma} \vert \Pi \vert {\alpha} \rangle = 1_{\{\alpha = \gamma ' \}}.
$$
Note that for $\sigma \in S_L$,
\begin{equation}\label{Diag}
B\Pi = \Pi B, \quad P^{\sigma}\Pi^{\otimes L}=\Pi^{\otimes L}P^{\sigma}, \quad B^{\otimes L}P^{\sigma}=P^{\sigma}B^{\otimes L}.
\end{equation}
Additionally, $R$ satisfies the particle conservation property, which is that
\begin{equation}\label{PCP}
\langle {\gamma,\delta} \vert R(z) \vert {\alpha,\beta} \rangle  = 0 \text{ if } \alpha+\beta \neq \gamma + \delta.
\end{equation}

The $S$--matrix $S(z)$ is related to the $R(z)$ by a gauge transform, with the explicit definition (see (15) of \cite{KMMO})
$$
S(z) = \widetilde{\Ga}^{-1} R(z) {\Ga}.
$$
Here, let $\Ga$ be the gauge transform defined on $V^{[1,L]}$ by the diagonal matrix 
\begin{align*}
\langle \eta \vert \Ga \vert {\xi}\rangle &= 1_{\{\xi_i^x = \eta_i^ x \ \forall i,x\}} q^{ - \sum_{1 \leq i < j \leq n+1} \sum_{1 \leq y <x \leq L} \xi_i^y \xi_j^x} \\
&= 1_{\{\eta = \xi\} } q^{-\sum_{1 \leq y < x \leq L} \sum_{i=1}^{n} \xi_{[1,i]}^y \xi_{i+1}^x}.
\end{align*}
The operator $\Pi$ intertwines between $\Ga$ and $\widetilde{\Ga}$ via
\begin{equation}\label{VVV}
 \Pi^{\otimes L}  \circ \Ga \circ \Pi^{\otimes L}  = \widetilde{\Ga}, 
\end{equation}
because the left--hand--side switches the indices $i$ and $j$, while the right--hand--side switches the indices $x$ and $y$. The original paper \cite{KMMO} only defined $\Ga$ for $L=2$, but it will be seen below that this is the natural extension to $L$ sites. The superscript $\Ga^{(L)}$ will sometimes be included to emphasize the number of lattice sites.

Just like the $R$--matrix $R(z)$, the $S$--matrix $S(z)$ satisfies the Yang--Baxter equation. 

\begin{remark} In a comment after Theorem 6 of \cite{KMMO}, it is noted that the gauge transformation comes from the $U_q(A_n)$--orbit of the unit normalization condition \eqref{Unit}.
This is a similar idea to the ``ground state transformation'' of \cite{K}, using the framework of \cite{CGRS}, in which the creation operators $e_i \in U_q(A_n)$ are applied to the ground state $\vert \Omega \rangle$. Because of this similarity, one might expect a simple relationship between these two transformations. Indeed, this will be stated and shown explicitly in Proposition \ref{GaGr}.
\end{remark}

\begin{remark}
Theorem 6 of \cite{KMMO} explicitly states that the sum of the output of $S(z)$ is equal to $1$, for any value of the spectral parameter $z$. This result is proved again in \cite{BM}, using a factorized expression for $S(z)$, and furthermore gives a range of values for which $S(z)$ has non--negative entries, and is therefore stochastic. Section \ref{Stoch} below will also give a range of values of $z$ for which $S(z)$ is stochastic, using different methods. 
\end{remark}

The transfer matrices (with periodic boundary conditions) are defined as follows:
$$
\mathrm{Tr}\vert_{V_l} \left( S_{0,L}(z/w_L) \cdots S_{0,1}(z/w_1)\right) \in \mathrm{End}\left( V_{m_1}^{} \otimes \cdots \otimes V_{m_L}^{} \right),
$$
where $S_{0,L}\cdots S_{0,1}$ is viewed as an operator on $V_l \otimes V_{m_1} \otimes \cdots \otimes V_{m_L}$, and the trace is taken over the auxiliary space $V_l$. As stated in \cite{KMMO}, the stochastic $S$--matrices satisfying the Yang--Baxter equation implies that the transfer matrices form a commuting family, but this will not be needed here. Because the transfer matrices are operators on $V^{[1,L]}$, they can be viewed as transition matrices for a particle system on the lattice $\{1,\ldots,L\}$.

So far, we have not used any explicit formulas for the stochastic matrix $S(z)$. In \cite{KMMO}, there were explicit formulas for $S(z)$ acting on $V_l \otimes V_m$ when $z=q^{l-m}$ and $l\leq m$. In this case, define
$$
\Phi_q(\gamma \vert \beta;\lambda,\mu) = q^{\chi_{\beta,\gamma}} \left( \frac{\mu}{\lambda} \right)^{\vert \gamma \vert} \frac{(\lambda;q)_{\vert \gamma \vert} (\tfrac{\mu}{\lambda};q )_{\vert \beta \vert - \vert \gamma \vert} }{ (\mu;q)_{\vert \beta \vert}}  \prod_{i=1}^{n} \binomq{\beta_i}{\gamma_i},
$$
where
$$
\chi_{\beta,\gamma} = \sum_{1\leq i<j \leq n} (\beta_i-\gamma_i)\gamma_j,
$$
and the stochastic operator, written as $\mathcal{S}$, acts as
$$
\langle {\gamma,\delta} \vert \mathcal{S} \vert {\alpha,\beta} \rangle  = 1_{\{\alpha+\beta=\delta+\gamma\}}\Phi_q({\gamma} \vert {\beta};q^{-l},q^{-m}).
$$
Also note that taking the derivative in $\lambda$, one obtains (see (43) of \cite{KMMO})
$$
\frac{\partial \Phi_q(\gamma \vert \beta;\lambda,\mu)}{\partial \lambda} \Big|_{\lambda=1} = \Phi'_q(\gamma \vert \beta;\mu) := q^{\chi_{\beta,\gamma}} \mu^{\vert\gamma\vert-1} \frac{ (q)_{\vert \gamma\vert -1} }{ (\mu q^{\vert \beta\vert-\vert\gamma\vert};q)_{\vert \gamma\vert}}   \prod_{i=1}^n \binomq{\beta_i}{\gamma_i}.
$$
The functions $\Phi_q$ and $\Phi'_q$ can be used to define a multi--species version of the $q$--Hahn Boson process of \cite{Po} in discrete and continuous--time (respectively), as described below in Section \ref{POVBACK}. Observe that $\mathcal{S}$ only depends on $\alpha$ through particle conservation, which will not be true of $S(z)$ for generic values of $z$. This means that parallel update will generally not be possible.

It is worth noting that several subsequent papers (\cite{KO,KO2}) prove results for the stochastic $\mathcal{U}_q(A_n^{(1)})$ vertex model in the $q$--Hahn Boson degeneration. It is possible that those results hold for more general values of the spectral parameter, but this is not pursued here.

The appendix of \cite{KMMO} also includes explicit formulas for $R(z)$ acting on $V_l \otimes V_m$ in the cases that $l=1$ and $m=1$. The vector space $V_1$ is spanned by the basis elements $\{\epsilon_k: 1\leq k \leq n+1\}$ where $\epsilon_k$ has a $1$ at the $k$th location and zeroes elsewhere. When $l=1,$
\begin{equation}\label{l=1}
R(z)_{\epsilon_j,\beta}^{\epsilon_k,\delta} 
= 1_{ \{\epsilon_j  + \beta = \epsilon_k + \delta \}} \times 
\begin{cases}
q^{\beta_k+1}\dfrac{1-q^{-2\delta_k+m-1}z}{q^{m+1}-z}, & \text{ if } k=j \\
-q^{\beta_{j+1} + \cdots + \beta_{k-1}} \dfrac{1-q^{2\beta_k}}{q^{m+1}-z}, & \text{ if } k>j, \\
-q^{m-(\beta_k+\ldots+\beta_j)} \dfrac{z(1-q^{2\beta_k})}{q^{m+1}-z}, & \text{ if } k <j.
\end{cases}
\end{equation}
When $m=1$,
\begin{equation}\label{m=1}
R(z)_{\alpha,\epsilon_j}^{\gamma,\epsilon_k} 
= 1_{ \{ \alpha + \epsilon_j   = \gamma + \epsilon_k  \}} \times 
\begin{cases}
q^{\alpha_k+1}\dfrac{1-q^{-2\alpha_k+l-1}z}{q^{l+1}-z}, & \text{ if } k=j \\
-q^{l-(\alpha_j+\ldots+\alpha_k)} \dfrac{z(1-q^{2\alpha_k})}{q^{l+1}-z}, & \text{ if } k >j,\\
-q^{\alpha_{k+1} + \cdots + \alpha_{j-1}} \dfrac{1-q^{2\alpha_k}}{q^{l+1}-z}, & \text{ if } k<j.
\end{cases}
\end{equation}

\subsection{The $q$--Hahn Boson process}\label{POVBACK}
For $n=1$, the function $\Phi_q$ from the previous section was introduced by \cite{Po} in the form $\varphi(m\vert m') = \Phi_q(m \vert m', \nu/\mu,\nu)$, and was used to define the (single--species) $q$--Hahn Boson process in discrete and continuous time. 

The state space consists of particle configurations $\eta=(\eta^x)$ on a lattice. There is no restriction on $\eta^x$, the number of particles at lattice site $x$. The discrete--time update can be described in the following way. Given a particle configuration $(\eta^x)$ with $\eta^x$ particles at lattice site $x$, the probability measure after the (discrete--time) update is described by 
$$
\mathbb{P}(\text{exactly } k \text{ particles at site } x) = \sum_{ \substack{ \gamma^{x-1} \geq 0 \\ \gamma^x \geq 0}} 1_{\{\eta^x - \gamma^x + \gamma^{x-1} = k\}} \Phi_q( \gamma^x \vert \eta^x) \Phi_q( \gamma^{x-1} \vert \eta^{x-1}).
$$
The physical description is that with probability $\Phi_q( \gamma^{x-1} \vert \eta^{x-1})$, $\gamma^{x-1}$ particles leave lattice site $x-1$ and jump to lattice site $x$. Simultaneously (i.e. in parallel), $\gamma^x$ particles leave lattice site $x$ and jump to lattice site $x+1$ with probability $\Phi_q( \gamma^x \vert \eta^x)$. In this case, we will say that the process evolves with total asymmetry to the right. If the probability measure after the update is given by 
$$
\mathbb{P}(\text{exactly } k \text{ particles at site } x) =  \sum_{\substack{ \gamma^{x} \geq 0 \\ \gamma^{x+1} \geq 0}} 1_{\{\eta^x - \gamma^x + \gamma^{x+1} = k \}} \Phi_q( \gamma^x \vert \eta^x) \Phi_q( \gamma^{x+1} \vert \eta^{x+1}),
$$
then we say that the process evolves with total asymmetry to the left.

The continuous--time update can be described as follows. For evolution with total asymmetry to the left, the generator ${\mathcal{L}}$ can be written as a sum of local generators $\sum_x {\mathcal{L}}_x$, where the off-diagonal entries of ${\mathcal{L}}_x$  are
$$
\langle \xi \vert {\mathcal{L}}_x \vert \eta \rangle = 
\begin{cases}
0, & \text{ if } \eta^y \neq \xi^y \text{ for some } y \neq x,x+1  \\
0, & \text{ if } \eta^x + \eta^{x+1} \neq \xi^x + \xi^{x+1} , \\
\Phi'_q( \eta^x - \xi^x \vert \eta^x), & \text{ else},
\end{cases}
$$ 
and the diagonal entries are given by the condition that $\sum_{\xi} \langle \xi \vert {\mathcal{L}}_x \vert \eta \rangle=0$. The first line indicates that $\mathcal{L}_x$ only causes particles to jump from $x$ to $x+1$, and the second line expresses particle conservation. 

For evolution with total asymmetry to the right, the generator $\tilde{\mathcal{L}}$ can be written as a sum of local generators $\sum_x \tilde{\mathcal{L}}_x$, where the off-diagonal entries of $\tilde{\mathcal{L}}_x$  are
$$
\langle \xi \vert \tilde{\mathcal{L}}_x \vert \eta \rangle = 
\begin{cases}
0, & \text{ if } \eta^y \neq \xi^y \text{ for some } y \neq x,x-1  \\
0, & \text{ if } \eta^x + \eta^{x-1} \neq \xi^x + \xi^{x-1} , \\
\Phi'_q( \eta^x - \xi^x \vert \eta^x), & \text{ else}.
\end{cases}
$$

At $n=1$, the single--species continuous--time $q$--Hahn Boson process can also be constructed through a deformation of an affine Hecke algebra \cite{T0}. Additionally, for general $n$ and $\mu= 0$, the process had been previously constructed in \cite{T} using a higher rank affine Hecke algebra, and there it is called a multi--species $q$--Boson process. The single--species $q$--Boson process goes back to \cite{SW}.

{\color{black} See Figure \ref{Deg} for a diagram showing the various processes.}

\begin{remark}
Even though the entries of $S(q^{l-m})$ acting on $V_l \otimes V_m$ can be written in terms of the function $\Phi_q$, it is not technically accurate to describe the resulting particle system as the $q$--Hahn Boson process. This is because the state space of the $q$--Hahn Boson process does not place a constraint on the number of particles at each site, whereas the vector space $V_m$ only constrains for $m$ particles at a site. This distinction is important here because Proposition \ref{DualityResult} is false if only finitely many particles are allowed at each site; see Remark \ref{Closed} for an explanation. However, after analytic continuation in the variables $\lambda,\mu$, there is no longer such a particle constraint, and the statement is true.
\end{remark}

\subsection{Results from \cite{K}}\label{KUANBACK}
Let $\exp_{r}$ be the deformed exponential
$$
\exp_r(x) = \sum_{k=0}^{\infty} \frac{x^k}{\{k\}^!_r}, \text{ where } \{k\}_r = \frac{1-r^k}{1-r}, \quad \{k\}_r^! = \{1\}_r \cdots \{k\}_r.
$$
Note that as $r\rightarrow 1$, $\exp_r$ becomes the usual exponential. Let $u_0$ be the element\footnote{Note that because $u_0$ is an infinite sum in the generators, it actually belongs to a completion of $\mathcal{U}_q(A_n)$.} of $\mathcal{U}_q\left(A_n\right)$  
$$
u_0 :=   (\exp_{q^{2}} f_1) \cdots ( \exp_{q^{2}} f_n).
$$ 
The deformed exponential $\exp_r$ satisfies a pseudo--factorization property (see \cite{CGRS}), which implies
\begin{equation}\label{PseudoFac}
\exp_{q^2}(\Delta^{(L)}f_i) = \exp_{q^2}(f_i \otimes 1^{\otimes L-1})  \exp_{q^2}(k_i^{-1} \otimes f_i \otimes 1^{\otimes L-2})  \cdots \exp_{q^2}((k_i^{-1})^{\otimes L-1} \otimes f_i)
\end{equation}
This will result in a simpler expression for the duality function, as will be seen below.

Let $\Gr$ be the ground state transformation, which is the diagonal matrix with entries
$$
\langle {\xi} \vert  \Gr \vert{\xi} \rangle = \langle \xi \vert \Delta^{(L)}(u_0) \vert \Omega^{ {\color{black} \otimes L}} \rangle,
$$
where $\vert \Omega^{\otimes L}\rangle$ denotes the vacuum vector $\vert \Omega \rangle$ on $L$ sites. This transformation was previously used in \cite{K}, using the framework of \cite{CGRS}. By Proposition 4.2 of \cite{K},
\begin{equation}\label{Gr}
\langle {\eta} \vert \Gr \vert {\eta} \rangle  = \mathrm{const} \cdot \prod_{i=1}^{n+1} \prod_{x=1}^L \frac{1}{ [\eta_i^x]_q^!}
 \times \prod_{i=1}^{n}  \prod_{1 \leq y <x \leq L} q^{  -\eta_{i+1}^y \eta_{[1,i]}^x} ,
\end{equation}
where for $\eta=(\eta_1,\ldots,\eta_{n+1}) \in \mathbb{Z}^{n+1}$ and $1 \leq i \leq j \leq n+1$, set
$$
\eta_{[a,b]} = \eta_a + \ldots + \eta_b.
$$
Here, and below, we say that a function $c(\eta,\xi)$ is constant under particle conservation if it only depends on the values of $ \eta_i^1 + \ldots + \eta_i^{L+1}$ and $\xi_i^1 + \ldots + \xi_i^{L+1}$ for $1 \leq i \leq n+1$. The notation $\mathrm{const}$ will denote a constant under particle conservation. By Remark \ref{Constant}, if $D(\eta,\xi)$ is a duality function then so is $c(\eta,\xi)D(\eta,\xi)$, as long as particle conservation is satisfied. 

For $u\in \mathcal{U}_q(A_n^{(1)})$, define the operator on $V_{m_1} \otimes \cdots \otimes V_{m_L}$: 
$$
D(u) = \Pi^{\otimes L} \circ \Gr^{-1} \circ \Delta^{(L)}(u) \circ \Gr^{-1} \circ \left(B^{-1}\right)^{\otimes L}.
$$
Note that the $B$ in this paper is denoted $B^{-2}$ in \cite{K}. Then by Proposition 5.1 of \cite{K}, the operator $D(u_0)$ has an explicit formula, which is that $\langle \xi \vert D(u_0) \vert \eta \rangle$ is equal to 
\begin{equation}\label{ExpExp0}
\mathrm{const}\cdot \prod_{x=1}^L \Q{m_x-\eta_{[1,n]}^x}^!  \prod_{i=1}^{n}   1_{\{m_x -\eta_{[1,n+1-i]}^x \geq \xi_{[1,i]}^x\}}   \frac{  \Q{  m_x - \eta_{[1,n-i]}^x - \xi_{[1,i]}^x   }^!  }{\Q{m_x - \eta_{[1,n+1-i]}^x   - \xi_{[1,i]}^x }^! }   q^{ -\xi_i^x (\sum_{y>x} 2\eta_{[1,n+1-i]}^y + \eta_{[1,n+1-i]}^x)} .
\end{equation}
It is not hard to see that an equivalent expression is
\begin{equation}\label{ExpExp}
\langle \xi \vert D(u_0) \vert \eta \rangle=
\mathrm{const}\cdot \prod_{x=1}^L \Q{\eta_1^x}^! \cdots\Q{\eta_{n+1}^x}^!  \prod_{i=1}^{n}     \binomQ{  m_x - \eta_{[1,n-i]}^x - \xi_{[1,i]}^x   }{ \eta_{n+1-i}^x }   q^{ -\xi_i^x (\sum_{y>x} 2\eta_{[1,n+1-i]}^y + \eta_{[1,n+1-i]}^x)}.
\end{equation}
To see this, note that the indicator term $1_{\{m_x -\eta_{[1,n+1-i]}^x \geq \xi_{[1,i]}^x\}}$ can be removed, because if its condition does not hold, then the $q$--binomial term is zero anyway. The other necessary identity is
$$
\Q{\eta_1^x}^!\cdots \Q{\eta_{n+1}^x}^!\prod_{i=1}^{n}\binomQ{  m_x - \eta_{[1,n-i]}^x - \xi_{[1,i]}^x   }{ \eta_{n+1-i}^x } = \Q{\eta_1^x}^!\cdots \Q{\eta_{n+1}^x}^!\prod_{i=1}^{n} \frac{\Q{m_x - \eta_{[1,n-i]}^x  }^!}{\Q{m_x - \eta_{[1,n+1-i]}^x  }^! \Q{\eta_{n+1-i}^x}^!}.
$$
Note that the expression for $D(u_0)$ is still well--defined even when $m_x \notin \mathbb{N}$.
Letting $\mu_x=q^{-m_x}$, the operator will sometimes be denoted $D_{\mu}(u_0)$ to emphasize the dependence on $\mu$. Additionally (see the proof of Proposition 5.2(b) of \cite{K}),
\begin{equation}\label{mLimit}
\lim_{m_x \rightarrow \infty} \Q{\eta_1^x}^! \cdots\Q{\eta_{n+1}^x}^!  \prod_{i=1}^{n}     \binomQ{  m_x - \eta_{[1,n-i]}^x - \xi_{[1,i]}^x   }{ \eta_{n+1-i}^x }   = \prod_{i=1}^n q^{\xi_i^x\eta^x_{[1,n+!-i]}}
\end{equation}
so that if one takes all $m_x\rightarrow\infty$ and assumes $q>1$, then the limit is
\begin{equation}\label{LimitDuality}
\langle \xi \vert D_0 \vert \eta \rangle =\prod_{x=1}^L \prod_{i=1}^{n}      q^{ \xi_i^x \left(\sum_{y \leq x} 2\eta_{[1,n+1-i]}^y \right) } .
\end{equation}
Since $u_0 \in \mathcal{U}_q(A_n)$ and does not involve $f_0$, the operator $D(u_0)$ does not involve the spectral parameter.  

The paper \cite{K} constructs a multi--species version of a process called $\textrm{ASEP}(q,j)$. The single--species case was introduced in \cite{CGRS}, and is itself a generalization of the usual $\textrm{ASEP}$, in which up to $2j$ particles can occupy a lattice site. In the homogeneous case when all $m_x=2j$, \cite{K} shows that this multi--species $\textrm{ASEP}(q,j)$ has a duality property with respect to the function $D_{\mu}(u_0)$. 

Furthermore, when $j\rightarrow\infty$, the multi--species $\textrm{ASEP}(q,j)$ converges to the multi--species $q$--TAZRP of \cite{T}. Taking $m_x\rightarrow\infty$, this shows that the multi--species $q$--TAZRP satisfies the duality with respect to the duality function $D_0$ of \eqref{LimitDuality}. This was explicitly stated in Theorem 2.5(b) of \cite{K}, and will be proved again below as Corollary \ref{ReProve}.

\subsection{Fusion}\label{FusionSection}
The $R$--matrix $R(z_1/z_2)$ acting on $V_l^{z_1} \otimes V_m^{z_2}$ can also be defined through a process called fusion, developed in \cite{KRL}. See also the exposition in section 3.5 of \cite{Resh10}\footnote{\color{black} Note that the notation here is different than in \cite{Resh10}, due to slightly different conventions in the definition of the quantized affine Lie algebras, which result in substitutions $q\rightarrow q^{-2},z\rightarrow z^{1/2}$. Section \ref{ReshConfirm} will give examples demonstrating that this is the correct expression.}. The representation $V_m$ is the $m$th symmetric tensor representation, meaning that it is the symmetric projection of $V_1^{\otimes m}$, the $m$th tensor power of the canonical representation $V_1 = \mathbb{C}^{n+1}$. There is an isomorphism (of representations) from $V_m$ to the image of the symmetric projection. This isomorphism is unique up to a constant, because it must map a lowest weight vector of $V_m$ to a lowest weight vector of $V_1^{\otimes m}$. Let $\mathcal{I}_m$ denote the isomorphism satisfying
\begin{equation}\label{CCCC}
\mathcal{I}_m \vert 0,\ldots,0,m\rangle = \vert 0,\ldots,0,1\rangle^{\otimes m}.
\end{equation}

For generic values of $q$, the image of the projection can be written using the expression for the ground state transformation from \eqref{Gr}. This can be seen for the following reasons: The representation $V_m$ is the irreducible sub--representation of $V_1^{\otimes m}$ generated by the vector $\vert \mathrm{A}^{\otimes l} \rangle$. Therefore, the element $ \Delta^{(l)}(u_0) \vert \mathrm{A}^{\otimes m}\rangle$ is also in $V_m$. Because $V_m$ has a weight space decomposition $V_m = \oplus_{\alpha} V_m[\alpha]$, the element $ \Delta^{(m)}(u_0) \vert \mathrm{A}^{\otimes m}\rangle$ decomposes as a sum over $\alpha$, with each term in the summand also in $V_m$. Each of these terms can be computed from the ground state transformation, which is given by the coefficients of the action of $ \Delta^{(l)}(u_0)$ on $ \vert \mathrm{A}^{\otimes m}\rangle$.  More explicitly, given any $\alpha \in \mathcal{B}_m$,
\begin{equation}\label{ProjGround}
\sum_{\vec{\alpha} \in \mathcal{B}_1^{\times m}} \langle \vec{\alpha} \vert \Gr \vert \vec{\alpha} \rangle \cdot 1_{\{ \vec{\alpha}_1 + \ldots + \vec{\alpha}_m  = \alpha \}} \cdot \vert \vec{\alpha} \rangle \in V_m[\alpha] \subseteq V_m \subseteq V_1^{\otimes m}
\end{equation}

There are two expressions for the fusion that will be used here. The $R$--matrix acting on $V_l \otimes V_m$ can be written as an operator on $V_1^{\otimes l } \otimes V_1^{\otimes m}$. Then, the $R$--matrix can be determined from $R(z)$ acting on $V_1 \otimes V_1$ by

\begin{align}\label{Fusion}
R(z) \Big|_{V_l \otimes V_m} = (\mathcal{I}_l \otimes \mathcal{I}_m)^{-1} &R_{1,l+m}\left( zq^{ {m+l-2}} \right) \cdots R_{1,l+1}\left( z q^{{l-m}}\right)\notag \\ 
&R_{2,l+m}\left( zq^{ {m+l-4}} \right) \cdots R_{2,l+1}\left( z q^{{l-m-2}}\right) \notag \\
& \cdots \cdots \cdots \notag \\ 
&R_{l,l+m}\left( zq^{ {m-l}} \right) \cdots R_{l,l+1}\left( z q^{{-m-l+2}}\right) (\mathcal{I}_l \otimes \mathcal{I}_m)
\end{align}
Note that the power of $q$ decreases by $2$ in both the horizontal and vertical directions. If the $R$--matrix acting on $V_1 \otimes V_m$ has already been defined, then $R(z)$ acting on $V_l \otimes V_m$ can be written as an operator on $V_1^{\otimes l} \otimes V_m$. In this case, {\color{black}
\begin{equation}\label{Fusion2}
R(z) \Big|_{V_l \otimes V_m} = (\mathcal{I}_l \otimes \mathrm{id})^{-1} R_{1,l+1}(zq^{l-1}) \cdots R_{l,l+1}(zq^{1-l}) (\mathcal{I}_l \otimes \mathrm{id}).
\end{equation}}
The two above equations are meaningful because the fusion of $R$--matrices preserves the image of $\mathcal{I}$, in the sense that
\begin{align}\label{INT}
R_{1,l+m}(zq^{m+l-2}) \cdots R_{l,l+1}(zq^{-m-l+2}) \left( \text{Im}(\mathcal{I}_l \otimes \mathcal{I}_m)\right) &\subseteq \text{Im}(\mathcal{I}_l \otimes \mathcal{I}_m) \notag \\
R_{1,l+1}(zq^{l-1}) \cdots R_{l,l+1}(zq^{1-l}) \left( \text{Im}(\mathcal{I}_l \otimes \mathrm{id} )\right) &\subseteq \text{Im}(\mathcal{I}_l \otimes \mathrm{id})
\end{align}
This is non--trivial, and uses the fact that $R(z)$ satisfies the Yang--Baxter equation. A stronger statement holds as well: if $P^+$ denotes the projection from $V_1^{\otimes l}$ onto the sub--representation $V_l$, then 
\begin{align}\label{Stronger}
(P^+ \otimes \mathrm{id})R_{1,l+1}(zq^{1-l})\cdots R_{l,l+1}(zq^{l-1}) (P^+ \otimes \mathrm{id})&= (P^+ \otimes \mathrm{id})R_{1,l+1}(zq^{1-l})\cdots R_{l,l+1}(zq^{l-1})  \\
=(P^+ \otimes \mathrm{id})R_{1,l+1}(zq^{l-1})\cdots R_{l,l+1}(zq^{1-l}) (P^+ \otimes \mathrm{id})&=  R_{1,l+1}(zq^{l-1})\cdots R_{l,l+1}(zq^{1-l}) (P^+ \otimes \mathrm{id})\notag
\end{align}
as operators from $V_1^{\otimes l}$ to $V_l$. See equations (16)--(18) of \cite{KRL}.

\subsection{Relationship to previous results}

\subsubsection{The \cite{CGRS} framework}
In \cite{CGRS}, the authors lay out a framework for constructing interacting particle systems with duality functions from a quantum group $\mathcal{U}_q(\mathfrak{g})$ and a central element $C \in \mathcal{U}_q(\mathfrak{g})$. There is some overlap between the argument here: for example, the construction of the duality function is identical. 

Despite these similarities, there are still two differences worth noting. In \cite{CGRS}, the relevant information about the central element $C$ is that its co--product commutes with $\Delta(u)$ for any $u\in \mathcal{U}_q(\mathfrak{g})$:
$$
\Delta(C) \Delta(u) = \Delta(u) \Delta(C).
$$
By comparing with \eqref{Inter}, one can think of $\check{R}(z)$ as taking the role of $\Delta(C)$. However, in \eqref{Inter}, the maps permute the order of $V_l \otimes V_m$, which was not the case before in \cite{CGRS}.

Another difference occurs through \eqref{Trans}. In \cite{CGRS}, it is assumed that $\Delta(C)$ is self--adjoint. In \cite{K}, this assumption is weakened so that $B^{-1} \Delta(C) B$ is self--adjoint for some diagonal matrix $B$. In the situation here, $R(z)$ needs to be conjugated by a non--diagonal matrix, the charge reversal matrix, to obtain a self--adjoint operator. Note that a formula similar to \eqref{Trans} appears as (34) in \cite{PoV}. Indeed, \eqref{Trans} can be interpreted as charge--time symmetry, and is used as such in \cite{BCPS}.

\subsubsection{{\color{black} Single--species stochastic vertex model from \cite{BP,CP}}}\label{Osvm}

The stochastic matrices $\mathring{S}$ from \cite{CP} have the expression (after substituting $q$ with $q^2$)
\begin{align}\label{n=1}
\mathring{S}_{\alpha}(g,0;g,0) &= \frac{1 + \alpha q^{2g}}{1+\alpha}, \quad \quad & \mathring{S}_{\alpha}(g,0;g-1,1) &= \frac{\alpha(1-q^{2g})}{1+\alpha} \\
\mathring{S}_{\alpha}(g,1;g+1,0) &= \frac{1 - \mu^2 q^{2g}}{1+\alpha}, \quad \quad & \mathring{S}_{\alpha}(g,1;g,1) &= \frac{\alpha + \mu^2 q^{2g}}{1+\alpha}\notag
\end{align}
Here, $g$ is the number of particles at a lattice site, with either $0$ or $1$ particles entering in the auxiliary space. These are also the expressions from \cite{BP} with $\alpha=-su$ and $\mu=s$. For $\mu = q^{-m}$, $\mathring{S}_{\alpha}$ can be viewed as a stochastic operator from $V_1 \otimes V_m$ to itself. In general, it can be viewed as a stochastic operator from $V_1 \otimes \bar{V}_{\infty}$ to itself.

The fusion procedure from \cite{CP} is written in the following way. Define the matrix $\Xi$, with rows indexed by $ \{0,1\}^l $ and columns indexed by  $\{0, \ldots, l\} $, which has entries
$$
\Xi((h_1,\ldots,h_l),h) = 
\begin{cases}
1, \quad h = h_1+ \ldots + h_l, \\
0, \quad h \neq h_1+ \ldots + h_l.
\end{cases}
$$
It is immediate that $\Xi$ is a stochastic matrix. 
Define the matrix $\Lambda$, with rows indexed by $\{0, \ldots, l\}$ and columns indexed by $ \{0,1\}^l $, which has entries\footnote{In \cite{CP}, the auxiliary space is written on the right, in the sense that the operators act on $V_m \otimes V_l$ instead of $V_l \otimes V_m$. Reversing the order of the tensor products results in $\prod q^{-2y}$ in the definition of $\Lambda$, instead of $\prod q^{2y}$. It also results in the reversal of the operators $\mathring{S}$ in \eqref{FusionCP}.}
$$ 
\Lambda(h,(h_1,\ldots,h_l)) =
\begin{cases}
Z_h^{-1}  \displaystyle\prod_{y: h_y=1} q^{-2y}, \quad & h= h_1 + \ldots + h_l,\\
0, \quad  & h \neq h_1+ \ldots + h_l. 
\end{cases}
$$
The normalizing constant $Z_h$ is chosen so that $\Lambda$ is stochastic. Now identify $\{0,1\}^l$ with $\mathcal{B}_1^{(2)} \times \cdots \times \mathcal{B}_1^{(2)}$ (with $l$ products) by sending $1$ to $(1,0)$ and $0$ to $(0,1)$. Also identify  $\{0,\ldots, l\}$ with $\mathcal{B}_l^{(2)}$ by sending $ h$ to $(h,l-h).$ Since $\mathcal{B}_1^{(2)} \times \cdots \times \mathcal{B}_1^{(2)}$ is a basis of $V_1^{\otimes l}$ and $\mathcal{B}_l^{(2)}$ is a basis of $V_l$, with these identifications $\Xi$ is a stochastic operator from $V_l$ to $V_1^{\otimes l}$ and likewise $\Lambda$ is a stochastic operator from $V_1^{\otimes l}$ to $V_l$. 

When viewed as operators, the composition
\begin{equation}\label{FusionCP}
(\Lambda \otimes \mathrm{id}) \circ [\mathring{S}_{q^{2(l-1)}\alpha}]_{1,l+1} \cdots [\mathring{S}_{\alpha}]_{l,l+1} \circ (\Xi \otimes \mathrm{id})
\end{equation}
is a stochastic operator from $V_l \otimes \bar{V}_{\infty}$ to itself, and it was shown that it satisfies the Pitman--Rogers intertwining condition \cite{PR} for a map of a Markov chain to be Markov. We we see below in Theorem \ref{StochasticFusion} that this fusion process matches the one from section \ref{FusionSection}, up to the application of the gauge transformation. This will result in the statement that the $\mathcal{U}_q(A_n^{(1)})$ stochastic vertex model is a multi--species generalization of this model; see Proposition \ref{Match}.

\begin{remark}
It is not immediately obvious that the $n=1$ case reduces to the stochastic vertex model of \cite{BP,CP}. For example, it is remarked (see Remark 6.9 of \cite{BP2}) that the transformation from the non--stochastic matrix $R(z)$ to the stochastic matrix $S(z)$ uses the eigenfunctions of the transfer matrices, whereas the gauge/ground state transformation here does not require the definition of the transfer matrices. 
\end{remark}

\section{Further results about $S(z)$}\label{Further}
Before continuing on to the results concerning dualities and the transfer matrices, a few more results about $S(z)$ will be collected in the section.

\subsection{Additional Degenerations}\label{BBB}

\subsubsection{At $l=1$}\label{atl=1}
When $l=1$, the explicit expressions for $R(z)$ are given in \eqref{l=1}. After applying the gauge transformation, the resulting matrix $S(z)$ is
\begin{equation}\label{Stol=1}
(q^{m+1}-z)S(z)_{\epsilon_j,\beta}^{\epsilon_k,\delta}  = 1_{ \{\epsilon_j  + \beta = \epsilon_k + \delta \}} \times 
\begin{cases}
q^{2\beta_{[1,k]}  -  m + 1} \left( 1 - q^{-2\beta_k+m-1}z\right), & \text{ if } k = j , \\
-q^{2\beta_{[1,k-1]} - m + 1} \left( 1 - q^{2\beta_k} \right), & \text{ if } k > j , \\
-q^{2 \beta_{[1,k-1]}  } z \left( 1 - q^{2\beta_k} \right), & \text{ if } k < j .
\end{cases}
\end{equation}
Notice that in order for parallel update to be possible, the output $(\epsilon_k,\beta)$ of $S(z)$ cannot depend on the input. In other words, the expressions cannot depend on $j$, and one can quickly see that this only happens at $z=q^{1-m}$. This corresponds to the case considered in \cite{KMMO}, when the expressions are given by $\Phi_q$.

\begin{remark}\label{SimpleCase}
There are a few interesting degenerations to consider:
\begin{itemize}
\item
Taking the limit $z\rightarrow\infty$ yields 
$$
\lim_{z\rightarrow \infty} S(z)_{\epsilon_j,\beta}^{\epsilon_k,\delta} = 
\begin{cases}
q^{2\beta_{[1,k-1]}  }, & \text{ if } k = j , \\
0, & \text{ if } k > j , \\
q^{2 \beta_{[1,k-1]}  }  \left( 1 - q^{2\beta_k} \right), & \text{ if } k < j .
\end{cases}
$$
Observe that in the case when $k>j$, the corresponding entry is $0$. Furthermore, none of the entries depend on the parameter $m$. Note that the third line also describes the jump rates of the continuous--time multi--species $q$--Boson process introduced in \cite{T}.

\item Another point of interest is at $z=0$, in which case
$$
S(0)_{\epsilon_j,\beta}^{\epsilon_k,\delta} = 
\begin{cases}
q^{2\beta_{[1,k]}  -2m } , & \text{ if } k = j , \\
-q^{2\beta_{[1,k-1]}  - 2m } \left( 1 - q^{2\beta_k} \right), & \text{ if } k > j , \\
0, & \text{ if } k < j .
\end{cases}
$$
Furthermore, assuming $q>1$, then 
$$
\lim_{m\rightarrow \infty} S(0)_{\epsilon_j,\beta}^{\epsilon_k,\delta} = 
\begin{cases}
1 , & \text{ if } k =n+1  , \\
0, & \text{ if } k < n+1.
\end{cases}
$$
Theorem \ref{Stuff} will show that these two items are true for general values of $l$. 

\item Also, under the inversions $z\rightarrow z^{-1},q\rightarrow q^{-1}$ and charge reversal,
$$
(  q^{m+1} -z)S(z^{-1})_{\epsilon_j',\beta'}^{\epsilon_k',\delta'} \Big|_{q\rightarrow q^{-1}}  = 
\begin{cases}
q^{2\beta_{[1,n+2-k]}   -m+1} \left(  1 - q^{-2\beta_{n+2-k}+m-1}z\right), & \text{ if } k = j , \\
-q^{2\beta_{[1,n+1-k]} }z \left(  1 - q^{2\beta_{n+2-k}} \right), & \text{ if } k < j , \\
-q^{2 \beta_{[1,n+1-k]} -m+1 } \left(  1 - q^{2\beta_{n+2-k}}  \right), & \text{ if } k > j ,
\end{cases}
$$
which are similar expressions to the entries of $S(z)$. See Theorem \ref{Invert} for a precise statement which holds for general values of $l$. 
\item 
If the $m\rightarrow\infty$ limit is taken first, then (assuming $q>1$ and $z$ is finite)
$$
S(z)_{\epsilon_j,\beta}^{\epsilon_k,\delta} = 
\begin{cases}
1 , & \text{ if } k =n+1  , \\
0, & \text{ if } k < n+1.
\end{cases}
$$
If $q<1$, then in the $m\rightarrow\infty$ limit, the result is no longer stochastic.
\end{itemize}
\end{remark}

\subsubsection{At $m=1$}\label{SSm=1}
Now fix $m=1$ and let $l\geq 0$. By \eqref{m=1} and the gauge transformation,
\begin{equation*}
S(z)_{\alpha,\epsilon_j}^{\gamma,\epsilon_k} 
= 
\begin{cases}
q^{l-2\alpha_{[1,k-1]} +1}\dfrac{1-q^{-2\alpha_k+l-1}z}{q^{l+1}-z}, & \text{ if } k=j \\
-q^{2l-2\alpha_{[1,k]}   }   \dfrac{z(1-q^{2\alpha_k})}{q^{l+1}-z}, & \text{ if } k >j,\\
-q^{l-2\alpha_{[1,k]} +1  } \dfrac{1-q^{2\alpha_k}}{q^{l+1}-z}, & \text{ if } k<j.
\end{cases}
\end{equation*}
In order for parallel update to be possible, these expressions cannot depend on $j$, which only happens when $z=q^{-l+1}.$ This can be rewritten as
\begin{equation*}
S(z)_{\alpha,\epsilon_j}^{\gamma,\epsilon_k} 
=  \frac{q^{l-2\alpha_{[1,k-1]} +1}}{q^{l+1}-z} \times 
\begin{cases}
{1-q^{-2\alpha_k+l-1}z}, & \text{ if } k=j \\
q^{l-1  }   {z(1-q^{-2\alpha_k})}, & \text{ if } k >j,\\
 ({1-q^{-2\alpha_k}}), & \text{ if } k<j.
\end{cases}
\end{equation*}
As in the previous section, there are a few interesting degenerations. If $0<q<1$, then in the limit $z\rightarrow \infty$, the resulting stochastic matrix has entries
\begin{equation*}
\lim_{z \rightarrow \infty}S(z)_{\alpha,\epsilon_j}^{\gamma,\epsilon_k} 
= 
\begin{cases}
q^{2l-2\alpha_{[1,k]} }, & \text{ if } k=j \\
q^{2l-2\alpha_{[1,k]}   }   (1-q^{2\alpha_k}), & \text{ if } k >j,\\
0, & \text{ if } k<j.
\end{cases}
\end{equation*}
If $l$ is then taken to infinity, the limit is
\begin{equation*}
\lim_{l\rightarrow \infty} \lim_{z \rightarrow \infty}S(z)_{\alpha,\epsilon_j}^{\gamma,\epsilon_k} 
= 
\begin{cases}
1 & \text{ if } k=n+1 \\
0, & \text{ if } k<n+1.
\end{cases}
\end{equation*}

If $z=0$, then 
\begin{equation*}
S(0)_{\alpha,\epsilon_j}^{\gamma,\epsilon_k} 
=   
\begin{cases}
{q^{-2\alpha_{[1,k-1]} } }, & \text{ if } k=j \\
0, & \text{ if } k >j,\\
 q^{-2\alpha_{[1,k-1]} } ({1-q^{-2\alpha_k}}), & \text{ if } k<j.
\end{cases}
\end{equation*}
which does not depend on $l$. In the limit $l\rightarrow \infty$ and assuming $0<q<1,0<z$,
\begin{equation*}
S(z)_{\alpha,\epsilon_j}^{\gamma,\epsilon_k} 
= 
\begin{cases}
1, & \text{ if } k=j \\
0, & \text{ if } k<n+1.
\end{cases}
\end{equation*}

\subsubsection{At $z=0,z\rightarrow\infty$}
Here, we show that the examples in the previous two sections are true in general. Note that the theorem does not assume that $S(z)$ is stochastic, but it does use that the columns sum to $1$. 

\begin{theorem}\label{Stuff}
(a) When $z=z_1/z_2=0$, the matrix $S(z)$ acting on $V_l^{z_1} \otimes V_m^{z_2}$ satisfies the property that 
$
S(0)_{\alpha,\beta}^{\gamma,\delta} 
$
can only be nonzero if $\gamma_{[1,k]} \leq \alpha_{[1,k]}$ for all $1\leq k \leq n+1$. By particle conservation, an equivalent conclusion is $\beta_{[1,k]} \leq \delta_{[1,k]}$.

When $z=z_1/z_2 \rightarrow \infty$, the matrix $S(z)$ acting on $V_l^{z_1} \otimes V_m^{z_2}$ satisfies the property that 
$
S(\infty)_{\alpha,\beta}^{\gamma,\delta} 
$
can only be nonzero if $\delta_{[1,k]} \leq \beta_{[1,k]}$ for all $1\leq k \leq n+1$. By particle conservation, an equivalent conclusion is $\alpha_{[1,k]} \leq \gamma_{[1,k]}$.

(b) When $z\rightarrow \infty$, the matrix $S(z)$ acting on $V_l \otimes V_m$ does not depend on $m$, in the following sense: If $\beta,\delta \in \mathcal{B}_m$ and $\beta^+,\delta^+ \in \mathcal{B}_{m^+}$ satisfy $\overline{\beta}=\overline{\beta^+},\overline{\delta} = \overline{\delta^+}$, then 
$$
S(z)_{\alpha,\beta}^{\gamma,\delta} = S(z)_{\alpha,\overline{\beta}}^{\gamma,\overline{\delta}}.
$$
Similarly, when $z\rightarrow 0$, the matrix $S(z)$ acting on $V_l \otimes V_m$ does not depend on $l$.

(c) If $q > 1 $, then in the limit $m\rightarrow\infty$,
$$
\lim_{m \rightarrow\infty} S(0)_{\alpha,\beta}^{\gamma,\delta} = 
\begin{cases}
1, &\gamma = \Omega, \\
0, &\gamma \neq \Omega.
\end{cases}
$$
If $q  < 1 $, then in the limit $l\rightarrow\infty$,
$$
\lim_{l \rightarrow\infty} S(\infty)_{\alpha,\beta}^{\gamma,\delta} = 
\begin{cases}
1, &\delta = \Omega, \\
0, &\delta \neq \Omega.
\end{cases}
$$
These limits are taken in the sense described in Section \ref{RT}.
\end{theorem}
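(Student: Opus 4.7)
The plan is to reduce each claim on $V_l\otimes V_m$ to the $l=1$ (or $V_1\otimes V_1$) case via the fusion procedure of Section \ref{FusionSection}, since in all three parts the condition on the spectral parameter ($z=0$ or $z\to\infty$) is preserved by the scalar shifts $zq^{*}$ that appear inside the fused product. At the outset I observe that $S(z)=\widetilde{\Ga}^{-1}R(z)\Ga$ with $\Ga$ (and hence $\widetilde{\Ga}$) diagonal, so the supports of $S(z)$ and $R(z)$ coincide, and for the pure support statement in part (a) it suffices to work with $R(z)$.

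For part (a), substitute $z=0$ into the fusion formula \eqref{Fusion}: every argument $zq^{*}$ collapses to $0$, so $R(0)|_{V_l\otimes V_m}$ is built from $lm$ copies of $R(0)|_{V_1\otimes V_1}$, each coupling one of the $l$ ``left'' sites in $V_1^{\otimes l}$ with one of the $m$ ``right'' sites in $V_1^{\otimes m}$. A direct inspection of \eqref{l=1} at $z=0$ shows that the only nonzero entries $R(0)^{\epsilon_k,\epsilon_q}_{\epsilon_j,\epsilon_p}$ have $k\geq j$; in other words, each elementary factor can only weakly increase the species label at its left slot. Iterating, every left position in $V_1^{\otimes l}$ undergoes a sequence of weak species increases, so the resulting multiset of labels dominates the original one pointwise, and projecting via $\mathcal{I}_l^{-1}$ back to $V_l$ translates this into $\gamma_{[1,k]}\leq\alpha_{[1,k]}$ for every $k$. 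The same argument applied at $z\to\infty$, where \eqref{l=1} now forces $k\leq j$, yields $\alpha_{[1,k]}\leq\gamma_{[1,k]}$, which by particle conservation is equivalent to $\delta_{[1,k]}\leq\beta_{[1,k]}$.

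For part (b), the explicit formula displayed in Section \ref{atl=1} for $\lim_{z\to\infty}S(z)^{\epsilon_k,\delta}_{\epsilon_j,\beta}$ at $l=1$ manifestly depends only on $\overline{\beta}$ and not on $m$. To bootstrap to general $l$, I invoke the stochastic fusion of Theorem \ref{StochasticFusion}, which expresses $S(z)|_{V_l\otimes V_m}$ as an $l$-fold product of $l=1$ operators $S(zq^{*})|_{V_1\otimes V_m}$ sandwiched between two intertwiners that act only on the $V_l$-slot and are $m$-independent. Each inner factor becomes $m$-independent in the $z\to\infty$ limit by the $l=1$ computation, so the entire product remains so in that limit. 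The $z=0$ statement (non-dependence on $l$) follows symmetrically by fusing in the auxiliary $V_m$-direction instead.

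For part (c), the $l=1$ case follows by a short direct calculation from the second bullet of Remark \ref{SimpleCase}: among the column entries indexed by $\epsilon_k$, the only one whose $m\to\infty$ limit survives (given $q>1$) is $k=n+1$, the others being suppressed by the prefactor $q^{-2m}$; a quick telescoping collecting the $k=j=n+1$ and $k>j,\,k=n+1$ contributions shows that the surviving entry tends to $1$, as it must by column-sum. For general $l$ I again use the stochastic fusion: each of the $l$ inner factors drives its $V_1$-slot into $\epsilon_{n+1}$ as $m\to\infty$, so the product drives the whole $V_1^{\otimes l}$ state into $\epsilon_{n+1}^{\otimes l}$, whose projection by $\mathcal{I}_l^{-1}$ is the vacuum $\Omega=(0,\dots,0,l)\in V_l$ by \eqref{CCCC}. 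The $l\to\infty$ half is symmetric. The main technical obstacle throughout is that the $m\to\infty$ (resp.\ $l\to\infty$) limit must be exchanged with the fused product; this is precisely why the \emph{stochastic} fusion of Theorem \ref{StochasticFusion}, rather than the plain $R$-matrix fusion \eqref{Fusion}, is essential here, since each of its factors is already genuinely stochastic in $m$ and pointwise convergence of stochastic matrices to a stochastic limit propagates through finite products without cancellation issues.
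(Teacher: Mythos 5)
Your part (a) is essentially the paper's own proof: fuse via \eqref{Fusion}, observe from the explicit $l=m=1$ entries that at $z=0$ (resp.\ $z\to\infty$) the $k<j$ (resp.\ $k>j$) entries vanish, and transport the resulting triangularity through the weight--preserving isomorphism $\mathcal{I}_l$ and the diagonal gauge. For (b) and (c) you take a genuinely different route. The paper proves (b) by a strong induction on $(\vert\alpha\vert,n)$ at the level of the $R$--matrix, extracting recursions from the intertwining relation \eqref{Inter} applied to $\Delta(f_n)$ and $\Delta(e_n)$ and establishing the sharper identity $R(\infty)_{\alpha,\beta}^{\gamma,\delta}=q^{-(m^+-m)\vert\alpha\vert}R(\infty)_{\alpha,\beta^+}^{\gamma,\delta^+}$; it proves (c) by power--counting the $q^{m}$ asymptotics of the $R$--entries against the gauge factor in \eqref{Fusion2}. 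You instead read both statements off the $l=1$ formulas of Remark \ref{SimpleCase} and bootstrap with Theorem \ref{StochasticFusion}, whose intertwiners $\Xi^{(n)},\Lambda^{(n)}$ act only on the horizontal slots and are manifestly $m$--independent---a real advantage over plain fusion, where the $\mathcal{I}$/gauge bookkeeping carries exactly the $m$--dependence the paper has to power--count. There is no circularity, since the proof of Theorem \ref{StochasticFusion} nowhere invokes Theorem \ref{Stuff}. For the $z\to\infty$ half of (b) your argument is complete once you note that the intermediate vertical states $\beta^{(r)}$ for $(\beta,\delta)$ and $(\beta^{+},\delta^{+})$ correspond bijectively by adding holes, with equal factor entries; and since $m$ is fixed in that half, exchanging $z\to\infty$ with a product of finitely many finite matrices needs nothing beyond entrywise convergence---stochasticity is not what does the work there, contrary to your closing remark.

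The genuine gap lies in the two halves you dismiss as ``symmetric.'' Theorem \ref{StochasticFusion} fuses only the first tensor factor ($V_l$ out of $V_1^{\otimes l}$); the paper contains no vertical analogue expressing $S(z)\vert_{V_l\otimes V_m}$ through factors acting on $V_l\otimes V_1$ with $l$--independent intertwiners. Consequently the $z=0$ statement of (b) ($l$--independence) and the $l\to\infty$ half of (c) do not follow off the shelf from your scheme: you would have to state and prove a vertical stochastic fusion, redoing the Lemma \ref{ABCDE}--type gauge computation in the transposed geometry (the gauge $\Ga$ is not symmetric in its two indices, so this is not literally the same lemma), or else pass through Proposition \ref{Invert}, which is an intertwining statement for the other co--product $\Delta_0$ and does not immediately yield an entrywise identity relating $S(z)$ on $V_l\otimes V_m$ to $S(1/z)$ on $V_m\otimes V_l$. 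Retreating to the horizontal fusion for the $l\to\infty$ half of (c) changes the problem qualitatively: the number of factors then grows with $l$, and you would need an estimate for an unboundedly long product rather than an exchange of one limit with a fixed finite product. Finally, in the $m\to\infty$ half of (c) the correct justification for passing the limit through the $l$--fold product is not stochasticity per se but the observation that each factor changes the reduced vertical configuration by at most one particle, so only a fixed finite set of reduced intermediate states ever occurs and the entrywise convergence of each factor is automatically uniform over it; with that made explicit, your argument for that half is sound.
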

\begin{proof}
(a) By fusion \eqref{Fusion}, the $R$--matrix $R(z)$ acting on $V_l^{z_1} \otimes V_m^{z_2}$ can be written as a product of $R$--matrices acting on $V_1\otimes V_1$. From the explicit formula for the $l=m=1$ case \eqref{m=1}, the relevant off--diagonal entries become $0$ when $z\rightarrow 0$ or $z\rightarrow \infty$. This implies (a). 

(b) We prove the first statement, with the proof of the second statement being similar.

Let $\beta,\delta \in \mathcal{B}_m$ and $\beta^+,\delta^+ \in \mathcal{B}_{m^+}$ satisfy $\overline{\beta}=\overline{\beta^+},\overline{\delta} = \overline{\delta^+}$, as in the statement of the theorem. By the explicit formula for the gauge transformation,
\begin{align*}
\Ga_{\alpha\beta^+}^{\alpha\beta^+} &= q^{-(m^+-m)\vert \alpha\vert}\Ga_{\alpha\beta}^{\alpha\beta} ,\\
\widetilde{\Ga}_{\gamma\delta^+}^{\gamma\delta^+} &= \widetilde{\Ga}_{\gamma\delta}^{\gamma\delta}.
\end{align*}
Therefore it is equivalent to show that
\begin{equation}\label{Equiv}
R(\infty)_{\alpha, \beta}^{\gamma,\delta} = q^{-(m^+-m)\vert \alpha\vert}R(\infty)_{\alpha, \beta^+}^{\gamma,\delta^+} .
\end{equation}
The proof of \eqref{Equiv} will be a strong induction argument on $(\vert \alpha\vert, n)$. Define the total ordering $\leq$ by
$$
(l,1) \leq (l-1,1) \leq \ldots \leq (0,1) \leq (l,2) \leq \ldots \leq (0,2) \leq (l,3) \leq \ldots . 
$$

In the base case when $\vert \alpha\vert =l$ and $n=1$, the input $\alpha$ must equal $(l,0)$. By part (a), $S(\infty)_{\alpha\beta}^{\gamma\delta}$ is only nonzero for $\gamma = (l,0)$. Since the columns of $S(\infty)$ sum to one, 
$$
S(\infty)_{\alpha\beta}^{\gamma\delta} 
= 
\begin{cases}
1, \quad \gamma=\alpha, \\
0, \quad \gamma \neq \alpha.
\end{cases}
$$
This does not depend on $m$.

For the inductive step, there are the two cases when $\vert \alpha \vert =l$ and $\vert \alpha \vert \neq l$. Start with $\vert \alpha \vert =l$. First we show a preliminary identity: if  $\beta_{\text{red}}$ denotes the ``reduced'' $(\beta_1,\ldots,\beta_{n-1},\beta_{n}+\beta_{n+1},0)$, then
\begin{align}\label{Reduced}
R(\infty)_{\alpha,\beta}^{\gamma,\delta} &= q^{\alpha_n  \beta_{n+1}  }R(\infty)_{\alpha,\beta_{\text{red}}}^{\gamma,\delta_{\text{red}}}  \text{ assuming } \alpha_{n+1}=0   \\ 
R(\infty)_{\alpha, \beta^+}^{\gamma,\delta^+} &= q^{\alpha_n(\beta_{n+1}+m^+-m)}R(\infty)_{\alpha,\beta^+_{\text{red}}}^{\gamma,\delta^+_{\text{red}}}  \text{ assuming } \alpha_{n+1}=0 \notag
\end{align}
The proof of this identity uses \eqref{Inter}. By \eqref{ACT},
$$
\check{R}(z) \left( f_n \otimes 1 + k_n^{-1} \otimes f_n \right) \vert \alpha,\beta \rangle = \left( f_n \otimes 1 + k_n^{-1} \otimes f_n \right) \check{R}(z) \vert \alpha,\beta \rangle .
$$
This simplifies to 
$$
\check{R}(z) \left( [\alpha_{n+1}]_q\vert \alpha + \hat{n}, \beta\rangle + q^{\alpha_n - \alpha_{n+1}} [ \beta_{n+1}]_q\vert \alpha, \beta + \hat{n} \right) = \sum_{\delta, \gamma}  \left( f_n \otimes 1 + k_n^{-1} \otimes f_n \right) R(z)_{\alpha\beta}^{\gamma\delta} \vert \delta,\gamma \rangle,
$$
which furthermore simplifies to 
\begin{multline*}
\sum_{\delta, \gamma} \left( [\alpha_{n+1}]_q  R(z) _{\alpha+\hat{n},\beta}^{\gamma\delta} + q^{\alpha_n - \alpha_{n+1}} [ \beta_{n+1}]_q  R(z)_{\alpha,\beta+\hat{n}}^{\gamma\delta} \right) \vert \delta, \gamma \rangle \\
= \sum_{\delta,\gamma} R(z)_{\alpha\beta}^{\gamma\delta} \left( [\delta_{n+1}]_q \vert \delta + \hat{n},\gamma \rangle + q^{\delta_n-\delta_{n+1}}[\gamma_{n+1}]_q \vert \delta, \gamma + \hat{n} \rangle \right)
\end{multline*}
Because $\alpha_{n+1}=0$ by assumption, part (a) implies that $R(z)_{\alpha\beta}^{\gamma,\delta- \hat{n}}$ can only be nonzero if $\gamma_{n+1}= 0$. Therefore,
$$
q^{\alpha_n - \alpha_{n+1}} [ \beta_{n+1}]_q  R(z)_{\alpha,\beta+\hat{n}}^{\gamma\delta} = R(z)_{\alpha\beta}^{\gamma,\delta - \hat{n}}  [\delta_{n+1}+1]_q 
$$
But then by particle conservation, $\delta_{n+1} = \beta_{n+1}-1$, implying that
$$
q^{\alpha_n }   R(\infty)_{\alpha,\beta+\hat{n}}^{\gamma\delta} = R(\infty)_{\alpha\beta}^{\gamma,\delta - \hat{n}}  , 
$$
or equivalently,
$$
R(\infty)_{\alpha,\beta }^{\gamma,\delta}   =q^{\alpha_n }   R(\infty)_{\alpha,\beta + \hat{n}}^{\gamma,\delta + \hat{n}} ,
$$
By repeatedly applying this last identity,
$$
R(\infty)_{\alpha, \beta}^{\gamma,\delta} = q^{\alpha_n\beta_{n+1}}R(\infty)_{\alpha,\beta_{\text{red}}}^{\gamma,\delta_{\text{red}}}.
$$
But this also holds for $\beta^+$ and $\delta^+$, and $\beta_{n+1}^+ = \beta_{n+1} + m^+ - m$, so \eqref{Reduced} holds.

Observe now that since we just showed that $\gamma_{n+1}=0$ for nonzero entries, therefore the right--hand--side of \eqref{Reduced} only contains basis elements $\alpha$ for which $\alpha_{n+1}=0$. By the strong induction hypothesis, 
$$
R(\infty)_{\alpha, \beta_{\text{red}}}^{\gamma,\delta_{\text{red}}} = q^{-(m^+-m) (\alpha_1 + \ldots + \alpha_{n-1})}R(\infty)_{\alpha, \beta_{\text{red}}^+}^{\gamma,\delta_{\text{red}}^+} .
$$
Therefore
\begin{align*}
R(\infty)_{\alpha,\beta}^{\gamma,\delta} &= q^{\alpha_n  \beta_{n+1} }R(\infty)_{\alpha,\beta_{\text{red}}}^{\gamma,\delta_{\text{red}}}   \\
&  = q^{\alpha_n  \beta_{n+1} }q^{-(m^+-m) (\alpha_1 + \ldots + \alpha_{n-1})} R(\infty)_{\alpha, \beta_{\text{red}}^+}^{\gamma,\delta_{\text{red}}^+} \\
&= q^{\alpha_n  \beta_{n+1} }q^{-(m^+-m) (\alpha_1 + \ldots + \alpha_{n-1})}  q^{-\alpha_n(\beta_{n+1}+m^+-m)}R(\infty)_{\alpha,\beta^+_{\text{}}}^{\gamma,\delta^+_{\text{}}} \\
&= q^{- (m^+-m)\vert \alpha \vert}R(\infty)_{\alpha,\beta^+_{\text{}}}^{\gamma,\delta^+_{\text{}}}. 
\end{align*}
This completes the inductive step when $\vert \alpha \vert =l$.

Now turn to the inductive step when $\vert \alpha \vert <l$. By \eqref{Inter} and \eqref{ACT},
$$
\check{R}(z) \left( 1 \otimes e_n   +  e_n \otimes k_n\right) \vert \alpha,\beta \rangle = \left( 1 \otimes e_n  + e_n \otimes  k_n \right) \check{R}(z) \vert \alpha,\beta \rangle .
$$
This simplifies to 
$$
\check{R}(z) \left( [\beta_{n}]_q\vert \alpha , \beta - \hat{n}\rangle + q^{\beta_{n+1} - \beta_{n}} [ \alpha_{n}]_q\vert \alpha - \hat{n}, \beta  \right) = \sum_{\delta, \gamma} \left( 1 \otimes e_n  + e_n \otimes  k_n \right)  R(z)_{\alpha\beta}^{\gamma\delta} \vert \delta,\gamma \rangle,
$$
which furthermore simplifies to 
\begin{multline*}
\sum_{\delta, \gamma} \left( [\beta_{n}]_q  R(z) _{\alpha,\beta - \hat{n}}^{\gamma\delta} + q^{\beta_{n+1} - \beta_{n}} [ \alpha_{n}]_q  R(z)_{\alpha - \hat{n}, \beta}^{\gamma\delta} \right) \vert \delta, \gamma \rangle \\
= \sum_{\delta,\gamma} R(z)_{\alpha\beta}^{\gamma\delta} \left( [\gamma_{n}]_q \vert \delta,\gamma - \hat{n} \rangle + q^{\gamma_{n+1}-\gamma_{n}}[\delta_{n}]_q \vert \delta - \hat{n}, \gamma  \rangle \right)
\end{multline*}
Therefore
\begin{equation*}
[\beta_n]_q R(z) _{\alpha,\beta - \hat{n}}^{\gamma\delta} + q^{\beta_{n+1} - \beta_{n}} [ \alpha_{n}]_q  R(z)_{\alpha - \hat{n}, \beta}^{\gamma\delta} = R(z)_{\alpha\beta}^{\gamma + \hat{n}, \delta}  [ \gamma_{n}+1]_q + R(z)_{\alpha\beta}^{\gamma,\delta+\check{n}}q^{\gamma_{n+1}-\gamma_{n}}[\delta_{n}+1]_q  .
\end{equation*}
This can be rewritten as
\begin{equation}\label{Key}
q^{\beta_{n+1} }   R(z)_{\alpha, \beta}^{\gamma\delta} = \frac{-[\beta_n]_q R(z) _{\alpha+\hat{n},\beta - \hat{n}}^{\gamma\delta} + R(z)_{\alpha+\hat{n},\beta}^{\gamma + \hat{n}, \delta}  [ \gamma_{n}+1]_q + R(z)_{\alpha+\hat{n},\beta}^{\gamma,\delta+\check{n}}q^{\gamma_{n+1}-\gamma_{n}}[\delta_{n}+1]_q}{q^{- \beta_{n}}[ \alpha_{n}+1]_q}  .
\end{equation}
The $\alpha+\hat{n}$ on the right--hand--side are well--defined because by assumption $\alpha_{n+1}>0$. By the strong induction hypothesis, 
$$
q^{\beta_{n+1} }   R(z)_{\alpha, \beta}^{\gamma\delta} = q^{-(m^+-m)\vert \alpha + \hat{n}\vert} \frac{-[\beta_n]_q R(z) _{\alpha+\hat{n},\beta^+ - \hat{n}}^{\gamma\delta^+} + R(z)_{\alpha+\hat{n},\beta^+}^{\gamma + \hat{n}, \delta^+}  [ \gamma_{n}+1]_q + R(z)_{\alpha+\hat{n},\beta^+}^{\gamma,\delta^++\check{n}}q^{\gamma_{n+1}-\gamma_{n}}[\delta_{n}+1]_q}{q^{- \beta_{n}}[ \alpha_{n}+1]_q}  .
$$
At the same time, applying \eqref{Key} to $\beta^+,\delta^+$ shows that
$$
q^{\beta_{n+1} + m^+ - m }   R(z)_{\alpha, \beta^+}^{\gamma\delta^+} = \frac{-[\beta_n]_q R(z) _{\alpha+\hat{n},\beta^+ - \hat{n}}^{\gamma\delta^+} + R(z)_{\alpha+\hat{n},\beta^+}^{\gamma + \hat{n}, \delta^+}  [ \gamma_{n}+1]_q + R(z)_{\alpha+\hat{n},\beta^+}^{\gamma,\delta^++\check{n}}q^{\gamma_{n+1}-\gamma_{n}}[\delta_{n}+1]_q}{q^{- \beta_{n}}[ \alpha_{n}+1]_q}  .
$$
Comparing the last two equalities shows
$$
q^{\beta_{n+1} }   R(z)_{\alpha, \beta}^{\gamma\delta} = q^{-(m^+-m)\vert \alpha + \hat{n}\vert} q^{\beta_{n+1} + m^+ - m } R(z)_{\alpha, \beta^+}^{\gamma\delta^+} ,
$$
which simplifies to 
$$
 R(z)_{\alpha, \beta}^{\gamma\delta} = q^{-(m^+-m)\vert \alpha  \vert }R(z)_{\alpha, \beta^+}^{\gamma\delta^+} .
$$
This completes the inductive step and the proof of (b).

(c) Consider the case when $z=0,q >1$ and $m\rightarrow\infty$, since the other case is similar. 

Use the expression \eqref{Fusion2} for fusion. At $z=0$ in \eqref{l=1}, the largest asymptotic contribution occurs when $k=n+1>j$, when the contribution is $q^m$. This contribution will happen $\vert \alpha\vert - \vert \gamma\vert $ times, for  a total of $q^{m( \vert \alpha \vert - \vert\gamma\vert)}$ asymptotically. The gauge transform multiplies the $R$--matrix by 
$$
q^{-\sum_{i=1}^n \alpha_{[1,i]}\beta_{i+1}} \cdot q^{\sum_{i=1}^n \delta_{[1,i]}\gamma_{i+1}},
$$
which is asymptotically $q^{-m\vert\alpha\vert}$. This gives a total of $q^{-m \vert \gamma\vert}$, which is zero for any $\gamma \neq \Omega$. Because the columns sum to $1$, the entry must be $1$ for $\gamma = \Omega$.

\end{proof}

\begin{remark}
If the limits $l,m\rightarrow \infty$ are taken for $0<z<\infty$, there is no guarantee that the result would be stochastic for every value of $q$. In particular, the limit of the entries need not be bounded.

\end{remark}

\subsection{Stochastic fusion}
In this section, we generalize the fusion procedure of \cite{CP} described in Section \ref{Osvm} to the multi--species case, and show that it matches the fusion of \cite{KRL} described in Section \ref{FusionSection}, after applying the gauge transformation.

Define an order on $\mathcal{B}_1$ by $\epsilon_1 < \epsilon_2 < \ldots < \epsilon_{n+1}$. Given $\vec{\alpha} = (\vec{\alpha}_{1}, \ldots, \vec{\alpha}_{l}) \in \mathcal{B}_1^{ \times l}$, let 
$$
E_{\vec{\alpha}} = \vert \{  (r,s)   : 1 \leq r < s \leq l \text{ and } \vec{\alpha}_r > \vec{\alpha}_s    \} \vert.
$$
Note that $E_{\vec{\alpha}}$ is related to the ground state transformation by 
\begin{equation}\label{Gre}
\langle \vec{\alpha} \vert \Gr \vert \vec{\alpha} \rangle =  \textrm{const} \cdot q^{-E_{\vec{\alpha}}}
\end{equation}
Define for any $n\geq 1$, the operator $\Xi^{(n)}$ from $V_l$ to $V_1^{\otimes l}$ by 
$$
\langle \vec{\alpha}_1, \ldots, \vec{\alpha}_l  \vert \Xi^{(n)} \vert \alpha \rangle = 
\begin{cases}
Z_{\alpha}^{-1} q^{-2E_{\vec{\alpha}}} , &\text{ if } \alpha = \vec{\alpha}_1 + \ldots + \vec{\alpha}_l, \\
0,& \text{ if } \alpha \neq \vec{\alpha}_1 + \ldots + \vec{\alpha}_l .
\end{cases}
$$
Here, $\vec{\alpha} = (\vec{\alpha}_1, \ldots, \vec{\alpha}_l)$ and $Z_{\alpha}^{-1}$ is a normalization constant chosen so that $\Xi^{(n)}$ is stochastic. Also define the operator $\Lambda^{(n)}$ from $V_1^{\otimes l}$ to $V_l$ by 
$$
\Lambda^{(n)} \vert \vec{\alpha}_1, \ldots, \vec{\alpha}_l \rangle  
= 
\begin{cases}
\vert \alpha \rangle, &\text{ if } \alpha = \vec{\alpha}_1 + \ldots + \vec{\alpha}_{l}, \\
0,& \text{ if } \alpha \neq \vec{\alpha}_1 + \ldots + \vec{\alpha}_{l} .
\end{cases}
$$
It is immediate that $\Lambda^{(n)}$ is stochastic. It is also straightforward that when $n=1$, the definitions of $\Xi^{(1)}$ and $\Lambda^{(1)}$ match that of \ref{Osvm}.

Part (a) of the next theorem shows that the $\mathcal{U}_q(A_n^{(1)})$ stochastic vertex model is a $n$--species generalization of the stochastic vertex models of \cite{BP,CP}. Part (b) generalizes Proposition 3.6 of \cite{CP}, which showed that the fused $S(z)$ matrix satisfies the Rogers--Pitman intertwining described in section \ref{LumpSection}.
\begin{theorem}\label{StochasticFusion}
(a) The $S(z)$ matrix acting on $V_l \otimes V_m$ can be written as
\begin{equation}\label{TWOSIDES}
S(z) = (\Lambda^{(n)} \otimes \mathrm{id} ) \circ S_{1,l+1}(zq^{l-1}) \cdots S_{l,l+1}(zq^{1-l}) \circ (\Xi^{(n)} \otimes \mathrm{id}).
\end{equation}

(b) The composition $ \Lambda^{(n)} \circ \Xi^{(n)} $ is the identity on $V_l$. As operators from $V_1^{\otimes l}\otimes V_m$ to $V_l \otimes V_m$,
\begin{multline}\label{RPCP}
(\Lambda^{(n)} \otimes \mathrm{id} ) \circ S_{1,l+1}(zq^{l-1}) \cdots S_{l,l+1}(zq^{1-l}) \circ (\Xi^{(n)}\Lambda^{(n)}  \otimes \mathrm{id}) \\
= (\Lambda^{(n)} \otimes \mathrm{id} ) \circ S_{1,l+1}(zq^{l-1}) \cdots S_{l,l+1}(zq^{1-l})
\end{multline}

\end{theorem}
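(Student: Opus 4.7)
The plan is to reduce Theorem \ref{StochasticFusion} to the algebraic fusion formula \eqref{Fusion2} for $R(z)$, by identifying the stochastic operators $\Xi^{(n)}$ and $\Lambda^{(n)}$ with the embedding $\mathcal{I}_l$ and its left inverse, modulo the gauge transformation $S(z) = \widetilde{\Ga}^{-1}R(z)\Ga$.

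For part (a), I would first note that by \eqref{ProjGround} and \eqref{Gre}, $\mathcal{I}_l \vert \alpha \rangle$ is proportional to $\sum_{\vec{\alpha}:\sum_i\vec{\alpha}_i = \alpha} q^{-E_{\vec{\alpha}}} \vert \vec{\alpha}\rangle$, whereas $\Xi^{(n)} \vert \alpha \rangle = Z_\alpha^{-1}\sum_{\vec{\alpha}:\sum = \alpha}q^{-2E_{\vec{\alpha}}}\vert \vec{\alpha}\rangle$. The extra factor $q^{-E_{\vec{\alpha}}}$ should be absorbable into the diagonal $l$-site gauge $\Ga^{(l)}$ on $V_1^{\otimes l}$ via the asserted relationship between $\Ga$ and $\Gr$ (Proposition \ref{GaGr}), modulo an $\alpha$-dependent scalar that matches the stochastic normalization $Z_\alpha^{-1}$. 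Second, I would verify that the $(l+1)$-site gauge on $V_1^{\otimes l}\otimes V_m$ restricts, on vectors $\vert \vec{\alpha},\beta\rangle$ with $\sum_y\vec{\alpha}^y = \alpha$, to the product of the $l$-site gauge on $V_1^{\otimes l}$ and the $2$-site gauge on $V_l\otimes V_m$; this uses the identity $\sum_{y=1}^l \vec{\alpha}^y_{[1,i]} = \alpha_{[1,i]}$. Combining these identifications with \eqref{Fusion2} and $S(z) = \widetilde{\Ga}^{-1}R(z)\Ga$ converts the $R$-matrix fusion into the desired identity \eqref{TWOSIDES}.

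For part (b), $\Lambda^{(n)}\circ\Xi^{(n)} = \mathrm{id}_{V_l}$ is immediate from the definitions and the defining choice $\sum_{\vec{\alpha}:\sum = \alpha}Z_\alpha^{-1}q^{-2E_{\vec{\alpha}}} = 1$. For the intertwining \eqref{RPCP}, set $A = \Lambda^{(n)}\otimes\mathrm{id}$, $B = \Xi^{(n)}\otimes\mathrm{id}$, and $X = S_{1,l+1}(zq^{l-1})\cdots S_{l,l+1}(zq^{1-l})$. Part (a) gives $S(z) = AXB$, and combined with $AB = \mathrm{id}$ this implies $S(z)A = AXBA$. The target equality $AXBA = AX$ therefore reduces to $X(\ker A)\subseteq \ker A$, which is the translation through part (a) of the preservation property \eqref{Stronger}: the $R$-matrix fusion preserves $\mathrm{Im}(\mathcal{I}_l\otimes\mathrm{id})$, which under the gauge identification corresponds to $\mathrm{Im}(B)$, whose direct-sum complement in $V_1^{\otimes l}\otimes V_m$ is exactly $\ker A$ (since $B$ is injective, $\ker(BA) = \ker A$).

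The main obstacle is the gauge bookkeeping in part (a). The $2$-site gauges that arise from writing each $S_{i,l+1} = \widetilde{\Ga_{i,l+1}}^{-1}R_{i,l+1}\Ga_{i,l+1}$ act on overlapping sites (all share site $l+1$) and do not commute with the intervening $R$-matrices, so one cannot simply factor them out globally. The resolution hinges on the explicit combinatorial form of Proposition \ref{GaGr}: once that dictionary is in hand, the accumulated $2$-site gauges collapse into the $l$-site-plus-$2$-site factorization described above, and the $q^{-E_{\vec{\alpha}}}$ discrepancy between $\mathcal{I}_l$ and $\Xi^{(n)}$ exactly accounts for the residual diagonal factor.
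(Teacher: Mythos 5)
Your high--level plan is the same as the paper's (reduce \eqref{TWOSIDES} to the $R$--matrix fusion \eqref{Fusion2} via $S(z)=\widetilde{\Ga}^{-1}R(z)\Ga$, matching $\Xi^{(n)}$ with $\mathcal{I}_l$ up to a diagonal factor), but the step you defer to ``gauge bookkeeping'' hides the actual crux, and your account of it is wrong in one essential respect. When each factor $S_{i,l+1}=\widetilde{\Ga}^{-1}_{i,l+1}R_{i,l+1}\Ga_{i,l+1}$ is expanded and the diagonal gauges are evaluated on the intermediate states (which are pinned down by particle conservation), the aggregate is the two--site gauge on $V_l\otimes V_m$ times a factor $q^{\mathcal{N}_1+\mathcal{N}_2}$ with $\mathcal{N}_1+\mathcal{N}_2=\vert\{r<s:\vec{\gamma}_r<\vec{\gamma}_s\}\vert-\vert\{r<s:\vec{\alpha}_r<\vec{\alpha}_s\}\vert$ (this is Lemma \ref{ABCDE} of the paper, and it requires a real computation, not just the factorization of $\Ga^{(l+1)}$ you invoke, because the two--site gauges interleave with the $R$--matrices). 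The $\vec{\alpha}$--dependent half is indeed absorbed by the discrepancy between $\Xi^{(n)}$ and $\mathcal{I}_l$, with the residual $\alpha$--dependent constant fixed by the sum--to--one property of both sides of \eqref{TWOSIDES}; but the $\vec{\gamma}$--dependent half survives as a diagonal operator $\mathfrak{d}$, $\langle\vec{\gamma}\vert\mathfrak{d}\vert\vec{\gamma}\rangle=q^{\vert\{r<s:\vec{\gamma}_r<\vec{\gamma}_s\}\vert}$, sitting on the \emph{output} side, where it cannot be pushed onto $\Xi^{(n)}$. Consequently your identification of $\Lambda^{(n)}$ with ``the left inverse of $\mathcal{I}_l$'' is the hard point, not a definition: $\Lambda^{(n)}$ alone is \emph{not} a left inverse of $\mathcal{I}_l$ (e.g.\ for $l=2,n=1$ one has $\mathcal{I}_2\vert 11\rangle=[2]_q^{-1}(\vert 10,01\rangle+q^{-1}\vert 01,10\rangle)$, whose coefficients do not sum to $1$). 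What must be proved is $\Lambda^{(n)}\mathfrak{d}\vert_{\mathrm{Im}(\mathcal{I}_l)}=\mathcal{I}_l^{-1}$, i.e.\ \eqref{SECIND}, and this is a substantive identity: the paper reduces it to evaluating $\langle\gamma\vert\Lambda^{(n)}\mathfrak{d}\,\Delta^{(l)}u_0\vert\Omega\rangle^{\otimes l}$ via the pseudo--factorization \eqref{PseudoFac} and the $q$--binomial theorem \eqref{qBin}, as a sum over flags $L_1\subset\cdots\subset L_n$ of subsets. Your proposal contains no mechanism for this step, so part (a) does not close.

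The same missing ingredient sinks your part (b). The reduction of \eqref{RPCP} to $X(\ker A)\subseteq\ker A$ (with $A=\Lambda^{(n)}\otimes\mathrm{id}$, using $\mathrm{Im}(BA-\mathrm{id})=\ker A$) is correct, but the claimed justification fails twice. First, \eqref{Stronger}, in the spectral order $zq^{l-1},\ldots,zq^{1-l}$ appearing here, yields invariance of the \emph{image} $\mathrm{Im}(P^+\otimes\mathrm{id})$; invariance of one summand of a direct--sum decomposition never implies invariance of the complementary summand --- that implication is exactly equivalent to the statement being proved, so the argument is circular at the decisive point. Second, $\ker\Lambda^{(n)}$ is not the gauge image of $\ker P^+$: already for $l=2,n=1$, $\ker\Lambda^{(1)}$ is spanned by $\vert 10,01\rangle-\vert 01,10\rangle$ while $\ker P^+$ is spanned by $\vert 10,01\rangle-q\vert 01,10\rangle$; the mismatch is precisely $\mathfrak{d}$ (note $\Lambda^{(1)}\mathfrak{d}$ kills the latter vector, $\Lambda^{(1)}$ does not). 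The paper's route instead uses Lemma \ref{ABCDE} to reduce \eqref{RPCP} to $\Lambda^{(n)}\mathfrak{d}\,R_{1,l+1}(zq^{l-1})\cdots R_{l,l+1}(zq^{1-l})=\mathrm{const}\cdot\Lambda^{(n)}\mathfrak{d}\,R_{1,l+1}(zq^{l-1})\cdots R_{l,l+1}(zq^{1-l})(P^+\otimes\mathrm{id})$, which then follows from \eqref{Stronger} \emph{together with} \eqref{SECIND}. So both halves of your proposal founder on the same gap: the operator $\mathfrak{d}$ and the identity $\Lambda^{(n)}\mathfrak{d}\vert_{\mathrm{Im}(\mathcal{I}_l)}=\mathcal{I}_l^{-1}$, which you neither state nor prove.
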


\begin{proof} (a) Begin by analyzing the right--hand--side of \eqref{TWOSIDES}. Given $\vec{\alpha}$ and $\vec{\gamma}$ in $\mathcal{B}_1^{\times l}$, set 
$$
\vec{\alpha}^{(r)}_s 
= 
\begin{cases}
\vec{\alpha}_s,  \quad s \leq r\\
\vec{\gamma}_s, \quad s>r.
\end{cases}
$$
Further define $\beta^{(r)} \in \mathcal{B}_m$ by 
$$
\beta^{(r)} + \vec{\alpha}_1^{(r)} + \ldots + \vec{\alpha}_r^{(r)} = \beta + \vec{\alpha}_1 + \ldots + \vec{\alpha}_r .
$$
In words, this says that $\vec{\alpha}^{(r)},\beta^{(r)}$ together have the same number of each species of particles as $\vec{\alpha},\beta$. Then by particle conservation, 
\begin{align*}
&\langle \vec{\gamma}, \delta \vert S_{1,l+1}(zq^{l-1}) \cdots S_{l,l+1}(zq^{1-l}) \vert \vec{\alpha}, \beta \rangle \\
&=\langle \vec{\gamma},\delta \vert S_{1,l+1}(z q^{l-1}) \vert \vec{\alpha}^{(1)},\beta^{(1)} \rangle \cdots  \langle \vec{\alpha}^{(l-1)}, \vec{\beta}^{(l-1)} \vert S_{l,l+1}(zq^{1-l}) \vert \vec{\alpha},\beta\rangle \\
&=\langle \vec{\gamma},\delta \vert \widetilde{\Ga}^{-1}_{1,l+1} R_{1,l+1}(z q^{l-1}) \Ga_{1,l+1} \vert \vec{\alpha}^{(1)},\beta^{(1)} \rangle \cdots  \langle \vec{\alpha}^{(l-1)}, \vec{\beta}^{(l-1)} \vert \widetilde{\Ga}^{-1}_{l,l+1} R_{l,l+1}(zq^{1-l})\Ga_{l,l+1} \vert \vec{\alpha},\beta\rangle \\
&= \langle \vec{\gamma},\delta \vert \widetilde{\Ga}_{1,l+1}^{-1} \vert \vec{\gamma},\delta \rangle   \langle \vec{\alpha}^{(1)},\beta^{(1)} \vert \Ga_{1,l+1} \vert \vec{\alpha}^{(1)},\beta^{(1)} \rangle \cdots  \langle \vec{\alpha}^{(l-1)}, \vec{\beta}^{(l-1)} \vert \widetilde{\Ga}^{-1}_{l,l+1} \vert \vec{\alpha}^{(l-1)}, \vec{\beta}^{(l-1)} \rangle \langle \vec{\alpha},\beta \vert  \Ga_{l,l+1} \vert \vec{\alpha},\beta\rangle \\
& \quad \quad \times   \langle \vec{\gamma},\delta \vert  R_{1,l+1}(z q^{l-1})  \vert \vec{\alpha}^{(1)},\beta^{(1)} \rangle \cdots  \langle \vec{\alpha}^{(l-1)}, \vec{\beta}^{(l-1)} \vert R_{l,l+1}(zq^{1-l}) \vert \vec{\alpha},\beta\rangle
\end{align*}
Now we show that 
\begin{lemma}\label{ABCDE}
With the definitions above, 
\begin{align*}
\langle \vec{\gamma},\delta \vert \widetilde{\Ga}^{-1}_{1,l+1} \vert \vec{\gamma},\delta \rangle  \cdots  \langle \vec{\alpha}^{(l-1)}, \vec{\beta}^{(l-1)} \vert \widetilde{\Ga}^{-1}_{l,l+1} \vert \vec{\alpha}^{(l-1)}, \vec{\beta}^{(l-1)} \rangle &= q^{\mathcal{N}_1(\vec{\alpha},\vec{\gamma}) }  \langle {\gamma},\delta \vert \widetilde{\Ga}^{-1} \vert {\gamma},\delta \rangle,\\
\langle \vec{\alpha}^{(1)},\beta^{(1)} \vert \Ga_{1,l+1} \vert \vec{\alpha}^{(1)},\beta^{(1)} \rangle \cdots   \langle \vec{\alpha},\beta \vert  \Ga_{l,l+1} \vert \vec{\alpha},\beta\rangle 
&= q^{\mathcal{N}_2(\vec{\alpha},\vec{\gamma}) }   \langle {\alpha},\beta \vert  \Ga \vert {\alpha},\beta\rangle.
\end{align*}
where 
$$
\mathcal{N}_1(\vec{\alpha},\vec{\gamma}) :=  | \{ (r,s): 1\leq r<s \leq l \text{ and } \vec{\gamma}_r < \vec{\gamma}_s \}| - | \{  (r,s): 1\leq r<s \leq l \text{ and }  \vec{\alpha}_r < \vec{\gamma}_s\} |,
$$
and 
$$
\mathcal{N}_2(\vec{\alpha},\vec{\gamma}) := - | \{ (r,s): 1\leq r<s \leq l \text{ and } \vec{\alpha}_s  > \vec{\alpha}_r  \} | +  | \{  (r,s): 1\leq r<s \leq l \text{ and } \vec{\gamma}_s > \vec{\alpha}_r\ \} |  .
$$
\end{lemma}
{\color{black}
\begin{proof}
Using that
$$
\langle {\alpha},\beta \vert  \Ga \vert {\alpha},\beta\rangle = \langle \vec{\alpha},\beta \vert  \Ga_{1,l+1} \vert \vec{\alpha},\beta\rangle \cdots \langle \vec{\alpha},\beta \vert  \Ga_{l,l+1} \vert \vec{\alpha},\beta\rangle,
$$
it suffices to show that 
$$
\frac{    \langle \vec{\alpha},\beta \vert  \Ga_{1,l+1} \vert \vec{\alpha},\beta\rangle \cdots \langle \vec{\alpha},\beta \vert  \Ga_{l,l+1} \vert \vec{\alpha},\beta\rangle     }{ \langle \vec{\alpha}^{(1)},\beta^{(1)} \vert \Ga_{1,l+1} \vert \vec{\alpha}^{(1)},\beta^{(1)} \rangle  \cdots \langle \vec{\alpha},\beta \vert  \Ga_{l,l+1} \vert \vec{\alpha},\beta\rangle }  = q^{-\mathcal{N}_2(\vec{\alpha},\vec{\gamma})}.
$$
If $i_r$ is defined by $\vec{\alpha}_r = \epsilon_{i_r}$, then 
$$
\frac{ \langle \vec{\alpha}, \beta \vert \Ga_{r,l+1} \vert \vec{\alpha},\beta \rangle }{ \langle \vec{\alpha}^{(r)},\beta^{(r)} \vert  \Ga_{r,l+1} \vert \vec{\alpha}^{(r)},\beta^{(r)}\rangle} = q^{-   \beta_{[i_r+1,n+1]}  +    \beta^{(r)}_{[i_r+1,n+1]}   }.
$$
By particle conservation, and the fact that $  \vec{\alpha}^{(r)}_{s} = \vec{\alpha}_{s} $ for $s\leq r$, 
\begin{multline*}
\beta^{(r)}_{[i_r+1,n+1]}  + | \{ (r,s): 1 \leq r <s \leq l \text{ and } \vec{\alpha}^{(r)}_s  \in \{\epsilon_{i_r+1} , \ldots , \epsilon_{n+1}\} \  \} | \\
=  \beta_{[i_r+1,n+1]} + | \{ (r,s): 1 \leq r<s \leq l \text{ and } \vec{\alpha}_s  \in \{\epsilon_{i_r+1} , \ldots , \epsilon_{n+1}\} \  \} |.
\end{multline*}
Substituting that $\vec{\gamma}_s = \vec{\alpha}_s^{(r)}$ for $s>r,$
\begin{align*}
 \beta^{(r)}_{[i_r+1,n+1]} - \beta_{[i_r+1,n+1]} &=  | \{ (r,s): 1 \leq r<s \leq l \text{ and }  \vec{\alpha}_s  > \vec{\alpha}_r  \} | -  | \{ (r,s):1 \leq r<s \leq l \text{ and } \vec{\gamma}_s > \vec{\alpha}_r\  \} | 
\end{align*}
Therefore, 
$$
\frac{    \langle \vec{\alpha},\beta \vert  \Ga_{1,l+1} \vert \vec{\alpha},\beta\rangle \cdots \langle \vec{\alpha},\beta \vert  \Ga_{l,l+1} \vert \vec{\alpha},\beta\rangle     }{ \langle \vec{\alpha}^{(1)},\beta^{(1)} \vert \Ga_{1,l+1} \vert \vec{\alpha}^{(1)},\beta^{(1)} \rangle  \cdots \langle \vec{\alpha},\beta \vert  \Ga_{l,l+1} \vert \vec{\alpha},\beta\rangle }  = \prod_{r=1}^l q^{ \beta^{(r)}_{[i_r+1,n+1]} - \beta_{[i_r+1,n+1]}}  = q^{-\mathcal{N}_2(\vec{\alpha},\vec{\gamma})},
$$
as needed. 

A similar argument holds for $\widetilde{\Ga}$. Similarly, if $j_s$ is defined by $\vec{\gamma}_s = \epsilon_{j_s}$ then 
$$
\frac{\langle \vec{\gamma}, \delta \vert \widetilde{\Ga}_{s'1} \vert \vec{\gamma},\delta \rangle }{ \langle \vec{\alpha}^{(s-1)},\beta^{(s-1)} \vert  \widetilde{\Ga}_{s'1} \vert \vec{\alpha}^{(s-1)},\beta^{(s-1)}\rangle} = q^{ \beta^{(s-1)}_{[1,j_s-1]}  - \delta_{[1,j_s-1]} }.
$$
By particle conservation, and using that $\vec{\alpha}^{(s-1)}_r = \vec{\gamma}_r$ for $r \geq s$, and $\vec{\alpha}^{(s-1)}_r = \vec{\alpha}_r$ for $r<s,$
\begin{align*}
 \beta^{(s-1)}_{[1,j_s-1]}  - \delta_{[1,j_s-1]}  &= - | \{ (r,s):1 \leq r<s \leq l \text{ and }  \vec{\alpha}^{(s-1)}_r \in \{\epsilon_1, \ldots, \epsilon_{j_s-1}\} \} | \\
 & \quad \quad \quad + | \{(r,s): 1 \leq r<s \leq l \text{ and }  \vec{\gamma}_r  \in \{\epsilon_1, \ldots, \epsilon_{j_s-1}\} \} | \\
&= | \{ (r,s): 1 \leq r<s \leq l \text{ and } \vec{\gamma}_r < \vec{\gamma}_s \}| - | \{ (r,s): 1 \leq r<s \leq l \text{ and }  \vec{\alpha}_r < \vec{\gamma}_s\} | 
\end{align*}
so that
$$
\frac{\langle {\gamma},\delta \vert  \widetilde{\Ga} \vert {\gamma},\delta \rangle}{ \langle \vec{\gamma}, \delta  \vert \widetilde{\Ga}_{1,l+1} \vert \vec{\gamma}, \delta\rangle  \cdots \langle \vec{\alpha}^{(l-1)},\beta^{(l-1)}  \vert  \widetilde{\Ga}_{l,l+1} \vert \vec{\alpha}^{(l-1)},\beta^{(l-1)} \rangle } = q^{ \mathcal{N}_1(\vec{\alpha},\vec{\gamma})}.
$$
This finishes the proof of the lemma.
\end{proof}
}

Therefore, the lemma implies that
\begin{multline*}
 \langle {\gamma},\delta \vert \text{RHS of } \eqref{TWOSIDES} \vert \alpha,\beta \rangle= \langle {\gamma},\delta \vert \widetilde{\Ga}^{-1} \vert {\gamma},\delta \rangle\langle \alpha ,\beta \vert  \Ga \vert {\alpha},\beta\rangle \\
\times \sum_{\vec{\alpha},\vec{\gamma}}  q^{\mathcal{N}_1(\vec{\alpha},\vec{\gamma}) + \mathcal{N}_2(\vec{\alpha},\vec{\gamma})} \langle\gamma \vert \Lambda^{(n)} \vert \vec{\gamma} \rangle \langle \vec{\gamma},\delta \vert  R_{1,l+1}(z q^{l-1})  \vert \vec{\alpha}^{(1)},\beta^{(1)} \rangle \cdots  \langle \vec{\alpha}^{(l-1)}, \vec{\beta}^{(l-1)} \vert R_{l,l+1}(zq^{1-l}) \vert \vec{\alpha},\beta\rangle \langle \vec{\alpha} \vert \Xi^{(n)} \vert \alpha\rangle.
\end{multline*}
At the same time, by the fusion described in Section \ref{FusionSection}, 
\begin{multline*}
 \langle {\gamma},\delta \vert  \text{LHS of } \eqref{TWOSIDES} \vert \alpha,\beta\rangle = \langle {\gamma},\delta \vert \widetilde{\Ga}^{-1} \vert {\gamma},\delta \rangle\langle \alpha ,\beta \vert  \Ga \vert {\alpha},\beta\rangle \\
\times  \sum_{\vec{\alpha}} \langle \gamma, \delta \vert ( \mathcal{I}_l \otimes \mathrm{id} )^{-1}  R_{1,l+1}(zq^{l-1}) \cdots R_{l,l+1}(zq^{1-l}) \vert \vec{\alpha}, \beta\rangle  \langle \vec{\alpha}, \beta \vert ( \mathcal{I}_l \otimes \mathrm{id}) \vert \alpha, \beta \rangle. 
\end{multline*}
Since
$$
\mathcal{N}_1(\vec{\alpha},\vec{\gamma}) + \mathcal{N}_2(\vec{\alpha},\vec{\gamma}) = | \{ (r,s):1 \leq r<s \leq l \text{ and } \vec{\gamma}_r < \vec{\gamma}_s \}| - | \{ (r,s):1 \leq r<s \leq l \text{ and } \vec{\alpha}_s > \vec{\alpha}_r \}
$$
the theorem will now follow from two identities. The first is
$$
q^{- | \{ (r,s):1 \leq r<s \leq l \text{ and }   \vec{\alpha}_s > \vec{\alpha}_r \}| }  \langle \vec{\alpha}  \vert \Xi^{(n)} \vert \alpha \rangle = \langle \vec{\alpha} \vert  \mathcal{I}_l \vert \alpha \rangle.
$$
After proving this first identity, it will remain to prove the second identity
\begin{equation}\label{SECIND}
\Lambda^{(n)} \mathfrak{d} \Big|_{\text{Im}( \mathcal{I}_l)}= \mathcal{I}_l^{-1},
\end{equation}
where $\mathfrak{d}$ is the diagonal operator on $V_1^{\otimes l}$ with entries
$$
\langle \vec{\gamma} \vert \mathfrak{d} \vert \vec{\gamma} \rangle = q^{  | \{ (r,s):1 \leq r<s \leq l \text{ and } \vec{\gamma}_r < \vec{\gamma}_s | \}}.
$$
Recall that because of \eqref{INT}, it suffices to restrict the domain of $\Lambda^{(n)}\mathfrak{d}$ to the image of $\mathcal{I}_l$.

To prove the first identity, note that 
$$
q^{- | \{ (r,s):1 \leq r<s \leq l \text{ and }   \vec{\alpha}_s > \vec{\alpha}_r \}| }  \langle \vec{\alpha}  \vert \Xi^{(n)} \vert \alpha \rangle = 
\begin{cases}
Z_{\alpha}^{-1}  q^{C_{\alpha}}q^{-E_{\vec{\alpha}}}, & \text{ if }  \alpha = \vec{\alpha}_1 + \ldots + \vec{\alpha}_r, \\
0,  & \text{ if } \alpha \neq \vec{\alpha}_1 + \ldots + \vec{\alpha}_r 
\end{cases}
$$
where $C_{\alpha}$ is the constant, only depending on $\alpha$ (and not on $\vec{\alpha}$), defined by 
\begin{align*}
&- | \{ (r,s): 1\leq r < s \leq l \text{ and } \vec{\alpha}_s > \vec{\alpha}_r \}| - 2 | \{ (r,s): 1\leq r <s \leq l \text{ and }  \vec{\alpha}_r > \vec{\alpha}_s \}| \\
&= - \binom{l}{2} + | \{ (r,s): 1 \leq r<s \leq l \text{ and } \vec{\alpha}_r = \vec{\alpha}_s \}|  -  | \{ (r,s): 1 \leq r<s \leq l \text{ and }  \vec{\alpha}_r > \vec{\alpha}_s \}| \\
&:= C_{\alpha} - E_{\vec{\alpha}}
\end{align*}
{\color{black} By \eqref{ProjGround} and \eqref{Gre}, this shows that the first identity holds up to a constant that depends only on $\alpha$. Since both the left--hand--side and right--hand--side of \eqref{TWOSIDES} satisfy the property that every column sums to $1$, this constant must be $1$.} 

Now proceed to the second identity. It is immediate from the definitions that the identity holds on $\vert \Omega \rangle^{\otimes l}$. To show the identity holds in general, it suffices to show 
$$
\langle \gamma \vert \Lambda^{(n)} \mathfrak{d}  \Delta^{(l)} f_1^{\gamma_1} \cdot \ldots \cdot \Delta^{(l)}  f_n^{ \gamma_{[1,n]}}    \vert \Omega\rangle^{\otimes l} = \langle \gamma \vert \mathcal{I}_l^{-1} \Delta^{(l)}  f_1^{\gamma_1} \cdot \ldots \cdot  \Delta^{(l)}  f_n^{ \gamma_{[1,n]}}    \vert \Omega\rangle^{\otimes l} ,
$$
since the $\textrm{Im}(\mathcal{I}_l)$ is generated by the generators $f_1,\ldots,f_n$ acting on $\vert \Omega \rangle^{\otimes l}$.  Since any exponents in $f_i$ other than $\gamma_{[1,i]}$ will result in zero, it suffices to show
$$
\langle \gamma \vert \Lambda^{(n)} \mathfrak{d}  \Delta^{(l)} u_0  \vert \Omega\rangle^{\otimes l} = \langle \gamma \vert  \mathcal{I}_l^{-1} \Delta^{(l)}u_0  \vert \Omega\rangle^{\otimes l} .
$$
Since $\mathcal{I}_l$ is an intertwiner of representations, the right--hand--side equals $\langle \gamma \vert u_0 \mathcal{I}_l^{-1} \vert \Omega \rangle^{\otimes l}$. By the definition of the representation \eqref{ACT}, the right--hand--side evaluates to 
\begin{align*}
\langle \gamma \vert u_0  \vert \Omega\rangle&= \prod_{i=1}^n \frac{[\gamma_{[1,i+1]}  ]_q[\gamma_{[1,i+1]}-1]_q \cdots [\gamma_{[1,i+1]}-\gamma_{[1,i]}+1]_q}{\{\gamma_{[1,i]}\}_{q^{2}}^!}   \\
&= \prod_{i=1}^n q^{g_i}\frac{\{\gamma_{[1,i+1]}  \}_{q^2}\{\gamma_{[1,i+1]}-1\}_{q^2} \cdots \{\gamma_{[1,i+1]}-\gamma_{[1,i]}+1\}_{q^2}}{  \{\gamma_{[1,i]}\}_{q^2}^!} \\
&= \prod_{i=1}^n q^{g_i}\binomqq{\gamma_{[1,i+1]}}{\gamma_{[1,i]}  }
\end{align*}
where 
$$
g_i = -(\gamma_{[1,i+1]}-1) - (\gamma_{[1,i+1]}-2) - \ldots -\gamma_{i+1}.
$$

Now proceed to the left--hand--side. Given $\vec{\gamma}$, define $\vec{\gamma}^{(i)}$ for $ 1 \leq i \leq n+1$ by
$$
\vec{\gamma}^{(i)}_s 
= 
\begin{cases}
\vec{\gamma}_s, & \text{ if } \vec{\gamma}_s > i, \\
i, & \text{ if } \vec{\gamma}_s \leq i. 
\end{cases}
$$
For example, $\vec{\gamma}^{(n+1)} = \vert \Omega \rangle^{\otimes l}$ and $\vec{\gamma}^{(1)} = \vec{\gamma}$. Then 
\begin{align*}
\langle \gamma \vert \Lambda^{(n)} \mathfrak{d}  \Delta^{(l)} u_0  \vert \Omega\rangle^{\otimes l} &= \sum_{\vec{\gamma}} \langle \gamma \vert \Lambda^{(n)} \vert \vec{\gamma} \rangle \langle \vec{\gamma} \vert \mathfrak{d} \vert \vec{\gamma} \rangle \prod_{i=1}^n \langle \vec{\gamma}^{(i)} \vert \exp_{q^{-2}}(\Delta^{(l)}f_i) \vert \vec{\gamma}^{(i+1)} \rangle
\end{align*}
The sum over $\vec{\gamma}$ can be re-written as a sum over sequences of subsets $\emptyset \subset L_1 \subset \cdots \subset L_n \subset \{1,\ldots,l\}$, where each $L_i$ is the set
$$
L_i = \{s: \vec{\gamma}_s \leq i \}  =  \{s: \vec{\gamma}^{(i)}_s \leq i \} .
$$
This is advantageous because now defining
$$
D(L_i,L_{i+1}) = |\{ (r,s):1\leq r<s \leq l \text{ and } r \in L_i \text{ and } s \in L_{i+1} -L_i\}|,
$$
leads to the two identities 
\begin{align*}
\langle \vec{\gamma}^{(i)} \vert \exp_{q^{-2}}(\Delta^{(l)}f_i) \vert \vec{\gamma}^{(i+1)} \rangle &= q^{g_i} q^{ D(L_i,L_{i+1})}\\
 \langle \vec{\gamma} \vert \mathfrak{d} \vert \vec{\gamma} \rangle  &= \prod_{i=1}^n q^{D(L_i,L_{i+1})}
\end{align*}
The first identity above uses the pseudo--factorization property \eqref{PseudoFac}. These two identities together establish that
$$
\langle \gamma \vert \Lambda^{(n)} \mathfrak{d}  \Delta^{(l)} u_0  \vert \Omega\rangle^{\otimes l} =  \sum_{L_1 \subset \cdots \subset L_n} \prod_{i=1}^n q^{g_i}q^{2D(L_i,L_{i+1})}
$$
Each $L_{i+1}$ is a set with $\gamma_{[1,i+1]}$ elements, and the sum over subsets $L_i$ is a sum over subsets of $\gamma_{[1,i]}$ elements. By the $q$--Binomial theorem, stated as equation \eqref{qBin},
$$
\langle \gamma \vert \Lambda^{(n)} \mathfrak{d}  \Delta^{(l)} u_0  \vert \Omega\rangle^{\otimes l} = \prod_{i=1}^n q^{g_i}\binomqq{\gamma_{[1,i+1]}}{\gamma_{[1,i]}},
$$
which equals the right--hand--side.

This completes the proof of (a).

(b) It follows immediately from the definitions that $\Lambda^{(n)} \circ \Xi^{(n)} $ is the identity on $V_l$. 

By Lemma \ref{ABCDE}, 
\begin{multline*}
\langle \gamma, \delta \vert \text{RHS of } \eqref{RPCP} \vert \vec{\alpha},\beta\rangle 
= \langle {\gamma},\delta \vert \widetilde{\Ga}^{-1} \vert {\gamma},\delta \rangle \langle {\alpha},\beta \vert  {\Ga} \vert\alpha ,\beta \rangle\\
\times \sum_{{\gamma}: \Lambda^{(n)}\vec{\gamma} = \gamma}q^{\mathcal{N}_1(\vec{\alpha},\vec{\gamma}) + \mathcal{N}_2(\vec{\alpha},\vec{\gamma}) } \langle {\gamma} \vert \Lambda^{(n)}  \vert \vec{\gamma}\rangle\langle \vec{\gamma},\delta\vert R_{1,l+1}(zq^{l-1}) \cdots R_{l,l+1}(zq^{1-l}) \vert \vec{\alpha}, \beta \rangle
\end{multline*}
and 
\begin{multline*}
\langle \gamma, \delta \vert \text{LHS of } \eqref{RPCP} \vert \vec{\alpha},\beta\rangle 
= \langle {\gamma},\delta \vert \widetilde{\Ga}^{-1} \vert {\gamma},\delta \rangle \langle {\alpha},\beta \vert  {\Ga} \vert\alpha ,\beta \rangle\\
\times \sum_{{\gamma}: \Lambda^{(n)}\vec{\gamma} = \gamma}q^{\mathcal{N}_1(\vec{\alpha},\vec{\gamma}) + \mathcal{N}_2(\vec{\alpha},\vec{\gamma}) } \langle \gamma \vert \Lambda^{(n)} \vert \vec{\gamma} \rangle \langle \vec{\gamma} \vert R_{1,l+1}(zq^{l-1}) \cdots R_{l,l+1}(zq^{1-l}) (\Xi^{(n)}\Lambda^{(n)}  \otimes \mathrm{id})  \vert \vec{\alpha}, \beta \rangle
\end{multline*}
Because the weight spaces of $V_l$ are one--dimensional, $\Xi^{(n)}\Lambda^{(n)}= \mathrm{const}\cdot P^+$. Again using 
$$
\mathcal{N}_1(\vec{\alpha},\vec{\gamma}) + \mathcal{N}_2(\vec{\alpha},\vec{\gamma}) = | \{ (r,s): 1 \leq r<s\leq l \text{ and } \vec{\gamma}_r < \vec{\gamma}_s \}| - | \{ (r,s): 1\leq r<s \leq l \text{ and } \vec{\alpha}_s > \vec{\alpha}_r \},
$$
it suffices to show that
$$
\Lambda^{(n)} \mathfrak{d} R_{1,l+1}(zq^{l-1}) \cdots R_{l,l+1}(zq^{1-l})  = \mathrm{const} \cdot \Lambda^{(n)} \mathfrak{d} R_{1,l+1}(zq^{l-1}) \cdots R_{l,l+1}(zq^{1-l}) (P^+ \otimes \mathrm{id})
$$
But this follows from \eqref{Stronger} and \eqref{SECIND}.

\end{proof}

\subsection{Stochasticity of $S(z)$}\label{Stoch}
For certain explicit values of the spectral parameter $z$, the $S$--matrix is stochastic. 

\begin{proposition}\label{Stochastic} The operator $S(z)$ acting on $V_l \otimes V_m$ is stochastic in the cases:
\begin{itemize}
\item Both $q>1$ and $0 \leq z \leq q^{2-l-m} $ hold.
\item Both $0<q<1$ and $z \geq q^{2-l-m}$ hold.
\end{itemize}

\end{proposition}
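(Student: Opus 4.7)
My plan is to use Theorem \ref{StochasticFusion}(a) to reduce the question to the case $l=1$, where the explicit formulas \eqref{Stol=1} are available. Writing
$$
S(z) = (\Lambda^{(n)} \otimes \mathrm{id}) \circ S_{1,l+1}(zq^{l-1}) \cdots S_{l,l+1}(zq^{1-l}) \circ (\Xi^{(n)} \otimes \mathrm{id}),
$$
and noting that $\Lambda^{(n)}$ and $\Xi^{(n)}$ are stochastic by construction while a composition of stochastic matrices is stochastic, it suffices to show that $S(z')$ on $V_1 \otimes V_m$ is stochastic for each spectral parameter $z' \in \{zq^{l-1}, zq^{l-3}, \ldots, zq^{1-l}\}$. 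Since the columns of $S(z')$ sum to $1$ for every $z'$ by Theorem 6 of \cite{KMMO}, only non-negativity of entries remains to be verified.

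In the $l=1$ case, apply \eqref{Stol=1} and analyze the sign of each entry as a ratio of the numerator by $q^{m+1}-z$. For $0<q<1$, the factor $1-q^{2\beta_k}$ is nonnegative, so the numerators in the $k>j$ and $k<j$ branches are nonpositive (the latter also using $z\ge 0$); this forces $q^{m+1}-z\le 0$, i.e.\ $z \ge q^{m+1}$. For the $k=j$ branch, the factor $1 - q^{-2\beta_k+m-1}z$ must also be nonpositive for every $\beta_k \in \{0,\ldots,m\}$; the most restrictive case is $\beta_k=0$, yielding $z \ge q^{1-m}$. Since $q^{1-m} \ge q^{m+1}$ when $q<1$ and $m \ge 1$, this last inequality is binding. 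A symmetric analysis for $q>1$ gives the binding condition $0 \le z \le q^{1-m}$, with the lower bound $z \ge 0$ needed to control the sign in the $k<j$ branch.

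Finally, substitute $z' = zq^{l-1-2i}$ for $i=0,\ldots,l-1$. For $q<1$ each condition reads $z \ge q^{2-l-m+2i}$; since $q<1$, these bounds are tightest at $i=0$, producing $z \ge q^{2-l-m}$. The analogous reasoning for $q>1$ gives $0 \le z \le q^{2-l-m}$, again binding at $i=0$. The main obstacle in this program is the clean identification of the binding inequality in the $l=1$ case: one must confirm that the $k=j$, $\beta_k = 0$ diagonal entry is the truly tight constraint and that non-negativity is preserved on the boundary, where a potential sign flip in the denominator is exactly cancelled by a vanishing numerator. Once the $l=1$ analysis is done, the general-$l$ range follows mechanically from the stochastic fusion decomposition.
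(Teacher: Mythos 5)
Your proof is correct, and your $l=1$ sign analysis matches the facts exactly: the binding constraint is the $k=j$, $\beta_k=0$ diagonal entry of \eqref{Stol=1}, giving $z'\geq q^{1-m}$ for $q<1$ and $0\leq z'\leq q^{1-m}$ for $q>1$, and after substituting $z'=zq^{l-1-2i}$ the tight case is $i=0$, yielding $z\geq q^{2-l-m}$, resp.\ $0\leq z\leq q^{2-l-m}$. Structurally, your route differs from the paper's. The paper first strips off the gauge: since $\Ga$ has non-negative entries, it reduces to non-negativity of $R(z)$, which it deduces from the $l=m=1$ formula \eqref{l=1} via the two-directional fusion \eqref{Fusion} (binding inequality $0\leq zq^{m+l-2}\leq 1$, resp.\ $\geq 1$), remarking that \eqref{Fusion2} with the $l=1$ base case gives the same bound via $0\leq zq^{l-1}\leq q^{1-m}$ --- that alternative is exactly your computation, but run on $R$ rather than $S$. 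You instead invoke Theorem \ref{StochasticFusion}(a) to write $S(z)$ as $(\Lambda^{(n)}\otimes\mathrm{id})\circ S_{1,l+1}(zq^{l-1})\cdots S_{l,l+1}(zq^{1-l})\circ(\Xi^{(n)}\otimes\mathrm{id})$, a composition of column-stochastic matrices. This buys something real: in the paper's version, \eqref{Fusion} and \eqref{Fusion2} involve conjugation by $(\mathcal{I}_l\otimes\mathrm{id})^{\pm 1}$, so ``product of non-negative matrices'' is not quite literal --- one must additionally observe that the weight spaces of $V_l$ are one-dimensional and the coefficients in \eqref{ProjGround} are positive, so that $\mathcal{I}_l^{-1}$ preserves non-negativity on the image; your decomposition makes non-negativity manifest, at the cost of invoking the heavier Theorem \ref{StochasticFusion}. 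Two points you should make explicit in a final write-up: (i) there is no circularity, because the proof of Theorem \ref{StochasticFusion} uses only the sum-to-one property (Theorem 6 of \cite{KMMO}), never the non-negativity being established here; and (ii) the pole $z'=q^{m+1}$ of \eqref{Stol=1} lies strictly outside your closed ranges for $m\geq 1$ (since $q^{m+1}<q^{1-m}$ when $q<1$ and $q^{m+1}>q^{1-m}$ when $q>1$), so all entries are finite on the whole range, and at the endpoint $z'=q^{1-m}$ the vanishing numerator you identified keeps the binding entry equal to $0$ rather than flipping sign.
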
 
\begin{proof}
We have already seen that the output of $S(z)$ sums to $1$. Since the entries of the gauge transformation $\Ga$ are non--negative (for $q>0$), it remains to show that the entries of $R(z)$ are non--negative. 

By \eqref{l=1}, the proposition holds for $l=m=1$, when $q^{2-l-m}=1$. By the fusion procedure \eqref{Fusion}, if $ 0 \leq zq^{m+l-2} \leq 1$ in the first case (and $zq^{m+l-2} \geq 1$ in the second case), then  $R(z)$ is a product of matrices with non--negative entries, so is itself a matrix with non--negative entries.

The equation \eqref{Fusion2} can also be used to arrive at the same result. For $l=1$ and general values of $m$, \eqref{l=1} shows that non--negativity holds for $0 \leq z \leq q^{1-m}$ in the first case (and $z \geq q^{1-m}$ in the second case). For general values of both $l$ and $m$, the necessary inequality from \eqref{Fusion2} is $0 \leq z q^{l-1} \leq  q^{1-m}$ or $zq^{l-1} \geq q^{1-m}$.

\end{proof}

Note that this is not an exhaustive list of all values for which $S(z)$ is stochastic, since $z=q^{l-m}$ is not included. Also note that the second case of Proposition \ref{Stochastic} is similar to (3.9) of \cite{BM}.

There is a certain symmetry in the two cases in Proposition \ref{Stochastic}, in that the second case can be derived from the first under simultaneous change of variables $q\rightarrow q^{-1},z\rightarrow z^{-1}$. Indeed, it turns out the two cases are related according to the choice of co--product and the charge reversal $\Pi$. Recall the alternative co--product $\Delta_0$ defined in section \ref{RT}, and that $R(z)$ was uniquely defined by the intertwining property \eqref{Inter} and the unit normalization condition \eqref{Unit}. Equation \eqref{Unit} is encapsulated in the sum--to--one property of stochastic matrices, and \eqref{Inter} is described in the next proposition.

\begin{proposition}\label{Invert}
The $R$--matrix is preserved under simultaneous inversion of the spectral parameter $z$, asymmetry parameter $q$, and charge reversal, in the sense that for all $u \in \mathcal{U}_q(A_n^{(1)})$, the equality
$$
\check{R}(z_2/z_1)\Big|_{q\rightarrow q^{-1}} \circ \Pi^{\otimes 2} (\pi_l^{z_1} \otimes \pi_m^{z_2})(\Delta_{\mathrm{0}}(u))\Pi^{\otimes 2} = \Pi^{\otimes 2} (\pi_m^{z_1} \otimes \pi_l^{z_2})(\Delta_{\mathrm{0}}(u))\Pi^{\otimes 2} \circ \check{R}(z_2/z_1)\Big|_{q\rightarrow q^{-1}}
$$
holds as operators on $V_l ^{z_1}\otimes V_m^{z_2}$.
\end{proposition}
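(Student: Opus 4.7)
The strategy is to derive the proposition from the standard intertwining property \eqref{Inter} via a careful analysis of how conjugation by $\Pi$ and the substitution $q\to q^{-1}$ act on the representation. First, a direct calculation from the action formulas \eqref{ACT} shows that for $1 \leq i \leq n$ one has $\Pi\,\pi_l^{z}(e_i)\,\Pi = \pi_l^{z}(f_{n+1-i})$ and $\Pi\,\pi_l^{z}(f_i)\,\Pi = \pi_l^{z}(e_{n+1-i})$, while for the affine generators a spectral-parameter inversion appears: $\Pi\,\pi_l^{z}(e_0)\,\Pi = \pi_l^{z^{-1}}(f_0)$ and $\Pi\,\pi_l^{z}(f_0)\,\Pi = \pi_l^{z^{-1}}(e_0)$. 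Combined with the elementary identity $\pi_l^{z}(k_i)|_{q\to q^{-1}} = \pi_l^{z}(k_i^{-1})$, these calculations yield, for any Chevalley generator $u \in \{e_i, f_i : 0 \leq i \leq n\}$, the key conjugation formula
\begin{equation*}
\Pi^{\otimes 2}(\pi_l^{z_1} \otimes \pi_m^{z_2})(\Delta_0(u))\Pi^{\otimes 2} = (\pi_l^{z_1^{-1}} \otimes \pi_m^{z_2^{-1}})(\Delta(\omega(u)))\Big|_{q\to q^{-1}},
\end{equation*}
together with an analogous identity on the other tensor factor obtained by swapping $l \leftrightarrow m$ and $z_1 \leftrightarrow z_2$.

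To prove the proposition for such $u$, I would apply \eqref{Inter} with spectral parameters $(z_1^{-1}, z_2^{-1})$, whose ratio is $z_2/z_1$, apply the substitution $q\to q^{-1}$ to both sides, and then set $v = \omega(u)$. Using the key conjugation formula above to rewrite both sides in terms of $\Pi^{\otimes 2}(\,\cdot\,)(\Delta_0(u))\Pi^{\otimes 2}$ produces the desired intertwining. For the remaining generators $u = k_i^{\pm 1}$, the conjugation formula fails (since $\omega(k_i) = k_{n+1-i}^{-1}$, not $k_{n+1-i}$), so I would proceed by direct computation: since $\Delta_0(k_i^{\pm 1}) = k_i^{\pm 1} \otimes k_i^{\pm 1}$ acts diagonally on the weight basis, the claimed identity reduces on each basis vector $\vert \alpha, \beta \rangle$ to equality of two scalar factors multiplying $\check{R}(z_2/z_1)|_{q\to q^{-1}}\vert \alpha,\beta\rangle$, which is forced by the particle-conservation property \eqref{PCP} (preserved by $q\to q^{-1}$). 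Since both sides of the proposition are algebra homomorphisms in $u$, verification on the generating set $\{e_i, f_i, k_i^{\pm 1}\}$ extends to all of $\mathcal{U}_q(A_n^{(1)})$ by multiplicativity.

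The main subtlety lies in the spectral-parameter bookkeeping of the first step: the choice $(z_1^{-1}, z_2^{-1})$ in \eqref{Inter} is precisely what is required to absorb the inversions appearing in $\Pi\pi^z(e_0)\Pi$ and $\Pi\pi^z(f_0)\Pi$, so that the affine and non-affine generators can be handled by the same formula. The Cartan generators $k_i$ require a separate treatment because $\omega$ inverts them, but particle conservation supplies the missing identity at no additional cost.
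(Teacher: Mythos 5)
Your proposal is correct and follows the same route as the paper's proof: establish the conjugation identities $\Pi\,\pi_m^z(e_i)\,\Pi = \pi_m^{1/z}(f_{n+1-i})$ and their companions, assemble them into the formula $\Pi^{\otimes 2}(\pi_l^{z_1}\otimes\pi_m^{z_2})(\Delta_0(u))\Pi^{\otimes 2} = (\pi_l^{1/z_1}\otimes\pi_m^{1/z_2})(\Delta(\omega u))\big|_{q\to q^{-1}}$, and then sandwich \eqref{Inter} (taken at spectral parameters $1/z_1,1/z_2$, whose ratio is $z_2/z_1$, with $q\to q^{-1}$ applied throughout) between the two conjugations. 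The one genuine divergence is your treatment of the Cartan generators, and here your instinct is sharper than the paper's write-up. The paper folds $k_i^{\pm 1}$ into the same $\omega$-formula with the remark that ``similar arguments hold''; read literally this is off by an inversion, exactly as you observed: conjugating $\Delta_0(k_i)$ by $\Pi^{\otimes 2}$ produces the action of $k_{n+1-i}^{-1}\otimes k_{n+1-i}^{-1}$, whereas $(\pi\otimes\pi)(\Delta(\omega k_i))\big|_{q\to q^{-1}}$ is the action of $k_{n+1-i}\otimes k_{n+1-i}$, since $\pi(k_j^{\pm 1})\big|_{q\to q^{-1}} = \pi(k_j^{\mp 1})$. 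The paper's conclusion is unaffected because the same discrepancy appears on both sides of its chain of equalities and cancels (equivalently, the one-line repair is to apply \eqref{Inter} with $k_{n+1-i}=\omega(k_i)^{-1}$ in place of $\omega(k_i)$). Your alternative --- verifying $u=k_i^{\pm 1}$ directly, using that $\Delta_0(k_i^{\pm 1})$ acts diagonally on the weight basis and that the weights of the components of $\check{R}(z_2/z_1)\big|_{q\to q^{-1}}\vert\alpha,\beta\rangle$ are constrained by particle conservation \eqref{PCP}, which is preserved under $q\to q^{-1}$ --- is equally valid and makes the Cartan case explicit rather than implicit, at the mild cost of invoking \eqref{PCP}, which the $\omega$-based repair avoids. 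Your closing multiplicativity step, extending the intertwining from the generating set to all of $\mathcal{U}_q(A_n^{(1)})$, is sound and is left implicit in the paper.
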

\begin{proof}
First show that as operators on $V_m^z$ for any $m \geq 0$,
$$
\Pi \pi_m^z(e_i) \Pi = \pi_m^{1/z}(f_{n+1-i}), \quad \Pi \pi_m^z(f_i) \Pi = \pi_m^{1/z}(e_{n+1-i}), \quad, \Pi \pi_m^z(k_i) \Pi = \pi_m^{1/z}(k_{n+1-i}^{-1})
$$
This is actually straightforward from $(\alpha + \hat{i})' = \alpha' - \widehat{n{+}1{-}i}$. For instance,
\begin{align*}
\Pi \pi_m^z(e_i) \Pi  \vert \alpha \rangle &= \Pi \pi_m^z(e_i) \vert \alpha' \rangle \\
&= \Pi z^{\delta_{i,0}} [\alpha_{n+2-i}]_q \vert \alpha' - \hat{i} \rangle \\
&= \Pi z^{\delta_{i,0}} [\alpha_{n+2-i}]_q \vert (\alpha +   \widehat{n{+}1{-}i})' \rangle\\
&= \pi_m^{1/z}(f_{n+1-i})\vert \alpha \rangle
\end{align*}
and similarly for $f_i$ and $k_i$. 

And now as operators on $V_l ^{z_1}\otimes V_m^{z_2}$,
\begin{align*}
(\Pi \otimes \Pi) (\pi_l^{z_1} \otimes \pi_m^{z_2})(\Delta_{\mathrm{0}}(e_i) )(\Pi \otimes \Pi) &= (\Pi \otimes \Pi) (\pi_l^{z_1} \otimes \pi_m^{z_2})( e_i \otimes 1 + k_i ^{-1}\otimes e_i)  (\Pi \otimes \Pi)  \\
&= (\pi_l^{1/z_1} \otimes \pi_m^{1/z_2} )\left( f_{n+1-i} \otimes 1 + k_{n+1-i}^{} \otimes f_{n+1-i}\right) \\
&= (\pi_l^{1/z_1} \otimes \pi_m^{1/z_2} )\left( f_{n+1-i} \otimes 1 + k_{n+1-i}^{-1} \otimes f_{n+1-i}\right)\Big|_{q\rightarrow q^{-1}} \\
&=  (\pi_l^{1/z_1} \otimes \pi_m^{1/z_2} ) (\Delta(f_{n+1-i})) \Big|_{q\rightarrow q^{-1}},
\end{align*}
where the third equality reflects the fact that the action of $f_{n+1-i}$ is preserved under $q\rightarrow q^{-1}$, but the action of $k_{n+1-i}$ is inverted. Similar arguments hold for $\Delta_0(f_i),\Delta_0(k_i)$. Therefore, recalling the definition of the involution $\omega$, 
\begin{align*}
\check{R}(z_2/z_1)\Big|_{q\rightarrow q^{-1}} \circ \Pi^{\otimes 2} (\pi_l^{z_1} \otimes \pi_m^{z_2})(\Delta_{\mathrm{0}}(u))\Pi^{\otimes 2} &= \check{R}(z_2/z_1)\Big|_{q\rightarrow q^{-1}} \circ (\pi_l^{{1/z_1}} \otimes \pi_m^{1/z_2})(\Delta_{\mathrm{}}(\omega u))\Big|_{q\rightarrow q^{-1}}  \\
&= (\pi_m^{{1/z_1}} \otimes \pi_l^{1/z_2})(\Delta_{\mathrm{}}(\omega u))\Big|_{q\rightarrow q^{-1}}  \circ \check{R}(z_2/z_1)\Big|_{q\rightarrow q^{-1}} \\
&= \Pi^{\otimes 2} (\pi_m^{z_1} \otimes \pi_l^{z_2})(\Delta_{\mathrm{0}}(u))\Pi^{\otimes 2} \circ \check{R}(z_2/z_1)\Big|_{q\rightarrow q^{-1}},
\end{align*}
as needed. 
\end{proof}
Note that the differing choice of co--product would also result in a different expression for the gauge transformation, since the latter comes from the $U_q(A_n)$--orbit of \eqref{Unit}, and the action of $U_q(A_n)$ is determined by the co--product.

\subsection{Lumpability of $S(z)$}
Recall the definition of lumpability in section \ref{LumpSection}. The next proposition says that projecting the $\mathcal{U}_q(A_n^{(1)})$ model onto consecutive particles is the $\mathcal{U}_q(A_p^{(1)})$ model.   

\begin{proposition}\label{JJJ}
Fix $1 \leq r \leq n$. The stochastic matrix $S(z)$ on $V_l \otimes V_m$ is lumpable with respect to the partition on $\mathcal{B}_l^{(n+1)} \times \mathcal{B}_m^{(n+1)}$ defined by 
\begin{align*}
(\alpha,\beta) \sim (\gamma,\delta) & \text{ if } \alpha_r + \alpha_{r+1} = \gamma_r + \gamma_{r+1} \text{ and } \beta_r + \beta_{r+1} = \delta_r + \delta_{r+1} \\
& \quad \text{ and } \alpha_s=\gamma_s,\beta_s=\delta_s \text{ for } s \neq r,r+1.
\end{align*}
The lumped $S(z)$ matrix is the $S(z)$ matrix acting on $\mathcal{B}_l^{(n)} \times \mathcal{B}_m^{(n)}.$

Furthermore, given any $1 \leq r_1 < r_2 < \ldots < r_p \leq n+1$, $S(z)$ is lumpable with respect to the partition
\begin{align*}
(\alpha,\beta) \sim (\gamma, \delta) & \text{ if } \alpha_1 + \ldots + \alpha_{r_1-1} = \gamma_1 + \ldots + \gamma_{r_1-1} \text{ and } \beta_1 + \ldots + \beta_{r_1-1} = \delta_1 + \ldots + \delta_{r_1-1} \\
& \text{ if } \alpha_{r_1} + \ldots + \alpha_{r_2-1} = \gamma_{r_1} + \ldots + \gamma_{r_2-1} \text{ and } \beta_{r_1} + \ldots + \beta_{r_2-1} = \delta_{r_1} + \ldots + \delta_{r_2-1} \\
& \cdots \\
& \text{ if } \alpha_{r_p} + \ldots + \alpha_{n+1} = \gamma_{r_p} + \ldots + \gamma_{n+1} \text{ and } \beta_{r_p} + \ldots + \beta_{n+1} = \delta_{r_p} + \ldots + \delta_{n+1}.
\end{align*}
The lumped $S(z)$ matrix is the $S(z)$ matrix acting on $\mathcal{B}_l^{(p)} \times \mathcal{B}_m^{(p)}.$
\end{proposition}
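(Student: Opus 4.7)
The second statement reduces to the first by iteration: the partition groups consecutive species and is therefore a composition of pairwise lumpings of adjacent species $r, r+1$. Lumpability is closed under composition (section \ref{LumpSection}), and since the lumped matrix of the first statement is itself a $\mathcal{U}_q(A_{n-1}^{(1)})$ model to which the first statement re-applies, iterating produces the $\mathcal{U}_q(A_p^{(1)})$ model.

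For the first statement, I would reduce to $l=1$ via the stochastic fusion identity \eqref{TWOSIDES}, $S(z) = (\Lambda^{(n)} \otimes \mathrm{id}) \circ S_{1,l+1}(zq^{l-1}) \cdots S_{l,l+1}(zq^{1-l}) \circ (\Xi^{(n)} \otimes \mathrm{id})$. Since composition of lumpable maps is lumpable with lumped value equal to the composition of lumped values, the task splits into (a) the $l = 1$ case of the first statement, and (b) compatibility of $\Lambda^{(n)}$ and $\Xi^{(n)}$ with the lumping, meaning that their lumps should coincide with $\Lambda^{(n-1)}$ and $\Xi^{(n-1)}$ composed with the natural pointwise lumping $V_1^{\otimes l} \to (V_1^{(n)})^{\otimes l}$.

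For (a), fix an input $(\epsilon_j, \beta)$; particle conservation $\delta = \epsilon_j + \beta - \epsilon_k$ leaves at most one entry per output species $k$. When the output block is a singleton $\{k_0\}$ with $k_0 \notin \{r, r+1\}$, inspection of \eqref{Stol=1} shows the single entry depends on $(\epsilon_j, \beta)$ only through its lumped data, since $\beta_{[1,k_0]}$, $\beta_{[1,k_0-1]}$, and $\beta_{k_0}$ are determined by $\tilde\beta$. When the output block is the merged $\{r, r+1\}$, summing the $k = r$ and $k = r+1$ entries collapses via the identity $(1 - q^{2\beta_r}) + q^{2\beta_r}(1 - q^{2\beta_{r+1}}) = 1 - q^{2(\beta_r + \beta_{r+1})}$ to exactly the $\mathcal{U}_q(A_{n-1}^{(1)})$ entry with merged count $\tilde\beta_r = \beta_r + \beta_{r+1}$; independence of the input representative when $j \in \{r, r+1\}$ follows by a parallel analysis in which the trichotomy $k < j$, $k = j$, $k > j$ in \eqref{Stol=1} is averaged out.

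The main technical point is (b). The operator $\Lambda^{(n)}$ is manifestly compatible with lumping because it simply sums species-indicator vectors. For $\Xi^{(n)}$, let $P = \{i : \vec{\tilde\alpha}_i = \tilde\epsilon_r\}$ be the positions in $\vec{\tilde\alpha}$ carrying the merged species; lifting $\vec{\tilde\alpha}$ to $\vec\alpha$ with $\sum_i \vec\alpha_i = \alpha$ amounts to choosing a subset $S \subseteq P$ of size $\alpha_r$ to carry $\epsilon_r$ and its complement in $P$ to carry $\epsilon_{r+1}$. The difference $E_{\vec\alpha} - E_{\vec{\tilde\alpha}}$ equals the number of inversions within $P$ of the associated binary sequence, so \eqref{qBin} (with $q$ replaced by $q^{-2}$ and the ambient set by $P$) yields $\sum_{S \subseteq P,\, |S| = \alpha_r} q^{-2(E_{\vec\alpha} - E_{\vec{\tilde\alpha}})} = \binomqqq{|P|}{\alpha_r}$. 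Because $Z_\alpha$ equals a $q^{-2}$-multinomial coefficient whose only $\alpha_r$-dependence is precisely the factor $\binomqqq{|P|}{\alpha_r}$, the quotient $Z_\alpha^{-1} \binomqqq{|P|}{\alpha_r}$ is independent of $\alpha_r$ within its input lumped class and equals $Z_{\tilde\alpha}^{-1}$, confirming that the lump of $\Xi^{(n)}$ is $\Xi^{(n-1)}$.
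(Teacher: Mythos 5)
Your proposal is correct and follows essentially the same route as the paper's proof: reduce the second statement to the first by composing projections, use the stochastic fusion decomposition of Theorem \ref{StochasticFusion} together with the fact that lumpability is closed under composition, and verify the $l=1$ case by the explicit case analysis of \eqref{Stol=1} with the telescoping identity $(1-q^{2\beta_r})+q^{2\beta_r}(1-q^{2\beta_{r+1}})=1-q^{2(\beta_r+\beta_{r+1})}$. Your only departure is that you spell out, via the inversion statistic and \eqref{qBin}, why $\Xi^{(n)}$ lumps to $\Xi^{(n-1)}$ --- a step the paper simply declares straightforward --- and your argument for it is correct.
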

\begin{proof}
Since the projections can be composed, it suffices to prove the first statement. 

By Theorem \ref{StochasticFusion}, $S(z)$ is a composition of stochastic operators, so it suffices to show that each of those operators is lumpable with respect to the same partition. It is straightforward that $\Xi^{(n)}$ and $\Lambda^{(n)}$ are lumpable. The matrix $S(z)$ acting on $V_1 \otimes V_m$ is lumpable, which follows from the following calculations from \eqref{Stol=1}:

For $1\leq j\leq r-1$,
\begin{align*}
({q^{m+1}-z} )(S(z)_{\epsilon_j,\beta}^{\epsilon_r,\beta+\epsilon_j-\epsilon_r}  + S(z)_{\epsilon_j,\beta}^{\epsilon_{r+1},\beta+\epsilon_j-\epsilon_{r+1}} )&= {-q^{2\beta_{[1,r-1]} - m + 1} \left( 1 - q^{2\beta_r} \right) -q^{2\beta_{[1,r]} - m + 1} \left( 1 - q^{2\beta_{r+1}} \right)} \\
&={-q^{2\beta_{[1,r-1]} - m + 1} \left( 1 - q^{2(\beta_r+ \beta_{r+1})} \right)  }.
\end{align*}
For $j=r$,
\begin{align*}
(q^{m+1}-z)(S(z)_{\epsilon_j,\beta}^{\epsilon_r,\beta+\epsilon_j-\epsilon_r}  + S(z)_{\epsilon_j,\beta}^{\epsilon_{r+1},\beta+\epsilon_j-\epsilon_{r+1}} ) &=  q^{2\beta_{[1,r]}  -  m + 1} \left( 1 - q^{-2\beta_r+m-1}z\right) -q^{2\beta_{[1,r]} - m + 1} \left( 1 - q^{2\beta_{r+1}} \right) \\
&= q^{2\beta_{[1,r]}  -  m + 1}\left( q^{2\beta_{r+1}} - q^{-2\beta_r+m-1} z\right) \\
&= q^{2\beta_{[1,r+1]}  -  m + 1}\left( 1 - q^{-2(\beta_r+\beta_{r+1})+m-1} z\right).
\end{align*}
For $j=r+1$,
\begin{align*}
(q^{m+1}-z)(S(z)_{\epsilon_j,\beta}^{\epsilon_r,\beta+\epsilon_j-\epsilon_r}  + S(z)_{\epsilon_j,\beta}^{\epsilon_{r+1},\beta+\epsilon_j-\epsilon_{r+1}}  )&= -q^{2\beta_{[1,r-1]}}z(1-q^{2\beta_r})+  q^{2\beta_{[1,r+1]}  -  m + 1} \left( 1 - q^{-2\beta_{r+1}+m-1}z\right)\\
&=  q^{2\beta_{[1,r+1]}  -  m + 1} - q^{2\beta_{[1,r-1] }}  z \\
&=  q^{2\beta_{[1,r+1]}  -  m + 1} \left( 1 - q^{-2(\beta_{r-1}+\beta_r)+m-1}z \right).
\end{align*}
For $j>r+1$, 
\begin{align*}
(q^{m+1}-z)(S(z)_{\epsilon_j,\beta}^{\epsilon_r,\beta+\epsilon_j-\epsilon_r}  + S(z)_{\epsilon_j,\beta}^{\epsilon_{r+1},\beta+\epsilon_j-\epsilon_{r+1}}  )&= -q^{2\beta_{[1,r-1]}}z(1 -q^{2\beta_r}) -q^{2\beta_{[1,r]}}z(1 -q^{2\beta_{r+1}}) \\
&= -q^{2\beta_{[1,r-1]}}z( 1 - q^{2(\beta_r+\beta_{r+1})}).
\end{align*}

\end{proof}

\subsection{Analytic Continutation}
In the case when $l=1$, the formula \eqref{Stol=1} allows for an analytic continuation in the variable $\mu=q^{-m}$. The expressions for the matrix entries can be re-written as
$$
( \mu^{-1}q-z)\mathcal{S}(z)_{\epsilon_j,\beta}^{\epsilon_k,\delta}  = 1_{ \{\epsilon_j  + \beta = \epsilon_k + \delta \}} \times 
\begin{cases}
q^{2\beta_{[1,k]}   } \left( q\mu - q^{-2\beta_k}z\right), & \text{ if } k = j<n+1 , \\
q\mu^{-1} - q^{2 \vert \beta \vert }z, & \text{ if } k = j =n+1 , \\
-q^{2\beta_{[1,k-1]}  }q\mu \left( 1 - q^{2\beta_k} \right), & \text{ if } n+1>k > j , \\
q\mu (-q^{2 \vert \beta \vert  }   + \mu^{-2} ) , & \text{ if }  n+1 = k> j , \\
-q^{2 \beta_{[1,k-1]}  } z \left( 1 - q^{2\beta_k} \right), & \text{ if } k < j .
\end{cases}
$$
This is a matrix acting on $V_1 \otimes \bar{V}_{\infty}$. Note that the third line does not occur in the $n=1$ case. However, this scenario actually significantly restricts the cases in which $S(z)$ is stochastic.

\begin{proposition}
Assume that $q, z,\mu$ all take real values. The matrix $\mathcal{S}(z)$ is stochastic if and only if one of the following cases holds:
\begin{itemize}
\item 
$\mu=0$
\item
$\vert q\vert =1$ \text{ and either } $\mu^{-1} \leq \mu \leq z, \text{  or  } \mu^{-1} \geq \mu \geq z.$
\end{itemize}
\end{proposition}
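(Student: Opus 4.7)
The plan is to treat the two directions separately after first noting that the column-sum identity for $S(z)$ proved in \cite{KMMO} is a polynomial identity in $\mu^{-1}$, so it persists by analytic continuation to arbitrary real $\mu$. Hence stochasticity of $\mathcal{S}(z)$ reduces to showing that each of the five explicit entries displayed above is non-negative, and the proposition becomes a question about signs of elementary rational expressions in $q, z, \mu$. Throughout the analysis the relevant implicit assumption is $n \geq 2$, since for $n = 1$ the $(n+1 > k > j)$ row is vacuous and the stochasticity regime is broader (as in \cite{BP,CP}).

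For the sufficiency direction in the $|q| = 1$ case, I would substitute $q = 1$ (the $q = -1$ sub-case reduces to $q = 1$ after changing the sign of $\mu$, since $q^{2k} = 1$ either way and the linear-in-$q$ terms simply flip sign). Then the entries labeled $(n+1 > k > j)$ and $(k < j)$ vanish identically because of the factor $(1 - q^{2\beta_k}) = 0$. The remaining non-trivial ratios are $(\mu - z)/(\mu^{-1} - z)$ in case $(k = j < n+1)$ and $(\mu^{-1} - \mu)/(\mu^{-1} - z)$ in case $(n+1 = k > j)$; non-negativity of both is a product-of-signs condition on $\mu^{-1} - z$, $\mu - z$, and $\mu^{-1} - \mu$. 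A case split on the sign of $\mu^{-1} - z$ collapses this to the two chains $\mu^{-1} \leq \mu \leq z$ or $\mu^{-1} \geq \mu \geq z$. For the $\mu = 0$ case, I would multiply the defining identity through by $\mu$ to clear denominators and then take $\mu \to 0$; the resulting matrix degenerates so that each incoming particle is absorbed into the bulk site and the auxiliary output is always the vacuum vector $\epsilon_{n+1}$, which is manifestly stochastic.

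For the necessity direction, assume $\mathcal{S}(z)$ is stochastic and $\mu \neq 0$, and deduce $|q| = 1$ together with the stated inequality chain. The key obstruction comes from the $(n+1 = k > j)$ entry, which is proportional to $\mu(\mu^{-2} - q^{2|\beta|})/(q\mu^{-1} - z)$ and must carry a constant sign as $|\beta|$ ranges over $\mathbb{Z}_{\geq 0}$. If $|q| \neq 1$, then $q^{2|\beta|}$ tends to $0$ or $\infty$ as $|\beta| \to \infty$, so for large $|\beta|$ the sign of $\mu^{-2} - q^{2|\beta|}$ is forced to equal $\operatorname{sgn}(\mu^{-2})$ or $\operatorname{sgn}(-1)$. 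I would then combine with the $(n+1 > k > j)$ entry, whose sign requirement for $\beta_k \geq 1$ fixes $\operatorname{sgn}(\mu)\cdot\operatorname{sgn}(q\mu^{-1} - z)$ as $+1$ when $|q| > 1$ and as $-1$ when $|q| < 1$. Comparing the two sign requirements shows they are jointly satisfiable only when $\mu = 0$, a contradiction. With $|q| = 1$ now established, the necessity of the inequality chain follows by running the sufficiency calculation in reverse.

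The main obstacle will be the sign bookkeeping in the necessity argument: there are four regimes to track ($|q|$ above or below $1$ and $|\mu|$ above or below $1$), and the key incompatibility of the $(n+1 > k > j)$ and $(n+1 = k > j)$ sign constraints has to be extracted from an explicit case split using the freedom to let $|\beta|$ be both zero and arbitrarily large. A secondary subtlety is that the proposition as stated treats $q = 1$ and $q = -1$ uniformly, but the linear-in-$q$ terms in the original formula flip sign between the two, so a small change of variables ($\mu \mapsto -\mu$) is needed to reconcile the $q = -1$ calculation with the stated inequality chain in $\mu, z$.
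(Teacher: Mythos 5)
Your proposal is correct and follows essentially the same route as the paper's proof: dispose of $\mu=0$ directly, reduce the $\vert q\vert=1$ case to non-negativity of the two ratios $(\mu-z)/(\mu^{-1}-z)$ and $(\mu^{-1}-\mu)/(\mu^{-1}-z)$, and rule out $\vert q\vert\neq 1$ by sign analysis exploiting that $\vert\beta\vert$ is unbounded so $q^{2\vert\beta\vert}$ degenerates --- the only structural difference being that the paper extracts the $\vert q\vert>1$ contradiction from the second line ($q\mu^{-1}-q^{2\vert\beta\vert}z$) and the $\vert q\vert<1$ contradiction from the fourth, whereas you run the fourth line uniformly, which works equally well. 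Your explicit flags of the implicit $n\geq 2$ assumption (the third line is vacuous for $n=1$, where the stochasticity region is genuinely larger) and of the $q=-1$ reconciliation via $\mu\mapsto-\mu$ are points the paper's own write-up silently glosses; the one slip to fix in your bookkeeping is that the third line pins down $\operatorname{sgn}(q\mu)\cdot\operatorname{sgn}(q\mu^{-1}-z)$, not $\operatorname{sgn}(\mu)\cdot\operatorname{sgn}(q\mu^{-1}-z)$, which differ when $q<0$.
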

\begin{proof}
If $\mu=0$, then the second and fourth lines are equal to $1$ and all other lines are $0$ (corresponding to the fourth item of Remark \ref{SimpleCase}). In this case, $\mathcal{S}(z)$ is stochastic, so assume hereafter that $\mu \neq 0$. 
 
First consider the case when $\vert q \vert>1$. If $\mathcal{S}(z)$ is stochastic, then the fifth, third, and second lines respectively show that $z,q\mu$ and $q\mu^{-1} - q^{2 \vert \beta\vert}z$ all have the same sign. Since $\mu^2 \geq 0$, then $q\mu$ and $q\mu^{-1}$ have the same sign as well. But for sufficiently large values of $\vert \beta\vert$,  the expression $q\mu^{-1} - q^{2 \vert \beta\vert}z$ will have a different sign from $q\mu$. This is a contradiction.

Consider the case when $ \vert q\vert<1$ and $\mu^{-1}q-z > 0$. If $\mathcal{S}(z)$ is stochastic, then the third and fifth lines show that $q\mu,z<0$. If $q\mu <0$, then the fourth line shows that $\mu^{-2} - q^{2\vert \beta\vert}<0$ for all $\beta$. But for sufficiently large values of $\vert \beta\vert$, this implies that $\mu^{-2}<0$, which is also a contradiction.

Consider the case when $ \vert q\vert<1$ and $\mu^{-1}q-z < 0$. If $\mathcal{S}(z)$ is stochastic, then the third and fifth lines show that $q\mu,z>0$. If $q\mu >0$, then the fourth line shows that $\mu^{-2} - q^{2\vert \beta\vert}<0$ for all $\beta$. This is again a contradiction.

Now suppose $ \vert q \vert=1$. Then the third and fifth lines are always $0$, and the second line is always $1$. So it suffices to see that stochasticity holds if and only if 
$$
\frac{\mu^{-1}-\mu}{\mu^{-1}-z}\geq 0, \quad \frac{\mu-z}{\mu^{-1}-z} \geq 0.
$$
This happens precisely in the two cases listed. 

\end{proof}

Note that while this proposition does not imply that analytic continuation in the parameter $\mu$ is fruitless, it does seem to imply that analytic continuation needs to be done in both $\lambda=q^{-l}$ and $\mu=q^{-m}$ simultaneously, as was done in \cite{KMMO}.

\subsubsection{The single--species $n=1$ case}

Now consider when $n=1$. In this case, the $\mathcal{U}_q({A}_n^{(1)})$ stochastic vertex model matches that of \cite{BP},\cite{CP}, described in Section \ref{Osvm}. 

\begin{proposition}\label{Match}

When $n=1$, then 
$$
\mathcal{S}(- \alpha q^{l}\mu^{-1}) \Big|_{V_l \otimes \bar{V}_{\infty}} = \mathring{S}_{\alpha} \Big|_{V_l \otimes \bar{V}_{\infty}}.
$$
\end{proposition}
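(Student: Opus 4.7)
The plan is to verify the identity for $l=1$ by direct comparison of matrix entries, and then bootstrap to arbitrary $l$ using the stochastic fusion of Theorem~\ref{StochasticFusion}(a). The base case is a short computation; the general case is bookkeeping in the spectral parameters.

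For $l=1$ and $n=1$, substitute $z = -\alpha q\mu^{-1}$ into the analytic-continuation formula for $\mathcal{S}(z)_{\epsilon_j,\beta}^{\epsilon_k,\delta}$ displayed just above the proposition. The common prefactor becomes $\mu^{-1}q - z = q\mu^{-1}(1+\alpha)$, producing precisely the denominator $(1+\alpha)$ appearing in the entries of $\mathring{S}_\alpha$ in \eqref{n=1}. With $n=1$ the case ``$n{+}1 > k > j$'' is vacuous, leaving four cases $(j,k) \in \{1,2\}^2$. Setting $g = \beta_1$ and reducing each case, one finds the value $(\alpha + \mu^2 q^{2g})/(1+\alpha)$ for $(j,k)=(1,1)$, $(1+\alpha q^{2g})/(1+\alpha)$ for $(2,2)$, $(1-\mu^2 q^{2g})/(1+\alpha)$ for $(1,2)$, and $\alpha(1-q^{2g})/(1+\alpha)$ for $(2,1)$. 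These are exactly the four entries of $\mathring{S}_\alpha$ in \eqref{n=1}, after identifying ``particle'' with $\epsilon_1$ and ``hole'' with $\epsilon_2$ in both the auxiliary slot and the vertical slot.

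For general $l$, apply Theorem~\ref{StochasticFusion}(a) with $n=1$:
\[
\mathcal{S}(z)\big|_{V_l \otimes \bar V_\infty} = (\Lambda^{(1)} \otimes \mathrm{id}) \circ \mathcal{S}_{1,l+1}(zq^{l-1}) \cdots \mathcal{S}_{l,l+1}(zq^{1-l}) \circ (\Xi^{(1)} \otimes \mathrm{id}).
\]
As observed immediately after the definition of $\Lambda^{(n)}, \Xi^{(n)}$, at $n=1$ these operators coincide with the $\Lambda, \Xi$ of \cite{CP} used in the Corwin--Petrov fusion \eqref{FusionCP}. Setting $z = -\alpha q^l \mu^{-1}$, the spectral parameter of the $r$-th factor is
\[
zq^{l-2r+1} = -(\alpha q^{2(l-r)}) \cdot q\mu^{-1},
\]
which, by the $l=1$ case applied with $\alpha$ replaced by $\alpha q^{2(l-r)}$, equals $\mathring{S}_{\alpha q^{2(l-r)}}$ on $V_1 \otimes \bar V_\infty$. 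Letting $r$ run from $1$ to $l$ produces the sequence $\mathring{S}_{\alpha q^{2(l-1)}}, \mathring{S}_{\alpha q^{2(l-2)}}, \ldots, \mathring{S}_\alpha$, which is exactly the sequence appearing in \eqref{FusionCP}. The two fusion expressions therefore coincide factor by factor, yielding the proposition.

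The one mild subtlety is that Theorem~\ref{StochasticFusion}(a) was proved on $V_l \otimes V_m$, whereas the proposition is on $V_l \otimes \bar V_\infty$. Since every matrix entry involved is rational in $\mu = q^{-m}$, the fusion identity extends to $\bar V_\infty$ by analytic continuation in $\mu$, discussed in the preceding subsection. There is no real obstacle; the only place where care is needed is in getting the signs and $q$-powers right in the four $l=1$ entries and in tracking that the shifts $zq^{l-2r+1}$ produce exactly the sequence $\alpha q^{2(l-r)}$ demanded by \eqref{FusionCP}.
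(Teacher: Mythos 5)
Your proof is correct and takes essentially the same route as the paper's: direct verification of the $l=1$ case after substituting $z=-\alpha q\mu^{-1}$, followed by Theorem \ref{StochasticFusion}(a) together with analytic continuation in $\mu$, matching the spectral parameters $zq^{l-2r+1} = -(\alpha q^{2(l-r)})q\mu^{-1}$ with the sequence in \eqref{FusionCP}. The only cosmetic difference is that you check all four $l=1$ matrix entries directly, whereas the paper verifies only the diagonal cases $k=j$ and deduces the rest from the columns summing to $1$.
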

\begin{proof}
First consider the case when $l=1$.  In the expression for $\mathcal{S}(z)$, substitute $
-\alpha q \mu^{-1}=z$ to get
$$
(1+\alpha)\mathcal{S}(z)_{\epsilon_j,\beta}^{\epsilon_k,\delta}  = 1_{ \{\epsilon_j  + \beta = \epsilon_k + \delta \}} \times 
\begin{cases}
q^{2\beta_{[1,k]}   }  \left( \mu^2 + q^{-2\beta_k} \alpha\right), & \text{ if } k = j<n+1 , \\
1 + q^{2 \vert \beta \vert }\alpha, & \text{ if } k = j =n+1 , \\
-q^{2\beta_{[1,k-1]}  }\mu^2 \left( 1 - q^{2\beta_k} \right), & \text{ if } n+1 \neq k > j , \\
1 - \mu^2 q^{2 \vert \beta \vert  }   , & \text{ if } k = n+1 > j , \\
q^{2 \beta_{[1,k-1]}  }  \alpha \left( 1 - q^{2\beta_k} \right), & \text{ if } k < j .
\end{cases}
$$
When $n=1$, there are only two possible outputs given any input. Since the columns sum to $1$, it suffices to check that the formulas match when $k=j$. Setting $g = \vert \beta \vert = \beta_1$, the expressions from the first and second lines are respectively
$$
\frac{\alpha+\mu^2q^{2g}}{1+\alpha}, \quad \frac{1+q^{2g}\alpha}{1+\alpha}
$$
This matches \eqref{n=1}.

Now consider when $l>1$. By Theorem \ref{StochasticFusion} and the uniqueness of analytic continuation,
\begin{align*}
\mathcal{S}(-\alpha q^l \mu^{-1}) &= (\Lambda^{(1)} \otimes \mathrm{id} ) \circ \mathcal{S}_{1,l+1}( - \alpha q^{2l-1}\mu^{-1}) \cdots \mathcal{S}_{l,l+1}( - \alpha \mu^{-1} q) \circ (\Xi^{(1)} \otimes \mathrm{id}) \\
&=(\Lambda^{(1)} \otimes \mathrm{id} ) \circ [\mathring{S}_{\alpha q^{2l-2}}]_{1,l+1} \cdots [\mathring{S}_{\alpha }]_{l,l+1}  \circ (\Xi^{(1)} \otimes \mathrm{id}) 
\end{align*}
This is exactly \eqref{FusionCP}.
\end{proof}

\section{Intertwining of transfer matrices}\label{ALGDUA}

\subsection{Equivalent expression for duality function}
Let us relate the gauge in section \ref{KMMOBACK} and the ground state transformation in section \ref{KUANBACK}, which allows us to rewrite the duality function. Here, the duality function will also act on the auxiliary space $V_l$. Given $\mu = (q^{-m_1},\ldots, q^{-m_L})$, define
$$
\mu^{+x} = (q^{-m_1},\ldots,q^{-m_x},q^{-l},q^{-m_{x+1}},\ldots,q^{-m_L})
$$ 
and set $D^{+x}_{\mu}(u) = D_{\mu^{+x}}(u).$ In other words, $D^{+x}_{\mu}(u)$ acts as 
$$
\Pi^{\otimes L+1} \circ \Gr^{-1} \circ \Delta^{(L+1)}(u) \circ \Gr^{-1} \circ \left(B^{-1}\right)^{\otimes L+1}
$$
on the representation 
$$
V_{m_1} \otimes \cdots \otimes V_{m_x} \otimes V_l \otimes V_{m_{x+1}} \otimes \cdots \otimes V_{m_L}.
$$
The subscript $\mu$ and the superscript $x$ will be dropped if it is clear from the context, but the $^+$ will always be retained.  

\begin{proposition}\label{GaGr}

(a) The ground state transformation $\Gr$ is related to the gauge transform $\Ga$ by
\begin{equation}\label{Grub}
\mathrm{const} \cdot \Gr \circ B^{\otimes L} = \Ga^{-1},
\end{equation}
where $\mathrm{const}$ is a constant under particle conservation.

(b) For any $x$, 
$$
D^{+x}(u)=\mathrm{const} \cdot  \Pi^{\otimes L+1} \circ B^{\otimes L+1} \circ \Ga^{(L+1)} \circ \Delta^{(L+1)}(u) \circ \mathrm{Ga}^{(L+1)},
$$
where $\mathrm{const}$ is a constant under particle conservation.

\end{proposition}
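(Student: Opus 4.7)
The plan is to handle (a) by a direct computation of diagonal entries, and then (b) follows quickly from (a) by algebraic manipulation.

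For part (a), I observe that $\Gr$, $B^{\otimes L}$, and $\Ga^{-1}$ are all diagonal in the tensor--product basis $\{\vert \eta \rangle\}$, so the identity reduces to comparing scalar entries $\langle \eta \vert \Gr B^{\otimes L} \vert \eta \rangle$ and $\langle \eta \vert \Ga^{-1} \vert \eta \rangle$. Using the formula \eqref{Gr} for $\Gr$ together with the definition of $B$, and the elementary identity $(q^2)_k / [k]_q^! = q^{\binom{k}{2}} (1-q^2)^k$, the left--hand side decomposes into a single--site piece $\prod_{i,x} q^{\binom{\eta_i^x}{2}} (1-q^2)^{\eta_i^x}$ and a two--site piece $\prod_i \prod_{y<x} q^{-\eta_{i+1}^y \eta_{[1,i]}^x}$; the right--hand side $\Ga^{-1}$ contributes the two--site piece $\prod_i \prod_{y<x} q^{\eta_{[1,i]}^y \eta_{i+1}^x}$. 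Dividing thus leaves, for each $i$, a two--site exponent $-\sum_{y<x} ( \eta_{i+1}^y \eta_{[1,i]}^x + \eta_{[1,i]}^y \eta_{i+1}^x )$.

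The key identity to finish (a) is the symmetrization $\sum_{y<x} ( A^y B^x + A^x B^y) = (\sum_x A^x)(\sum_x B^x) - \sum_x A^x B^x$, applied with $A = \eta_{[1,i]}$ and $B = \eta_{i+1}$. This collapses the two--site exponent into a constant under particle conservation plus $\sum_x \eta_{[1,i]}^x \eta_{i+1}^x$. Summing over $i$ and combining with the site--local $\binom{\eta_i^x}{2}$ factors invokes the completed--square identity $\sum_{i=1}^{n+1} \binom{\eta_i^x}{2} + \sum_{1 \leq i<j \leq n+1} \eta_i^x \eta_j^x = \binom{m_x}{2}$, valid since $\sum_i \eta_i^x = m_x$ on $V_{m_x}$. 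The exponent of $q$ then depends only on the fixed parameters $m_x$, and $(1-q^2)^{\sum_{i,x}\eta_i^x}$ is a constant under particle conservation, proving (a).

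For part (b), I invoke (a) at level $L+1$ in the form $\Gr^{-1} = \mathrm{const} \cdot B^{\otimes L+1} \circ \Ga^{(L+1)}$, and substitute into the definition $D^{+x}(u) = \Pi^{\otimes L+1} \circ \Gr^{-1} \circ \Delta^{(L+1)}(u) \circ \Gr^{-1} \circ (B^{-1})^{\otimes L+1}$. Because $B^{\otimes L+1}$ and $\Ga^{(L+1)}$ are both diagonal and therefore commute, the trailing factor simplifies as $B^{\otimes L+1} \circ \Ga^{(L+1)} \circ (B^{-1})^{\otimes L+1} = \Ga^{(L+1)}$, producing the claimed expression. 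The $\mathrm{const}$ scalars are diagonal functions of the total particle counts per species, which are invariants of $\Delta^{(L+1)}(u)$, so they can be pulled past $\Delta^{(L+1)}(u)$ and combined into a single constant under particle conservation.

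The main obstacle is the bookkeeping in part (a): the two--site exponents in $\Gr$ and $\Ga^{-1}$ use different index patterns ($\eta_{i+1}^y \eta_{[1,i]}^x$ versus $\eta_{[1,i]}^y \eta_{i+1}^x$), so a symmetrization across $y<x$ is needed to see that they agree up to a constant depending only on species totals; the completed--square identity then absorbs the remaining site--local $\eta$ dependence into a function of the fixed parameters $m_x$. Part (b) is then a routine substitution exploiting the commutativity of diagonal operators.
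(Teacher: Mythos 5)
Your proof of (a) is correct and is essentially the paper's own argument: the paper likewise compares diagonal entries, converting $[k]_q^!$ into $(q^2)_k$ times $q^{k(k+1)/2}$-type factors, and then invokes exactly your two identities --- the per-site completed square $(m_x)^2=\sum_i(\eta_i^x)^2+2\sum_{i<j}\eta_i^x\eta_j^x$ and the cross-site symmetrization $\bigl(\sum_x\eta_i^x\bigr)\bigl(\sum_x\eta_j^x\bigr)=\sum_x\eta_i^x\eta_j^x+\sum_{y<x}(\eta_i^y\eta_j^x+\eta_j^y\eta_i^x)$ --- just in the opposite order and indexed by species pairs $(\eta_i,\eta_j)$ rather than $(\eta_{[1,i]},\eta_{i+1})$; these reorganizations are trivially equivalent. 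Part (b) is the same substitution $\Gr^{-1}=\mathrm{const}\cdot B^{\otimes L+1}\circ\Ga^{(L+1)}$ with $B^{\otimes L+1}\Ga^{(L+1)}(B^{-1})^{\otimes L+1}=\Ga^{(L+1)}$ as in the paper.

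One small inaccuracy in your justification for (b): the species totals are \emph{not} invariants of $\Delta^{(L+1)}(u)$ for general $u$ --- for instance $\Delta^{(L+1)}(f_i)$ converts a species-$(i{+}1)$ particle into a species-$i$ particle --- so the constant factors cannot literally be commuted past $\Delta^{(L+1)}(u)$. This does not damage the conclusion, because the paper's notion of ``constant under particle conservation'' is a function $c(\eta,\xi)$ depending on the row totals and the column totals \emph{separately}: one constant can be left on the far left (a function of the output totals, after moving past $\Pi^{\otimes L+1}$, which merely relabels the species) and the other on the far right (a function of the input totals), and their entrywise product is already a constant in the required two-argument sense, with no commutation past $\Delta^{(L+1)}(u)$ needed.
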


\begin{proof}
(a)
In order see the relationship between $\Gr$ and $\Ga$, use the identity
$$
[k]_q^! = [1]_q \cdots [k]_q = \frac{(q-q^{-1}) \cdots (q^k - q^{-k})}{ (q-q^{-1})^k } = \frac{(-1)^k q^{-k(k+1)/2} (1-q^2) \cdots (1-q^{2k}) }{ (q-q^{-1})^k } = \frac{(-1)^k q^{-k(k+1)/2}  }{ (q-q^{-1})^k } (q^2)_k.
$$
Therefore, recalling \eqref{Gr},
\begin{align*}
\Gr_{\eta}^{\eta} &= \mathrm{const} \cdot \prod_{i=1}^{n+1} \prod_{x=1}^L  \frac{1}{ (q^2)_{\eta_i^x}  }q^{(\eta_i^x )^2/2}  \prod_{1 \leq y <x \leq L} q^{  -\sum_{1 \leq i < j \leq n+1} \eta^y_j\eta^x_i} \\
&= \mathrm{const} \cdot \prod_{i=1}^{n+1} \prod_{x=1}^L  \frac{1}{ (q^2)_{\eta_i^x}  }q^{-\sum_{1\leq i<j\leq n+1} \eta_i^x \eta_j^x} \prod_{1 \leq y <x \leq L} q^{  -\sum_{1 \leq i < j \leq n+1} \eta^y_j\eta^x_i} \\ 
&= \mathrm{const} \cdot \prod_{i=1}^{n+1} \prod_{x=1}^L  \frac{1}{ (q^2)_{\eta_i^x}  } \prod_{1 \leq y <x \leq L}  q^{  \sum_{1\leq i<j \leq n+1} \eta^y_i\eta^x_j  },
\end{align*}
which implies that $\Gr \circ B^{\otimes L}=\Ga^{-1}$. Above, the second and third equalities use the respective identities
\begin{align*}
\mathrm{const} = (m_x)^2 = \left( \sum_{i=1}^{n+1} \eta^x_i \right)^2 &= \sum_{i=1}^{n+1}( \eta^x_i)^2  + 2 \sum_{1 \leq i<j \leq n+1} \eta^x_i\eta^x_j, \\
\mathrm{const}=\left(\eta_i^1 + \cdots + \eta_i^L\right)\left(\eta_j^1 + \cdots + \eta_j^L\right) &= \sum_{x=1}^L \eta_i^x \eta_j^x+ \sum_{1 \leq  y<x \leq L} \left(  \eta_i^y \eta_j^x + \eta_j^y\eta_i^x \right).
\end{align*}

(b)
By part (a),
\begin{align*}
D^{+x}(u) &= \Pi^{\otimes L+1} \circ \Gr^{-1} \circ \Delta^{(L+1)}(u) \circ \Gr^{-1} \circ \left(B^{-1}\right)^{\otimes L+1}\\
&=  \Pi^{\otimes L+1} \circ B^{\otimes L+1} \circ \Ga^{(L+1)} \circ \Delta^{(L+1)}(u) \circ \mathrm{Ga}^{(L+1)} .
\end{align*}

\end{proof}

The next lemma concerning the function $D^+(u_0)$ will be beneficial: from a probabilistic standpoint, a proper duality function should not act on the auxiliary space, and in certain cases the function $D^+$ can be reduced to $D$. 
\begin{lemma}\label{Reduction}
$$
\Q{l}^! \langle \xi \vert D(u_0) \vert \eta \rangle = \langle \xi, \mathrm{A} \vert D^{+L}(u_0) \vert \eta,\Omega\rangle =  \langle \Omega, \xi \vert D^{+0}(u_0) \vert \mathrm{A}, \eta \rangle.
$$
\end{lemma}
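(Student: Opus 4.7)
I will deduce Lemma \ref{Reduction} directly from the explicit formula \eqref{ExpExp} for $\langle \xi \vert D(u_0) \vert \eta \rangle$. By definition, $D^{+x}(u_0)$ is simply $D(u_0)$ on $L+1$ sites with spin parameter $q^{-l}$ inserted at position $x+1$, so both matrix elements in the lemma are specific evaluations of \eqref{ExpExp} on $L+1$ sites with the auxiliary position filled by $\Omega$ or $\mathrm{A}$.

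For the middle equality, I would substitute $\eta^{L+1}=\Omega=(0,\ldots,0,l)$ and $\xi^{L+1}=\mathrm{A}=(l,0,\ldots,0)$ into \eqref{ExpExp}. At the auxiliary site $L+1$, the factorial $\prod_i \Q{\Omega_i}^!$ collapses to $\Q{l}^!$; each $q$-binomial $\binomQ{l-\Omega_{[1,n-i]}-\mathrm{A}_{[1,i]}}{\Omega_{n+1-i}}=\binomQ{0}{0}=1$, since $\mathrm{A}_{[1,i]}=l$ and $\Omega_{n+1-i}=0$ for $i\in\{1,\ldots,n\}$; and the local exponential factor is $1$ because $\mathrm{A}_i\cdot\Omega_{[1,n+1-i]}=0$ for every such $i$. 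Crucially, the cross-site factors $q^{-\xi^x_i\cdot 2\eta^{L+1}_{[1,n+1-i]}}$ appearing for base sites $x\leq L$ are also trivial, since $\Omega_{[1,n+1-i]}=0$ for $i\geq 1$. Therefore the $(L+1)$-site expression factorizes as $\Q{l}^!$ times the $L$-site expression \eqref{ExpExp} applied to $(\xi,\eta)$.

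For the first equality, with the auxiliary now at position $1$ and $\eta^1=\mathrm{A}$, $\xi^1=\Omega$, I would argue symmetrically. The factorial at site $1$ contributes $\prod_i \Q{\mathrm{A}_i}^!=\Q{l}^!$; each $q$-binomial equals $1$ (namely $\binomQ{0}{0}$ for $i\leq n-1$, where $\mathrm{A}_{[1,n-i]}=l$ cancels the $l$, and $\binomQ{l}{l}$ for $i=n$); and every exponential prefactor at site $1$ carries $\xi^1_i=\Omega_i=0$ for $i\leq n$ and thus equals $1$. The cross-site terms emanating from $y=1$ are trivial for the same reason. This again produces the factor $\Q{l}^!$ and gives the first equality.

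The one subtlety is the normalization constant ``$\mathrm{const}$'' in \eqref{ExpExp}, which depends only on per-species totals; these totals shift by $l$ units in the appropriate species slot when the auxiliary is populated by $\mathrm{A}$ or $\Omega$. To pin this down I would compute a single base case, e.g.\ $\xi=\eta=\Omega$ on every base site, where \eqref{ExpExp} gives $\prod_x\Q{m_x}^!$ on $L$ sites and $\Q{l}^!\prod_x\Q{m_x}^!$ on $L+1$ sites, confirming the ratio is exactly $\Q{l}^!$. Alternatively one can use Proposition \ref{GaGr}(b) together with the pseudo-factorization \eqref{PseudoFac} to show abstractly that $\langle\xi',\Omega\vert\Delta^{(L+1)}(u_0)\vert\eta,\Omega\rangle=\langle\xi'\vert\Delta^{(L)}(u_0)\vert\eta\rangle$, because the $f_i$'s in the ``auxiliary'' exponentials $\exp_{q^2}((k_i^{-1})^{\otimes L}\otimes f_i)$ can only decrease $\alpha_{n+1}^{L+1}$ and therefore cannot preserve $\Omega$ at the auxiliary unless only the $k=0$ (identity) term is taken. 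There is no real conceptual obstacle: the essential content is that the extremal auxiliary vectors $\Omega$ and $\mathrm{A}$ annihilate every cross-site interaction in \eqref{ExpExp}, leaving only the site-local factorial $\Q{l}^!$.
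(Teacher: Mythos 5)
Your proof is correct and takes essentially the same approach as the paper: the paper's own two-line argument likewise verifies the lemma by direct substitution of the extremal auxiliary vectors $\mathrm{A}$ and $\Omega$ into the explicit formulas \eqref{ExpExp} and \eqref{ExpExp0}, observing that the added site contributes exactly the factor $\Q{l}^!$ while every $q$--binomial and exponential factor it touches equals $1$. Your closing worry about the normalization is moot at the level at which the lemma is stated and used, since \eqref{ExpExp} is taken with a fixed normalization and any residual constant under particle conservation is absorbable by Remark \ref{Constant}; your detailed checks simply spell out what the paper asserts succinctly.
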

\begin{proof}

To see that $\Q{l}^!\langle \xi \vert D^{}(u_0) \vert \eta \rangle = \langle \xi, \mathrm{A} \vert D^{+L}(u_0) \vert \eta,\Omega\rangle$, note that by the explicit expression for $D(u_0)$ in \eqref{ExpExp}, adding lattice sites to $\eta$ which contain no particles multiplies $D(u_0)$ by $\Q{l}^!$, due to the $\Q{\eta_n^x}^!$ term, with all others equal to $1$.

To see that $\Q{l}^! \langle \xi \vert D^{}(u_0) \vert \eta \rangle  =  \langle \Omega, \xi \vert D^{+0}(u_0) \vert \mathrm{A}, \eta \rangle$, examine the $q$--factorial terms from \eqref{ExpExp0}.
When $\eta = \vert \mathrm{A} \rangle$, so that $\eta_1^x=m_x$ and all other $\eta_i^x=0$, then $\eta_{[1,n-i]}^x = \eta_{[i,n-i-1]}^x$ for $i \neq n-1$. Thus the only change is from $i=1$, where $D(u_0)$ is multiplied by $\Q{l}^!$.
\end{proof}

\subsection{The one--site case}
Start with the case $L=1$. 

\begin{theorem}\label{2Site} As maps from $V_l^z \otimes V_m^w$ to $V_m^w \otimes V_l^z$, and for any $u \in \mathcal{U}_q( A_n^{(1)})$,
$$
\left(S(z/w)P\right)^* D^+(u) = D^+(u)P{S}(z/w).
$$
\end{theorem}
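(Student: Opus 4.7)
The plan is to expand both sides using Proposition~\ref{GaGr}(b), which rewrites $D^+(u) = \mathrm{const} \cdot \Pi^{\otimes 2} B^{\otimes 2} \Ga \Delta(u) \Ga$, together with $S(z) = \widetilde{\Ga}^{-1} R(z) \Ga$ (and its transpose $S^*(z) = \Ga R^*(z) \widetilde{\Ga}^{-1}$, obtained from the diagonality of $\Ga$ and $\widetilde{\Ga}$), and then reduce each side to the common expression
\[
\mathrm{const}\cdot \Pi^{\otimes 2} B^{\otimes 2}\, P\,\widetilde{\Ga}\, R(z)\, \Delta(u)\, \Ga.
\]
The only inputs needed are the intertwining property \eqref{Inter}, the transpose/twist symmetry \eqref{Trans}, the $\Pi$--conjugation \eqref{VVV}, and the diagonal commutativities in \eqref{Diag}. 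Notably the Yang--Baxter equation is not used, consistent with the remark in the abstract.

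For the LHS $(S(z)P)^*D^+(u)$, I would first invoke \eqref{VVV} in the form $\widetilde{\Ga}^{-1}\Pi^{\otimes 2} = \Pi^{\otimes 2}\Ga^{-1}$ to pull $\widetilde{\Ga}^{-1}$ across the $\Pi^{\otimes 2}$ of $D^+(u)$; the resulting $\Ga^{-1}$ commutes past $B^{\otimes 2}$ and cancels with the inner $\Ga$ of $D^+(u)$. Then \eqref{Trans} converts $R^*\Pi^{\otimes 2}B^{\otimes 2}$ into $\Pi^{\otimes 2}B^{\otimes 2}R$, and a second application of \eqref{VVV} as $\Ga\Pi^{\otimes 2} = \Pi^{\otimes 2}\widetilde{\Ga}$, together with the commutativities in \eqref{Diag} moving $P^\ast$, $\Pi^{\otimes 2}$ and $B^{\otimes 2}$ past one another, yields the target expression (identifying $P^\ast$ with $P$ as the swap $V_l\otimes V_m \to V_m\otimes V_l$).

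For the RHS $D^+(u)PS(z)$, the relevant $D^+(u)$ is $D^{+1}(u)$, which uses the gauge on $V_m\otimes V_l$; call it $\Ga'$. A short inspection of the defining formula for the gauge shows $\Ga' P = P\widetilde{\Ga}$, i.e.\ $\Ga'$ and $\widetilde{\Ga}$ are conjugate by $P$. Using this, the rightmost $\Ga' P \widetilde{\Ga}^{-1}$ collapses to $P$, leaving $\mathrm{const}\cdot \Pi^{\otimes 2}B^{\otimes 2}\Ga'\Delta_{m,l}(u) P R(z) \Ga$. The intertwining \eqref{Inter}, in the form $\Delta_{m,l}(u)(PR) = (PR)\Delta_{l,m}(u)$, then moves $\Delta(u)$ across $PR$, and a second application of $\Ga' P = P\widetilde{\Ga}$ produces the target expression.

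The main obstacle is careful bookkeeping: the operator $D^+(u)$ on the LHS acts on $V_l\otimes V_m$ (it is $D^{+0}$) while on the RHS it acts on $V_m \otimes V_l$ (it is $D^{+1}$), and these two copies use gauges that differ by the $P$--conjugation $\Ga' = P\widetilde{\Ga}P^{-1}$, a relation implicit in the excerpt but immediate from the explicit formula for $\Ga$. Likewise, the two occurrences of $P$ on the two sides are a priori different swap operators but coincide as matrices. Once these identifications are pinned down, the argument is an essentially mechanical chain of applications of \eqref{Inter}, \eqref{Trans}, \eqref{VVV}, and diagonal commutativity.
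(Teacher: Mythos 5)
Your proposal is correct and takes essentially the same route as the paper's proof: both sides are reduced to a common expression using only \eqref{Inter}, \eqref{Trans}, \eqref{VVV}, \eqref{Diag}, Proposition \ref{GaGr}(b), and the diagonality of $B$ and $\Ga$, with no Yang--Baxter input. The paper merely packages your explicit gauge relation $\Ga' P = P\widetilde{\Ga}$ into the tilde notation $\widetilde{M} = P M P$, first rewriting the claim as $S^*(z/w)D^+(u)P = PD^+(u)\widetilde{S}(z/w)$, but the chain of cancellations is the same.
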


\begin{proof}
The statement is equivalent to 
$$
PS^*(z/w) D^+(u) = D^+(u)P{S}(z/w).
$$
and then to
$$
S^*(z/w) D^+(u)P = PD^+(u)\widetilde{S}(z/w),
$$
as maps from $V_m \otimes V_l$ to $V_l \otimes V_m$.

We have that (abbreviating $B^{\otimes 2}$ to $B$ and $\Pi^{\otimes 2}$ to $\Pi$)
\begin{align*}
S^*(z/w) &= \Ga \circ R_{}^*(z/w) \circ \widetilde{\Ga}^{-1}\\
& \stackrel{\eqref{Trans}}{=} \Ga \circ \Pi \circ B \circ R_{}(z/w) B^{-1} \Pi^{-1} \widetilde{\Ga}^{-1}. 
\end{align*}
This implies that
\begin{align*}
S^*(z/w)D^+(u)P &= \Ga \circ \Pi \circ B \circ R_{}(z/w) B^{-1} \Pi^{-1} \widetilde{\Ga}^{-1} \circ \Pi \circ B \circ \Ga \circ \Delta_{}(u) \circ \Ga \circ P \\
&\stackrel{\eqref{VVV}}{=} \Pi \circ \widetilde{\Ga} \circ B \circ R_{}(z/w) B^{-1} \circ {\Ga}^{-1}  \circ B \circ \Ga \circ \Delta_{}(u) \circ \Ga \circ P \\
& = \Pi \circ P \circ {\Ga}^{} \circ P \circ B \circ R_{}(z/w) \circ \Delta_{}(u) \circ \Ga \circ P \\
& \stackrel{\eqref{Diag}}{=} \Pi \circ P \circ {\Ga}^{} \circ B \circ P \circ R_{}(z/w) \circ \Delta_{}(u) \circ \Ga \circ P
\end{align*}
Meanwhile,
\begin{align*}
PD^+(u) \widetilde{S}_{}(z/w) &= P \circ \Pi \circ B \circ \Ga \circ \Delta_{}(u) \circ \Ga \circ \Ga^{-1} \widetilde{R}_{}(z/w) \widetilde{\Ga}_{} \\
&= P \circ \Pi \circ B \circ \Ga \circ \Delta_{}(u) \circ \widetilde{R}_{}(z/w) \circ \widetilde{\Ga}_{}.
\end{align*}
Recalling that $\widetilde{M}=PMP,$ we have
$$
PD^+(u) \widetilde{S}_{}(z/w) = P_{} \circ \Pi \circ B \circ \Ga  \circ \Delta_{}(u) \circ P_{} \circ R_{}(z/w) \circ  \Ga_{} \circ P_{}
$$
and
$$
S_{}^*(z/w)D^+(u)P = \Pi \circ P_{} \circ B \circ \Ga  \circ P_{} \circ R_{}(z/w) \circ \Delta_{}(u) \circ \Ga_{} \ P_{}
$$
Finally, recall that $\Pi P=P\Pi$ (by \eqref{Diag}) and that by \eqref{Inter}
$$
P_{} R_{}(z/w) \Delta_{}(u) = \Delta_{}(u) P_{} R_{}(z/w).
$$
This finishes the proof.
\end{proof}

\begin{figure}

\caption{The below example shows the sequential update of $\mathcal{T}_{\text{rev}}$ from left to right, defined as the sequence of maps
$
V_l \otimes V_{m_1} \otimes V_{m_2} \otimes V_{m_3} \rightarrow V_{m_1} \otimes V_{l} \otimes V_{m_2} \otimes V_{m_3} \rightarrow V_{m_1} \otimes V_{m_2} \otimes V_{l} \otimes V_{m_3} \rightarrow  V_{m_1} \otimes V_{m_2} \otimes V_{m_3}  \otimes V_l.
$
Dotted lines indicate updates that have yet to occur, and solid lines with no arrows indicate no particles.}

\begin{center}

\includegraphics{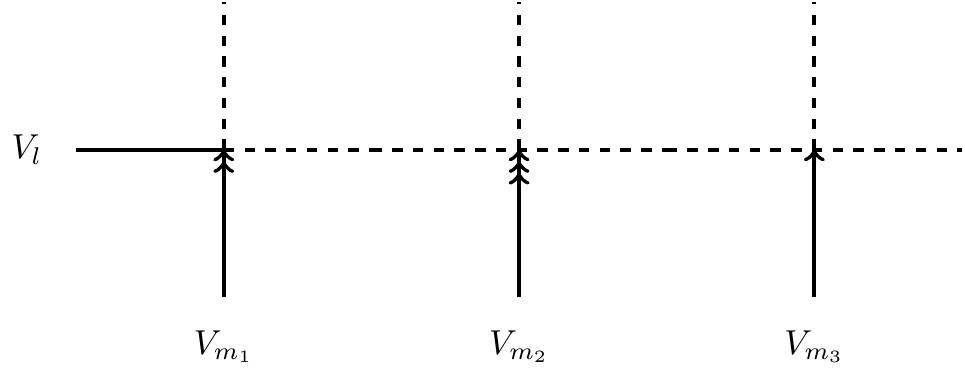}

\vspace{0.15in}

\includegraphics{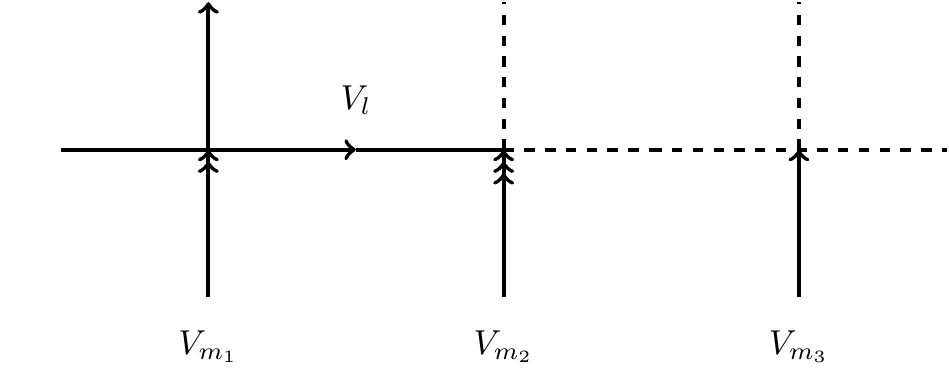}

\vspace{0.15in}

\includegraphics{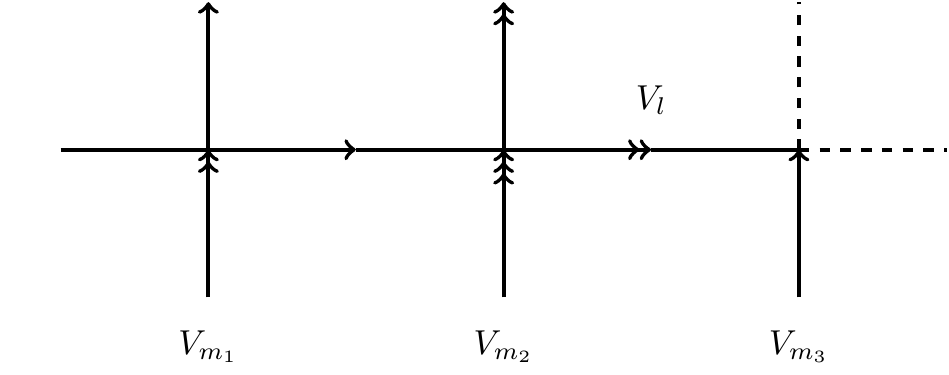}

\vspace{0.15in}

\hspace{0.25in} \includegraphics{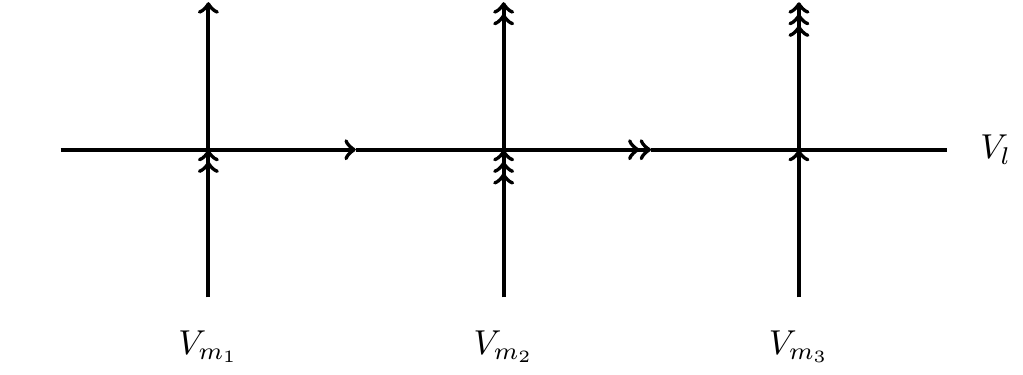}

\end{center}

\label{Tikz}
\end{figure}

\subsection{Extension to $L$ sites}

This section will extend the results of the previous section to $L$ lattice sites. 

The definition of the transfer matrix will be slightly modified in order to state the theorem. The transfer matrix (with open boundary conditions and left jumps) can be written as a map $\mathcal{T}: V^{(L)} \otimes V_l \rightarrow V_l \otimes V^{(L)}$ defined by
$$
\mathcal{T} = (S(z/w_1)P)_{12} \circ \cdots \circ (S(z/w_L)P)_{L,L+1},
$$
where each $(S(z/w_j)P)_{j,j+1}$ is the map acting on the $j,j+1$ tensor powers:
$$
(S(z/w_j)P)_{j,j+1} = \id{V^{[1,j-1]} } \otimes (S(z/w_j)P) \otimes \id{V^{[j+2,L+1]}}.
$$
At the same time, the space reversed transfer matrix (with right jumps) can be written as a map $V_l \otimes V^{(L)} \rightarrow V^{(L)} \otimes V_l$ defined by
$$
\mathcal{T}_{\text{rev}} =(PS(z/w_L))_{L,L+1} \circ \ldots \circ (PS(z/w_1))_{12}.
$$
If the dependence of $\mathcal{T}$ on the spectral and spin parameters needs to be emphasized, it will be written
$$
\mathcal{T}(l,z \vert \substack{m_1,\ldots,m_L \\ w_1,\ldots,w_L}) 
$$
and similarly for $\mathcal{T}_{\text{rev}}$. See Figure \ref{Tikz} for a pictorial understanding of the transfer matrix. 

We need a few more multi--site identities.

\begin{lemma}\label{LSite}
For any $u \in \mathcal{U}_q( A_n^{(1)}),$

\begin{multline}\label{Inter*}
\left(S(z/w_j)P\right)^*_{j,j+1} \Pi^{\otimes 2}_{j,j+1} B^{\otimes 2}_{j,j+1} \Ga_{j,j+1} \Delta^{(L+1)}(u) \Ga_{j,j+1} \\
= \Pi^{\otimes 2}_{j,j+1} B^{\otimes 2}_{j,j+1} \Ga_{j,j+1} \Delta^{(L+1)}(u) \Ga_{j,j+1} \left( PS(w/z_j)\right)_{j,j+1}.
\end{multline}

Furthermore, if $M_{12}$ satisfies particle conservation, i.e.
$$
\langle \xi \vert M \vert \eta \rangle = 0, \text{ if } \eta^1_i + \eta^2_i \neq \xi^1_i + \xi^2_i \text{ for some } 1\leq i \leq n+1, 
$$
then
\begin{equation}\label{GGG}
M_{12} \Ga_{13}\Ga_{23} = \Ga_{13}\Ga_{23} M_{12}, \quad M_{23}\Ga_{12}\Ga_{13} = \Ga_{12}\Ga_{13}M_{23}. 
\end{equation}
and
\begin{equation}\label{GGG2}
M_{12}\Pi^{\otimes 2}_{12} \Ga_{12}\Ga_{13}  = \Ga_{12}\Ga_{13} M_{12}\Pi^{\otimes 2}_{12} , \quad  M_{23}\Pi^{\otimes 2}_{23} \Ga_{12}\Ga_{13}  = \Ga_{12}\Ga_{13} M_{23}\Pi^{\otimes 2}_{23}. 
\end{equation}

\end{lemma}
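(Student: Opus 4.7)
The plan is to prove the three parts of the lemma separately, with \eqref{Inter*} reducing to the two-site statement of Theorem \ref{2Site} and the auxiliary identities \eqref{GGG}, \eqref{GGG2} following from the explicit diagonal formula for the gauge.

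For \eqref{Inter*}, the strategy is to localize everything at sites $j, j{+}1$. By co-associativity of the coproduct one may rewrite
$$
\Delta^{(L+1)}(u) \;=\; \bigl(\mathrm{id}^{\otimes (j-1)} \otimes \Delta \otimes \mathrm{id}^{\otimes(L-j)}\bigr) \circ \Delta^{(L)}(u),
$$
so that in Sweedler's notation each summand of $\Delta^{(L+1)}(u)$ has the form $u_{(1)} \otimes \cdots \otimes \Delta(u_{(j)}) \otimes \cdots \otimes u_{(L)}$, with $\Delta(u_{(j)})$ the only factor acting on the pair of sites $j, j{+}1$. All remaining factors $u_{(k)}$ for $k \neq j$ act on tensor components disjoint from $\{j, j{+}1\}$ and therefore commute past the local operators $(S(z/w_j)P)^*_{j,j+1}$, $\Pi^{\otimes 2}_{j,j+1}$, $B^{\otimes 2}_{j,j+1}$, and $\Ga_{j,j+1}$ on both sides. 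After stripping these factors, the identity \eqref{Inter*} reduces to
$$
(S(z/w_j)P)^*\, \Pi^{\otimes 2} B^{\otimes 2} \Ga\, \Delta(u_{(j)})\, \Ga \;=\; \Pi^{\otimes 2} B^{\otimes 2} \Ga\, \Delta(u_{(j)})\, \Ga\, (PS(z/w_j))
$$
on $V_l \otimes V_{m_j}$, summed over Sweedler components, which is exactly Theorem \ref{2Site} applied term by term.

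The identities \eqref{GGG} follow by inspection of the diagonal formula $\langle \eta \vert \Ga_{a,b} \vert \xi \rangle = \mathbf{1}_{\eta=\xi}\cdot q^{-\sum_{i=1}^{n} \xi^a_{[1,i]} \xi^b_{i+1}}$ for $a<b$. The product $\Ga_{13}\Ga_{23}$ is diagonal with entry $q^{-\sum_i (\xi^1_{[1,i]}+\xi^2_{[1,i]})\,\xi^3_{i+1}}$, depending on sites $1, 2$ only through the componentwise partial sums of $\xi^1+\xi^2$. Since $M_{12}$ preserves $\xi^1_i+\xi^2_i$ for every $i$ by the particle-conservation hypothesis, those partial sums are preserved and $M_{12}$ commutes past $\Ga_{13}\Ga_{23}$. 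Symmetrically, $\Ga_{12}\Ga_{13}$ has diagonal entry $q^{-\sum_i \xi^1_{[1,i]}(\xi^2_{i+1}+\xi^3_{i+1})}$, depending on sites $2, 3$ only through $\xi^2+\xi^3$, so it commutes with $M_{23}$. For \eqref{GGG2}, the same kind of argument applies after conjugating with $\Pi^{\otimes 2}_{12}$ (respectively $\Pi^{\otimes 2}_{23}$), using the single-bond relation \eqref{VVV} to track how $\Pi^{\otimes 2}\Ga\Pi^{\otimes 2}$ rewrites the diagonal entries; the combined operator $M\Pi^{\otimes 2}$ preserves the relevant partial sums on the appropriate pair of sites, and commutation through $\Ga_{12}\Ga_{13}$ follows from the same particle-conservation mechanism.

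The main obstacle is bookkeeping rather than genuine difficulty. In \eqref{Inter*} one must verify that all of $(S(z/w_j)P)^*_{j,j+1}$, $\Pi^{\otimes 2}_{j,j+1}$, $B^{\otimes 2}_{j,j+1}$, and $\Ga_{j,j+1}$ truly act as the identity on tensor factors outside $\{j,j{+}1\}$, so that the co-associativity reduction to Theorem \ref{2Site} is clean. In \eqref{GGG2} the care is in tracking how charge reversal via $\Pi^{\otimes 2}$ interacts with the explicit gauge entries so that, after conjugation, the dependence on the pair of sites still reduces to partial sums preserved by $M\Pi^{\otimes 2}$.
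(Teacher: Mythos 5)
Your treatments of \eqref{Inter*} and \eqref{GGG} are correct and essentially identical to the paper's own proof: the paper likewise invokes co--associativity, writing $\Delta^{(L+1)}(u)=\sum_{(u)}\Delta^{(j-2)}(u_{(1)})\ot\Delta(u_{(2)})\ot\Delta^{(L-j-1)}(u_{(3)})$ and applying Theorem \ref{2Site} to the middle factor at sites $j,j+1$, and it proves \eqref{GGG} by exactly your diagonal--entry computation, using that $\Ga_{13}\Ga_{23}$ (resp.\ $\Ga_{12}\Ga_{13}$) depends on the pair $(1,2)$ (resp.\ $(2,3)$) only through the componentwise sum $\xi^1+\xi^2$ (resp.\ $\xi^2+\xi^3$), which the particle--conservation hypothesis on $M$ preserves.

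The gap is in \eqref{GGG2}, where your key assertion --- that ``the combined operator $M\Pi^{\ot 2}$ preserves the relevant partial sums'' --- is false: composing with $\Pi^{\ot 2}$ charge--\emph{reverses} the conserved pair--sum, since a nonzero entry $\langle \xi \vert M_{12}\Pi^{\ot 2}_{12} \vert \eta \rangle$ forces $\xi^1_i+\xi^2_i=\eta^1_{n+2-i}+\eta^2_{n+2-i}$ rather than $\xi^1_i+\xi^2_i=\eta^1_i+\eta^2_i$. In fact \eqref{GGG2} as printed is not a valid commutation relation at all: take $M=\mathrm{Id}$ (trivially particle--conserving), $n=1$, three factors of $V_1$, and $\eta=\bigl((1,0),(0,1),(0,1)\bigr)$; the diagonal entry of $\Ga_{12}\Ga_{13}$, namely $q^{-\xi^1_1(\xi^2_2+\xi^3_2)}$, equals $q^{-2}$ at $\eta$ but $q^{0}$ after charge--reversing sites $1,2$, so $\Pi^{\ot 2}_{12}$ does not commute with $\Ga_{12}\Ga_{13}$, and an analogous example defeats the second identity. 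What the particle--conservation mechanism actually delivers is the twisted relation $M_{12}\Pi^{\ot 2}_{12}\,G=\bigl(\Pi^{\ot 2}_{12}\,G\,\Pi^{\ot 2}_{12}\bigr)\,M_{12}\Pi^{\ot 2}_{12}$ for $G$ any of the cross--gauge products: both $G$ and its $\Pi$--conjugate (again diagonal, by the same mechanism as \eqref{VVV}) are functions of the componentwise pair--sum, and $M$ conserves every component of that sum, hence commutes with each of them separately. This twisted version is also precisely what the proof of Theorem \ref{Major} requires, since there one multiplies \eqref{Inter*} on the left by the $\Pi$--conjugated cross--gauge product and on the right by the plain one, after which \eqref{GGG} absorbs the factor past $(PS)_{j,j+1}$. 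To be fair, the paper disposes of \eqref{GGG2} with only ``a similar argument applies,'' so the defect originates in the printed statement itself; but your proof, if made precise along the lines you sketch, would run into this obstruction rather than resolve it, and the honest repair is to replace \eqref{GGG2} by the conjugated identity above.
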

\begin{proof}
Equation \eqref{Inter*} is true by co--associativity: if 
$$
\Delta^{(2)}(u) = \sum_{(u)} u_{(1)} \otimes u_{(2)} \otimes u_{(3)}
$$
then
$$
\Delta^{(L+1)}(u) = \sum_{(u)} \Delta^{(j-2)}(u_{(1)}) \otimes \Delta(u_{(2)}) \otimes \Delta^{(L-j-1)}(u_{(3)}),
$$
so Theorem \ref{2Site} applies with $u_{(2)}$ at the $j,j+1$ lattice sites.

For the first identity in \eqref{GGG}, note that
\begin{align*}
\langle \xi \vert M_{12} \Ga_{13}\Ga_{23}  \vert \eta \rangle &= \langle \xi \vert M_{12} \vert \eta \rangle q^{-\sum_{i=1}^n \left(\eta^1_{[1,i]} + \eta^2_{[1,i]} \right)\eta^3_{i+1}} ,\\
\langle \xi \vert \Ga_{13}\Ga_{23} M_{12}  \vert \eta \rangle &= q^{-\sum_{i=1}^n \left(\xi^1_{[1,i]} + \xi^2_{[1,i]} \right)\xi^3_{i+1}}  \langle \xi \vert M_{12} \vert \eta \rangle .
\end{align*}
Since $M_{12}$ does not act on the third tensor power and $\Ga$ is diagonal, then $\xi^3=\eta^3$. By particle conservation, if $ \langle \xi \vert M_{12} \vert \eta \rangle$ is nonzero, then for $1\leq i \leq n$, $\left(\xi^1_{[1,i]} + \xi^2_{[1,i]} \right) = \left(\eta^1_{[1,i]} + \eta^2_{[1,i]} \right)$. This shows the identity. A similar argument applies for the other identities.

\end{proof}

\begin{theorem}\label{Major} As maps from $V_l\otimes V^{(L)}$ to $V^{(L)}\otimes V_l$, and for any $u\in \mathcal{U}_q(A_n^{(1)})$,
$$
\mathcal{T}^* D^+_{\mu}(u) = D^+_{\mu}(u) \mathcal{T}_{\text{rev}}
$$
\end{theorem}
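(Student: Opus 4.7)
The plan is induction on $k$, iteratively moving the auxiliary through the lattice one site at a time. For $0 \leq k \leq L$, define the partial compositions
$$\mathcal{T}^*_{[1,k]} := (S(z/w_k)P)^*_{k,k+1} \circ \cdots \circ (S(z/w_1)P)^*_{12}, \quad \mathcal{T}^{\mathrm{rev}}_{[1,k]} := (PS(z/w_k))_{k,k+1} \circ \cdots \circ (PS(z/w_1))_{12},$$
with $\mathcal{T}^*_{[1,0]} = \mathcal{T}^{\mathrm{rev}}_{[1,0]} = \mathrm{id}$. I will prove by induction on $k$ that
$$\mathcal{T}^*_{[1,k]} \circ D^{+0}_\mu(u) = D^{+k}_\mu(u) \circ \mathcal{T}^{\mathrm{rev}}_{[1,k]},$$
so that the case $k = L$ recovers the theorem (with $D^+_\mu(u)$ read as $D^{+0}_\mu(u)$ on the left and $D^{+L}_\mu(u)$ on the right, both sides being maps from $V_l \otimes V^{(L)}$ to the configuration with auxiliary at position $k+1$).

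The base case $k=0$ is immediate. For the inductive step $k \to k+1$, peeling off the outermost factors from each partial composition and invoking the inductive hypothesis reduces the claim to the single-step intertwining
$$(S(z/w_{k+1})P)^*_{k+1,k+2} \circ D^{+k}_\mu(u) = D^{+(k+1)}_\mu(u) \circ (PS(z/w_{k+1}))_{k+1,k+2}.$$
By Proposition \ref{GaGr}(b), both $D^{+k}_\mu(u)$ and $D^{+(k+1)}_\mu(u)$ equal, up to a common scalar depending only on per-site and per-species particle counts, the formal operator $E = \Pi^{\otimes L+1} B^{\otimes L+1} \Ga^{(L+1)} \Delta^{(L+1)}(u) \Ga^{(L+1)}$, applied in the two tensor orderings of $\{V_l, V_{m_1}, \ldots, V_{m_L}\}$ that differ by swapping positions $k+1$ and $k+2$. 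Since $S(z)$ conserves these counts and the scalar is symmetric in the site labels, the scalars agree on both sides, and the problem reduces to $(S(z/w_{k+1})P)^*_{k+1,k+2} \circ E = E \circ (PS(z/w_{k+1}))_{k+1,k+2}$.

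The two-site restriction of this identity—obtained by replacing $\Pi^{\otimes L+1} B^{\otimes L+1} \Ga^{(L+1)}$ by its two-site piece $\Pi^{\otimes 2}_{k+1,k+2} B^{\otimes 2}_{k+1,k+2} \Ga_{k+1,k+2}$—is exactly equation \eqref{Inter*} of Lemma \ref{LSite}. To bootstrap from the two-site version to the full $E$, I commute the spectator factors past $(S(z/w_{k+1})P)^*_{k+1,k+2}$: the $\Pi_j, B_j$ for $j \notin \{k+1,k+2\}$ and the gauges $\Ga_{pq}$ with $\{p,q\} \cap \{k+1,k+2\} = \emptyset$ commute trivially as they act on disjoint tensor factors, while the ``straddling'' gauges $\Ga_{r,k+1}\Ga_{r,k+2}$ (for $r \leq k$) and $\Ga_{k+1,r}\Ga_{k+2,r}$ (for $r \geq k+3$) commute with the two-site $S$-matrix in precisely these paired combinations by the particle-conservation identities \eqref{GGG} and \eqref{GGG2} of Lemma \ref{LSite}. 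The principal technical obstacle lies in this last step: individually, neither a straddling gauge factor nor the charge-reversal operators $\Pi_{k+1}, \Pi_{k+2}$ commute with $(S(z/w_{k+1})P)^*_{k+1,k+2}$, but the combined identities of Lemma \ref{LSite}—which rest essentially on particle conservation of $S(z)$—supply exactly the paired commutations needed to reduce cleanly to \eqref{Inter*}.
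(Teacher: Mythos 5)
Your proposal is correct and follows essentially the same route as the paper's proof: the paper likewise reduces the theorem to the one-step intertwining $(S(z/w_j)P)^*_{j,j+1}\circ D^{+(j-1)}(u) = D^{+j}(u)\circ (PS(z/w_j))_{j,j+1}$ and chains it across $j=1,\ldots,L$, then expands $\Ga^{(L+1)}$ so that the spectator factors commute trivially and the paired straddling gauges $\Ga_{xj}\Ga_{x,j+1}$, $\Ga_{jy}\Ga_{j+1,y}$ commute via the particle-conservation identities \eqref{GGG} and \eqref{GGG2}, reducing to \eqref{Inter*} of Lemma \ref{LSite}. Your explicit treatment of the overall constant from Proposition \ref{GaGr}(b) (noting it depends only on conserved totals, hence agrees for $D^{+k}$ and $D^{+(k+1)}$) is a point the paper glosses over, but it is a refinement of, not a departure from, the same argument.
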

\begin{proof}
Recall that the operator $D^+(u)$ is defined by
$$
\Pi^{\otimes L+1} \circ B^{\otimes L+1} \circ \Ga^{(L+1)} \circ \Delta^{(L+1)}(u) \circ \mathrm{Ga}^{(L+1)} .
$$
It suffices to show that $\left( S(z/w_j)P \right)^*_{j,j+1} \circ D^{+(j-1)}(u) = D^{+j}(u) \circ \left( PS(w/z_j)\right)_{j,j+1}$, for that would imply that
\begin{align*}
\mathcal{T}^*D^+(u) &= \left(S(z/w_L)P\right)^*_{L,L+1} \circ \cdots \circ \left(S(z/w_1)P\right)^*_{12} \circ D^{+0}(u)\\
&= D^{+L}(u) \circ (PS(z/w_L))_{L,L+1} \circ \cdots \circ \left(PS(z/w_1)\right)_{12} \\
&= D^{+L}(u)\mathcal{T}_{\text{rev}}.
\end{align*}
Expand the gauge as 
$$
\Ga^{(L+1)} = \Ga^{[1,j-1]} \Ga^{[j+2,L+1]}  \Ga_{j,j+1} \prod_{\substack{x<j \\  j+1 < y }} \Ga_{xy} \prod_{x=1}^{j-1} \Ga_{xj} \Ga_{x,j+1}\prod_{y=j+2}^{L+1} \Ga_{jy} \Ga_{j+1,y},
$$
and note that in this expansion of $\Ga^{(L+1)}$, the first three terms commute with any operator acting on the $j,j+1$ lattice sites. This means that the desired equality $\left( S(z/w_j)P \right)^*_{j,j+1} \circ D^+(u) = D^+(u) \circ \left( PS(w/z_j)\right)_{j,j+1}$ is equivalent to the equality
\begin{align*}
&\left(S(z/w_j)P\right)^*_{j,j+1} \Pi^{\otimes 2}_{j,j+1} B^{\otimes 2}_{j,j+1} \Ga_{j,j+1}\prod_{x=1}^{j-1} \Ga_{xj}\Ga_{x,j+1} \prod_{y=j+2}^{L+1} \Ga_{jy} \Ga_{j+1,y} \Delta^{(L+1)}(u) \\
& \quad \quad\quad\quad \times \Ga_{j,j+1}\prod_{x=1}^{j-1} \Ga_{xj}\Ga_{x,j+1} \prod_{y=j+2}^{L+1} \Ga_{jy} \Ga_{j+1,y} \\
&= \Pi^{\otimes 2}_{j,j+1} B^{\otimes 2}_{j,j+1} \Ga_{j,j+1}\prod_{x=1}^{j-1} \Ga_{xj}\Ga_{x,j+1} \prod_{y=j+2}^{L+1} \Ga_{jy} \Ga_{j+1,y} \Delta^{(L+1)}(u)\\
& \quad \quad\quad\quad \times  \Ga_{j,j+1}\prod_{x=1}^{j-1} \Ga_{xj}\Ga_{x,j+1} \prod_{y=j+2}^{L+1} \Ga_{jy} \Ga_{j+1,y} \left( PS(w/z_j)\right)_{j,j+1}
\end{align*}

Since $S$ satisfies particle conservation \eqref{PCP}, as does the permutation operator $P$, by \eqref{GGG} and \eqref{GGG2}, it then suffices to show that 
 \begin{multline*}
\left(S(z/w_j)P\right)^*_{j,j+1} \Pi^{\otimes 2}_{j,j+1} B^{\otimes 2}_{j,j+1} \Ga_{j,j+1} \Delta^{(L+1)}(u) \Ga_{j,j+1} \\
= \Pi^{\otimes 2}_{j,j+1} B^{\otimes 2}_{j,j+1} \Ga_{j,j+1} \Delta^{(L+1)}(u) \Ga_{j,j+1} \left( PS(w/z_j)\right)_{j,j+1}.
\end{multline*}
But this is exactly \eqref{Inter*}.

\end{proof}

\begin{remark}\label{Explain}
Note that despite the notational similarities between  Theorem \ref{Major} and the definition of duality, it is not technically accurate to describe this as a duality result. This is because the maps send $V_l \otimes V_{m_1} \otimes \cdots \otimes V_{m_L}$ to  $V_{m_1} \otimes \cdots \otimes V_{m_L} \otimes V_l$, rather than mapping a single vector space to itself. The theorem could have been stated as a map from $V_l \otimes V_{m_1} \otimes \cdots \otimes V_{m_L}$ to itself using an equivalent form of \eqref{Inter}
$$
 {R}(z) \Delta(u) = \widetilde{\Delta(u)}  {R}(z),
$$
and without modifying the transfer matrix, but then the duality function $D(u)$ would not be the same on both sides of the equation, as it is in the definition of stochastic duality.
\end{remark}

\subsection{Interpretation as a particle system satisfying duality}\label{Iaap}
Define the operators $\mathcal{Z}$ and $\mathcal{Z}_{\text{rev}}$ on $V_{m_1} \otimes \cdots \otimes V_{m_L}$ by 
\begin{align*}
\langle \eta \vert \mathcal{Z} \vert \xi \rangle &=  \langle \Omega, \eta \vert \mathcal{T} \vert \xi, \mathrm{A} \rangle, \\
\langle \eta \vert \mathcal{Z}_{\text{rev}} \vert \xi \rangle &=  \langle \eta,\Omega \vert \mathcal{T}_{\text{rev}} \vert \mathrm{A},\xi \rangle,
\end{align*}
where $\vert \mathrm{A} \rangle, \vert \Omega \rangle \in V_l$ and the notation $\vert \xi, \mathrm{A}\rangle = \vert \xi \rangle \otimes \vert \mathrm{A} \rangle$ is used.

\begin{theorem}\label{Corollary} (a) For any $u \in \mathcal{U}_q (A_n^{(1)})$ and any $\epsilon,\zeta \in V_l,$
$$
\sum_{\substack{ \sigma \in \mathcal{B}_{m_1} \times \cdots \times \mathcal{B}_{m_L} \\ \iota \in \mathcal{B}_l }} \langle \iota,\sigma \vert \mathcal{T} \vert \eta,\zeta\rangle \langle \iota,\sigma \vert D^+(u) \vert \epsilon,\xi \rangle = \sum_{\substack{ \sigma \in \mathcal{B}_{m_1} \times \cdots \times \mathcal{B}_{m_L} \\ \iota  \in \mathcal{B}_l  }} \langle \eta,\zeta \vert D^+(u) \vert \sigma,\iota\rangle \langle \sigma,\iota \vert \mathcal{T}_{\text{rev}}\vert \epsilon,\xi\rangle.
$$
(b) For  the summation taken over $\sigma \in \mathcal{B}_{m_1} \times \cdots \times \mathcal{B}_{m_L} $, and fixing $u=u_0$,
\begin{multline*}
\sum_{\sigma} \langle \sigma \vert \mathcal{Z} \vert \eta \rangle \langle \sigma \vert D(u_0) \vert \xi \rangle  + \frac{1}{ [l]^!_q} \sum_{\substack{ \sigma \\\Omega \neq \iota \in \mathcal{B}_l}} \langle \iota,\sigma \vert \mathcal{T} \vert \eta,\mathrm{A}\rangle \langle \iota,\sigma \vert D^+(u_0) \vert \mathrm{A},\xi \rangle \\
= \sum_{\sigma} \langle \eta \vert D(u_0) \vert \sigma \rangle \langle \sigma \vert \mathcal{Z}_{\text{rev}} \vert \xi\rangle  + \frac{1}{ [l]^!_q}  \sum_{\substack{\sigma \\\Omega \neq \iota \in \mathcal{B}_l}} \langle \eta,\mathrm{A} \vert D^+(u_0) \vert \sigma,\iota\rangle \langle \sigma,\iota \vert \mathcal{T}_{\text{rev}}\vert \mathrm{A},\xi\rangle  .
\end{multline*}
(c)
Suppose that $z,m_x,w_x,{ \color{black} L}$ depend on a parameter $\mathfrak{p}$ such that for $\iota \neq \Omega$, 
$$
\sum_{\sigma}\langle \iota,\sigma \vert \mathcal{T}(l,z \vert \substack{m_1,\ldots,m_L \\ w_1,\ldots,w_L}) \vert \eta,\mathrm{A}\rangle  \rightarrow 0, \quad \quad \sum_{\sigma}\langle \sigma,\iota \vert \mathcal{T}_{\text{rev}}(l,z \vert \substack{m_1,\ldots,m_L \\ w_1,\ldots,w_L})  \vert \mathrm{A},\xi\rangle  \rightarrow 0
$$
as $\mathfrak{p}\rightarrow \infty$. 
Then in the limit $\mathfrak{p}\rightarrow \infty$,
$$\mathcal{Z}^*D(u_0)= D(u_0)\mathcal{Z}_{\text{rev}}$$ and $\mathcal{Z}$ is a stochastic operator if every $S(z/w_L)$ is stochastic.
\end{theorem}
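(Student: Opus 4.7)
The plan is to pass to the limit $\mathfrak{p}\to\infty$ in the identity from Theorem \ref{Corollary}(b). The first sums on either side of (b) are, by construction, the $(\eta,\xi)$-entries of $\mathcal{Z}^*D(u_0)$ and $D(u_0)\mathcal{Z}_{\text{rev}}$ respectively; this is exactly what motivated the definitions of $\mathcal{Z},\mathcal{Z}_{\text{rev}}$ together with the reduction from $D^+$ to $D$ in Lemma \ref{Reduction}. Thus it suffices to show that the two correction sums indexed by $\iota\neq\Omega$ both vanish in the limit.

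To handle the correction sum on the left I would use the bound
\begin{equation*}
\Bigl|\sum_{\sigma, \iota\neq\Omega}\langle \iota,\sigma \vert \mathcal{T} \vert \eta,\mathrm{A}\rangle \langle \iota,\sigma \vert D^+(u_0) \vert \mathrm{A},\xi\rangle\Bigr| \leq \sum_{\Omega \neq \iota \in \mathcal{B}_l} M_\iota \sum_\sigma \langle \iota,\sigma \vert \mathcal{T} \vert \eta,\mathrm{A}\rangle,
\end{equation*}
where $M_\iota = \sup_\sigma |\langle \iota,\sigma \vert D^+(u_0) \vert \mathrm{A},\xi\rangle|$. The entries of $\mathcal{T}$ are non-negative, so no absolute value is needed on that factor. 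The constants $M_\iota$ are finite and independent of $\mathfrak{p}$: by the explicit formula \eqref{ExpExp} and the particle-conservation constraint forced by $u_0\in\mathcal{U}_q(A_n)$ (no $f_0$), only finitely many $\sigma$ give a nonzero contribution for a fixed input $(\mathrm{A},\xi)$, and the relevant $q$-factorials and $q$-binomials are uniformly bounded on this set. Since the outer sum runs over the finite set $\mathcal{B}_l\setminus\{\Omega\}$, the hypothesis forces each inner sum to $0$, so the full expression vanishes. The right-hand correction sum is handled identically using the hypothesis on $\mathcal{T}_{\text{rev}}$. Passing to the limit in (b) then yields $\mathcal{Z}^*D(u_0)=D(u_0)\mathcal{Z}_{\text{rev}}$ entrywise.

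For stochasticity of $\mathcal{Z}$, observe that each $(S(z/w_j)P)_{j,j+1}$ is stochastic, because $P$ merely permutes tensor factors, which preserves column sums. Hence the composition $\mathcal{T}$ is stochastic, and
\begin{equation*}
\sum_\eta \langle \eta \vert \mathcal{Z} \vert \xi\rangle = \sum_{\eta} \langle \Omega,\eta \vert \mathcal{T} \vert \xi,\mathrm{A}\rangle = 1 - \sum_{\Omega\neq\iota\in \mathcal{B}_l}\sum_{\eta}\langle \iota,\eta \vert \mathcal{T} \vert \xi,\mathrm{A}\rangle.
\end{equation*}
The finite outer sum of vanishing inner sums gives $\sum_\eta \langle \eta \vert \mathcal{Z} \vert \xi\rangle \to 1$, while non-negativity of entries is inherited from $\mathcal{T}$. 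The main obstacle in the argument is securing the uniform bound on the $D^+(u_0)$-entries relevant for a fixed input; this is precisely what the closed-form expression \eqref{ExpExp} provides, though in each concrete degeneration to which Theorem \ref{Corollary}(c) is applied one must still verify that the hypothesis on $\mathcal{T}$ actually holds.
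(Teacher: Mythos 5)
Your proposal does not prove the statement: it establishes only part (c), taking the identity of part (b) as given (``pass to the limit in the identity from Theorem \ref{Corollary}(b)''), while (a) and (b) are themselves assertions of the theorem and are nowhere derived. Part (a) is the entrywise form of Theorem \ref{Major}: one writes $\langle \eta,\zeta \vert \mathcal{T}^*D^+(u) \vert \epsilon,\xi\rangle = \langle \eta,\zeta \vert D^+(u)\mathcal{T}_{\text{rev}} \vert \epsilon,\xi\rangle$ and expands both sides over the intermediate states $(\iota,\sigma)$, resp.\ $(\sigma,\iota)$. Part (b) then follows by specializing $\epsilon=\zeta=\mathrm{A}$, splitting the $\iota$--sums into $\iota=\Omega$ and $\iota\neq\Omega$, dividing by $\Q{l}^!$, and applying \emph{both} halves of Lemma \ref{Reduction} to identify the $\iota=\Omega$ terms, namely $\langle \Omega,\sigma \vert D^{+0}(u_0)\vert \mathrm{A},\xi\rangle = \Q{l}^!\,\langle \sigma \vert D(u_0)\vert \xi\rangle$ on the left and $\langle \eta,\mathrm{A} \vert D^{+L}(u_0)\vert \sigma,\Omega\rangle = \Q{l}^!\,\langle \eta \vert D(u_0)\vert \sigma\rangle$ on the right --- note the auxiliary space sits at position $0$ on one side and position $L$ on the other, which is exactly why Lemma \ref{Reduction} has two statements. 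Your remark that the first sums in (b) are ``by construction'' the entries of $\mathcal{Z}^*D(u_0)$ and $D(u_0)\mathcal{Z}_{\text{rev}}$ is an interpretation of (b), not a proof of it.

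Within part (c), your limit argument and the column--sum computation for stochasticity coincide with the paper's proof, but your justification of the key uniform bound is wrong as stated. The claim that ``only finitely many $\sigma$ give a nonzero contribution'' is empty for fixed $\mathfrak{p}$ (the index set $\mathcal{B}_{m_1}\times\cdots\times\mathcal{B}_{m_L}$ is finite anyway) and false uniformly in $\mathfrak{p}$: as $L$ and the $m_x$ grow the support grows without bound, and in the $\mu\to 0$ degeneration the function \eqref{LimitDuality} vanishes nowhere. Nor does $u_0\in\mathcal{U}_q(A_n)$ force particle conservation between bra and ket --- $u_0$ is built from the $f_i$, which shift species counts. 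What is actually needed, and what the paper uses, is weaker: with the ket $(\mathrm{A},\xi)$ fixed, the exponents and $q$--binomials in \eqref{ExpExp} are controlled by the fixed, finite particle counts of the ket, so $\sup_\sigma \vert \langle \iota,\sigma\vert D^+(u_0)\vert \mathrm{A},\xi\rangle\vert$ is finite uniformly in $\mathfrak{p}$; combined with the finiteness of $\mathcal{B}_l\setminus\{\Omega\}$ ($l$ is fixed) and the hypothesis, the correction sums vanish. (Your appeal to non--negativity of the entries of $\mathcal{T}$ to drop absolute values is not among the hypotheses of the duality claim in (c), though the paper's own proof implicitly makes the same move.) With these repairs your (c) matches the paper; the missing proofs of (a) and (b) remain the substantive gap.
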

\begin{proof}
(a) By Theorem \ref{Major}, for any $\epsilon,\zeta,$
$$
\langle \eta,\zeta \vert \mathcal{T}^*D^+(u) \vert \epsilon,\xi\rangle = \langle \eta,\zeta \vert D^+(u) \mathcal{T}_{\text{rev}}\vert \epsilon,\xi\rangle.
$$
This can be written as
$$
\sum_{\sigma,\iota} \langle \eta,\zeta \vert \mathcal{T}^* \vert \iota,\sigma\rangle \langle \iota,\sigma \vert D^+(u) \vert \epsilon,\xi \rangle = \sum_{\sigma,\iota} \langle \eta,\zeta \vert D^+(u) \vert \sigma,\iota\rangle \langle \sigma,\iota \vert \mathcal{T}_{\text{rev}}\vert \epsilon,\xi\rangle,
$$
which is equivalent to the needed statement.

(b) Take $\epsilon= \zeta = \mathrm{A}$ in part (a). Divide both sides by $[l]_q^!$ and apply Lemma \ref{Reduction} to get the result.

(c) From the explicit expression \eqref{ExpExp}, the terms $\langle \iota,\sigma \vert D^+(u_0) \vert \mathrm{A},\xi\rangle$ and $ \langle \eta,\mathrm{A} \vert D^+(u_0) \vert \sigma,\iota\rangle  $ are uniformly bounded in $\sigma$. Therefore, with the assumptions here, the $\sum_{\iota \neq \Omega}$ summation in (b) converges to $0$. This implies that $\mathcal{Z}^*D(u_0)= D(u_0)\mathcal{Z}_{\text{rev}}$.

If every $S(z/w_L)$ is stochastic, then the transfer matrix $\mathcal{T}$ is stochastic. Therefore the entries of $\mathcal{Z}$ are non--negative. The columns sum to 
$$
1 - \sum_{\iota \neq \Omega} \sum_{\eta}  \langle \iota, \eta \vert \mathcal{T} \vert \xi, \mathrm{A} \rangle,
$$
which converges to $1$. An identical argument holds for $\mathcal{T}_{\text{rev}}$.
\end{proof}

Statement (c) can be interpreted as saying that if almost surely no particles exit the lattice, then $\mathcal{Z}$ and $\mathcal{Z}_{\text{rev}}$ define Markov chains which satisfy space--reversed duality with respect to $D(u_0)$. This will hold when the lattice is the infinite line $\mathbb{Z}$, or when the lattice is finite with closed boundary conditions. One can think of $\mathcal{Z}$ and $\mathcal{Z}_{\text{rev}}$ as defining particle systems with evolution to the left and right, respectively. Furthermore, for certain parameters of the transfer matrix, the ``$\mathrm{A}$'' particles entering the lattice from the auxiliary space will not affect the evolution of the system. The next sections will elaborate on this. 

\begin{remark}
It is natural to ask if these duality results hold for open or periodic boundary conditions. For open boundary conditions, the conditions of Theorem {\color{black} \ref{Corollary}}(c) do not hold and Lemma \ref{Reduction} does not apply. For periodic boundary conditions, the operators $\mathcal{Z},\mathcal{Z}_{\mathrm{rev}}$ need to be redefined, but doing so results in Lemma \ref{Reduction} being inapplicable. There are duality results for open or periodic boundary conditions (see \cite{O} and \cite{S2}), but it is not clear if it is possible to obtain similar results from the framework here. 
\end{remark}

\subsubsection{On the infinite line}\label{OTIL}
To define the transition matrix for the particle system, we restrict the state space to states $\eta$ with finitely many particles, in the sense if $\eta$ is a particle configuration written as $\eta=(\eta_i^x)$ for $ x\in \mathbb{Z}, 1\leq i \leq n+1$, then the set
$$
\{ x \in \mathbb{Z}: \eta_1^x + \ldots + \eta_n^x \neq 0\}
$$ 
is finite. Let $W$ denote this state space. The particles of the configurations $\xi, \eta \in W$ are contained in a finite lattice, and if $M$ empty lattice sites are added on both sides of this finite lattice, and an auxiliary space is also added, then the transfer matrices $\mathcal{T}$ and $\mathcal{T}_{\text{rev}}$ can act on $\eta$ and $\xi$.

In other words, for $\xi,\eta \in W$, define $\mathcal{Z}$ (with particle jumps to the left) by 
$$
\langle \eta \vert \mathcal{Z} \vert \xi \rangle = \lim_{M\rightarrow\infty} \langle \Omega, \eta \vert \mathcal{T}^{(M)} \vert \xi, \Omega \rangle,
$$
where the superscript $(M)$ indicates that $L$ (the total number of lattice sites on which $\mathcal{T}$ acts) depends on $M$.
Similarly, define the space reversed version by
$$
\langle \eta \vert \mathcal{Z}_{\text{rev}} \vert \xi \rangle = \lim_{M\rightarrow\infty} \langle \eta,\Omega \vert \mathcal{T}_{\text{rev}}^{(M)} \vert \Omega, \xi \rangle.
$$
See Figure \ref{Adding} for an example.

Here, a lemma of the stochastic matrices will be needed. 

\begin{lemma}\label{Univ}
(a) Suppose $S(z)$ is stochastic. Then there exists a fixed $\kappa \in [0,1)$ such that for all $\alpha \neq \Omega$,
$$
\langle {\alpha,\Omega} \vert S(z) \vert {\alpha,\Omega} \rangle \leq \kappa.
$$
In words, this means that for an input $\alpha$ in the auxiliary space, the probability of no particles settling in at the lattice site is at most $\kappa$.

(b) For any $\epsilon,\zeta$, the following formulas for $\mathcal{Z}$ and $\mathcal{Z}_{\text{rev}}$ also hold:
\begin{align*}
\langle \eta \vert \mathcal{Z} \vert \xi \rangle &= \lim_{M\rightarrow\infty} \langle \Omega, \eta \vert \mathcal{T}^{(M)} \vert \xi, \zeta \rangle \\
\langle \eta \vert \mathcal{Z}_{\text{rev}} \vert \xi \rangle &= \lim_{M\rightarrow\infty} \langle \eta,\Omega \vert \mathcal{T}_{\text{rev}}^{(M)} \vert \epsilon, \xi \rangle.
\end{align*}

(c) For $\iota \neq \Omega$, 
\begin{align*}
\sum_{\sigma} \langle \iota, \sigma \vert \mathcal{T}^{(M)} \vert \eta, \zeta \rangle & =  O\left(\kappa^M\right), \\
\sum_{\sigma} \langle \sigma,\iota \vert \mathcal{T}_{\text{rev}}^{(M)}\vert \epsilon,\xi\rangle & = O\left(\kappa^M\right).
\end{align*} 

\end{lemma}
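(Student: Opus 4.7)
The plan is to prove (a) by direct inspection of the stochastic matrix $S(z)$, then bootstrap it to (b) and (c) by iterating the one-site estimate across the extra empty sites in the extended lattice $\mathcal{T}^{(M)}$. For (a), since the state space $\mathcal{B}_l$ is finite, it suffices to show that $\langle\alpha,\Omega\vert S(z)\vert\alpha,\Omega\rangle<1$ for each $\alpha\neq\Omega$; the maximum over this finite set then furnishes a uniform $\kappa\in[0,1)$. The strict inequality says that when the auxiliary $\alpha$ carries at least one particle and the lattice site is empty, a positive fraction of the probability mass must go into configurations where at least one particle drops into the lattice site. I would verify this from the one-site formula \eqref{Stol=1} combined with particle conservation in the case $l=1$, and then promote it to general $l$ using the factorization \eqref{TWOSIDES} from Theorem \ref{StochasticFusion}: each one-particle factor has a positive drop-off probability when acting against an empty site, and the compositional structure preserves this.

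Parts (b) and (c) then follow by propagating the estimate in (a) across empty sites. The essential companion input is the deterministic identity $S(z)\vert\Omega,\Omega\rangle=\vert\Omega,\Omega\rangle$, which is the unit normalization \eqref{Unit} specialized to stochastic $S(z)$: an auxiliary in state $\Omega$ meeting an empty site stays $\Omega$. The contrapositive of (a) says that if the auxiliary is in any state $\iota\neq\Omega$ and the lattice site is empty, the probability of the pair $(\iota,\Omega)$ being preserved is at most $\kappa$, so across $M$ consecutive empty sites the marginal probability of a non-vacuum auxiliary is damped by at most $\kappa^M$.

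For (c), I would fix $\iota\neq\Omega$ and bound the marginal
\[
\sum_{\sigma}\langle\iota,\sigma\vert\mathcal{T}^{(M)}\vert\eta,\zeta\rangle
\]
by conditioning on the auxiliary state at the moment it first exits the support of $\eta$: from that point on the auxiliary crosses at least $M$ empty sites, over which the marginal probability of a non-vacuum auxiliary decays by $\kappa^M$ through iterated (a). The bound for $\mathcal{T}_{\text{rev}}^{(M)}$ is the mirror image. For (b), the dependence on the initial auxiliary $\zeta$ (or the final $\epsilon$ in the reversed case) is controlled the same way: replacing $\zeta$ by $\Omega$ alters the amplitude only through components of $\zeta$ carrying particles, and such components must traverse $M$ empty sites before reaching the support of $\xi$, so they contribute only $O(\kappa^M)\to 0$.

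The main obstacle is part (a): (b) and (c) reduce to iterating it, so the whole argument rests on having the strict inequality $\langle\alpha,\Omega\vert S(z)\vert\alpha,\Omega\rangle<1$ hold uniformly over $\alpha\in\mathcal{B}_l\setminus\{\Omega\}$ in the stochastic regime of Proposition \ref{Stochastic}. Working with the fusion expression of Theorem \ref{StochasticFusion} rather than with $S(z)$ directly should give the cleanest uniform route, since then the drop-off probability is built out of the explicit single-particle formulas.
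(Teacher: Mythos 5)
Your proposal is correct and follows essentially the same route as the paper: part (a) is proved there by exactly the same combination of finiteness of $\mathcal{B}_l$, the explicit $l=1$ formula \eqref{Stol=1}, and the fusion factorization of Theorem \ref{StochasticFusion} (merely phrased as a contradiction, ruling out a diagonal entry equal to $1$), and parts (b) and (c) are likewise obtained by iterating the one-site bound of (a) across the $M$ added empty sites to get the $O(\kappa^M)$ estimates. The level of detail you leave implicit (e.g.\ that at an empty site the auxiliary can only shed particles, so the crossing estimate compounds) matches what the paper itself leaves implicit.
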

\begin{proof}
(a) Suppose this were not true. Then there must be a $\alpha \neq \Omega$ such that
$$
\langle {\alpha,\Omega} \vert S(z) \vert {\alpha,\Omega} \rangle = 1.
$$
However, by Theorem \ref{StochasticFusion} and \eqref{Stol=1}, this cannot hold.

(b) By part (a), at each lattice site there is a probability of at most $\kappa<1$ that no particles will settle at that lattice site. So if $\zeta$ particles enter at the right boundary, the probability that no particle interacts with $\xi$ is asymptotically $O(\kappa^M)$.
Therefore, 
$$
\left| \langle \eta \vert \mathcal{Z} \vert \xi \rangle - \langle \Omega, \eta \vert \mathcal{T}^{(M)} \vert \xi, \zeta \rangle \right| = O (\kappa^M),
$$
which converges to $0$. The same argument applies for $\mathcal{Z}_{\text{rev}}$.

(c) By a similar argument as in (b), the probability that a particle in $\eta$ makes its way to the left boundary is asymptotically $O(\kappa^M)$.

\end{proof}

\begin{figure}
\caption{Here, the particles in the configuration defined by $\xi$ are contained in two lattice sites, and the particles in the configuration defined by $\eta$ are contained in three lattice sites. The value of $M$ is $1$, so one empty lattice site is added on both sides. Again, solid lines with no arrows indicate no particles.}
\begin{center}

\includegraphics{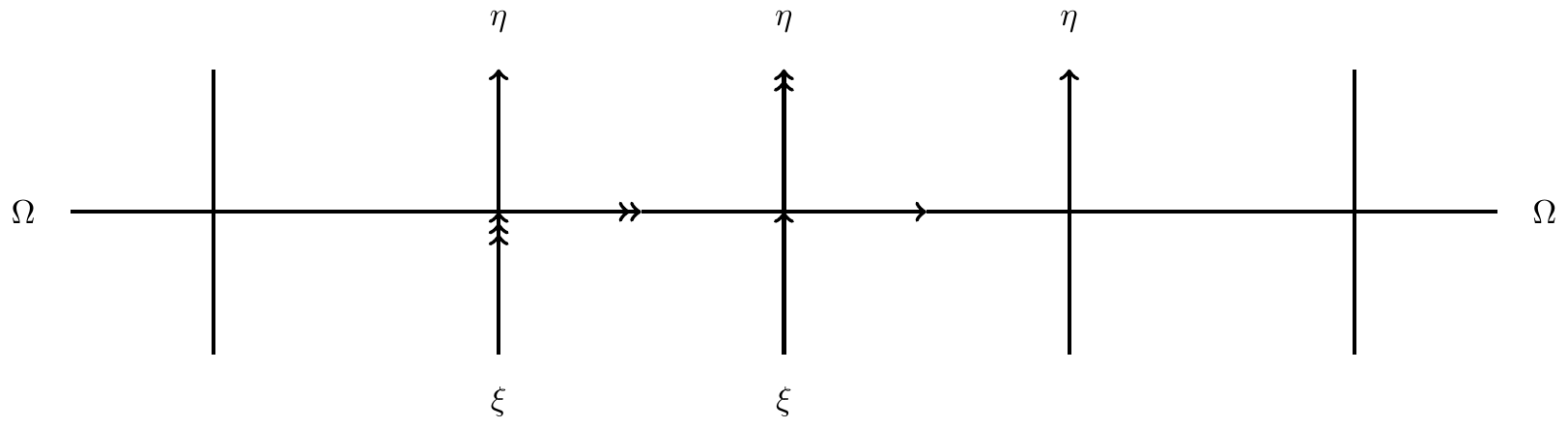}


\end{center}
\label{Adding}
\end{figure}

Note that if Lemma \ref{Univ}(a) were not true, then there would be a positive probability that particles could exit the lattice at infinity, which would violate particle conservation.

\begin{theorem}\label{DualThm}
The process $\mathcal{Z}$ satisfies {\color{black} space--reversed self--duality} with respect to $D(u_0)$, given explicitly by {\color{black}\eqref{ExpExp}}:
\begin{equation*}
\langle \xi \vert D(u_0) \vert \eta \rangle=
\prod_{x\in \mathbb{Z}} \Q{\eta_1^x}^! \cdots\Q{\eta_{n+1}^x}^!  \prod_{i=1}^{n}     \binomQ{  m_x - \eta_{[1,n-i]}^x - \xi_{[1,i]}^x   }{ \eta_{n+1-i}^x }   q^{ -\xi_i^x (\sum_{z>x} 2\eta_{[1,n+1-i]}^z + \eta_{[1,n+1-i]}^x)}.
\end{equation*}
In other words,
$$
\mathcal{Z}^*D(u_0) = D(u_0) \mathcal{Z}_{\text{rev}}.
$$

\end{theorem}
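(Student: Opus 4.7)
The plan is to deduce Theorem \ref{DualThm} from Theorem \ref{Corollary}(b) by taking $M \to \infty$ in a finite truncation, with Lemma \ref{Univ} providing the necessary decay estimates so that the extra boundary terms involving $\iota \neq \Omega$ vanish in the limit.

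First, I would fix configurations $\xi, \eta \in W$, pick $M$ large enough that the supports of $\xi$ and $\eta$ lie in an interval of length $L = L(M)$, and work on the truncated lattice with $M$ empty sites padded on each side. On this finite lattice, Theorem \ref{Corollary}(b) (applied with $u = u_0$) reads
\begin{multline*}
\sum_{\sigma} \langle \sigma \vert \mathcal{T}^{(M)}(\Omega,\cdot;\xi,\mathrm{A}) \vert \eta \rangle\,\langle \sigma \vert D(u_0) \vert \xi \rangle + \frac{1}{[l]_q^!}\sum_{\sigma,\iota\neq\Omega} \langle \iota,\sigma \vert \mathcal{T}^{(M)} \vert \eta,\mathrm{A}\rangle\,\langle \iota,\sigma \vert D^+(u_0) \vert \mathrm{A},\xi\rangle \\
= \sum_{\sigma} \langle \eta \vert D(u_0) \vert \sigma \rangle\,\langle \sigma \vert \mathcal{Z}_{\text{rev}}^{(M)} \vert \xi\rangle + \frac{1}{[l]_q^!}\sum_{\sigma,\iota\neq\Omega} \langle \eta,\mathrm{A} \vert D^+(u_0) \vert \sigma,\iota\rangle\,\langle \sigma,\iota \vert \mathcal{T}_{\text{rev}}^{(M)} \vert \mathrm{A},\xi\rangle.
\end{multline*}

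Next, I would verify the hypothesis of Theorem \ref{Corollary}(c). The parameter $\mathfrak{p}$ is just $M$: the transfer length $L$ grows with $M$ while $z$, $\{m_x\}$, $\{w_x\}$ remain fixed at the sites originally in the support of $\xi,\eta$ (and take any admissible values on the padded sites). By Lemma \ref{Univ}(a), there is a uniform $\kappa < 1$ such that at each padded site a non-empty input in the auxiliary space must expel at least one particle with total probability $\geq 1-\kappa$. Iterating this through the $M$ padded sites on the appropriate side gives the estimates in Lemma \ref{Univ}(c), namely
$$
\sum_\sigma \langle \iota,\sigma \vert \mathcal{T}^{(M)} \vert \eta,\mathrm{A}\rangle = O(\kappa^M), \qquad \sum_\sigma \langle \sigma,\iota \vert \mathcal{T}_{\text{rev}}^{(M)} \vert \mathrm{A},\xi\rangle = O(\kappa^M)
$$
for every $\iota \neq \Omega$. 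Since by the explicit formula \eqref{ExpExp} the entries $\langle \iota,\sigma \vert D^+(u_0) \vert \mathrm{A},\xi\rangle$ and $\langle \eta,\mathrm{A} \vert D^+(u_0) \vert \sigma,\iota\rangle$ are uniformly bounded in $\sigma$ (they depend only on $\iota,\xi,\eta$ up to a factor involving the $\sigma$-dependent spin parameters which are simply $l$ on the padded sites, yielding a uniform bound), the two boundary sums in the identity above are $O(\kappa^M) \to 0$.

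Finally, by Lemma \ref{Univ}(b) the $M \to \infty$ limits of the two main terms converge to $\sum_\sigma \langle \sigma \vert \mathcal{Z} \vert \eta\rangle\,\langle \sigma \vert D(u_0) \vert \xi\rangle$ and $\sum_\sigma \langle \eta \vert D(u_0) \vert \sigma\rangle\,\langle \sigma \vert \mathcal{Z}_{\text{rev}} \vert \xi\rangle$ respectively, yielding the duality $\mathcal{Z}^* D(u_0) = D(u_0)\,\mathcal{Z}_{\text{rev}}$. The main technical obstacle is the uniform boundedness of the $D^+$ entries in the sums indexed by $\sigma$: one must check from \eqref{ExpExp} that inserting an auxiliary site with $\iota = \mathrm{A}$ on one end and with arbitrary $\sigma$ on the others produces only a controlled factor, so that the geometric $\kappa^M$ decay from the transfer matrices dominates and the boundary sums vanish in the limit.
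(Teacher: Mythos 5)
Your proposal is correct and follows essentially the same route as the paper: the paper's proof of Theorem \ref{DualThm} is precisely the observation that Lemma \ref{Univ}(c) verifies the hypotheses of Theorem \ref{Corollary}(c), and your write-up simply unfolds the arguments already contained in the proofs of Theorem \ref{Corollary}(b)--(c) and Lemma \ref{Univ} (the $O(\kappa^M)$ decay from Lemma \ref{Univ}(a), the uniform boundedness in $\sigma$ of the $D^+(u_0)$ entries via \eqref{ExpExp}, and the identification of the limits via Lemma \ref{Univ}(b)). No gaps; this matches the paper's intended argument in full.
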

\begin{proof}
By Theorem \ref{Univ}(c), the conditions of Theorem \ref{Corollary} hold. This immediately implies the result. 
\end{proof}

\begin{remark}
Notice the duality function does not depend on the horizontal spin parameter $l$. This is a similar phenomenon to that of \cite{CGRS}. In that framework, if one takes a $l$--th degree polynomial in the Casimir element, the resulting process can have up to $l$ particles jumping at a time. However, the duality function does not depend on the choice of the central element, and therefore does not depend on $l$.
\end{remark}

\subsubsection{Closed boundary conditions}\label{CBC}
The transfer matrices can be used to define a particle system on a lattice with closed boundary conditions satisfying space--reversed duality with respect to $D(u_0)$.

Assume here that $q>1$. Consider the limit of 
$$
\mathcal{T}(l,z \vert \substack{m_1,\ldots,m_L \\ w_1,\ldots,w_L})  \text{ and } \mathcal{T}_{\text{rev}}(l,z \vert \substack{m_1,\ldots,m_L \\ w_1,\ldots,w_L}) 
$$
as
$$
m_1=m_2=m_{L-1}=m_L \rightarrow\infty, \quad w_1=w_2 = w_{L-1}=w_L \rightarrow \infty,
$$
where the limits in the spectral parameters $w$ are taken first. Then define
\begin{align*}
\langle \eta \vert \mathcal{Z} \vert \xi \rangle &= \lim \langle \Omega, \eta \vert \mathcal{T} \vert \xi, \mathrm{A} \rangle, \\
\langle \eta \vert \mathcal{Z}_{\text{rev}} \vert \xi \rangle &= \lim \langle \eta,\Omega \vert \mathcal{T}_{\text{rev}} \vert \epsilon, \mathrm{A} \rangle,
\end{align*}
where the limit as taken as in the one above. This results in $\mathcal{Z}$ and $\mathcal{Z}_{\text{rev}}$ as operators on the space 
$$
\bar{V}_{\infty} \otimes \bar{V}_{\infty}\otimes V_{m_3} \otimes \cdots \otimes V_{m-2} \otimes \bar{V}_{\infty} \otimes \bar{V}_{\infty}.
$$
\begin{proposition}\label{412}
The operators $\mathcal{Z}$ and $\mathcal{Z}_{\text{rev}}$ are stochastic and satisfy
$$
\mathcal{Z}^*D(u_0)= D(u_0)\mathcal{Z}_{\text{rev}}.
$$
\end{proposition}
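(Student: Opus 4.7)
The plan is to reduce to Theorem~\ref{Corollary}(c), with the parameter $\mathfrak{p}$ indexing the iterated limits $w_j\to\infty$ (first) and $m_j\to\infty$ at the four wall sites $j\in\{1,2,L-1,L\}$. That theorem requires checking, along the approximating sequence, (i) stochasticity of each $S(z/w_j)$, and (ii) the leakage conditions
$$\sum_{\sigma} \langle \iota,\sigma\vert \mathcal{T} \vert \eta,\mathrm{A}\rangle \to 0, \qquad \sum_{\sigma} \langle \sigma,\iota\vert \mathcal{T}_{\mathrm{rev}} \vert \mathrm{A},\xi\rangle \to 0 \qquad (\iota\neq\Omega).$$
Hypothesis (i) is immediate from Proposition~\ref{Stochastic}: the wall ratios $z/w_j\to 0$ fall in the stochastic range for $q>1$, and the bulk ratios are assumed to lie in that range as well.

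The work is in (ii), which I would verify by tracking the auxiliary particle through $\mathcal{T}$ and showing that in the limit it exits as $\Omega$ with probability one. Entering at position $L{+}1$ in state $|\mathrm{A}\rangle$, the auxiliary first interacts with the rightmost wall at site $L$; here Theorem~\ref{Stuff}(c) (with $q>1$, $z/w_L\to 0$ then $m_L\to\infty$) forces the auxiliary output to $\Omega$, depositing all its particles into the infinite reservoir at site $L$. The adjacent wall at site $L{-}1$ then preserves this vacuum auxiliary by Theorem~\ref{Stuff}(a): at $z/w_{L-1}=0$ the constraint $\gamma_{[1,k]}\le \alpha_{[1,k]}=0$ pins $\gamma$ to $\Omega$. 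The auxiliary thus enters the finite bulk region $3,\ldots,L{-}2$ as $\Omega$, and after traversing these sites emerges in some state $\iota'\in V_l$ carrying at most $l$ real particles. Applying the same two-step wall argument on the left (at sites $2$ and $1$) converts $\iota'$ back into $\Omega$ before it exits. Summing over intermediate lattice states $\sigma$ then forces the leakage to zero, and the statement for $\mathcal{T}_{\mathrm{rev}}$ follows by the spatial reversal symmetry of the construction.

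Both hypotheses of Theorem~\ref{Corollary}(c) being in place, the conclusion $\mathcal{Z}^*D(u_0)=D(u_0)\mathcal{Z}_{\mathrm{rev}}$ together with the stochasticity of $\mathcal{Z}$ and $\mathcal{Z}_{\mathrm{rev}}$ follows directly. The subtlety I expect to be the main obstacle is uniformizing the wall absorption: Theorem~\ref{Stuff}(c) asserts only pointwise convergence of $S$-matrix entries, whereas the vanishing of the \emph{summed} leakage requires that the column mass outside $\gamma=\Omega$ go to zero uniformly in the lattice input at the wall. Fortunately this follows from the fact that Theorem~\ref{Stuff}(c) identifies the entire mass of each stochastic column with the single entry at $\gamma=\Omega$, so the complementary mass is simply $1$ minus a converging probability, and vanishes outright.
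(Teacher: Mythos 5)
Your proposal is correct and takes essentially the paper's own route: the paper's proof of Proposition \ref{412} is exactly the one-line reduction ``by Theorem \ref{Stuff}(c), the conditions of Theorem \ref{Corollary} hold,'' and your explicit tracking of the auxiliary particle through the doubled walls (Theorem \ref{Stuff}(c) to absorb its load, Theorem \ref{Stuff}(a) to preserve the vacuum auxiliary at the adjacent wall site) is precisely the justification the paper leaves implicit and only sketches informally in the discussion following the proposition. Your closing point about passing from pointwise entry convergence to vanishing summed leakage is likewise consistent with how the paper argues inside Theorem \ref{Corollary}(c), where the $D^+(u_0)$ terms are noted to be uniformly bounded in $\sigma$.
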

\begin{proof}
By Theorem \ref{Stuff}(c), the conditions of Theorem \ref{Corollary} hold. This implies the proposition.
\end{proof}

The stochastic operators $\mathcal{Z}$ and $\mathcal{Z}_{\text{rev}}$ can be interpreted as the transition matrices of an interacting particle system with closed boundary conditions. To see this, notice by Theorem \ref{Stuff}(c), with probability $1$, the particles entering the lattice along the auxiliary space all settle in at the endpoints, and do not interact with the other particles. See Figure \ref{ClosedFig} for an example. Therefore, $\mathcal{Z}$ and $\mathcal{Z}_{\text{rev}}$ can be viewed as stochastic operators on 
$$
\bar{V}_{\infty}\otimes V_{m_3} \otimes \cdots \otimes V_{m-2} \otimes \bar{V}_{\infty} .
$$
One way of thinking about this is that they only act on particle configurations on the lattice $\{2,\ldots,L-1\}$. Since particles do not exit this lattice, $\mathcal{Z}$ and $\mathcal{Z}_{\text{rev}}$ can be seen as transition matrices of an interacting particle system on the lattice $\{2,\ldots,L-1\}$ with closed boundary conditions.

By applying \eqref{mLimit}  to \eqref{ExpExp}, the duality functional can be written as
\begin{multline*}
\langle \xi \vert D(u_0) \vert \eta \rangle =\mathrm{const}\prod_{x \in \{1,2,L-1,L\}}\prod_{i=1}^n q^{-\xi_i^x \sum_{y>x} 2\eta^y_{[1,n+1-i]}} \\
\times  \prod_{x=3}^{L-2} \Q{\eta_1^x}^! \cdots\Q{\eta_{n+1}^x}^!  \prod_{i=1}^{n}     \binomQ{  m_x - \eta_{[1,n-i]}^x - \xi_{[1,i]}^x   }{ \eta_{n+1-i}^x }   q^{ -\xi_i^x (\sum_{y>x} 2\eta_{[1,n+1-i]}^y + \eta_{[1,n+1-i]}^x)}  .
\end{multline*}
Note that because $\xi^1$ and $\eta^L$ are the particle configurations with no particles, the duality simplifies to
\begin{multline*}
\langle \xi \vert D(u_0) \vert \eta \rangle =\mathrm{const} \prod_{i=1}^n q^{-\xi_i^2 \sum_{y=3}^{L-1} 2\eta^y_{[1,n+1-i]}} \\
\times  \prod_{x=3}^{L-2} \Q{\eta_1^x}^! \cdots\Q{\eta_{n+1}^x}^!  \prod_{i=1}^{n}     \binomQ{  m_x - \eta_{[1,n-i]}^x - \xi_{[1,i]}^x   }{ \eta_{n+1-i}^x }   q^{ -\xi_i^x (\sum_{y=x+1}^{L-1} 2\eta_{[1,n+1-i]}^y + \eta_{[1,n+1-i]}^x)} .
\end{multline*}
Again, $D(u_0)$ does not depend on the particle configurations at the lattice sites $1$ and $L$, so can also be viewed as a duality functional on the particle configurations on the lattice $\{2,\ldots,L-1\}$.

\begin{remark}\label{Closed}
For a \textit{totally} asymmetric process, it is necessary to take $m\rightarrow\infty$ at the endpoints in order for this sort of duality to hold. To see this, suppose $\xi$ evolves to the left, with initial condition consisting of one particle at the lattice site $L-1$. Suppose $\eta$ evolves to the right, with initial condition consisting of particles contained in $\{3,\ldots,L-2\}$. If $\xi$ evolves with $\eta$ fixed, then eventually $\xi$ has one particle at lattice site $2$, so all particles in $\eta$ will be counted. If $\eta$ evolves with $\xi$ fixed, then in order for all particles to be counted, they must all occupy site $L-1$. This is only possible if an arbitrary number of particles can occupy lattice site $L-1$.
\end{remark}

\begin{figure}
\caption{In this example, $l=4$. The top diagram shows $\mathcal{T}$ (with particle configuration denoted by $\xi$) and the bottom shows $\mathcal{T}_{\text{rev}}$ (with particle configuration denoted by $\eta$). The spectral parameter $w$ in the leftmost vertex is set to infinity so that particles entering the lattice in $\mathcal{T}_{\text{rev}}$ do not interfere with the rest of the lattice. On the second vertex from the left, $w=\infty$ so that particles in $\mathcal{T}$ do not enter the leftmost vertex. Similar statements hold for the two vertices on the right.}
\begin{center}

\includegraphics{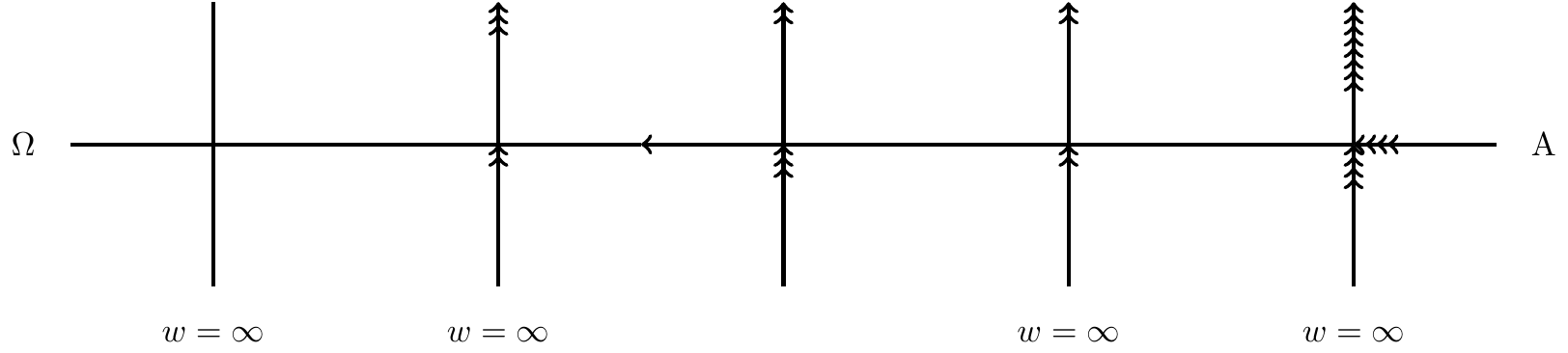}

\includegraphics{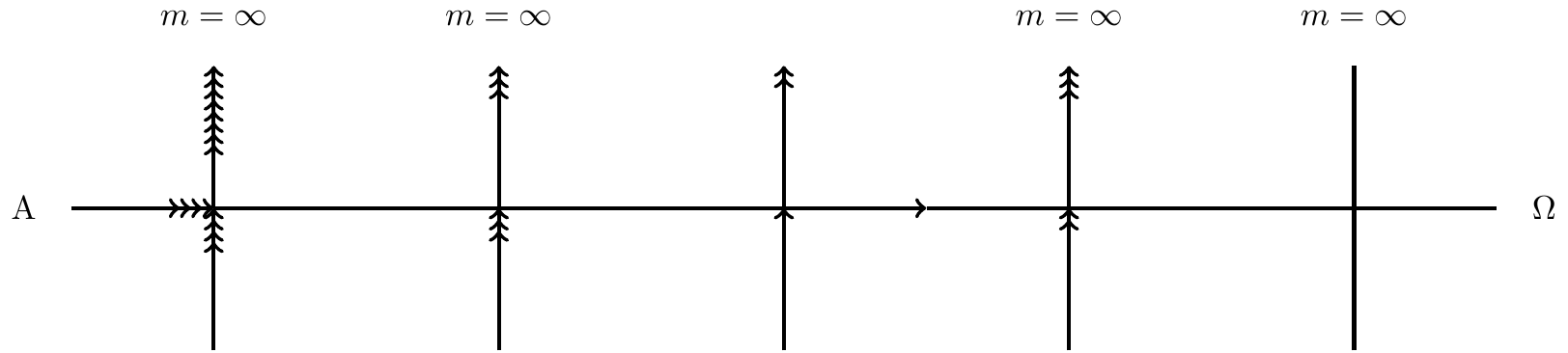}

\end{center}
\label{ClosedFig}
\end{figure}

\subsubsection{Continuous--time zero range process}\label{CTP}
In the $m\rightarrow\infty$ limit, the stochastic matrix $S(z)$ can be used to define a continuous--time zero range process, either on the infinite lattice or on a finite lattice with closed boundary conditions. This definition is different than the $\lambda\rightarrow\mu$ degeneration in the $q$--Hahn Boson process. In that case, it would not have been a priori obvious that after the degeneration, the off--diagonal entries would be non--negative. On the other hand, with the construction here, non--negativity will always hold.

In the definition of $\mathcal{T}$, suppose that some $w_j$ is fixed and all other values $w_i$ are taken to infinity. Furthermore, take the limit of all $m_1,\ldots,m_L\rightarrow \infty$. That is, define
\begin{align*}
\mathcal{L}^x &= \lim_{m\rightarrow\infty} (S(0)P)_{12} \circ \cdots \circ (S(0)P)_{x-1,x} \circ (S(z/w_x)P)_{x,x+1} \circ (S(0)P)_{x+1,x+2} \circ \cdots \circ (S(0)P)_{L,L+1} \\
&=\mathcal{T}(l,z \vert \substack{\infty,\ldots,\infty,\infty,\infty,\ldots \infty \\ \infty,\ldots,\infty,w_x,\infty,\ldots,\infty}) 
\end{align*}
and set
$$
\mathcal{L} = \frac{1}{L}\sum_{x=1}^L \mathcal{L}^x.
$$
Similarly, define
\begin{align*}
\mathcal{L}^x_{\text{rev}} &= \lim_{m\rightarrow \infty} (PS(0))_{L,L+1} \circ \cdots \circ (PS(0))_{x+1,x+2} \circ (PS(z/w_x))_{x,x+1} \circ (PS(0))_{x-1,x} \circ \cdots \circ  (PS(0))_{12} \\
&=\mathcal{T}_{\text{rev}}(l,z \vert \substack{\infty,\ldots,\infty,\infty,\infty,\ldots \infty \\ \infty,\ldots,\infty,w_x,\infty,\ldots,\infty}) 
\end{align*}
and set
$$
\mathcal{L}_{\text{rev}} = \frac{1}{L}\sum_{x=1}^L \mathcal{L}^x_{\text{rev}}.
$$
By Theorem \ref{Stuff}(c), each $\mathcal{L}^x$ and $\mathcal{L}^x_{\text{rev}}$ can be viewed as a local stochastic operator, in which only particles from lattice site $x$ can jump, and all other particles cannot jump.

The operators $\mathcal{L}$ and $\mathcal{L}_{\text{rev}}$ are maps
\begin{align*}
\mathcal{L}&: \bar{V}_{\infty}^{\otimes L} \otimes V_l \rightarrow V_l \otimes \bar{V}_{\infty}^{\otimes L},\\
\mathcal{L}_{\text{rev}} &:  V_l \otimes \bar{V}_{\infty}^{\otimes L} \rightarrow \bar{V}_{\infty}^{\otimes L} \otimes V_l.
\end{align*}
Then define
\begin{align*}
\langle \eta \vert \mathcal{Y} \vert \xi \rangle &= \langle \Omega, \eta \vert \mathcal{L} \vert \xi, \mathrm{A} \rangle, \\
\langle \eta \vert \mathcal{Y}_{\text{rev}} \vert \xi \rangle &=  \langle \eta,\Omega \vert \mathcal{L}_{\text{rev}} \vert \epsilon, \mathrm{A} \rangle,
\end{align*}
\begin{proposition}\label{AAA2}
The operators $\mathcal{Y}$ and $\mathcal{Y}_{\mathrm{rev}}$ are stochastic on $\bar{V}_{\infty}^{\otimes L}$ and satisfy
$$
\mathcal{Y}^*D(u_0) = D(u_0)\mathcal{Y}_{\mathrm{rev}}.
$$
If $\mathrm{Id}$ denotes the identity matrix, then 
$$
(\mathcal{Y}^*- \mathrm{Id})D(u_0) = D(u_0)(\mathcal{Y}_{\mathrm{rev}} - \mathrm{Id}).
$$
\end{proposition}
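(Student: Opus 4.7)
The plan is to apply Theorem~\ref{Corollary}(c) to each $\mathcal{L}^x$ separately and then invoke linearity. Define $\mathcal{Y}^x$ by $\langle\eta\vert\mathcal{Y}^x\vert\xi\rangle=\langle\Omega,\eta\vert\mathcal{L}^x\vert\xi,\mathrm{A}\rangle$ and $\mathcal{Y}^x_{\mathrm{rev}}$ analogously, so that by the linearity of the averaging in the definitions of $\mathcal{L}$ and $\mathcal{L}_{\mathrm{rev}}$ we have $\mathcal{Y}=L^{-1}\sum_x\mathcal{Y}^x$ and $\mathcal{Y}_{\mathrm{rev}}=L^{-1}\sum_x\mathcal{Y}^x_{\mathrm{rev}}$. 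It therefore suffices to prove the corresponding statements for each $\mathcal{Y}^x$ and $\mathcal{Y}^x_{\mathrm{rev}}$.

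For a fixed $x$, view $\mathcal{L}^x$ as the $\mathfrak{p}\to\infty$ limit of the family $\mathcal{T}(l,z\vert\substack{m_1,\ldots,m_L\\w_1,\ldots,w_L})$ in which $m_i=\mathfrak{p}$ for every $i$, $w_i=\mathfrak{p}$ for $i\ne x$, and $w_x$ is held fixed (and analogously for $\mathcal{L}^x_{\mathrm{rev}}$). The hypothesis to check for Theorem~\ref{Corollary}(c) is that for each $\iota\ne\Omega$, $\sum_\sigma\langle\iota,\sigma\vert\mathcal{T}\vert\eta,\mathrm{A}\rangle\to 0$ and the analogous sum for $\mathcal{T}_{\mathrm{rev}}$ tends to $0$ as $\mathfrak{p}\to\infty$. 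By Theorem~\ref{Stuff}(c), each factor $S(0)$ appearing in the composition concentrates its auxiliary output on $\Omega$ in the $m\to\infty$ limit; since the composition defining $\mathcal{T}$ contains at least one such factor whenever $L\ge 2$, the auxiliary output of the whole transfer matrix is also concentrated on $\Omega$ in the limit, which verifies the required vanishing. The stochasticity of the constituent $S$--matrices (both the $S(0)$'s and $S(z/w_x)$, in the appropriate range for $z/w_x$) is guaranteed by Proposition~\ref{Stochastic}. Theorem~\ref{Corollary}(c) then delivers $(\mathcal{Y}^x)^*D(u_0)=D(u_0)\mathcal{Y}^x_{\mathrm{rev}}$ together with the stochasticity of $\mathcal{Y}^x$ on $\bar{V}_\infty^{\otimes L}$.

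Averaging over $x$ and using linearity transfers both conclusions to $\mathcal{Y}$: it is stochastic as a convex combination of stochastic operators, and satisfies $\mathcal{Y}^*D(u_0)=D(u_0)\mathcal{Y}_{\mathrm{rev}}$. The generator form $(\mathcal{Y}^*-\mathrm{Id})D(u_0)=D(u_0)(\mathcal{Y}_{\mathrm{rev}}-\mathrm{Id})$ then follows by subtracting the trivial identity $\mathrm{Id}^*D(u_0)=D(u_0)=D(u_0)\mathrm{Id}$ from both sides of the first equation. The main obstacle is verifying the limit hypothesis of Theorem~\ref{Corollary}(c), and this reduces cleanly to Theorem~\ref{Stuff}(c) once one observes that a single $S(0)$ factor in the composition is enough to force the final auxiliary output to $\Omega$, irrespective of what the one surviving factor $S(z/w_x)$ does to the lattice.
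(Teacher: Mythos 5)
Your overall architecture matches the paper's: the paper proves this proposition by citing Proposition \ref{412}, whose proof is exactly the chain you reconstruct (Theorem \ref{Stuff}(c) verifying the hypothesis of Theorem \ref{Corollary}(c)), and your per-site decomposition $\mathcal{Y}=L^{-1}\sum_x\mathcal{Y}^x$, the pairing of $\mathcal{L}^x$ with $\mathcal{L}^x_{\mathrm{rev}}$ at the \emph{same} $x$ (so that Theorem \ref{Corollary}(c) applies with identical parameters on both sides), the averaging, and the subtraction of $\mathrm{Id}$ are all sound.

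There is, however, a genuine gap in your verification of the limit hypothesis: the claim that ``a single $S(0)$ factor in the composition is enough to force the final auxiliary output to $\Omega$, irrespective of what the one surviving factor $S(z/w_x)$ does'' is false, because absorption by a trivial vertex is directional. In $\mathcal{T}$ the auxiliary line sweeps from site $L$ down to site $1$ and exits on the left, so a trivial $S(0)$ factor (which by Theorem \ref{Stuff}(c) forces auxiliary output $\Omega$) can absorb particles emitted by the active vertex at site $x$ only if it sits strictly to the left of $x$, i.e.\ only if $x\geq 2$. For $x=1$ the factor $(S(z/w_1)P)_{12}$ is applied \emph{last}, its auxiliary output is the final output $\iota$, and whenever site $1$ is occupied the active vertex emits particles onto the line with nonzero amplitude (see the $k<j$ entries of $S(\infty)$ in Remark \ref{SimpleCase}), so $\sum_\sigma\langle\iota,\sigma\vert\mathcal{T}\vert\eta,\mathrm{A}\rangle$ does not vanish for $\iota\neq\Omega$; the symmetric failure occurs for $x=L$ with $\mathcal{T}_{\mathrm{rev}}$, whose auxiliary line exits on the right. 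Consequently neither the vanishing hypothesis of Theorem \ref{Corollary}(c) nor the stochasticity of $\mathcal{Y}^1$ (resp.\ $\mathcal{Y}^L_{\mathrm{rev}}$) holds on all of $\bar{V}_{\infty}^{\otimes L}$ as you assert. The repair is the convention the paper implicitly inherits from Proposition \ref{412} and section \ref{CBC}: the process lives on configurations supported on $\{2,\ldots,L-1\}$, with the outermost sites acting as vacant buffer columns. On that state space the active vertex at $x=1$ (resp.\ $x=L$) sees an empty site and outputs $\Omega$ deterministically, and your argument then goes through for every $x$ exactly as written. You should either impose this restriction explicitly or handle the extreme values of $x$ separately; as stated, the ``irrespective of position'' step is the one place where your proof would fail.
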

\begin{proof}
{\color{black} This follows immediately from Proposition \ref{412}. }
\end{proof}
Because  $\mathcal{Y}$ and $\mathcal{Y}_{\mathrm{rev}}$ are stochastic, the operators $(\mathcal{Y}^*- \mathrm{Id})$ and $(\mathcal{Y}_{\mathrm{rev}} - \mathrm{Id})$ satisfy the property that the off--diagonal entries are non--negative and the columns sum to $0$. Therefore, they are generators for a continuous--time process on particle configurations on the lattice $\{2,\ldots,L-1\}$. Because they can be written as a sum of local generators, the process is a zero--range process. Theorem \ref{Stuff}(b) establishes that if $w_x=0$ in the definition of $\mathcal{L}^x$, the process is nontrivial. Indeed, it will be seen below that for $l=1$, the process is the multi-species $q$--Boson process of \cite{T}.

\begin{remark}
The use of subtracting the identity matrix to obtain a continuous--time process with duality from a discrete--time process with duality is not new: see e.g. section 5.2 of \cite{S}.
\end{remark}

{\color{black}
\begin{remark}\label{InfiniteParticles}
A continuous--time zero--range process must allow an arbitrary number of particles to occupy a lattice site. Indeed, the jump rates for a particle jump from lattice site $x$ to $x+1$ cannot depend on the occupancy at the site $x+1$, which implies that there can not be a constraint on the number of particles allowed to occupy site $x+1$. 
\end{remark}
}

\section{Descriptions of processes}\label{Dop}
This section describes the processes that can be defined from the stochastic $\mathcal{U}_q(A_n^{(1)})$ vertex model. See Figure \ref{Deg2}.

\subsection{At $z=q^{l-m}$}\label{ZZZ}
At $z=q^{l-m}$, the operator $S(z)$ acting on $V_l \otimes V_m$ is meromorphic in $\lambda = q^{-l}$ and $\mu=q^{-m}$. Then Theorem \ref{DualThm} can be viewed as an identity holding on ${\color{black} W }\subset \left(\mathbb{Z}_{\geq 0}^{n}\right)^{\otimes \infty}$ which depends on the parameters $l,\{m_x\}_{x \in \mathbb{Z}}$. In particular, both sides of the equation are also meromorphic functions in the complex variables $\lambda^{-1}=q^l,\mu_x^{-1}=q^{m_x}$. Since the equality holds for all $l,m_x\geq 0$, it holds on the set $\{q,q^2,q^3,\ldots\}$, which as $0$ as a limit point. Therefore, the equality holds for all values of $\lambda$ and $\mu_x$.

\begin{theorem}\label{qHb}
The $n$--species discrete--time $q$--Hahn Boson process satisfies space--reversed self--duality with respect to the function $D_{\mu}$ in \eqref{ExpExp}, given explicitly by 
$$
\langle \xi \vert D_{\mu}(u_0) \vert \eta \rangle=
\prod_{x\in \mathbb{Z}} \Q{\eta_1^x}^! \cdots \Q{\eta_n^x}^!\Q{m_x - \eta_{[1,n]}^x}^!  \prod_{i=1}^{n}     \binomQ{  m_x - \eta_{[1,n-i]}^x - \xi_{[1,i]}^x   }{ \eta_{n+1-i}^x }   q^{ -\xi_i^x (\sum_{z>x} 2\eta_{[1,n+1-i]}^z + \eta_{[1,n+1-i]}^x)},
$$
where $m_x$ is defined by $q^{-m_x}=\mu_x$.
Here, $\eta$ evolves to the left and $\xi$ evolves to the right.
\end{theorem}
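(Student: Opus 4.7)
The plan is to deduce Theorem \ref{qHb} from Theorem \ref{DualThm} by specializing the spectral parameter to $z=q^{l-m}$ and then analytically continuing in the vertical spin parameters to remove the cap on site occupancy built into $V_m$.

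First I would apply Theorem \ref{DualThm} at $z = q^{l-m}$. At this value the matrix entries of $S(z)$ on $V_l \otimes V_m$ are precisely the $q$-Hahn Boson jump probabilities $\Phi_q(\gamma\vert\beta;q^{-l},q^{-m})$ recalled in Section \ref{KMMOBACK}, so the operators $\mathcal{Z}, \mathcal{Z}_{\text{rev}}$ obtained from the transfer matrix coincide with the transition matrices of the $n$-species discrete-time $q$-Hahn Boson process, but restricted to configurations $\eta \in W$ with $\eta^x_{[1,n]} \leq m_x$ at every site $x$. Theorem \ref{DualThm} then gives $\mathcal{Z}^*D_\mu(u_0)=D_\mu(u_0)\mathcal{Z}_{\text{rev}}$ for every choice of nonnegative integers $l$ and $\{m_x\}_{x\in\mathbb{Z}}$.

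Next I would fix arbitrary finite-support configurations $\xi,\eta \in W$ and view the individual matrix-entry identity $\langle \xi \vert \mathcal{Z}^* D_\mu(u_0) \vert \eta \rangle = \langle \xi \vert D_\mu(u_0)\mathcal{Z}_{\text{rev}} \vert \eta \rangle$ as a relation between scalars depending on $\lambda = q^{-l}$ and $\{\mu_x = q^{-m_x}\}$. Using the explicit formula for $\Phi_q$ together with \eqref{ExpExp}, and reading each $q$-factorial $[m_x - \cdots]_q^!$ through the $q$-Gamma identity $[n]_q^! = q^{n(n-1)/2}\Gamma_{q^{-2}}(n+1)$ from Section \ref{RT}, every appearing term is meromorphic in $(\lambda, \{\mu_x\})$. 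By particle conservation and the finite support of $\xi,\eta$, only finitely many lattice sites contribute nontrivially and the sum over intermediate configurations $\sigma$ is finite, so each side is a well-defined meromorphic function of these complex variables.

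Finally, the step just established shows this meromorphic identity holds on the set $\{(\lambda,\{\mu_x\}) : \lambda = q^l,\ \mu_x = q^{m_x},\ l, m_x \in \mathbb{Z}_{\geq 0} \text{ large enough to dominate the supports of } \xi, \eta\}$. This set has the accumulation point $(0,\{0\})$ when $0<q<1$ (the case $q>1$ is symmetric under $q\leftrightarrow q^{-1}$), so uniqueness of meromorphic continuation forces the identity to hold for all complex $(\lambda,\{\mu_x\})$. In particular, dropping the integer cap on $m_x$ removes the occupancy restriction and produces the duality for the unconstrained $q$-Hahn Boson process with $D_\mu$ as stated. The main obstacle, and really the only delicate point, is the stabilization needed to justify this continuation: I would have to verify that for fixed $\xi,\eta$ the set of nonzero intermediate terms $\sigma$ contributing to either side stops changing once the $m_x$ exceed a threshold depending on $\xi,\eta$, so that the integer values one is continuing from genuinely compute the "unrestricted" matrix entries. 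This is immediate from particle conservation applied to the finitely many sites where $\xi$ and $\eta$ are supported.
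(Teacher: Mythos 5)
Your proposal is correct and follows essentially the same route as the paper: Section \ref{ZZZ} likewise specializes Theorem \ref{DualThm} to $z=q^{l-m}$, notes that both sides of the matrix-entry identity are meromorphic in the complex variables $\lambda^{-1}=q^{l}$ and $\mu_x^{-1}=q^{m_x}$, and concludes by unique analytic continuation from the accumulating set $\{q,q^2,q^3,\ldots\}$ of integer specializations (your set should indeed be written in the inverted variables $q^{l},q^{m_x}$, consistent with your own convention $\lambda=q^{-l}$, $\mu_x=q^{-m_x}$, to have $0$ as a limit point when $0<q<1$). Your closing remark on the stabilization of the finitely many contributing intermediate configurations is a point the paper leaves implicit, and your justification of it via particle conservation on the finite supports of $\xi$ and $\eta$ is sound.
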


This theorem implies dualities for two degenerations:

\begin{corollary}\label{qHb1} The $n$--species continuous--time $q$--Hahn Boson process satisfies space--reversed self--duality with respect to $D_{\mu}$. 
\end{corollary}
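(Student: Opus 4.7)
The plan is to deduce the continuous--time duality from the discrete--time duality of Theorem~\ref{qHb} by degeneration in the parameter $\lambda = q^{-l}$, as indicated in Figure~\ref{Deg}. The key observation is that the duality function $D_{\mu}$ depends only on $q$ and the vertical spin parameters $\mu_x$, and in particular is independent of $\lambda$. The discrete--time identity $P_\lambda^* D_\mu = D_\mu P_{\text{rev},\lambda}$ (interpreting Theorem~\ref{qHb} in operator form) holds for all $\lambda$, with both sides meromorphic in $\lambda$, so it may be differentiated term--by--term at $\lambda = 1$.

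First I would verify the degenerate behavior at $\lambda = 1$. Because $(\lambda;q)_{|\gamma|}$ vanishes at $\lambda = 1$ for every $|\gamma| \geq 1$, one has $\Phi_q(\gamma \vert \beta; 1, \mu) = 1_{\{\gamma = 0\}}$. Hence at $\lambda=1$ the discrete--time transition matrix reduces to the identity (no particles move in a single step), and the duality identity at $\lambda = 1$ is trivial. The first nontrivial information appears at order $\lambda - 1$. Since the full discrete--time kernel $P_\lambda$ factors over lattice sites as a product of local $\Phi_q$ factors, and each factor is $\delta_{\gamma,0}$ at $\lambda=1$, the Leibniz rule gives an expansion
\begin{equation*}
P_\lambda = I + (\lambda-1)\,\mathcal{L} + O\!\left((\lambda-1)^2\right),
\qquad
P_{\text{rev},\lambda} = I + (\lambda-1)\,\tilde{\mathcal{L}} + O\!\left((\lambda-1)^2\right),
\end{equation*}
in which $\mathcal{L}$ and $\tilde{\mathcal{L}}$ are sums of single--site operators whose off--diagonal entries are exactly $\Phi'_q(\gamma\vert\beta;\mu)$, by the formula $\partial_\lambda \Phi_q|_{\lambda=1} = \Phi'_q$ recalled from (43) of \cite{KMMO}. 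Comparing with section~\ref{POVBACK}, this is precisely the definition of the continuous--time generator of the $n$--species $q$--Hahn Boson process (and its space reversal), with the diagonal entries forced by column--sum zero inherited from stochasticity of $P_\lambda$.

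Extracting the $(\lambda-1)^1$ coefficient in Theorem~\ref{qHb} then produces $\mathcal{L}^* D_\mu = D_\mu \tilde{\mathcal{L}}$, i.e.\ space--reversed self--duality in continuous time. The only step that requires care is the definitional matching between $\partial_\lambda \Phi_q|_{\lambda=1}$ and the local generator in section~\ref{POVBACK}; this is the main (though mild) obstacle, and amounts to a direct comparison of formulas via $\Phi'_q$. Everything else is formal expansion in $\lambda-1$ of a meromorphic identity that already holds for all values of $\lambda$.
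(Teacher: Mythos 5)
Your proposal is correct and takes essentially the same route as the paper: since $D_{\mu}$ is independent of $\lambda$, one differentiates the (analytically continued) discrete--time duality of Theorem \ref{qHb} with respect to $\lambda$ at $\lambda=1$, where $\frac{\partial \Phi_q}{\partial \lambda}\big|_{\lambda=1} = \Phi'_q$ produces exactly the continuous--time generators $\sum_x \mathcal{L}_x$ and $\sum_x \mathcal{L}^x_{\text{rev}}$. The additional details you supply --- that $\Phi_q(\gamma\vert\beta;1,\mu)=1_{\{\gamma=0\}}$ makes $P_\lambda$ the identity at $\lambda=1$, the Leibniz expansion yielding a sum of local generators, and the diagonal entries forced by column sums --- simply make explicit what the paper's one--line proof leaves implicit.
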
 
\begin{proof}
Observe that the duality $D$ does not depend on $\lambda$ and only on $\mu$. Therefore if we take Theorem \ref{qHb} and differentiate with respect to $\lambda$, then
$$
\sum_{x} \mathcal{L}_x^* D = \sum_{x} D \mathcal{L}^x_{\text{rev}}
$$
\end{proof}

The next corollary was previously shown in Theorem 2.5(b) of \cite{K}.

\begin{corollary}\label{ReProve} The $n$--species $q$--TAZRP of \cite{T} is dual to its space--reversed version with respect to the duality function $D_0$ defined by
$$
\langle \xi \vert D_0 \vert \eta \rangle =\prod_{x\in \mathbb{Z}} \prod_{i=1}^{n}      q^{ \xi_i^x \left(\sum_{y \leq x} 2\eta_{[1,n+1-i]}^y \right) } .
$$
Here, the $\eta$ process evolves to the left and the $\xi$ process evolves to the right.
\end{corollary}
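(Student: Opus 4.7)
The plan is to obtain Corollary \ref{ReProve} as the $\mu \to 0$ (equivalently $m_x \to \infty$) degeneration of Corollary \ref{qHb1}, using the limits already recorded in the preliminaries. As indicated in Figure \ref{Deg} and in Section \ref{POVBACK}, the multi--species continuous--time $q$--Hahn Boson process degenerates to the multi--species $q$--TAZRP of \cite{T} when every vertical spin parameter $\mu_x = q^{-m_x}$ is sent to $0$. Concretely, the off--diagonal entries of the local generator $\mathcal{L}_x$ (with rates $\Phi'_q(\eta^x - \xi^x \mid \eta^x; \mu_x)$), and of its space--reversed counterpart $\tilde{\mathcal{L}}_x$, have well--defined pointwise $\mu_x \to 0$ limits which reproduce the $q$--TAZRP jump rates. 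Since for any fixed pair of configurations $\eta,\xi$ with finitely many particles only the finitely many local generators $\mathcal{L}_x$ at sites near the support of $\eta$ contribute to a given matrix entry, this entry--wise convergence is unproblematic.

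Next I would pass the same limit through the duality function. Starting from the explicit formula \eqref{ExpExp} for $D_\mu(u_0)$ and applying \eqref{mLimit} site by site, under the hypothesis $q > 1$ the local factor
\[
\Q{\eta_1^x}^! \cdots \Q{\eta_{n+1}^x}^! \prod_{i=1}^{n} \binomQ{m_x - \eta_{[1,n-i]}^x - \xi_{[1,i]}^x}{\eta_{n+1-i}^x}
\]
converges to $\prod_{i=1}^n q^{\xi_i^x \eta_{[1,n+1-i]}^x}$. Substituting into \eqref{ExpExp} and combining with the factor $q^{-\xi_i^x(2\sum_{y>x}\eta_{[1,n+1-i]}^y + \eta_{[1,n+1-i]}^x)}$, the exponents reorganize into $q^{\xi_i^x \sum_{y \leq x} 2\eta_{[1,n+1-i]}^y}$ (after discarding a prefactor that is constant under particle conservation, which is permitted by Remark \ref{Constant}). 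This reproduces $D_0$ as given in \eqref{LimitDuality}.

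Finally, combining these two limits in the identity $\sum_x \mathcal{L}_x^* D_\mu(u_0) = \sum_x D_\mu(u_0) \tilde{\mathcal{L}}_x$ of Corollary \ref{qHb1} yields $\mathcal{L}^* D_0 = D_0 \tilde{\mathcal{L}}$ for the $q$--TAZRP generator $\mathcal{L}$ and its space--reversal $\tilde{\mathcal{L}}$, which is the asserted space--reversed self--duality. The main obstacle is purely bookkeeping: one must verify that the interchange of the $m_x \to \infty$ limit with the matrix products defining the duality relation is legitimate. Because both the generator and $D_0$ act locally and particle conservation forces only finitely many terms to be nonzero for any fixed $(\eta, \xi)$, the interchange reduces to the pointwise limit \eqref{mLimit}, so no delicate analytic estimates are needed.
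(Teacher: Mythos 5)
Your overall route is the paper's own: both you and the paper obtain Corollary \ref{ReProve} by degenerating the continuous--time $q$--Hahn Boson duality of Corollary \ref{qHb1}, sending every $\mu_x = q^{-m_x}\to 0$ under the hypothesis $q>1$, with the entrywise limit of the generator and the finiteness of the relevant sums (finitely many particles, local generators) justifying the interchange. One detail differs, harmlessly and in fact slightly more cleanly on your side: the paper's asymptotics (routed through Proposition 5.2(b) of \cite{K}) produce the duality function with the strict sum $\sum_{y<x}$, which is then converted to $\sum_{y\leq x}$ by invoking translation invariance of the limiting process; your bookkeeping through \eqref{mLimit} lands directly on the $\sum_{y\leq x}$ form \eqref{LimitDuality}, at the cost only of a prefactor of the form $q^{-2\sum_i T_i \sum_x \xi_i^x}$ (with $T_i$ the conserved totals of $\eta_{[1,n+1-i]}$), which is genuinely constant under particle conservation and disposable by Remark \ref{Constant}. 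That shortcut is consistent with the paper's own statement of \eqref{mLimit} and is fine.

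There is, however, a genuine gap: your argument lives entirely in the regime $q>1$ and stops there. The hypothesis $q>1$ is not cosmetic --- it is what makes $\mu_x=q^{-m_x}\to 0$ as $m_x\to\infty$, and what makes the normalizing limit behind \eqref{mLimit} converge (the paper's proof isolates precisely this point: $\lim_{m\to\infty}(q-q^{-1})^{m-k}q^{-(m-k)(m-k+1)/2}[m-k]_q^! = \prod_{l=1}^{\infty}(1-q^{-2l})$, a product that diverges for $0<q<1$; for $0<q<1$ one would instead need $m_x\to-\infty$, which the paper explicitly declines to pursue because it requires $q$--Gamma asymptotics at $-\infty$). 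But the $n$--species $q$--TAZRP of \cite{T} is an actual stochastic process only for $0<q<1$, the paper's standing convention --- for $q>1$ the rates proportional to $1-q^{2\beta_k}$ are negative --- so what you have proved is a formal identity at values of $q$ where the process named in the corollary does not exist. The paper closes this with a final step you omit: analytic continuation in $q$. For fixed finite configurations $(\eta,\xi)$, both sides of the duality identity are finite sums of rational functions of $q$, so equality on $(1,\infty)$ extends to $0<q<1$. Adding that one observation completes your proof; without it, the corollary as stated is not established.
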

\begin{proof}
One proof would involve taking $m_x\rightarrow -\infty$, but this requires knowledge of the asymptotics of the $q$--Gamma function at $-\infty$. Instead, assume that $q>1$ and now suppose that all $m_x\rightarrow\infty$, which means that $\mu\rightarrow 0.$ The asymptotic analysis was essentially already done in the proof of Proposition 5.2(b), with the only exception being

$$
\lim_{m\rightarrow\infty} (q-q^{-1})^{m-k} q^{-(m-k)(m-k+1)/2} [m-k]_q^! = \lim_{m\rightarrow \infty} \prod_{l=1}^{m-k} q^{-l}(q^{l}-q^{-l}) = \prod_{l=1}^{\infty} (1-q^{-2l}),
$$
reflecting that $q>1$. This results in duality with respect to the function
$$
\prod_{x\in \mathbb{Z}} \prod_{i=1}^{n}      q^{ \xi_i^x \left(\sum_{y < x} 2\eta_{[1,n+1-i]}^y\right) }.
$$
Since the process is translation invariant in the limit when all $m_x\rightarrow\infty$, this proves the corollary for $q>1$. By analytic continuation, it also holds for $0<q<1$.

\end{proof}

\subsection{$l=1$}\label{PPP} Recall from section \ref{atl=1} that when $l=1$,
$$
(q^{m+1}-z)S(z)_{\epsilon_j,\beta}^{\epsilon_k,\delta}  = 
\begin{cases}
q^{2\beta_{[1,k]}  -  m + 1} \left( 1 - q^{-2\beta_k+m-1}z\right), & \text{ if } k = j , \\
-q^{2\beta_{[1,k-1]} - m + 1} \left( 1 - q^{2\beta_k} \right), & \text{ if } k > j , \\
-q^{2 \beta_{[1,k-1]}  } z \left( 1 - q^{2\beta_k} \right), & \text{ if } k < j .
\end{cases}
$$
This section will consider the various processes that can be defined from $S(z)$ when $l=1$.

\subsubsection{Discrete--time process with blocking}
The stochastic operator $\mathcal{Z}$ from Section \ref{Iaap} defines a discrete--time interacting particle system in which at most $m$ particles of $n$ different species can occupy a site. The update is defined sequentially, and at most one particle can jump from a site. To understand the dynamics, first consider the degenerate cases when $z\rightarrow 0$ or $z\rightarrow \infty$.

When $z\rightarrow \infty$, corresponding to $w\rightarrow 0$, the limit is
$$
\lim_{z\rightarrow \infty} S(z)_{\epsilon_j,\beta}^{\epsilon_k,\delta} = 
\begin{cases}
q^{2\beta_{[1,k-1]}  }, & \text{ if } k = j , \\
0, & \text{ if } k > j , \\
q^{2 \beta_{[1,k-1]}  }  \left( 1 - q^{2\beta_k} \right), & \text{ if } k < j .
\end{cases}
$$
This is stochastic for $0<q<1$. In this case, the ordering of the species of particles is more apparent, and there is a verbal description of the model, which is similar to the verbal description of the multi--species $\mathrm{ASEP}(q,j)$ in \cite{K}. Particles have a desire to jump in the direction of movement (left for $\mathcal{Z}$ and right for $\mathcal{Z}_{\text{rev}}$). Particles with smaller indices are considered to have higher mass (or higher class, or higher priority) than particles with larger indices. The species $n+1$ particles are considered to be holes.  For example, a particle configuration at a site indexed by $(3,1,1)$ has three particles of the heaviest mass and one hole.  If a particle of species $j$ enters a lattice site, then no particle of species $j+1,\ldots,n+1$ can exit, because those have smaller mass. However, the particles of species $1,\ldots,j-1$ have higher mass, so their inclination to jump is higher than the species $j$ particle. The species $j$ particle asks each higher mass particle if it would like to jump, starting from species $1$. Each particle says ``no'' with probability $q^2$ and says ``yes'' with probability $1-q^2$. If the particle says ``no'' then the species $j$ particle proceeds to ask the next particle. If the particle says ``yes'', then the particle blocks the jump of the species $j$ particle and jumps itself instead. If the species $j$ particle receives an answer of ``no'' from all particles of species $1,\ldots,j-1$, then it is finally allowed to jump. See Figure \ref{Permit} for an example.

Notice that a species $1$ particle entering a lattice site also exits that lattice with probability one. If the lattice is infinite, then infinitely many of the lattice sites must satisfy $w\neq 0$, or else Lemma \ref{Univ}(a) will not hold,  since there is a positive probability that a particle could jump forever in one direction.

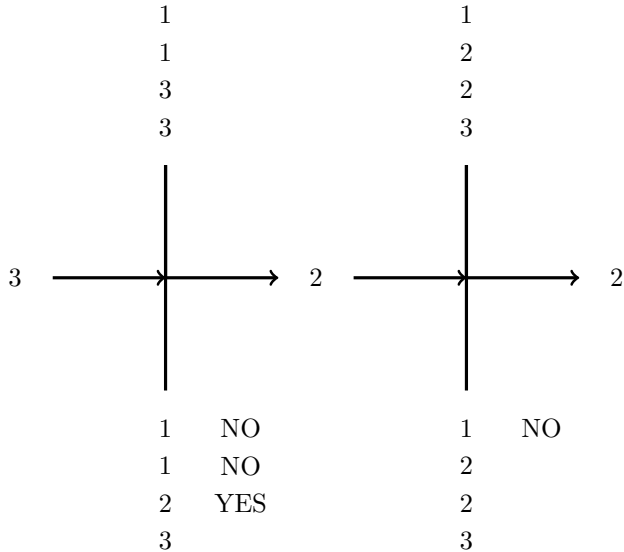
\begin{figure}
\caption{Each number represents a particle with species indicated by that number. In this example, the species $3$ particle enters the left vertex and is blocked by a species $2$ particle, which happens with probability $q^4(1-q^2)$. The species $2$ particle then enters the right vertex and is allowed to jump, which happens with probability $q^2$. }
\begin{center}

\begin{tikzpicture}
\draw [->,very thick](-1.5,0) -- (0,0) ;
\draw [->,very thick] (0,0) -- (1.5,0) ;
\draw [very thick](0,-1.5) -- (0,1.5) ;
\draw [very thick](0,0) -- (0,1.5) ;
\draw (-2,0) node {$\mathrm{3}$};
\draw (0,-2) node {$\mathrm{1}$};
\draw (0,-2.5) node {$\mathrm{1}$};
\draw (0,-3) node {$\mathrm{2}$};
\draw (0,-3.5) node {$\mathrm{3}$};
\draw (1,-2) node {NO};
\draw (1,-2.5) node {NO};
\draw (1,-3) node {YES};
\draw (0,3.5) node {${1}$};
\draw (0,3) node {${1}$};
\draw (0,2.5) node {${3}$};
\draw (0,2) node {${3}$};
\draw (2,0) node{2};
\draw [->,very thick](2.5,0) -- (4,0) ;
\draw [->,very thick] (4,0) -- (5.5,0) ;
\draw [very thick](4,-1.5) -- (4,1.5) ;
\draw [very thick](4,0) -- (4,1.5) ;
\draw (4,-2) node {$\mathrm{1}$};
\draw (4,-2.5) node {$\mathrm{2}$};
\draw (4,-3) node {$\mathrm{2}$};
\draw (4,-3.5) node[circle] {$\mathrm{3}$};
\draw (5,-2) node {NO};
\draw (4,3.5) node {${1}$};
\draw (4,3) node {${2}$};
\draw (4,2.5) node {${2}$};
\draw (4,2) node {${3}$};
\draw (6,0) node{2};
\end{tikzpicture}
\end{center}
\label{Permit}
\end{figure}

Also notice that $S(\infty)$ does not depend on $m$, as shown in Theorem \ref{Stuff}(b), so it is sensible to take the $m\rightarrow\infty$ limit at every lattice site. The description of the dynamics is the same, but arbitrarily many particles may occupy each lattice site. 

In the $z\rightarrow 0$ limit, then 
$$
S(0)_{\epsilon_j,\beta}^{\epsilon_k,\delta} = 
\begin{cases}
q^{-2\beta_{[k+1,n+1]}   } , & \text{ if } k = j , \\
q^{-2\beta_{[k+1,n+1]}   } \left(  1 - q^{-2\beta_k} \right), & \text{ if } k > j , \\
0, & \text{ if } k < j .
\end{cases}
$$
This is stochastic for $q>1$, and is in fact the same dynamics as the $z\rightarrow\infty$ situation, but with the ordering of the species reversed. The two cases $z=0$ and $z\rightarrow\infty$ can be thought of as cases of \textit{strong blocking}, due to lower mass particles being completely forbidden to jump, as shown in Theorem \ref{Stuff}(a). The values of $z$ in $(0,\infty)$ are then an interpolation between the two extreme cases. The most ``intermediate'' value in $(0,\infty)$ is at $z=q^{1-m}$, when the jump rates out of the lattice site are independent of the input. This is the case in which parallel jumps are possible, so can be viewed as \textit{no blocking}. 

\subsubsection{Continuous--time zero--range process}
By the general framework in section \ref{CTP}, $S(z)$ defines a continuous--time zero--range process. As explained in Remark \ref{InfiniteParticles}, in order to define a continuous--time zero--range process, all values of $m_x$ need to be taken to infinity, and for $z \in (0,\infty)$ and $q>1$,
$$
\lim_{m\rightarrow\infty} S(z)_{\epsilon_j,\beta}^{\epsilon_k,\delta} = 
\begin{cases}
1 , & \text{ if } k =n+1  , \\
0, & \text{ if } k < n+1.
\end{cases}
$$
And for $q<1$, the limit is no longer stochastic. The nontrivial case only occurs when $z\rightarrow\infty$ before the limit $m\rightarrow\infty$, and in this case the jump rates are precisely the jump rates for the multi--species $q$--Boson process of \cite{T}; see the first item of Remark \ref{SimpleCase}. Therefore, this provides another proof of Theorem 2.5(b) of \cite{K}, which was already given another proof in Corollary \ref{ReProve}.

\subsection{$m=1$}
By section \ref{SSm=1}, the entries of $S(z)$ when $m=1$ are
\begin{equation*}
S(z)_{\alpha,\epsilon_j}^{\gamma,\epsilon_k} 
= 1_{ \{\alpha + \epsilon_j   = \delta +  \epsilon_k  \}} \times 
\begin{cases}
q^{l-2\alpha_{[1,k-1]} +1}\dfrac{1-q^{-2\alpha_k+l-1}z}{q^{l+1}-z}, & \text{ if } k=j \\
-q^{2l-2\alpha_{[1,k]}   }   \dfrac{z(1-q^{2\alpha_k})}{q^{l+1}-z}, & \text{ if } k >j,\\
-q^{l-2\alpha_{[1,k]} +1  } \dfrac{1-q^{2\alpha_k}}{q^{l+1}-z}, & \text{ if } k<j.
\end{cases}
\end{equation*}
Consider the same degenerations as in the $l=1$ case of the previous section.
In the limit $z\rightarrow \infty$, 
\begin{equation*}
\lim_{z \rightarrow \infty}S(z)_{\alpha,\epsilon_j}^{\gamma,\epsilon_k} 
= 1_{ \{\epsilon_j  + \beta = \epsilon_k + \delta \}} \times 
\begin{cases}
q^{2l-2\alpha_{[1,k]} }, & \text{ if } k=j \\
q^{2l-2\alpha_{[1,k]}   }   (1-q^{2\alpha_k}), & \text{ if } k >j,\\
0, & \text{ if } k<j.
\end{cases}
\end{equation*}
When  $z=0$, then 
$$
S(0)_{\alpha,\epsilon_j}^{\gamma,\epsilon_k} 
= 1_{ \{\epsilon_j  + \beta = \epsilon_k + \delta \}} \times 
\begin{cases}
q^{-2\alpha_{[1,k-1]} } , & \text{ if } k=j \\
0, & \text{ if } k >j,\\
q^{-2\alpha_{[1,k-1]}   } ( 1- q^{-2\alpha_k} ), & \text{ if } k<j.
\end{cases}
$$

\subsection{Conjecture for general $l,m,z$}
For generic values of $l,m$ and $z$, we know that a well--defined process exists in both discrete and continuous--time, and that duality holds on both the infinite line and for closed boundary conditions. Each lattice site can hold up to $m$ particles, and up to $l$ particles may jump at a time. Furthermore, for $z=0$ and $z\rightarrow\infty$, the strong blocking phenomenon occurs again, due to Theorem \ref{Stuff}(a). At $z=q^{l-m}$, parallel updates occur, as shown in \cite{KMMO}. 

Due to the degeneration of multi--species $\textrm{ASEP}(q,j)$ to the multi--species $q$--Boson shown in \cite{K}, it is not unreasonable to conjecture that a generalization must hold for all values of $l$. Namely, for each $l\geq 1$, there should exist a central element $C_l$ of $\mathcal{U}_q(\mathfrak{gl}_{n+1})$ such that the framework of \cite{CGRS} produces a continuous--time asymmetric exclusion process in which up to $2j$ particles may occupy a site and up to $l$ particles may jump simultaneously. In the limit $j\rightarrow\infty$, the process should degenerate to the same totally asymmetric continuous--time zero range process produced by Section \ref{CTP}. Duality results should hold for both the asymmetric exclusion process and the totally asymmetric zero range process.

\section{``Direct'' results for multi--species $q$--Hahn Boson}\label{DIRECTDUA}
Theorem \ref{qHb} shows that the multi--species $q$--Hahn Boson process satisfies space--reversed self--duality with respect to $D_{\mu}$. Taking the limit $\mu\rightarrow 0$ degenerates the multi--species $q$--Hahn Boson process to the multi--species $q$--Boson process, and shows that the latter process satisfies space--reversed self--duality with respect to $D_0$. 

It turns out that the multi--species $q$--Hahn Boson process is also dual with respect to $D_0$, even before taking the degenerations in the process. This statement will be proved in this section through direct means, as it is unclear how to prove it using algebraic machinery.  First, start with some identities.

\subsection{Identities}

Given $\beta,\gamma \in \mathbb{Z}^n$, recall the notation that $\vert \beta\vert = \beta_1 + \ldots + \beta_n$, write $\gamma \leq \beta$ to mean $\gamma_i\leq \beta_i$ for $1\leq i \leq n$, and that
$$
\chi_{\beta,\gamma} = \sum_{1\leq i<j \leq n} (\beta_i-\gamma_i)\gamma_j,
$$

\begin{lemma}\label{qBinomialLemma}
The following identities hold:
\begin{align*}
\binomq{n}{k-1} + q^k \binomq{n}{k} &= \binomq{n+1}{k} ,\\
\sum_{\gamma_1+\gamma_2=l} \binomq{\eta_1}{\gamma_1} \binomq{\eta_2}{\gamma_2} q^{(\eta_1-\gamma_1)\gamma_2} q^{\xi_1l + \xi_2\gamma_1}&= \binomq{\eta_1+\eta_2}{l}q^{l(\xi_1+\xi_2)} ,\\
\sum_{\substack{\vert \gamma\vert=m}} q^{\chi_{\eta,\gamma}} q^{\sum_{i=1}^{n} \xi_i\gamma_{[1,n+1-i]}}  \prod_{i=1}^{n} \binomq{\eta_i}{\gamma_i}   &= \binomq{\vert \eta\vert}{m} q^{m\vert \xi \vert}.
\end{align*}
\end{lemma}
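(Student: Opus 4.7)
The first identity is the standard $q$-Pascal rule. I would prove it by clearing denominators using $\binomq{n}{k}=(q)_n/[(q)_k(q)_{n-k}]$: placing both terms on the common denominator $(q)_k(q)_{n-k+1}$ gives
$$\binomq{n}{k-1}+q^k\binomq{n}{k}=\frac{(q)_n\bigl[(1-q^k)+q^k(1-q^{n-k+1})\bigr]}{(q)_k(q)_{n-k+1}},$$
and the bracketed factor collapses to $1-q^{n+1}$, yielding $\binomq{n+1}{k}$.

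The second identity is a weighted form of the classical $q$-Vandermonde convolution. Stripping the $\xi$-factors gives exactly $q$-Vandermonde, which I would prove combinatorially via the subset formula \eqref{qBin}: an $l$-element subset $L\subseteq\{1,\ldots,\eta_1+\eta_2\}$ decomposes uniquely as $L_1\sqcup L_2$ with $L_1\subseteq\{1,\ldots,\eta_1\}$ of size $\gamma_1$ and $L_2\subseteq\{\eta_1+1,\ldots,\eta_1+\eta_2\}$ of size $\gamma_2$, and the inversion statistic splits as $c_l(L)=c_{\gamma_1}(L_1)+c_{\gamma_2}(L_2)+(\eta_1-\gamma_1)\gamma_2$, producing the factor $q^{(\eta_1-\gamma_1)\gamma_2}$. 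To incorporate the $\xi$-weights one verifies that $\xi_1 l+\xi_2\gamma_1$ and $l(\xi_1+\xi_2)$ match once one reindexes via $\gamma_2=l-\gamma_1$ and collects the contribution species-by-species; the needed rearrangement is purely algebraic and can alternatively be organized as an induction on $\eta_1+\eta_2$ using identity (1) as the recursion step.

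For the third identity I plan an induction on $n$, with identity (2) supplying the base case $n=2$ (after identifying $l=m$). For the inductive step, I would split off $\gamma_n$ and use the decomposition
$$\chi_{\eta,\gamma}=\chi_{\bar\eta,\bar\gamma}+\gamma_n(\eta_{[1,n-1]}-\gamma_{[1,n-1]}),\qquad
\sum_{i=1}^{n}\xi_i\gamma_{[1,n+1-i]}=\sum_{j=1}^{n}\gamma_j\xi_{[1,n+1-j]},$$
where $\bar\eta=(\eta_1,\ldots,\eta_{n-1})$ and similarly $\bar\gamma$. Fixing $\gamma_n$ and summing over $\bar\gamma$ with $\vert\bar\gamma\vert=m-\gamma_n$, the induction hypothesis collapses the inner sum to $\binomq{\vert\bar\eta\vert}{m-\gamma_n}$ times an explicit $\xi$-factor; the remaining outer sum over $\gamma_n$ has exactly the shape of identity (2) with $\eta_1\mapsto\vert\bar\eta\vert$, $\eta_2\mapsto\eta_n$, $l\mapsto m$, and appropriate $\xi$-weights read off from the splitting above.

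The main obstacle will be the bookkeeping in identity (3): both $\chi_{\eta,\gamma}$ (which couples every pair $i<j$) and the nested partial sums $\gamma_{[1,n+1-i]}$ must be reorganized cleanly into a piece involving only $\bar\eta,\bar\gamma$ and a piece involving only $\gamma_n$, and the residual piece involving $\gamma_n$ has to fit precisely the template of identity (2). Getting the exponent-matching correct so that identity (2) applies without residue is the crux; once this alignment is achieved the collapse to $\binomq{\vert\eta\vert}{m}q^{m\vert\xi\vert}$ is immediate.
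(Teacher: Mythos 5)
Your proof of the first identity is exactly the paper's computation. Your induction for the third identity is a sound variant of the paper's argument: the paper contracts the last two coordinates, setting $\widetilde{\eta}=(\eta_1,\ldots,\eta_{n-2},\eta_{n-1}+\eta_n)$ and $\widetilde{\xi}=(\xi_1+\xi_2,\xi_3,\ldots,\xi_n)$ (the $\xi$--contraction sits at the opposite end because of the reversal in $\gamma_{[1,n+1-i]}$), whereas you peel off $\gamma_n$, apply the hypothesis to the prefix with weights $(\xi_2,\ldots,\xi_n)$, and finish with the second identity under $\xi_2\mapsto \xi_2+\cdots+\xi_n$, $\eta_1\mapsto \vert\bar{\eta}\vert$, $\eta_2\mapsto\eta_n$. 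I checked this bookkeeping and it does close, conditional on the second identity; it is arguably tidier than the paper's contraction. One slip in your combinatorial aside: with $L_1\subseteq\{1,\ldots,\eta_1\}$ listed first, the cross--block inversion count is $\gamma_1(\eta_2-\gamma_2)$, not $(\eta_1-\gamma_1)\gamma_2$; you need the $\eta_2$--block first, or else the $\eta_1\leftrightarrow\eta_2$ symmetry of the unweighted sum.

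The genuine gap is your treatment of the $\xi$--weights in the second identity. The factor $q^{\xi_2\gamma_1}$ depends on the summation index, so it can neither be stripped from the $q$--Vandermonde nor restored afterwards by a ``purely algebraic'' reindexing: there is no rearrangement under which $\xi_1 l+\xi_2\gamma_1$ becomes $l(\xi_1+\xi_2)$ term by term. If you actually attempt your species--by--species matching at $\eta_1=\eta_2=1$, $l=1$, the left side is $q^{\xi_1+\xi_2}+q^{1+\xi_1}$ while the right side is $q^{\xi_1+\xi_2}+q^{1+\xi_1+\xi_2}$; and at $\eta_1=0$, $\eta_2=1$, $l=1$ the left side is $q^{\xi_1}$, carrying no $\xi_2$ at all. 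So the weighted statement is emphatically not the unweighted one in disguise --- as printed it disagrees at these points unless $\xi_2=0$, which you would have discovered had you executed the step --- and your fallback of ``induction using identity (1)'' cannot simply be gestured at either: the content of the paper's inductive step lies precisely in how the $\xi$--exponents transform under the Pascal split (one branch invokes the hypothesis with $\gamma_1\mapsto\gamma_1+1$, producing $\xi_2(\gamma_1+1)$, the other with $\xi_1\mapsto\xi_1+1$, and the first identity is used a second time to recombine), and it silently requires a base case at $\eta_1=0$, which is exactly where the mismatch above lives. Since the second identity is both the base case and the engine of your third, the gap propagates: as written your proposal establishes only the first identity, and before the other two can be proved by any route you must confront the $\xi$--bookkeeping head on and pin down the precise hypotheses under which the weighted statement is meant to hold.
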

\begin{proof}
The first identity is not new (see e.g. 10.0.3  of \cite{AAR}), but can be easily seen to follow from
\begin{align*}
\frac{ \q{n} }{ \q{k-1} \q{n-k+1} } + \frac{ q^k \q{n} }{ \q{k} \q{n-k}  } &= \frac{ \q{n} }{ \q{k} \q{n-k+1} } \left(  (1-q^k) + q^k (1 - q^{n-k+1})\right) \\
&= \frac{ \q{n+1} }{ \q{k} \q{n-k+1} } .
\end{align*}

The second identity follows by an induction argument on $\eta_1+\eta_2$. By the induction hypothesis and the first identity, 
\begin{align*}
&\binomq{\eta_1+\eta_2}{l} q^{l(\xi_1+\xi_2)}= \binomq{\eta_1+\eta_2-1}{l-1}q^{l(\xi_1+\xi_2)} + q^{l(\xi_1+\xi_2+1)} \binomq{\eta_1+\eta_2-1}{l} \\
&=  \sum_{\gamma_1+\gamma_2=l-1} \binomq{\eta_1-1}{\gamma_1} \binomq{\eta_2}{\gamma_2} q^{(\eta_1-1-\gamma_1)\gamma_2} q^{\xi_1 l+\xi_2(\gamma_1+1)}+ \sum_{\gamma_1+\gamma_2=l} \binomq{\eta_1-1}{\gamma_1} \binomq{\eta_2}{\gamma_2} q^{(\eta_1-1-\gamma_1)\gamma_2} q^{(\xi_1 +1)l + \xi_2 \gamma_1}.
\end{align*}
Replacing $\gamma_1$ with $\gamma_1-1$ in the first summation, the two sums combine into
\begin{align*}
\sum_{\gamma_1+\gamma_2=l} &\left(  \binomq{\eta_1-1}{\gamma_1-1} \binomq{\eta_2}{\gamma_2} q^{(\eta_1-\gamma_1)\gamma_2}q^{\xi_1 l + \xi_2 \gamma_1} + \binomq{\eta_1-1}{\gamma_1} \binomq{\eta_2}{\gamma_2} q^{(\eta_1-1-\gamma_1)\gamma_2} q^{(\xi_1 + 1) l + \xi_2 \gamma_1}    \right)\\
&=\sum_{\gamma_1+\gamma_2=l} \binomq{\eta_2}{\gamma_2} q^{(\eta_1-1-\gamma_1)\gamma_2} q^{\xi_1 l + \xi_2 \gamma_1}\left( q^{\gamma_2} \binomq{\eta_1-1}{\gamma_1-1} +  q^l\binomq{\eta_1-1}{\gamma_1}  \right) \\
&= \sum_{\gamma_1+\gamma_2=l} q^{(\eta_1-\gamma_1)\gamma_2} q^{\xi_1 l + \xi_2 \gamma_1}\binomq{\eta_1}{\gamma_1} \binomq{\eta_2}{\gamma_2},
\end{align*}
where the last equality used the first identity again.

For the third identity, proceed by induction on $n$. The base case $n=2$ is the second identity. For general $n$,
$$
\sum_{ \vert \gamma \vert =m} q^{\chi_{\eta,\gamma}} q^{\sum_{i=1}^n \xi_i \gamma_{[1,n+1-i]}} \prod_{i=1}^n \binomq{\eta_i}{\gamma_i} = \sum_{l=0}^m \sum_{\substack{\gamma_{[1,n-2]}=m-l \\ \gamma_{n-1}+\gamma_n=l }} q^{\chi_{\eta,\gamma}}  q^{\sum_{i=1}^n \xi_i \gamma_{[1,n+1-i]}} \prod_{i=1}^n \binomq{\eta_i}{\gamma_i} .
$$
For each $\gamma$ in the summand,
$$
\chi_{\eta,\gamma} = \sum_{1 \leq i < j \leq n-2} (\eta_i - \gamma_i)\gamma_j +  \sum_{i=1}^{n-2} (\eta_i - \gamma_i)(\gamma_{n-1}+\gamma_{n}) + (\eta_{n-1}-\gamma_{n-1})\gamma_n .
$$
Therefore, the summand becomes
\begin{multline*}
q^{\sum_{1 \leq i<j\leq n-2}(\eta_i-\gamma_i)\gamma_j}  q^{(\xi_1 +\xi_2)\gamma_{[1,n-2]} } q^{\sum_{i=3}^n \xi_i \gamma_{[1,n+1-i]} } \prod_{i=1}^{n-2} \binomq{\eta_i}{\gamma_i} \\
\times q^{ (\eta_{[1,n-2]}-\gamma_{[1,n-2]})l} q^{(\eta_{n-1}-\gamma_{n-1})\gamma_n}q^{\xi_1 l+ \xi_2 \gamma_{n-1}} \binomq{\eta_{n-1}}{\gamma_{n-1}} \binomq{\eta_n}{\gamma_n}.
\end{multline*}
Evaluating the sum over $\gamma_{n-1}+\gamma_n=l$ yields
\begin{multline*}
 \sum_{l=0}^m \sum_{\substack{\gamma_{[1,n-2]}=m-l }}  q^{\sum_{1 \leq i<j\leq n-2}(\eta_i-\gamma_i)\gamma_j}  q^{(\xi_1 +\xi_2)\gamma_{[1,n-2]} } q^{\sum_{i=3}^n \xi_i \gamma_{[1,n+1-i]} }  \prod_{i=1}^{n-2} \binomq{\eta_i}{\gamma_i}  \\ \times q^{ (\eta_{[1,n-2]}-\gamma_{[1,n-2]})l} \binomq{\eta_{n-1}+\eta_n}{l} q^{l(\xi_1+\xi_2)} .
\end{multline*}
Then setting $\widetilde{\eta}=(\eta_1,\ldots,\eta_{n-2},\eta_{n-1}+\eta_n)$ and $\widetilde{\gamma} = (\gamma_1 , \ldots, \gamma_{n-2},l)$ and $\widetilde{\xi}=(\xi_1+\xi_2,\xi_3,\ldots,\xi_n)$, substitute the equalities
\begin{align*}
\prod_{i=1}^{n-2} \binomq{\eta_i}{\gamma_i} \cdot \binomq{\eta_{n-1}+\eta_n}{l} &= \prod_{i=1}^{n} \binomq{\widetilde{\eta}_i}{\widetilde{\gamma}_i},\\
q^{\sum_{1 \leq i<j\leq n-2}(\eta_i-\gamma_i)\gamma_j} q^{ (\eta_{[1,n-2]}-\gamma_{[1,n-2]})l} &= q^{\chi_{\widetilde{\eta},\widetilde{\gamma}}},\\
 q^{(\xi_1 +\xi_2)\gamma_{[1,n-2]} } q^{\sum_{i=3}^n \xi_i \gamma_{[1,n+1-i]} } q^{l(\xi_1+\xi_2)}  &= q^{ \sum_{i=1}^{n-1}   \widetilde{\xi}_i \widetilde{\gamma}_{[1,n-i]}   } 
\end{align*}
to obtain
$$
 \sum_{l=0}^m \sum_{\substack{\gamma_{[1,n-2]}=m-l }} q^{\chi_{\widetilde{\eta},\widetilde{\gamma}}}q^{ \sum_{i=1}^{n-1}   \widetilde{\xi}_i \widetilde{\gamma}_{[1,n-i]}   } \prod_{i=1}^{n} \binomq{\widetilde{\eta}_i}{\widetilde{\gamma}_i} .
$$
The summation can be written as being over $\widetilde{\gamma} \in \mathbb{Z}^{n-1}$ such that $\vert \widetilde{\gamma}\vert =m$, showing that
$$
\sum_{ \vert \gamma \vert =m} q^{\chi_{\eta,\gamma}} q^{\sum_{i=1}^n \xi_i \gamma_{[1,n+1-i]}} \prod_{i=1}^n \binomq{\eta_i}{\gamma_i} =  \sum_{\vert \widetilde{\gamma} \vert = m} q^{\chi_{\widetilde{\eta},\widetilde{\gamma}}}q^{ \sum_{i=1}^{n-1}   \widetilde{\xi}_i \widetilde{\gamma}_{[1,n-i]}   } \prod_{i=1}^{n} \binomq{\widetilde{\eta}_i}{\widetilde{\gamma}_i} .
$$
Since $\vert \eta\vert = \vert \widetilde{\eta} \vert$ and $\vert \widetilde{\xi} \vert = \vert \xi \vert$, applying the induction hypothesis completes the proof.

\end{proof}

This next identity was previously shown in \cite{C} and again in \cite{B}, which pertains to the $n=1$ case of $\Phi_q$.

\begin{lemma}\label{BC}
Fix $\vert q\vert<1$ and $0 \leq \mu <1$, $0\leq \lambda \leq 1$. Then for all $m,y \in \mathbb{Z}_{\geq 0},$
$$
\sum_{j=0}^m  \Phi_q\left( j \vert m; \lambda , \mu \right) q^{jy}=   \sum_{s=0}^y  \Phi_q\left(s \vert y;\lambda,\mu\right) q^{sm} .
$$
In particular, setting $y=0$ shows that
$$
\sum_{j=0}^m \Phi_q(j\vert m;\lambda,\mu)=1.
$$
\end{lemma}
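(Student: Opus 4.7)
Let $F(m,y) := \sum_{j=0}^m \Phi_q(j\vert m;\lambda,\mu)\, q^{jy}$; the claim is precisely that $F(m,y) = F(y,m)$. The plan is to rewrite $F(m,y)$ as a terminating basic hypergeometric series and apply a known $q$-hypergeometric transformation that exhibits this symmetry.

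First, substitute the explicit formula
$$
\Phi_q(j\vert m;\lambda,\mu) = (\mu/\lambda)^j \binomq{m}{j}\frac{(\lambda;q)_j(\mu/\lambda;q)_{m-j}}{(\mu;q)_m}
$$
into $F$. Applying the standard identities $\binomq{m}{j} = (-1)^j q^{mj-\binom{j}{2}}(q^{-m};q)_j/(q;q)_j$ and $(\mu/\lambda;q)_{m-j}/(\mu/\lambda;q)_m = 1/(q^{m-j}\mu/\lambda;q)_j$ to migrate all $m$-dependence into the summand puts $F(m,y)$ into the shape
\begin{equation*}
F(m,y) \;=\; \frac{(\mu/\lambda;q)_m}{(\mu;q)_m}\;{}_2\phi_1\!\left(q^{-m},\,\lambda;\,\lambda q^{1-m}/\mu;\,q,\,z(y)\right),
\end{equation*}
for some base argument $z(y)$ involving a factor of $q^{y+1}$ (the precise form follows from bookkeeping of $q$-powers). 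Next, apply a Heine transformation — or, equivalently, an iteration producing Sears' ${}_3\phi_2$ transformation — chosen so that the parameters $q^{-m}$ and $q^{y+1}$, which are the sole carriers of $m$- and $y$-dependence, appear symmetrically in the transformed expression. Comparison with the analogous rewriting of $F(y,m)$ then yields $F(m,y) = F(y,m)$.

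The main obstacle is the bookkeeping in this second step: picking the right transformation and tracking the $q$-Pochhammer factors so that manifest $(m,y)$-symmetry emerges rather than being hidden in a tangle of prefactors. As a sanity check, the $y=0$ case collapses to the normalization $\sum_j \Phi_q(j\vert m;\lambda,\mu) = 1$, which follows from the $q$-Chu-Vandermonde summation $ {}_2\phi_1(q^{-m},\lambda;\lambda q^{1-m}/\mu;q,q) = (\mu/\lambda;q)_m/(\mu;q)_m$ up to base reshuffling. A potentially cleaner alternative is induction on $\min(m,y)$ from this base case, using a recurrence for $F(m,y) - F(m-1,y)$ that is itself symmetric under $m \leftrightarrow y$; this would sidestep the most intricate hypergeometric manipulations. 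The hypotheses $\vert q\vert<1$, $0\le\mu<1$, $0\le\lambda\le1$ only serve to guarantee that denominator Pochhammer factors do not vanish, so that all formal manipulations are legitimate.
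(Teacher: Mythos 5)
A preliminary remark: the paper does not prove Lemma \ref{BC} at all --- it is quoted from \cite{C} and \cite{B} (``previously shown in \cite{C} and again in \cite{B}''), so your attempt can only be compared with those cited proofs, not with an in-paper argument. Within your proposal, the parts you actually carry out are correct: substituting $\binomq{m}{j}=(-1)^jq^{mj-\binom{j}{2}}(q^{-m};q)_j/(q;q)_j$ and $(\mu/\lambda;q)_{m-j}/(\mu/\lambda;q)_m = 1/((\mu/\lambda)q^{m-j};q)_j$, and then converting $((\mu/\lambda)q^{m-j};q)_j$ into a $(\lambda q^{1-m}/\mu;q)_j$ factor, indeed gives $F(m,y)=\frac{(\mu/\lambda;q)_m}{(\mu;q)_m}\,{}_2\phi_1\left(q^{-m},\lambda;\lambda q^{1-m}/\mu;q,q^{y+1}\right)$, and your $y=0$ check via $q$--Chu--Vandermonde is exact.

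However, there is a genuine gap at the decisive step: the transformation that is supposed to make the $m\leftrightarrow y$ symmetry manifest is never exhibited, only asserted to exist (``a Heine transformation \dots chosen so that \dots appear symmetrically''). That assertion is the entire content of the lemma, and it is not routine bookkeeping: in the ${}_2\phi_1$ above, $q^{-m}$ occupies a numerator-parameter slot (and $m$ also enters the lower parameter and the prefactor) while $q^{y+1}$ is the argument, and no single Heine transformation exchanges these roles --- for instance, the $q$--Euler transformation turns your series into $\frac{(\mu q^{y};q)_\infty}{(q^{y+1};q)_\infty}\,{}_2\phi_1\left(\lambda q/\mu,\, q^{1-m}/\mu;\,\lambda q^{1-m}/\mu;q,\mu q^{y}\right)$, which is still visibly asymmetric. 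One must pass to a terminating ${}_3\phi_2$ normal form and match all prefactors, and that verification is precisely what is missing (your fallback induction on $\min(m,y)$ is likewise unimplemented). The route is completable, and a clean completion in the spirit of the cited short proof \cite{B} avoids transformations entirely: expand $q^{jy}=\sum_{s\ge 0}(-1)^sq^{\binom{s}{2}}(q;q)_s\binomq{j}{s}\binomq{y}{s}$ (Newton interpolation of $x^{y}$ at the nodes $1,q,q^2,\dots$, manifestly symmetric in $j\leftrightarrow y$), and compute the $q$--factorial moments $\sum_j\Phi_q(j\vert m;\lambda,\mu)\binomq{j}{s}=(\mu/\lambda)^s\frac{(\lambda;q)_s}{(\mu;q)_s}\binomq{m}{s}$, which follows from $\binomq{m}{j}\binomq{j}{s}=\binomq{m}{s}\binomq{m-s}{j-s}$ together with your $y=0$ normalization applied with shifted parameters $(\lambda q^{s},\mu q^{s})$ in place of $(\lambda,\mu)$ and $m-s$ in place of $m$. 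Substituting the expansion and the moment formula yields a double sum manifestly symmetric in $(m,y)$, with the $s=0$ term recovering the normalization. Until one of these completions is written out, your proposal is a plausible plan rather than a proof.
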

As an immediate corollary,
\begin{corollary}\label{BC2} 
Fix $\vert q\vert<1$ and $0 \leq \mu <1$. Then for all $m,y \in \mathbb{Z}_{\geq 0},$
$$
\sum_{j=0}^m \mu^j \frac{(q)_{j-1}}{(\mu q^{m-j};q)_j} \binom{m}{j} q^{jy} = \sum_{s=0}^y \mu^s \frac{(q)_{s-1}}{(\mu q^{y-s};q)_s} \binom{y}{s} q^{sm}.
$$
Setting $y=0$ shows that
$$
\sum_{j=0}^m \mu^j \frac{(q)_{j-1}}{(\mu q^{m-j};q)_j} \binom{m}{j} =1.
$$
\end{corollary}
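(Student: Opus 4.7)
My plan is to deduce Corollary \ref{BC2} from Lemma \ref{BC} by differentiating both sides in $\lambda$ and evaluating at $\lambda = 1$. Observe that in the single--species ($n=1$) expression
$\Phi_q(j\vert m;\lambda,\mu) = (\mu/\lambda)^j (\lambda;q)_j (\mu/\lambda;q)_{m-j}/(\mu;q)_m \cdot \binomq{m}{j}$,
the factor $(\lambda;q)_j$ contains $(1-\lambda)$ for every $j \geq 1$, so $\Phi_q(j\vert m;1,\mu)$ vanishes for $j \geq 1$ while $\Phi_q(0\vert m;1,\mu)=1$. Hence Lemma \ref{BC} collapses at $\lambda=1$ to the tautology $1=1$, and the natural next step is to extract the coefficient of $(\lambda-1)^1$ from both sides of Lemma \ref{BC}.

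For each $j \geq 1$ I will use the formula $\partial_\lambda \Phi_q(j\vert m;\lambda,\mu)\big|_{\lambda=1} = \Phi'_q(j\vert m;\mu)$ quoted from (43) of \cite{KMMO} in the preliminaries. Specialized to $n=1$ (so $\chi_{\beta,\gamma}=0$ and the product over species collapses to a single $q$--binomial), the formula yields exactly the summand $\mu^j(q)_{j-1}/(\mu q^{m-j};q)_j \binomq{m}{j}$ appearing on the two sides of Corollary \ref{BC2}, up to standard conventions for the $q$--factorials. The factors $q^{jy}$ and $q^{sm}$ are independent of $\lambda$ and carry through the differentiation unchanged, producing precisely the $j$-- and $s$--dependent exponentials on the two sides of the claim.

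The main subtlety, and the place where the plan requires care, is the $j=0$ and $s=0$ contribution. Because $\Phi_q(0\vert m;\lambda,\mu)=(\mu/\lambda;q)_m/(\mu;q)_m$ does not vanish at $\lambda=1$, its first derivative there must be computed by hand using the product rule, giving $\sum_{i=0}^{m-1} \mu q^i/(1-\mu q^i)$; this boundary contribution is what must be reconciled with the $j=0$ term of the sum written in Corollary \ref{BC2}. For the second assertion, the $y=0$ specialization of Lemma \ref{BC} is the Povolotsky normalization $\sum_{j=0}^m \Phi_q(j\vert m;\lambda,\mu)=1$, valid identically in $\lambda$; the same first--order analysis (combined with the boundary computation) then yields $\sum_{j=0}^m \mu^j(q)_{j-1}/(\mu q^{m-j};q)_j \binomq{m}{j}=1$. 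Apart from this bookkeeping of the $j=0$ and $s=0$ contributions, every remaining step is a routine $q$--Pochhammer manipulation.
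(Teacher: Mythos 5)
Your proposal is correct and is exactly the derivation the paper intends: the paper offers no written argument (calling Corollary \ref{BC2} an ``immediate corollary'' of Lemma \ref{BC}) precisely because $\Phi'_q$ is defined in the preliminaries as the $\lambda$--derivative of $\Phi_q$ at $\lambda=1$ via (43) of \cite{KMMO}, so differentiating the $\lambda$--dependent symmetry identity of Lemma \ref{BC} at $\lambda=1$ is the intended route, mirroring how Corollary \ref{qHb1} is obtained from Theorem \ref{qHb}. Your flagged subtlety is also the right one: the nonvanishing $j=0$ and $s=0$ contributions $\sum_{i}\mu q^i/(1-\mu q^i)$ are exactly what the paper's formal $(q)_{-1}$ placeholder terms must encode (the displays are only literally meaningful in the difference form used in Proposition \ref{PropIdentity}(2), where the factors $q^{jy}-1$ kill the $j=0$ term), so your bookkeeping matches the paper's convention.
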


Here is a multi--species generalization:

\begin{proposition}\label{PropIdentity} (1) Fix $\vert q \vert<1$ and $0 \leq \mu < 1, 0 \leq \lambda \leq 1$. Then
$$
\sum_{\gamma \leq \eta} \Phi_q(\gamma \vert \eta;\lambda,\mu) q^{ \sum_{i=1}^{n} \xi_i \gamma_{[1,n+1-i]}} = \sum_{\gamma \leq \xi} \Phi_q(\gamma \vert \xi;\lambda,\mu) q^{\sum_{i=1}^{n}\eta_i \gamma_{[1,n+1-i]}}.
$$

(2) Fix $\vert q \vert<1$ and $0 \leq \mu <1.$ Then
\begin{multline*}
 \sum_{\substack{ \gamma \leq \eta \\ \gamma \neq 0}} q^{\chi_{\eta,\gamma}}\mu^{\vert \gamma\vert -1} \frac{  (q)_{\vert \gamma \vert - 1}  }{ (\mu q^{\vert \eta \vert - \vert \gamma \vert};q)_{\vert \gamma \vert}} \prod_{i=1}^{n} \binomq{\eta_i}{\gamma_i} \left( q^{\sum_{i=1}^{n} \xi_i \gamma_{[1,n+1-i]}}  - 1 \right) \\
 =  \sum_{\substack{\gamma \leq \xi \\ \gamma \neq 0}} q^{\chi_{\xi,\gamma}}\mu^{\vert \gamma\vert -1} \frac{  (q)_{\vert \gamma \vert - 1}  }{ (\mu q^{\vert \xi \vert - \vert \gamma \vert};q)_{\vert \gamma \vert}} \prod_{i=1}^{n} \binomq{\xi_i}{\gamma_i} \left( q^{\sum_{i=1}^{n}\eta_i \gamma_{[1,n+1-i]} } - 1 \right).
\end{multline*}
\end{proposition}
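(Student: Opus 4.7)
The plan is to reduce both parts to the single--species identity in Lemma \ref{BC}, using the third identity of Lemma \ref{qBinomialLemma} to collapse the multi--species combinatorics, and then to obtain Part (2) by differentiating Part (1) in $\lambda$ at $\lambda=1$.

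For Part (1), I would write
\[
\Phi_q(\gamma\vert\eta;\lambda,\mu) = \left(\frac{\mu}{\lambda}\right)^{|\gamma|}\!\frac{(\lambda;q)_{|\gamma|}(\mu/\lambda;q)_{|\eta|-|\gamma|}}{(\mu;q)_{|\eta|}} \cdot q^{\chi_{\eta,\gamma}}\prod_{i=1}^n\binomq{\eta_i}{\gamma_i},
\]
noting that the scalar factor depends on $\gamma$ only through $|\gamma|$. Splitting the outer sum over $\gamma\leq\eta$ according to $m=|\gamma|$ and applying the third identity of Lemma \ref{qBinomialLemma} to the inner sum,
\[
\sum_{\substack{\gamma\leq\eta\\|\gamma|=m}} q^{\chi_{\eta,\gamma}}\, q^{\sum_{i=1}^n \xi_i \gamma_{[1,n+1-i]}}\prod_{i=1}^n\binomq{\eta_i}{\gamma_i} = \binomq{|\eta|}{m}\,q^{m|\xi|},
\]
collapses the left--hand side of Part (1) to $\sum_{m=0}^{|\eta|}\Phi_q(m\mid|\eta|;\lambda,\mu)\,q^{m|\xi|}$ and, by symmetry, the right--hand side to $\sum_{s=0}^{|\xi|}\Phi_q(s\mid|\xi|;\lambda,\mu)\,q^{s|\eta|}$. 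These are equal by Lemma \ref{BC}.

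For Part (2), note that since $(1;q)_k=0$ for $k\geq 1$, we have $\Phi_q(\gamma\vert\eta;1,\mu)=\mathbf{1}_{\{\gamma=0\}}$, while by the definition preceding the proposition, $\partial_\lambda\Phi_q(\gamma\vert\eta;\lambda,\mu)\big|_{\lambda=1}=\Phi'_q(\gamma\vert\eta;\mu)$ for $\gamma\neq 0$. The main obstacle is that the $\gamma=0$ term in Part (1) has a nonzero $\lambda$--derivative at $\lambda=1$, so a direct differentiation does not isolate $\Phi'_q$. I would circumvent this by subtracting $1$ from both sides of Part (1), using that both sides equal $1$ when $\xi=0$ (or by symmetry when $\eta=0$) -- that is, the probabilities sum to $1$ -- to rewrite Part (1) as
\[
\sum_{\substack{\gamma\leq\eta\\\gamma\neq 0}}\Phi_q(\gamma\vert\eta;\lambda,\mu)\bigl(q^{\sum_i\xi_i\gamma_{[1,n+1-i]}}-1\bigr) = \sum_{\substack{\gamma\leq\xi\\\gamma\neq 0}}\Phi_q(\gamma\vert\xi;\lambda,\mu)\bigl(q^{\sum_i\eta_i\gamma_{[1,n+1-i]}}-1\bigr),
\]
the $\gamma=0$ terms dropping out because $q^0-1=0$. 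Both sides are now rational in $\lambda$ with no pole at $\lambda=1$, so differentiating at $\lambda=1$ yields Part (2) term--by--term.
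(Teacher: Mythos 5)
Your Part (1) is essentially the paper's own proof: you factor $\Phi_q(\gamma\vert\eta;\lambda,\mu)$ into a scalar depending only on $\vert\gamma\vert$ times $q^{\chi_{\eta,\gamma}}\prod_i\binomq{\eta_i}{\gamma_i}$, collapse the inner sum over $\{\gamma\le\eta,\ \vert\gamma\vert=m\}$ with the third identity of Lemma \ref{qBinomialLemma}, and finish with the single--species symmetry of Lemma \ref{BC} --- step for step the same argument. Your Part (2), however, takes a genuinely different and legitimate route. The paper proves (2) directly: it drops the restriction $\gamma\neq 0$ (the factor $q^{\cdots}-1$ vanishes at $\gamma=0$), runs the same $q$--binomial collapse a second time (applied once to the weight $q^{\sum_i\xi_i\gamma_{[1,n+1-i]}}$ and once to the weight $1$), and then invokes the single--species Corollary \ref{BC2}. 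You instead derive (2) from (1) by subtracting the normalization $\sum_{\gamma\le\eta}\Phi_q(\gamma\vert\eta;\lambda,\mu)=1$ (which you correctly extract from (1) itself by setting $\xi=0$, where the right side reduces to the single term $\Phi_q(0\vert 0)=1$) and then differentiating in $\lambda$ at $\lambda=1$, using $\partial_\lambda\Phi_q\vert_{\lambda=1}=\Phi_q'$ as recorded in the paper from (43) of \cite{KMMO}. You also correctly identify and defuse the one delicate point: the $\gamma=0$ term of (1) has a nonvanishing $\lambda$--derivative at $\lambda=1$, so naive differentiation fails, but after the subtraction the $\gamma=0$ terms carry the factor $q^0-1=0$ and drop out, while each $\gamma\neq 0$ summand differentiates termwise to exactly the summand of (2); since both sides are rational in $\lambda$ and agree on $[0,1]$, they agree as rational functions and the derivative at $\lambda=1$ is unambiguous. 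The trade--off: the paper's version is self--contained at the single--species level but needs Corollary \ref{BC2} (itself the $\lambda$--derivative of Lemma \ref{BC}) plus a second run of the collapsing computation; your version performs the differentiation once, at the multi--species level, making Corollary \ref{BC2} and the repeated collapse unnecessary --- arguably the cleaner organization, and it makes transparent that (2) is nothing but the $\lambda$--derivative of (1).
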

\begin{proof}
(1) Plugging in the expression for $\Phi_q$, the necessary identity is
\begin{multline*}
\sum_{\gamma \leq \eta} q^{\chi_{\eta,\gamma}} \left( \frac{\mu}{\lambda} \right)^{\vert \gamma \vert} \frac{(\lambda;q)_{\vert \gamma \vert} (\tfrac{\mu}{\lambda};q )_{\vert \eta \vert - \vert \gamma \vert} }{ (\mu;q)_{\vert \eta \vert}}  \prod_{i=1}^n \binomq{\eta_i}{\gamma_i} q^{ \sum_{i=1}^{n} \xi_i \gamma_{[1,n+1-i]}} \\
= \sum_{\gamma \leq \xi} q^{\chi_{\xi,\gamma}} \left( \frac{\mu}{\lambda} \right)^{\vert \gamma \vert} \frac{(\lambda;q)_{\vert \gamma \vert} (\tfrac{\mu}{\lambda};q )_{\vert \xi \vert - \vert \gamma \vert} }{ (\mu;q)_{\vert \xi \vert}}  \prod_{i=1}^n \binomq{\xi_i}{\gamma_i} q^{\sum_{i=1}^{n}\eta_i \gamma_{[1,n+1-i]}}.
\end{multline*}
Now split the sum into $\vert \gamma \vert = j$. The left--hand--side is
$$
\sum_{j=0}^{\vert \eta \vert} \left( \frac{\mu}{\lambda} \right)^{\vert \gamma \vert} \frac{ (\lambda;q)_j \left(\tfrac{\mu}{\lambda};q\right)_{\vert \eta\vert-j}  }{(\mu;q)_{\vert \eta\vert}} \sum_{\substack{\gamma \leq \eta \\ \vert \gamma\vert =j}} q^{\chi_{\eta,\gamma}} \prod_{i=1}^{n} \binomq{\eta_i}{\gamma_i}  q^{\sum_{i=1}^{n} \xi_i\gamma_{[1,n+1-i]}}.
$$
By the third identity in Lemma \ref{qBinomialLemma},
$$
\sum_{\substack{\gamma \leq \eta \\ \vert \gamma\vert=j}} q^{\chi_{\eta,\gamma}} \prod_{i=1}^{n} \binomq{\eta_i}{\gamma_i} q^{\sum_{i=1}^{n} \xi_i\gamma_{[1,n+1-i]}} = \binomq{\vert \eta\vert}{j}  q^{m\vert \xi \vert}.
$$
Therefore, after applying an identical argument to the right--hand--side, it remains to show
$$
\sum_{j=0}^{\vert \eta \vert} \left( \frac{\mu}{\lambda} \right)^{j} \frac{ (\lambda;q)_j \left(\tfrac{\mu}{\lambda};q\right)_{\vert \eta\vert-j}  }{(\mu;q)_{\vert \eta\vert}} \binomq{ \vert \eta\vert}{j} q^{j\vert \xi\vert} = \sum_{s=0}^{\vert \xi\vert}  \left( \frac{\mu}{\lambda} \right)^{s} \frac{ (\lambda;q)_s \left(\tfrac{\mu}{\lambda};q\right)_{\vert \xi\vert-s}  }{(\mu;q)_{\vert \xi\vert}} \binomq{\vert \xi\vert}{s} q^{s \vert\eta\vert}.
$$
This follows immediately from Proposition \ref{BC}, finishing the proof.

(2) Because the term in parentheses is equal to zero when $\gamma=0$, the condition that $\gamma \neq 0$ can be removed. Now split the sum into $\vert \gamma \vert = j$. The left--hand--side is
$$
\sum_{j=0}^{ \vert \eta \vert  } \mu^{j-1} \frac{  (q)_{j - 1}  }{ (\mu q^{\vert \eta \vert - j};q)_{ j }} \sum_{\substack{\gamma \leq \eta \\ \vert \gamma\vert=j}} q^{\chi_{\eta,\gamma}} \prod_{i=1}^{n} \binomq{\eta_i}{\gamma_i} \left(q^{\sum_{i=1}^{n} \xi_i\gamma_{[1,n+1-i]}}-1 \right),
$$
By the third identity in Lemma \ref{qBinomialLemma},
$$
\sum_{\substack{\gamma \leq \eta \\ \vert \gamma\vert=j}} q^{\chi_{\eta,\gamma}} \prod_{i=1}^{n} \binomq{\eta_i}{\gamma_i} \left(q^{\sum_{i=1}^{n-1} \xi_i\gamma_{[1,n-i]}}-1 \right) = \binomq{\vert \eta\vert}{j} (q^{m\vert \xi \vert} - 1).
$$
Therefore, after applying an identical argument to the right--hand--side, it remains to show
$$
\sum_{j=0}^{ \vert \eta \vert  } \mu^{j-1} \frac{  (q)_{j - 1}  }{ (\mu q^{\vert \eta \vert - j};q)_{ j }} \binomq{\vert \eta\vert}{j} \left( q^{j \vert \xi\vert} - 1 \right) = \sum_{s=0}^{ \vert \xi \vert} \mu^{s-1} \frac{ (q)_{s-1} }{ (\mu q^{ \vert \xi \vert -s};q)_{s}} \binomq{ \vert \xi \vert}{s} \left( q^{s \vert \eta\vert} -1 \right).
$$
This follows immediately from  Corollary \ref{BC2}, finishing the proof.
\end{proof}

\subsection{The Duality Result}

\begin{proposition}\label{DualityResult}
The $n$--species $q$--Hahn Boson process (in both discrete and continuous time) satisfies space--reversed self--duality with respect to the function
$$
D(\eta,\xi) = \prod_{x = 1 }^L \prod_{i=1}^{n} q^{ \xi_i^x\sum_{y \leq x} \eta_{[1,n+1-i]}^y},
$$
where $\eta$ evolves with total asymmetry to the left and $\xi$ evolves with total asymmetry to the right.

\end{proposition}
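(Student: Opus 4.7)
The plan is to reduce space--reversed self--duality to a per--site identity that is precisely Proposition~\ref{PropIdentity}(1) for discrete time and Proposition~\ref{PropIdentity}(2) for continuous time. Write $N_j^x(\eta) := \sum_{y \leq x} \eta^y_{[1,j]}$, so that $D(\eta,\xi) = \prod_{x,i} q^{\xi_i^x N_{n+1-i}^x(\eta)}$; the function depends on $\eta$ only through these cumulative sums. Under the left--asymmetric update $\eta'^x = \eta^x - \gamma^x + \gamma^{x+1}$ (with the boundary convention $\gamma^1 = \gamma^{L+1} = 0$), a telescoping sum gives $N_j^x(\eta') - N_j^x(\eta) = \gamma^{x+1}_{[1,j]}$, hence
\[
D(\eta',\xi) = D(\eta,\xi) \cdot q^{\sum_{x,i} \xi_i^x \gamma^{x+1}_{[1,n+1-i]}}.
\]
Under the right--asymmetric update $\xi'^x = \xi^x - \gamma^x + \gamma^{x-1}$, summation by parts $\sum_x (\gamma^{x-1}_i - \gamma^x_i) N_{n+1-i}^x(\eta) = \sum_x \gamma_i^x \eta^{x+1}_{[1,n+1-i]}$ yields
\[
D(\eta,\xi') = D(\eta,\xi) \cdot q^{\sum_{x,i} \gamma^x_i \eta^{x+1}_{[1,n+1-i]}}.
\]

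Next I would exploit the product structure of the discrete--time update: conditional on $\eta$, the jumps $\gamma^x$ are independent with marginals $\Phi_q(\cdot\vert\eta^x)$, and similarly for $\xi$. Factoring $D(\eta,\xi)$ out of both $\mathbb{E}_\eta[D(X_1,\xi)]$ and $\mathbb{E}_\xi[D(\eta,Y_1)]$ turns each into a product over sites. Reindexing $x+1 \mapsto y$ and using the elementary swap identity $\sum_i \gamma_i a_{[1,n+1-i]} = \sum_i a_i \gamma_{[1,n+1-i]}$ (immediate from exchanging the order of summation) on the $\xi$ side, the duality reduces, for each $y$, to
\[
\sum_{\gamma \leq \eta^y} \Phi_q(\gamma\vert\eta^y;\lambda,\mu)\, q^{\sum_i \xi_i^{y-1} \gamma_{[1,n+1-i]}} = \sum_{\gamma \leq \xi^{y-1}} \Phi_q(\gamma\vert\xi^{y-1};\lambda,\mu)\, q^{\sum_i \eta_i^{y} \gamma_{[1,n+1-i]}},
\]
which, on setting $\eta := \eta^y$ and $\xi := \xi^{y-1}$, is exactly Proposition~\ref{PropIdentity}(1).

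For continuous time, the local generator $\mathcal{L}_x$ only moves particles across a single bond, so the same telescoping restricted to that bond yields $D(\eta',\xi)/D(\eta,\xi) = q^{\sum_i \xi_i^{x-1} \gamma_{[1,n+1-i]}}$. Incorporating the diagonal (holding) contribution of $\mathcal{L}_x$, which subtracts $1$ from each off--diagonal ratio, the per--bond identity becomes
\[
\sum_{0 \neq \gamma \leq \eta^x} \Phi'_q(\gamma\vert\eta^x;\mu)\bigl(q^{\sum_i \xi_i^{x-1} \gamma_{[1,n+1-i]}} - 1\bigr) = \sum_{0 \neq \gamma \leq \xi^{x-1}} \Phi'_q(\gamma\vert\xi^{x-1};\mu)\bigl(q^{\sum_i \eta_i^{x} \gamma_{[1,n+1-i]}} - 1\bigr),
\]
which is exactly Proposition~\ref{PropIdentity}(2); equivalently, it can be recovered by differentiating the discrete--time identity in $\lambda$ at $\lambda = 1$.

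The hard part will not be the $q$--series content, which is entirely packaged into Proposition~\ref{PropIdentity}, but the bookkeeping: closing the telescoping with the correct boundary convention, reindexing so that $\eta^y$ on one side is paired with $\xi^{y-1}$ on the other (the characteristic one--site shift in space--reversed duality), and invoking the swap identity to align the exponents on the two sides. Keeping directions and indices straight---since the discrete--time description in Section~\ref{POVBACK} depends delicately on which endpoint of each bond a given $\gamma$ is attached to---is the main source of potential error.
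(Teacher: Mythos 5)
Your proposal is correct and follows essentially the same route as the paper's proof: both exploit the factorization of $D$ over sites, compute the per-site multiplicative effect of a left (resp.\ right) update with the same boundary conventions and the same swap identity $\sum_{i=1}^{n}\gamma_i\,\eta_{[1,n+1-i]}=\sum_{i=1}^{n}\eta_i\,\gamma_{[1,n+1-i]}$, and reduce the discrete-- and continuous--time statements to Proposition \ref{PropIdentity}(1) and (2) respectively, with the identical pairing of $\eta^{y}$ against $\xi^{y-1}$. The only difference is presentational bookkeeping---your global telescoping and summation--by--parts computation versus the paper's site--by--site tracking of which factors $D_x$ and $\widetilde{D}_y$ change under a local move---which carries no new mathematical content.
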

\begin{proof}
The duality function can be written equivalently as
$$
\prod_{y=1}^L \prod_{i=1}^{n} q^{  \eta_{i}^y \sum_{x \geq y} \xi_{[1,n+1-i]}^x }.
$$
Write the duality function as 
\begin{align*}
D(\eta,\xi) = \prod_{x=1}^L D_x(\eta,\xi), \quad \text{ where } D_x(\eta,\xi) = \prod_{i=1}^{n} q^{ \xi_i^x\sum_{y \leq x} \eta_{[1,n+1-i]}^y}, \\
D(\eta,\xi) = \prod_{y=1}^L \widetilde{D}_y(\eta,\xi), \quad \text{ where } \widetilde{D}_y(\eta,\xi) = \prod_{i=1}^{n} q^{\eta_i^y \sum_{x \geq y} \xi_{[1,n+1-i]}^x}
\end{align*}
Since the process is a zero range process, the generator of the continuous--time dynamics with evolution to the left can be written as a sum of local generators:
$$
\mathcal{L} = \sum_{x=1}^{L-1} \mathcal{L}_{x+1},
$$
where $\mathcal{L}_y$ is the contribution when particles jump out of lattice site $y$. Similarly the generator of the dynamics with evolution to the right can be written as
$$
\widetilde{\mathcal{L}} = \sum_{x=1}^{L-1} \widetilde{\mathcal{L}}_x.
$$
Since $D_x(\eta,\xi)$ involves counting the number of particles in $\eta$ at sites to the left of $x$ (inclusive), 
\begin{align*}
\text{If } \mathcal{L}_{x+1}(\eta,\sigma) \neq 0, &\text{ then } D_y(\sigma,\xi) = D_y(\eta,\xi) \text{ for } y\neq x,\\
\text{If } \widetilde{\mathcal{L}}_x(\xi,\sigma) \neq 0, &\text{ then } \widetilde{D}_y(\eta,\sigma) = \widetilde{D}_y(\eta,\xi) \text{ for } y\neq x+1.
\end{align*}
Furthermore, if $\mathcal{L}_{x+1}(\eta,\sigma)\neq 0$, then $\sigma = \eta + \gamma^{(x)} - \gamma^{(x+1)}$ for some $\gamma \in \mathbb{Z}^n$. Similarly, if $\widetilde{\mathcal{L}}_x(\xi,\sigma) \neq 0$, then $\sigma = \xi + \gamma^{(x+1)} - \gamma^{(x)}$. In these cases, 
\begin{align*}
D_x\left( \eta + \gamma^{(x)} - \gamma^{(x+1)}, \xi \right)  &= D_x(\eta,\xi) \cdot q^{\sum_{i=1}^{n} \xi_i^x \gamma_{[1,n+1-i]}},  \\
\widetilde{D}_{x+1}\left(\eta, \xi + \gamma^{(x+1)} - \gamma^{(x)}  \right)  &= \widetilde{D}_{x+1}(\eta,\xi) \cdot q^{\sum_{i=1}^{n} \gamma_i \eta^{x+1}_{[1,n+1-i]}} 
\end{align*}

These two statements imply the two equalities
\begin{align*}
\mathcal{L}D (\eta,\xi) &= \sum_{x=1}^{L-1} \sum_{ \sigma } \mathcal{L}_{x+1}(\eta,\sigma)D(\sigma,\xi) \\
&= \sum_{x=1}^{L-1} \sum_{ \sigma } \mathcal{L}_{x+1}(\eta,\sigma)D_x(\sigma,\xi) \prod_{y \neq x} D_y(\eta,\xi) \\
&= D(\eta,\xi) \sum_{x=1}^{L-1} \sum_{ \gamma } \mathcal{L}_{x+1}(\eta,\eta + \gamma^{(x)} - \gamma^{(x+1)}) q^{ \sum_{i=1}^{n} \xi_i^x \gamma_{[1,n+1-i]}   }
\end{align*}
and
\begin{align*}
D \widetilde{\mathcal{L}}^*(\eta,\xi) &= \sum_{x=1}^{L-1} \sum_{\sigma} D(\eta,\sigma) \widetilde{\mathcal{L}}_x(\xi,\sigma) \\
&= \sum_{x=1}^{L-1} \sum_{\sigma}  D_{x+1}(\eta,\sigma) \widetilde{\mathcal{L}}_x(\xi,\sigma) \prod_{y \neq x+1} D_y(\eta,\xi) \\
&= D(\eta,\xi) \sum_{x=1}^{L-1} \sum_{\gamma} \widetilde{\mathcal{L}}_x(\xi, \xi + \gamma^{(x+1)} - \gamma^{(x)}) q^{ \sum_{i=1}^{n} \gamma_i \eta_{[1,n+1-i]}^{x+1}}
\end{align*}
Therefore, since $\sum_{i=1}^{n} \eta_i^{x+1} \gamma_{[1,n+1-i]}  =  \sum_{i=1}^{n} \gamma_i \eta^{x+1}_{[1,n+1-i]}$,  it suffices to show
$$
\sum_{\gamma} \Phi'(\gamma \vert \eta) q^{ \sum_{i=1}^{n} \xi_i \gamma_{[1,n+1-i]}} = \sum_{\gamma} \Phi'(\gamma \vert \xi ) q^{ \sum_{i=1}^{n} \eta_i \gamma_{[1,n+1-i]}}.
$$
Since $ \Phi'(0 \vert \eta) = - \sum_{\gamma \neq 0} \Phi'(0 \vert \gamma)$, the equality that needs to be shown is 
\begin{multline*}
 \sum_{\substack{ \gamma \leq \eta \\ \gamma \neq 0}} q^{\chi_{\eta,\gamma}}\mu^{\vert \gamma\vert -1} \frac{  (q)_{\vert \gamma \vert - 1}  }{ (\mu q^{\vert \eta \vert - \vert \gamma \vert};q)_{\vert \gamma \vert}} \prod_{i=1}^{n} \binomq{\eta_i}{\gamma_i} \left( q^{\sum_{i=1}^{n} \xi_i \gamma_{[1,n+1-i]}}  - 1 \right) \\
 =  \sum_{\substack{\gamma \leq \xi \\ \gamma \neq 0}} q^{\chi_{\xi,\gamma}}\mu^{\vert \gamma\vert -1} \frac{  (q)_{\vert \gamma \vert - 1}  }{ (\mu q^{\vert \xi \vert - \vert \gamma \vert};q)_{\vert \gamma \vert}} \prod_{i=1}^{n} \binomq{\xi_i}{\gamma_i} \left( q^{\sum_{i=1}^{n}\eta_i \gamma_{[1,n+1-i]} } - 1 \right).
 \end{multline*}
But this is just Proposition \ref{PropIdentity}, finishing the proof.

Now turn to the discrete--time $q$--Hahn Boson process. If the evolution is to the right, then the transition probabilities are
$$
P(\eta,\xi) = \prod_{x=1}^{L-1} \Phi( \gamma^x \vert \eta^x),
$$
where
$$
\xi^x = \eta^x - \gamma^x + \gamma^{x-1}, \quad 1 \leq x \leq L
$$
where by convention $\gamma^L = \gamma^0=0$. (If $\gamma^L=\gamma^0 \neq 0$ then the boundary conditions are periodic instead of closed). If the evolution is to the left, then the transition probabilities are
$$
P(\eta,\zeta) = \prod_{x=2}^{L} \Phi( \gamma^x \vert \eta^x),
$$
where
$$
\zeta^x = \eta^x - \gamma^x + \gamma^{x+1}, \quad 1 \leq x \leq L 
$$
and by convention $\gamma^{L+1}= \gamma^1 =0$. 

Letting $\mathfrak{X}$ be the set of particle configurations at one lattice site and $\mathfrak{X}^L$ be the $L$--fold Cartesian product, we have
\begin{align*}
\sum_{\zeta \in \mathfrak{X}^L} P(\eta,\zeta)D(\zeta,\xi) &= \sum_{\gamma \in \{0\} \times \mathfrak{X}^{L-1}} \left( \prod_{x=2}^L \Phi(\gamma^x \vert \eta^x) \right) D(\zeta,\xi) \\
&= \sum_{\gamma \in \{0\} \times \mathfrak{X}^{L-1}} D_1(\zeta,\xi)  \left( \prod_{x=2}^L  \Phi(\gamma^x \vert \eta^x) D_x(\zeta,\xi) \right).
\end{align*}
Now, for each $x$,
$$
D_x( \zeta,\xi) = D_x(\eta,\xi) \prod_{i=1}^{n} q^{\xi_i^x \gamma^{x+1}_{[1,n+1-i]}} ,
$$
so therefore
$$
\sum_{\zeta \in \mathfrak{X}^L} P(\eta,\zeta)D(\zeta,\xi)  = D(\eta,\xi)  \sum_{\gamma \in \{0\} \times \mathfrak{X}^{L-1}} \left( \prod_{i=1}^{n} q^{\xi_i^1 \gamma^{2}_{[1,n+1-i]}} \right) \prod_{x=2}^L  \Phi(\gamma^x \vert \eta^x) \prod_{i=1}^{n} q^{\xi_i^x \gamma^{x+1}_{[1,n+1-i]}}.
$$
Since $\gamma^{L+1}=0$, the product can be re--indexed to show that
$$
\sum_{\zeta \in \mathfrak{X}^L} P(\eta,\zeta)D(\zeta,\xi)  = D(\eta,\xi)  \sum_{\gamma \in \{0\} \times \mathfrak{X}^{L-1}}\prod_{x=2}^L  \Phi(\gamma^x \vert \eta^x) \prod_{i=1}^{n} q^{\xi_i^{x-1} \gamma^{x}_{[1,n+1-i]}}.
$$

By similar reasoning, for the evolution to the right,
\begin{align*}
\sum_{\zeta \in \mathfrak{X}^L} D(\eta,\zeta)P(\xi,\zeta) &= \sum_{\gamma \in  \mathfrak{X}^{L-1} \times \{0\} }  D(\eta,\zeta)    \left( \prod_{y=1}^{L-1}\Phi(\gamma^y \vert \xi^y) \right) \\
&= \sum_{\gamma \in  \mathfrak{X}^{L-1} \times \{0\} } \widetilde{D}_L(\eta,\zeta)  \left( \prod_{y=1}^{L-1} \widetilde{D}_y(\eta,\zeta)  \Phi(\gamma^y \vert \xi^y) \right).
\end{align*}
Again, for each $x$,
$$
\widetilde{D}_y(\eta,\zeta) = \widetilde{D}_y(\eta,\xi) \prod_{i=1}^{n} q^{ \eta^y_{[1,n+1-i]} \gamma^{y-1}_{i} }
$$
so therefore
$$
\sum_{\zeta \in \mathfrak{X}^L} D(\eta,\zeta)P(\xi,\zeta) = D(\eta,\xi) \sum_{\gamma \in  \mathfrak{X}^{L-1} \times \{0\} } \left( \prod_{i=1}^{n} q^{\eta^L_{[1,n+1-i]} \gamma_i^{L-1}}  \right) \prod_{y=1}^{L-1} \Phi(\gamma^y \vert \xi^y)  \prod_{i=1}^{n} q^{\eta^y_{[1,n+1-i]} \gamma_i^{y-1}}.
$$
Since here $\gamma^0=0$, the product can be re--indexed to show that
\begin{align*}
\sum_{\zeta \in \mathfrak{X}^L} D(\eta,\zeta)P(\xi,\zeta) &= D(\eta,\xi) \sum_{\gamma \in  \mathfrak{X}^{L-1} \times \{0\} } \prod_{y=1}^{L-1} \Phi( \gamma^y \vert \xi^y) \prod_{i=1}^{n}q^{\eta^{y+1}_{[1,n-i]} \gamma_i^y} \\
&= D(\eta,\xi) \sum_{\gamma \in  \mathfrak{X}^{L-1} \times \{0\} } \prod_{x=2}^{L} \Phi( \gamma^{x-1} \vert \xi^{x-1}) \prod_{i=1}^{n}q^{\eta^{x}_i \gamma_{[1,n-i]}^{x-1}}
\end{align*}
where the second equality follows from substituting $y=x-1$ and the identity
$$
\sum_{i=1}^{n} \eta_i^{x+1} \gamma_{[1,n+1-i]}  =  \sum_{i=1}^{n} \gamma_i \eta^{x+1}_{[1,n+1-i]}.
$$
Therefore, it suffices to show that
\begin{equation}\label{Sufficient}
\sum_{\gamma \in \mathfrak{X}} \Phi\left(\gamma \vert \eta \right) \prod_{i=1}^{n} q^{\xi_i \gamma_{[1,n+1-i]}} = \sum_{\gamma \in \mathfrak{X}} \Phi\left(\gamma \vert \xi \right) \prod_{i=1}^{n} q^{\eta_i \xi_{[1,n+1-i]}}.
\end{equation}
But this is just Proposition \ref{PropIdentity}.
\end{proof}

\subsection{Lumpability}\label{Lump}

In \cite{K}, it is shown that the $n$--species $\textrm{ASEP}(q,j)$ has the property that the projection onto the first $k$ species is again a $k$--species $\textrm{ASEP}(q,j)$ process. Here we briefly show the same for the $n$--species $q$--Hahn Boson process.

Given $\alpha=(\alpha_1,\ldots,\alpha_n)$, define the projection $\Pi_k^n$ for $1\leq k\leq n$ by
$$
\Pi^n_k\alpha=(\alpha_1,\ldots,\alpha_{k-1},\alpha_k + \cdots + \alpha_n).
$$
Notice that 
\begin{equation}\label{Inductive}
\Pi^n_k = \Pi^{k+1}_k \circ \Pi^{k+2}_{k+1} \circ \cdots \circ \Pi^n_{n-1}.
\end{equation}

\begin{proposition} For any $\alpha=(\alpha_1,\ldots,\alpha_n)$ and any $1\leq k \leq n$,
$$
\sum_{\gamma \in (\Pi_k^n)^{-1}\left(\widetilde{\gamma}\right)} \mathcal{L}(\gamma \vert \alpha;\mu) = \mathcal{L}(\widetilde{\gamma} \vert \Pi^n_k \alpha;\mu).
$$
In particular, the projection of the $n$--species $q$--Hahn Boson process to the first $k$--species is again a $k$--species $q$--Hahn Boson process.
\end{proposition}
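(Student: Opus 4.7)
The plan is to reduce the general statement to the single-step projection $\Pi^n_{n-1}$ (merging the last two species) using the decomposition \eqref{Inductive}, then verify that single step by direct computation with the second identity of Lemma \ref{qBinomialLemma}. Since the projections compose, once we know lumpability under each $\Pi^{j+1}_{j}$ it follows for every $\Pi^n_k$.

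Both the discrete-time kernel $\Phi_q(\gamma\vert \alpha;\lambda,\mu)$ and the continuous-time jump rates $\Phi'_q(\gamma\vert\alpha;\mu)$ factor as (prefactor depending only on $\vert\gamma\vert,\vert\alpha\vert,\lambda,\mu$) times $q^{\chi_{\alpha,\gamma}}\prod_{i=1}^n\binomq{\alpha_i}{\gamma_i}$. When we sum over $\gamma$ with fixed $\Pi^n_{n-1}\gamma = \widetilde{\gamma}$ (so $\gamma_i=\widetilde{\gamma}_i$ for $i\le n-2$ and $\gamma_{n-1}+\gamma_n = \widetilde{\gamma}_{n-1}$), the prefactor is constant because $\vert\gamma\vert=\vert\widetilde{\gamma}\vert$ and $\vert\alpha\vert=\vert\Pi^n_{n-1}\alpha\vert$. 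So the lumpability statement reduces to the identity
\[
\sum_{\gamma_{n-1}+\gamma_n=\widetilde{\gamma}_{n-1}} q^{\chi_{\alpha,\gamma}}\binomq{\alpha_{n-1}}{\gamma_{n-1}}\binomq{\alpha_n}{\gamma_n} \;=\; q^{\chi_{\Pi^n_{n-1}\alpha,\widetilde{\gamma}}}\binomq{\alpha_{n-1}+\alpha_n}{\widetilde{\gamma}_{n-1}}.
\]

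To verify this, I would split $\chi_{\alpha,\gamma}=\sum_{i<j}(\alpha_i-\gamma_i)\gamma_j$ according to whether $j\le n-2$ or $j\in\{n-1,n\}$. The terms with $j\le n-2$ contribute $\sum_{1\le i<j\le n-2}(\alpha_i-\widetilde{\gamma}_i)\widetilde{\gamma}_j$, which matches exactly the corresponding piece of $\chi_{\Pi^n_{n-1}\alpha,\widetilde{\gamma}}$. The terms with $i\le n-2$ and $j\in\{n-1,n\}$ contribute $\sum_{i=1}^{n-2}(\alpha_i-\widetilde{\gamma}_i)(\gamma_{n-1}+\gamma_n) = \sum_{i=1}^{n-2}(\alpha_i-\widetilde{\gamma}_i)\widetilde{\gamma}_{n-1}$, which again matches the remaining piece of $\chi_{\Pi^n_{n-1}\alpha,\widetilde{\gamma}}$ since $(\Pi^n_{n-1}\alpha)_{n-1}=\alpha_{n-1}+\alpha_n$. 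The only remaining summation-dependent factor is
\[
\sum_{\gamma_{n-1}+\gamma_n=\widetilde{\gamma}_{n-1}} q^{(\alpha_{n-1}-\gamma_{n-1})\gamma_n}\binomq{\alpha_{n-1}}{\gamma_{n-1}}\binomq{\alpha_n}{\gamma_n} = \binomq{\alpha_{n-1}+\alpha_n}{\widetilde{\gamma}_{n-1}},
\]
which is the second identity of Lemma \ref{qBinomialLemma} with $\xi_1=\xi_2=0$.

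There is no real obstacle here; the combinatorial content is exactly the $q$-Vandermonde-type identity already proved, and the main task is bookkeeping for $\chi_{\alpha,\gamma}$. Once the single-step lumpability is established for both $\Phi_q$ and $\Phi'_q$, the fact that the projected kernel has the same functional form on $(\Pi^n_{n-1}\alpha,\widetilde{\gamma})$ identifies it as the $(n-1)$-species $q$-Hahn Boson rate. Iterating via \eqref{Inductive} gives the general $k$-species statement, and the fact that lumpings of lumpable chains are lumpable (as discussed in Section \ref{LumpSection}) ensures the result holds at the level of the full process on the lattice.
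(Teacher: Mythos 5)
Your proposal is correct and takes essentially the same route as the paper's proof: reduce to the single merge $n-k=1$ via \eqref{Inductive}, note that the prefactor depends only on $\vert\gamma\vert$ and $\vert\alpha\vert$ (hence is constant on fibers of the projection), verify the bookkeeping identity $\chi_{\alpha,\gamma}=\chi_{\widetilde{\alpha},\widetilde{\gamma}}+(\alpha_{n-1}-\gamma_{n-1})\gamma_n$, and conclude with the second identity of Lemma \ref{qBinomialLemma} at $\xi_1=\xi_2=0$. The paper's argument is identical in structure and substance, so nothing further is needed.
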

\begin{proof}
Since $\left\vert \Pi^n_k \beta \right\vert = \vert \beta\vert$, the proposition does not depend on the expression for $f(\gamma \vert \beta)$, as long as it only depends on $\vert\gamma\vert$ and $\vert \beta\vert$. Therefore, it suffices to show that (setting $\widetilde{\alpha} = \Pi^n_k\alpha$)
$$
\sum_{\gamma \in (\Pi_k^n)^{-1}\left(\widetilde{\gamma}\right)} q^{\chi_{\alpha,\gamma}} \prod_{i=1}^n \binomq{\alpha_i}{\gamma_i} = q^{\chi_{\widetilde{\alpha},\widetilde{\gamma}}} \prod_{i=1}^k \binomq{\widetilde{\alpha}_i}{\widetilde{\gamma}_i} . 
$$
By \eqref{Inductive}, it suffices to consider $n-k=1$. In this case, 
\begin{align*}
\chi_{\alpha,\gamma} &= \sum_{1\leq i<j \leq n-1} (\alpha_i-\gamma_i)\gamma_j  + \sum_{i=1}^{n-1} (\alpha_i-\gamma_i)\gamma_n, \\
\chi_{\widetilde{\alpha},\widetilde{\gamma}} &=  \sum_{1\leq i<j \leq n-2} (\alpha_i-\gamma_i)\gamma_j + \sum_{i=1}^{n-2} (\alpha_i-\gamma_i)(\gamma_{n-1} + \gamma_n)\\
&=  \sum_{1\leq i<j \leq n-1} (\alpha_i-\gamma_i)\gamma_j + \sum_{i=1}^{n-2} (\alpha_i-\gamma_i)\gamma_n,\\
\end{align*}
so that
$$
\chi_{\alpha,\gamma} = \chi_{\widetilde{\alpha},\widetilde{\gamma}}  + (\alpha_{n-1}-\gamma_{n-1})\gamma_n.
$$
Therefore, it suffices to show
$$
\sum_{\gamma_{n-1} + \gamma_n = \widetilde{\gamma}_{n-1}} q^{(\alpha_{n-1}-\gamma_{n-1})\gamma_n} \binomq{\alpha_{n-1}}{\gamma_{n-1}}\binomq{\alpha_n}{\gamma_n} = \binomq{\alpha_{n-1}+\alpha_n}{\widetilde{\gamma}_{n-1}},
$$
which is true by Lemma \ref{qBinomialLemma}.
\end{proof}

\appendix
\section{Explicit examples}
\subsection{Fusion for $l=1,m=2,n=1$}\label{ReshConfirm}
For $l=m=1$ and $n=1$, the $R$--matrix $R(z)$ is given by \eqref{m=1}: 
$$
R(z)=
\left( 
\begin{array}{cccc}
1 & 0 & 0 & 0 \\
0 & \dfrac{q(z-1)}{z-q^2} & \dfrac{z(1-q^2)}{z-q^2} & 0 \\
0 & \dfrac{(1-q^2)}{z-q^2} & \dfrac{q(z-1)}{z-q^2} & 0 \\
0 & 0 & 0 & 1
\end{array}
\right)
$$
with respect to the basis $\vert 10\rangle \otimes \vert 10 \rangle, \vert 10 \rangle \otimes \vert 01 \rangle, \vert 01\rangle \otimes \vert 10 \rangle, \vert 01 \rangle \otimes \vert 01 \rangle$.
From this, 
$$
\check{R}(q^{-2})=\left(
\begin{array}{cccc}
 1 & 0 & 0 & 0 \\
 0 & \frac{q^2}{q^2+1} & \frac{q}{q^2+1} & 0 \\
 0 & \frac{q}{q^2+1} & \frac{1}{q^2+1} & 0 \\
 0 & 0 & 0 & 1 \\
\end{array}.
\right)
$$
It is straightforward to check that $\check{R}(q^{-2})^2=\check{R}(q^{-2})$, so is a projection. Other interesting cases are that $R(1)$ is the usual permutation matrix, and for $q=1$,  $R(z)$ is the identity matrix. 

Take $l=1$ and $m=2$ in the expression \eqref{Fusion} for fusion. The symmetric projection is
$
P^+ = \mathrm{Id}_2 \otimes \check{R}(q^{-2}),
$
where $\mathrm{Id}_2$ is the $2\times 2$ identity matrix and $\check{R}(q^{-2})$ is the matrix from above. The other terms are
$$
R_{12}(zq^{-1}) = R(zq^{-1}) \otimes \mathrm{Id}_2
$$ 
and 
$$
R_{1'2}(zq) = \left(
\begin{array}{cccccccc}
 1 & 0 & 0 & 0 & 0 & 0 & 0 & 0 \\
 0 & \frac{1-q z}{q-z} & 0 & 0 & \frac{(q^2-1)z}{q^2-q z} & 0 & 0 & 0 \\
 0 & 0 & 1 & 0 & 0 & 0 & 0 & 0 \\
 0 & 0 & 0 & \frac{1-q z}{q-z} & 0 & 0 & \frac{(q^2-1)z}{q^2-q z} & 0 \\
 0 & \frac{q^2-1}{q-z} & 0 & 0 & \frac{1-q z}{q-z} & 0 & 0 & 0 \\
 0 & 0 & 0 & 0 & 0 & 1 & 0 & 0 \\
 0 & 0 & 0 & \frac{q^2-1 }{q-z} & 0 & 0 & \frac{1-q z}{q-z} & 0 \\
 0 & 0 & 0 & 0 & 0 & 0 & 0 & 1 \\
\end{array}
\right).
$$
Multiplying the $8\times 8$ matrices $P^+R_{13}(zq)R_{12}(zq^{-1}) P^+$  yields
$$
\left(
\begin{array}{cccccccc}
 1 & 0 & 0 & 0 & 0 & 0 & 0 & 0 \\
 0 & \frac{q^3 (q-z)}{\left(q^2+1\right) \left(q^3-z\right)} & \frac{q^2
   (q-z)}{\left(q^2+1\right) \left(q^3-z\right)} & 0 & \frac{q(q^2-1)z}{q^3-z} & 0 & 0 & 0
   \\
 0 & \frac{q^2 (q-z)}{\left(q^2+1\right) \left(q^3-z\right)} & \frac{q
   (q-z)}{\left(q^2+1\right) \left(q^3-z\right)} & 0 & \frac{z
   \left(q^2-1\right)}{q^3-z} & 0 & 0 & 0 \\
 0 & 0 & 0 & \frac{q (1-q z)}{q^3-z} & 0 & \frac{zq(q^2-1)}{q^3-z} & \frac{z
   \left(q^2-1\right)}{q^3-z} & 0 \\
 0 & \frac{\left(q^2-1\right) q}{q^3-z} & \frac{ q^2-1}{q^3-z} & 0 &
   \frac{q (1-q z)}{q^3-z} & 0 & 0 & 0 \\
 0 & 0 & 0 & \frac{\left(q^2-1\right) q}{q^3-z} & 0 & \frac{q^3 (q-z)}{\left(q^2+1\right)
   \left(q^3-z\right)} & \frac{q^2 (q-z)}{\left(q^2+1\right) \left(q^3-z\right)} & 0 \\
 0 & 0 & 0 & \frac{ q^2-1}{q^3-z} & 0 & \frac{q^2
   (q-z)}{\left(q^2+1\right) \left(q^3-z\right)} & \frac{q (q-z)}{\left(q^2+1\right)
   \left(q^3-z\right)} & 0 \\
 0 & 0 & 0 & 0 & 0 & 0 & 0 & 1 \\
\end{array}
\right).
$$
Note that this is singular at $z=q^3$ but not at $z=q$. One can check that 
\begin{align*}
P^+R_{13}(zq^{-1})R_{12}(zq) P^+&= P^+R_{13}(zq^{-1})R_{12}(zq) \\
=P^+R_{13}(zq)R_{12}(zq^{-1}) P^+&=  R_{13}(zq)R_{12}(zq^{-1}) P^+,
\end{align*}
as predicted by \eqref{Stronger}.

The above $8\times 8$ matrix is consistent with \eqref{l=1} at $m=2$. For instance, 
$$
R(z)\left( \vert 10\rangle \otimes  \vert 11\rangle\right) =   \frac{q(q-z)}{q^3-z}\vert 10\rangle \otimes  \vert 11\rangle +  \frac{q^2-1}{q^3-z}\vert 01\rangle \otimes  \vert 20\rangle.
$$
Since
\begin{align*}
\vert 10\rangle \otimes  \vert 11\rangle &= \vert 10 \rangle \otimes \check{R}(q^{-2})\left( \vert 01 \rangle \otimes \vert 10 \rangle\right) = (q^2+1)^{-1} \left( q \vert 10 \rangle \otimes \vert 10 \rangle  \otimes \vert 01 \rangle +  \vert 10 \rangle \otimes \vert 01 \rangle  \otimes \vert 10 \rangle \right), \\
\vert 01\rangle \otimes  \vert 20\rangle &= \vert 01\rangle \otimes  \vert 10\rangle \otimes \vert 10\rangle,
\end{align*}
then the $8 \times 8$ matrix applied to $\vert 10 \rangle \otimes \vert 01 \rangle \otimes \vert 10 \rangle$ equals
\begin{multline*}
\frac{q^2 (q-z)}{\left(q^2+1\right) \left(q^3-z\right)} \vert 10 \rangle \otimes \vert 10 \rangle \otimes \vert 01 \rangle + \frac{q (q-z)}{\left(q^2+1\right) \left(q^3-z\right)} \vert 10 \rangle \otimes \vert 01 \rangle \otimes \vert 10 \rangle + \frac{q^2-1}{q^3-z} \vert 01 \rangle \otimes \vert 10 \rangle \otimes \vert 10\rangle \\
=   \frac{q(q-z)}{q^3-z}\vert 10\rangle \otimes  \vert 11\rangle +  \frac{q^2-1}{q^3-z}\vert 01\rangle \otimes  \vert 20\rangle.
\end{multline*}

Note that one can also check that
$$
(z-q^2)( \check{R}(q^{-2}) \otimes \mathrm{Id}_2) ( \mathrm{Id}_2 \otimes R(z) ) ( \check{R}(q^{-2}) \otimes \mathrm{Id}_2) 
$$
has rank $2$ at $z\rightarrow q^2$, and that the resulting matrix satisfies
$$
Q^2 = \frac{1-q^6}{1+q^2}Q.
$$

\subsection{$S(z)$ for $l=m=2,n=1$}
For $l=m=2$ and $n=1$ the action of $R(z)$ is given by (replacing $z=-\alpha q^4$)
\begin{align*}
&R(z)(|02\rangle \ot |02\rangle)=|02\rangle \ot |02\rangle, 
\quad R(z)(|20\rangle \ot |20\rangle)=|20\rangle \ot |20\rangle,\\
&R(z)(|02\rangle \ot |11\rangle)=\frac{q^{-2}(1+\alpha q^4)}{1 + \alpha}|02\rangle \ot |11\rangle+\frac{(1-q^4)\alpha}{1 + \alpha}|11\rangle \ot |02\rangle, \\
&R(z)(|02\rangle \ot |20\rangle)=q^{-4}\frac{(1+\alpha q^4)(1+\alpha q^6)}{(1 + \alpha q^2)(1 + \alpha)}|02\rangle \ot |20\rangle+\frac{q^{-1}(1+q^2)(1-q^4)(1+\alpha q^4)\alpha}{(1 + \alpha q^2)(1 + \alpha)}|11\rangle \ot |11\rangle \\
& \ \ \ \ \ \ \ \ \ \ \ \ \ \ \ \ \ \ \ \ \ \ \ \ \   +\frac{(1-q^2)(1-q^4){\color{black}q^2}\alpha^2}{(1 + \alpha q^2)(1 + \alpha)}|20\rangle \ot |02\rangle, \\
&R(z)(|11\rangle \ot |20\rangle)=\frac{\alpha(1-q^4)}{1 + \alpha}|20\rangle \ot |11\rangle+\frac{q^{-2}(1+\alpha q^4)}{1 + \alpha}|11\rangle \ot |20\rangle, \\
&R(z)(|11\rangle \ot |11\rangle)= -\frac{q^{-5}(1-q^2)(1+\alpha q^4)}{(1 + \alpha q^2)(1 + \alpha)}|02\rangle \ot |20\rangle+\frac{q^6z-2q^4z+q^4+q^2z^2-2q^2z+z}{(q^2(1 + \alpha q^2))(q^4(1 + \alpha))}|11\rangle \ot |11\rangle \\
& \ \ \ \ \ \ \ \ \ \ \ \ \ \ \ \ \ \ \ \ \ \ \ \ \   +\frac{q^{-1}(1-q^2)(1+\alpha q^4)\alpha}{(1 + \alpha q^2)(1 + \alpha)}|20\rangle \ot |02\rangle, \\
&R(z)(|11\rangle \ot |02\rangle)=\frac{q^2(1+\alpha q^4)}{q^4(1 + \alpha)}|11\rangle \ot |02\rangle-\frac{1-q^4}{q^4(1 + \alpha)}|02\rangle \ot |11\rangle, \\
&R(z)(|20\rangle \ot |02\rangle)=\frac{q^{-4}(1+\alpha q^4)(1+ \alpha q^6)}{(1 + \alpha q^2)(1 + \alpha)}|20\rangle \ot |02\rangle-\frac{q^{-5}(1+q^2)(1-q^4)(1+\alpha q^4)}{(1 + \alpha q^2)(1 + \alpha)}|11\rangle \ot |11\rangle \\
& \ \ \ \ \ \ \ \ \ \ \ \ \ \ \ \ \ \ \ \ \ \ \ \ \  + q^{-6}\frac{(1-q^2)(1-q^4)}{(1 + \alpha q^2)(1 + \alpha)}|02\rangle \ot |20\rangle, \\
&R(z)(|20\rangle \ot |11\rangle)=
\frac{q^{-2}(1+\alpha q^4)}{1 + \alpha}|20\rangle \ot |11\rangle-\frac{1-q^4}{q^4(1 + \alpha)}|11\rangle \ot |20\rangle,
\end{align*}
where $|\alpha\rangle$ with $\alpha=(\alpha_1,\alpha_2)$ is 
denoted by $|\alpha_1\alpha_2\rangle$.
Now since
$$
\Ga(|\alpha_1 \alpha_2\rangle \ot |\beta_1 \beta_2\rangle) = q^{-\alpha_1\beta_2}|\alpha_1 \alpha_2\rangle \ot |\beta_1 \beta_2\rangle, \quad \widetilde{\Ga}^{-1}(|\alpha_1 \alpha_2\rangle \ot |\beta_1 \beta_2\rangle) = q^{\alpha_2\beta_1}|\alpha_1 \alpha_2\rangle \ot |\beta_1 \beta_2\rangle,
$$
then the action of $S(z)$ is 
\begin{align*}
&S(z)(|02\rangle \ot |02\rangle)=|02\rangle \ot |02\rangle, 
\quad S(z)(|20\rangle \ot |20\rangle)=|20\rangle \ot |20\rangle,\\
&S(z)(|02\rangle \ot |11\rangle)=\frac{1+\alpha q^4}{1 + \alpha}|02\rangle \ot |11\rangle+\frac{(1-q^4)\alpha}{1 + \alpha}|11\rangle \ot |02\rangle, \\
&S(z)(|02\rangle \ot |20\rangle)=\frac{(1+\alpha q^4)(1+\alpha q^6)}{(1 + \alpha q^2)(1 + \alpha)}|02\rangle \ot |20\rangle+\frac{(1+q^2)(1-q^4)(1+\alpha q^4)\alpha}{(1 + \alpha q^2)(1 + \alpha)}|11\rangle \ot |11\rangle \\
& \ \ \ \ \ \ \ \ \ \ \ \ \ \ \ \ \ \ \ \ \ \ \ \ \   +\frac{(q^2-q^4)(1-q^4)\alpha^2}{(1 + \alpha q^2)(1 + \alpha)}|20\rangle \ot |02\rangle, \\
&S(z)(|11\rangle \ot |20\rangle)=\frac{\alpha(1-q^4)}{1 + \alpha}|20\rangle \ot |11\rangle+\frac{1+\alpha q^4}{1 + \alpha}|11\rangle \ot |20\rangle, \\
&S(z)(|11\rangle \ot |11\rangle)= \frac{(1-q^{-2})(1+\alpha q^4)}{(1 + \alpha q^2)(1 + \alpha)}|02\rangle \ot |20\rangle+\frac{q^6z-2q^4z+q^4+q^2z^2-2q^2z+z}{q^5(1 + \alpha q^2)(1 + \alpha)}|11\rangle \ot |11\rangle \\
& \ \ \ \ \ \ \ \ \ \ \ \ \ \ \ \ \ \ \ \ \ \ \ \ \   +\frac{(1-q^2)(q^{-2}+\alpha q^2)\alpha}{(1 + \alpha q^2)(1 + \alpha)}|20\rangle \ot |02\rangle, \\
&S(z)(|11\rangle \ot |02\rangle)=\frac{1+\alpha q^4}{q^4(1 + \alpha)}|11\rangle \ot |02\rangle +\frac{1-q^{-4}}{1 + \alpha}|02\rangle \ot |11\rangle, \\
&S(z)(|20\rangle \ot |02\rangle)=\frac{(q^{-4}+\alpha )(q^{-4}+ \alpha q^2)}{(1 + \alpha q^2)(1 + \alpha)}|20\rangle \ot |02\rangle+\frac{(1+q^2)(1-q^{-4})(q^{-4}+\alpha )}{(1 + \alpha q^2)(1 + \alpha)}|11\rangle \ot |11\rangle \\
& \ \ \ \ \ \ \ \ \ \ \ \ \ \ \ \ \ \ \ \ \ \ \ \ \  + \frac{ (1-q^{-2})(1-q^{-4})}{(1 + \alpha q^2)(1 + \alpha)}|02\rangle \ot |20\rangle, \\
&S(z)(|20\rangle \ot |11\rangle)=
\frac{q^{-4}+\alpha }{1 + \alpha}|20\rangle \ot |11\rangle+\frac{1-q^{-4}}{1 + \alpha}|11\rangle \ot |20\rangle,
\end{align*}
Up to $q\rightarrow q^{1/2}$, with $\nu=q^{-2}$, these are the same weights as in Appendix B.2 of \cite{CP}.

\bibliographystyle{plain}

\begin{thebibliography}{99}
\bibitem[AnAsRo]{AAR} G. E. Andrews, R. Askey, and R. Roy, \textit{Special Functions}, vol. 71, Cambridge University Press, 2001.
\bibitem[Bar14]{B} G. Barraquand, \textit{A short proof of a symmetry identity for the q--Hahn distribution}, Electron. Commun. Probab. 19 (2014), no. 50, 1--3. \DOI{10.1214/ECP.v19-3674}
\bibitem[BarCor16]{BC} G. Barraquand, I. Corwin, \textit{The q--Hahn asymmetric exclusion process}, Ann. Appl. Probab, Vol. 26, No. 4 (2016), 2304--2356.
\bibitem[BelSch15]{BS} V. Belitsky, G.M. Sch{\" u}tz, \textit{Self-Duality for the Two-Component Asymmetric Simple Exclusion Process}, J. Math. Phys., 56, 083302 (2015), DOI: \href{http://dx.doi.org/10.1063/1.4929663}{10.1063/1.4929663}
\bibitem[BelSch15-2]{BS2} V. Belitsky, G.M. Sch{\" u}tz, \textit{Quantum algebra symmetry and reversible measures for the ASEP with second-class particles}, J. Stat. Phys., November 2015, Volume 161, Issue 4, pp 821--842, DOI: \href{http://dx.doi.org/10.1007/s10955-015-1363-1}{10.1007/s10955-015-1363-1}
\bibitem[BelSch16]{BS3} V. Belitsky, G.M. Sch{\" u}tz, \textit{Self-duality and shock dynamics in the $n$-component priority ASEP}, \arXiv{1606.04587v1}
\bibitem[BorCor13]{BorCor13} A. Borodin, I. Corwin, \textit{Discrete time q--TASEPs}. Int. Math. Res. Not. (rnt206), 05 2013. \DOI{10.1093/imrn/rnt206}
\bibitem[BorCorGor16]{BCG} A. Borodin, I. Corwin, V. Gorin, \textit{Stochastic six-vertex model}, Duke Math. J. 165, no. 3 (2016), 563--624. \DOI{10.1215/00127094-3166843}
\bibitem[BorCorSa14]{BCS} A. Borodin, I. Corwin, T. Sasamoto, \textit{From duality to determinants for $q$--TASEP and ASEP}, Ann. Probab. 2014, Vol. 42, No. 6, 2314--2382, DOI: \href{http://dx.doi.org/10.1214/13-AOP868}{10.1214/13-AOP868}
\bibitem[BCPS15]{BCPS} A. Borodin, I. Corwin, L. Petrov, T. Sasamoto. \textit{Spectral theory for the q-Boson particle system.} Compositio Mathematica, 151:1--67 (2015).
\bibitem[BorPet16]{BP} A. Borodin and L. Petrov. \textit{Higher spin six vertex model and symmetric rational functions}, \arXiv{1601.05770v1}
\bibitem[BorPet16-2]{BP2} A. Borodin and L. Petrov, \textit{Lectures on Integrable probability: Stochastic vertex models and symmetric functions}, \arXiv{1605.01349v1}
\bibitem[BoMa16]{BM} G. Bosnjak, V. Mangazeev, \textit{Construction of R-matrices for symmetric tensor representations related to} $U_q(\widehat{sl_n})$, J. Phys. A: Math. Theor. 49 (2016) 495204, \DOI{10.1088/1751-8113/49/49/495204}
\bibitem[Can15] {C2}L. Cantini, \textit{Asymmetric Simple Exclusion Process with open boundaries and Koornwinder polynomials}, \href{http://arxiv.org/abs/1506.00284v1}{arXiv:1506.00284v1}
\bibitem[CGGW16]{CGGW} L. Cantini, A. Garbali, J. d. Gier, M. Wheeler, \textit{Koornwinder polynomials and the stationary multi-species asymmetric exclusion process with open boundaries}, J. Phys. A: Math. Theor. 49 (2016), 444002 (32pp) \DOI{10.1088/1751-8113/49/44/444002}
\bibitem[CGW16]{CGW} Luigi Cantini, Jan de Gier, Michael Wheeler, \textit{Matrix product and sum rule for Macdonald polynomials}, \arXiv{1602.04392v1}
\bibitem[CGRS15]{CGRS} G. Carinci, C. Giardin\`{a}, F. Redig, T. Sasamoto, \textit{A generalized asymmetric exclusion process with }$\mathcal{U}_q(\mathfrak{sl}_2)$\textit{ stochastic duality}, Probab. Theory Relat. Fields DOI: \href{http://link.springer.com/article/10.1007/s00440-015-0674-0}{10.1007/s0040-015-0674-0}
\bibitem[CGRS16]{CGRS2} G. Carinci, C. Giardin\`{a}, F. Redig, T. Sasamoto, \textit{Asymmetric stochastic transport models with $\mathcal{U}_q(\mathfrak{su}(1,1))$ symmetry}, J. Stat. Phys., April 2016, Vol. 163, Issue 2, pp 239--279, DOI:\href{http://dx.doi.org/10.1007/s10955-016-1473-4}{10.1007/s10955-016-1473-4}
\bibitem[CoMaWi15]{CMW}S. Corteel, O. Mandelshtam, L. Williams, \textit{Combinatorics of the two-species ASEP and Koornwinder moments}, \href{http://arxiv.org/abs/1510.05023}{arXiv:1510.05023}
\bibitem[Cor15]{C} I. Corwin, \textit{The q-Hahn Boson Process and q-Hahn TASEP}, Int Math Res Notices (2015) 2015 (14): 5577--5603. \DOI{10.1093/imrn/rnu094}
\bibitem[CorPet16]{CP} I. Corwin and L. Petrov, \textit{Stochastic Higher Spin Vertex Models on the Line}, Communications in Mathematical Physics, April 2016, Volume 343, Issue 2, pp 651--700. \DOI{10.1007/s00220-015-2479-5}
\bibitem[CST16]{CST} I. Corwin, H. Shen and L.-C. Tsai, \textit{ASEP(q,j) converges to the KPZ equation}, \href{http://arxiv.org/abs/1602.01908v1}{arXiv:1602.01908v1}
\bibitem[CFRV16]{CFRV} N. Crampe, C. Finn, E. Ragoucy, and M. Vanicat, \textit{Integrable boundary conditions for multi--species ASEP} J. Phys. A: Mathematical and Theoretical, Volume 49, Number 37, \DOI{10.1088/1751-8113/49/37/375201}
\bibitem[GKRV09]{GKRV} C. Giardin\`{a}, J. Kurchan, F. Redig, K. Vafayi, \textit{Duality and hidden symmetries in interacting particle systems}, J. Stat. Phys, \textbf{135}, 25--55 (2009). DOI: \href{http://dx.doi.org/10.1007/s10955-009-9716-2}{10.1007/s10955-009-9716-2}  
\bibitem[GRV10]{GRV} C. Giardin\`{a}, F. Redig, K. Vafayi, \textit{Correlation inequalities for interacting particle systems with duality}, J. Stat. Phys., \textbf{141}, 242--263 (2010). DOI: \href{http://dx.doi.org/10.1007/s10955-010-0055-0}{10.1007/s10955-010-0055-0}
\bibitem[GwaSpo92]{GS92} L--H. Gwa and H. Spohn. \textit{Bethe solution for the dynamical-scaling exponent of the noisy Burgers equation}. Phys. Rev. A, 46:844--854, 1992.
\bibitem[ImSa11]{IS} T. Imamura, T. Sasamoto, \textit{Current Moments of 1D ASEP by Duality}, J. Stat. Phys, J. Stat. Phys., March 2011, Vol. 142, Issue 5, pp. 919--930. DOI: \href{http://dx.doi.org/10.1007/s10955-011-0149-3}{10.1007/s10955-011-0149-3}
\bibitem[Ka99]{Ka} V. Karimipour, \textit{A multi-species asymmetric exclusion process, steady state and correlation functions on a periodic lattice},  1999 EPL (Europhysics Letters), Volume 47, Number 3, \DOI{10.1209/epl/i1999-00389-2}
\bibitem[KeSn76]{KS} J.G. Kemeny and J.L. Snell, \textit{Finite Markov Chains}, Springer--Verlag, 1976.
\bibitem[Kuan15]{K2} J. Kuan, \textit{Stochastic duality of ASEP with two particle types via symmetry of quantum groups of rank two}, J. Phys A, Volume 49, Number 11 (2016), 29 pp. \href{http://dx.doi.org/10.1088/1751-8113/49/11/115002}{DOI:10.1088/1751-8113/49/11/115002}
\bibitem[Kuan16]{K} J. Kuan, \textit{A Multi-species ASEP(q,j) and q-TAZRP with Stochastic Duality}, to appear in Int. Mat. Res. Not. \arXiv{1605.00691v1}
\bibitem[KuReSk81]{KRL} P. P. Kulish, N. Yu. Reshetikhin and E. K. Sklyanin, \textit{Yang--Baxter Equation and Representation Theory: I}, Lett. Math. Phys. 5 (1981). 393--403.
\bibitem[KMMO16]{KMMO} A. Kuniba, V. Mangazeev, S. Maruyama, M. Okado, \textit{Stochastic $R$ Matrix for $U_q(A_n^{(1)})$}, Nucl. Phys. B913 (2016) 248--277. \arXiv{1604.08304v4}
\bibitem[KuOk16]{KO} A. Kuniba, M. Okado, \textit{Matrix product formula for $\mathcal{U}_q(A_2^{(1)})$--zero range process}, \arXiv{1608.02779v1}
\bibitem[KuOk16-2]{KO2} A. Kuniba, M. Okado, \textit{A $q$--boson representation of Zamolodchikov--Faddeev algebra for stochastic $R$ matrix of $U_q(A_n^{(1)})$}, \arXiv{1610.00531v1}
\bibitem[KuOkSe15]{KOS} A. Kuniba, M. Okado, S. Sergeev, \textit{Tetrahedron equation and generalized quantum groups}, J. Phys. A, Math. Theor. 48 (2015) 304001. \DOI{10.1088/1751-8113/48/30/304001}
\bibitem[Ligg76]{L} T.M. Liggett, Coupling the simple exclusion process. Ann. Probab. \textbf{4}, 339--356 (1976). \DOI{10.1214/aop/1176996084}
\bibitem[Ligg]{Ligg} T.M. Liggett, \textit{Interacting Particle Systems}, Springer, Berlin (2005).
\bibitem[Lusz]{Lusz} G. Lusztig, \textit{Introduction to Quantum Groups}, Birkh\"{a}user, Boston (1993)
\bibitem[Man15]{M}O. Mandelshtam, \textit{Matrix ansatz and combinatorics of the k-species PASEP
}, \href{http://arxiv.org/abs/1508.04115v1}{arXiv:1508.04115v1}
\bibitem[ManVie15]{MV} O. Mandelshtam, X. Viennot. \textit{Tableaux combinatorics of the two-species PASEP}, \href{http://arxiv.org/abs/1506.01980v1}{arXiv:1506.01980v1}
\bibitem[Ohku16]{O} J. Ohkubo, \textit{On dualities for SSEP and ASEP with open boundary conditions}, \arXiv{1606.05447v1}
\bibitem[PitRog81]{PR} J.W. Pitman and L.C.G. Rogers, \textit{Markov functions}, Ann. Probab., 9(4) (1981) 573--582.
\bibitem[Pov13]{Po} A.M. Povolotsky, \textit{On the integrability of zero-range chipping models with factorized steady states}, Journal of Physics A: Mathematical and Theoretical, Volume 46, Number 46. \DOI{10.1088/1751-8113/46/46/465205}
\bibitem[PovPri06]{PoV} A.M. Povolotsky and V.B. Priezzhev, \textit{Determinant solution for the totally asymmetric exclusion process with parallel update}, 2006 J. Stat. Mech. 07 P07002. \DOI{10.1088/1742-5468/2006/07/P07002}
\bibitem[PEM09]{PEM} S. Prolhac, M. R. Evans, K. Mallick, \textit{Matrix product solution of the multispecies partially asymmetric exclusion process}, J. Phys. A: Math. Theor. 42 (2009) 165004. \DOI{10.1088/1751-8113/42/16/165004}
\bibitem[Resh10]{Resh10} N. Reshetikhin, \textit{Lectures on the integrability of the 6--vertex model}, \arXiv{1010.5031v1}
\bibitem[SaWa98]{SW} T. Sasamoto and M. Wadati, \textit{Exact results for one--dimensional totally asymmetric diffusion models}, J. Phys. A. \textbf{31} (1998), no. 28, 6057--6071. \DOI{10.1088/0305-4470/31/28/019}
\bibitem[Sch97]{S} G. Sch{\" u}tz, \textit{Duality relations for asymmetric exclusion processes}. J. Stat. Phys. \textbf{86}(5/6), 1265--1287 (1997) \href{http://dx.doi.org/10.1007/BF02183623}{DOI:10.1007/BF02183623}
\bibitem[Sch16]{S2} G. Sch{\" u}tz, \textit{Duality relations for the periodic ASEP conditioned on a low current}, to appear in From Particle Systems to Partial Differential Equations III, Springer Proceedings in Mathematics and Statistics, vol. 162 (2016). \arXiv{1508.03158v1}
\bibitem[SchSan94]{SS} G. Sch{\" u}tz, S. Sandow, \textit{Non--Abelian symmetries of stochastic processes: Derivation of correlation functions for random--vertex models and disordered--interacting--particle systems}, Phys. Rev. E \textbf{49}, 2726 (1994) \href{http://dx.doi.org/10.1103/PhysRevE.49.2726}{DOI:10.1103/PhysRevE.49.2726}
\bibitem[Spit70]{Sp} F. Spitzer, \textit{Interaction of Markov processes}. Adv. Math 5:246--290 (1970). \DOI{10.1016/0001-8708(70)90034-4}
\bibitem[Take14]{T0} Y. Takeyama, \textit{A deformation of affine Hecke algebra and integrable stochastic particle system}, J. Phys. A, Vol. 47, No. 46. \DOI{10.1088/1751-8113/47/46/465203}
\bibitem[Take15]{T} Y. Takeyama, \textit{Algebraic Construction of
Multi--species q--Boson System}, \href{http://arxiv.org/abs/1507.02033}{arXiv:1507.02033}
\bibitem[Uch08] {Uch}M. Uchiyama, \textit{Two-species asymmetric simple exclusion process with open boundaries}. Chaos Solitons Fractals, 35(2):398--407, 2008. \href{http://dx.doi.org/10.1016/j.chaos.2006.05.013}{DOI:10.1016/j.chaos.2006.05.013}
\end{thebibliography}

\end{document}